\def\TEXTCOLOR {black}
\def\PAGECOLOR {white}
\def\NOTECOLOR {Purple}
\definecolor{linkBlue}{rgb}{.05,.25,.6}
\pgfplotsset{compat=1.18}
\theoremstyle{definition}
\newtheorem{definition}{Definition}[subsection]
\newtheorem*{definition*}{Definition}
\newtheorem{theorem}[definition]{Theorem}
\newtheorem*{theorem*}{Theorem}
\newtheorem{corollary}[definition]{Corollary}
\newtheorem{lemma}[definition]{Lemma}
\newtheorem*{lemma*}{Lemma}
\newtheorem{proposition}[definition]{Proposition}
\newtheorem{claim}[definition]{Claim}
\newtheorem{remark}[definition]{Remark}
\newtheorem{example}{Example}[subsection]
\newtheorem*{example*}{Example}
\newtheoremstyle{named}{}{}{\itshape}{}{\bfseries}{.}{.5em}{\thmnote{#3}}
\theoremstyle{named}
\newtheorem*{namedtheorem}{Theorem}
\let\emptyset\varnothing
\let\tilde\widetilde
\renewcommand{\bar}[1]{\overline{\hspace{-1pt}{#1}\hspace{-.2pt}}}
\let\hat\widehat
\newcommand{\de}{\text{d}}
\newcommand{\R}{\mathbb{R}}
\newcommand{\C}{\mathbb{C}}
\newcommand{\Z}{\mathbb{Z}}
\renewcommand{\P}{\mathbb{P}}
\newcommand{\from}{\,:\,}
\newcommand{\into}{\hookrightarrow}
\newcommand{\acts}{\mathbin{{\text{\raisebox{.75em}{\rotatebox{-90}{$\circlearrowright$}}}}}}
\newcommand{\im}{\text{im}\,}
\newcommand{\sm}{\smallsetminus}
\newcommand{\eps}{\varepsilon}
\newcommand{\e}{{\mathrm e}}
\newcommand{\norm}[1]{\lVert#1\rVert}
\newcommand{\CP}{\ensuremath{\mathbb {CP}}}
\DeclareMathOperator{\delbar}{\bar\partial}
\DeclareMathOperator{\del}{\partial}
\let\Re\relax
\let\Im\relax
\DeclareMathOperator{\Re}{Re}
\DeclareMathOperator{\Im}{Im}
\DeclareMathOperator{\tr}{tr}
\DeclareMathOperator{\rk}{rk}
\DeclareMathOperator{\diag}{diag}
\DeclareMathOperator{\GL}{GL}
\DeclareMathOperator{\SL}{SL}
\DeclareMathOperator{\U}{U}
\DeclareMathOperator{\SU}{SU}
\DeclareMathOperator{\SO}{SO}
\DeclareMathOperator{\Hom}{Hom}
\DeclareMathOperator{\End}{End}
\DeclareMathOperator{\Rep}{Rep}
\DeclareMathOperator*{\twoslash}{\hspace{-.2em}/\hspace{-.2em}/\hspace{-.2em}}
\DeclareMathOperator*{\fourslash}{\twoslash\twoslash}
\newcommand{\bx}{\mathbf{x}}
\newcommand{\by}{\mathbf{y}}
\title[Hyperk\"ahler Degenerations from Higgs to Hyperpolygon Spaces]{
    Hyperk\"ahler Degenerations from 
   Parabolic $\text{SL}(2,\C)$-Higgs Bundles Moduli Spaces on the Punctured Sphere to Hyperpolygon Spaces
}
\author{Laura Fredrickson}
\address{Department of Mathematics\\
University of Oregon, Eugene, OR 97403 USA}
\email{lfredric@uoregon.edu}
\author{Arya Yae}
\address{Department of Mathematics\\
University of Oregon, Eugene, OR 97403 USA}
\email{ayae@uoregon.edu}
\date{} 
\begin{document}

\begin{abstract}
    Complete hyperk\"ahler 4-manifolds of finite energy are grouped into ALE, ALF, ALG$^{(*)}$, ALH$^{(*)}$, each of these being further classified according to the Dynkin type of their noncompact end. A family of ALG-$D_4$ spaces are modeled by certain moduli spaces of strongly parabolic $\mathrm{SL}(2,\mathbb{C})$-Higgs bundles on the Riemann sphere with $n=4$ punctures. Meanwhile, a family of ALE-$D_4$ spaces are modeled by certain Nakajima quiver varieties known as $n=4$ hyperpolygon spaces. There is a map from hyperpolygon space to the moduli space of strong parabolic $\mathrm{SL}(2,\mathbb{C})$-Higgs bundles that is a diffeomorphism onto its open and dense image. We show that under a fine-tuned degenerate limit, the pullback of a family of ALG-$D_4$ metrics parameterized by $R$ converges pointwise to the ALE-$D_4$ metric as $R \to 0$.  While the connection to gravitational instantons occurs in the $n=4$ case, we prove our result for any finite $n$.
\end{abstract}

\maketitle

\tableofcontents

\newcommand{\app}{\text{app}}

\section{Introduction}
A gravitational instanton is a hyperkähler 4-manifold $(X,g,J_1,J_2,J_3,\omega_1,\omega_2,\omega_3)$ with finite energy
    $$E(g)=\int_X|\text{Rm}_g|^2\text{dvol}_g<\infty.$$
Let $r$ be a coordinate given by geodesic distance from a fixed point $p_0$ in $X$.  If we impose the slightly stronger condition $|\text{Rm}_g|^2\in O(r^{-2-\eps})$ as $r\to\infty$, G. Chen and X. Chen prove that $X$ must have a single noncompact end of type ALE, ALF, ALG, ALH according to the volume growth \cite{CC21}.  Namely, we have $\text{Vol}(B_r(p_0))\sim r^m$ for some $m\in\{1,2,3,4\}$, and ALE, ALF, ALG, ALH respectively correspond to $m=4, 3, 2, 1$.  When we relax the stronger curvature decay condition, the classification expands to include two additional possibilities ALG$^*$ and ALH$^*$.

The so-called Modularity Conjecture, attributed to Boalch \cite{aim},
posits that all of these gravitational instantons can be realized as gauge-theoretic moduli spaces. In particular, the ALG$^{(*)}$ gravitational instantons should be realized as Hitchin moduli spaces on certain punctured Riemann surfaces $C$ with divisor $D$ with special fixed data at $D$. The ALE gravitational instantons can be realized as certain quiver varieties.  There are also certain maps between gauge theoretic spaces.  A follow-up question to the Modularity Conjecture is:
Can these gauge-theoretic maps be used to understand degenerations of gravitational instantons from one type to another? 

We consider this question in the particular case of well-tuned families of ALG-$D_4$ gravitational instantons\footnote{For any fixed modular parameter $\tau \in \mathbb{H}/\mathrm{PSL}(2,\Z)$, the family of relevant ALG gravitational instantons are all asymptotic to $(\C \times T^2_\tau)/\Z_2$, where $\Z_2$ acts by $(z,w) \mapsto (-z, -w)$. } degenerating to ALE-$D_4$ gravitational instantons\footnote{
    The relevant ALE gravitational instantons are asymptotic to $\C^2/Q8$. Under the McKay correspondence, the affine Dynkin diagram arises as the root system of a finite subgroup $\Gamma$ of $SU(2)$. The relevant subgroup $\Gamma$ here is the quaternion group $Q8\simeq \{\pm 1, \pm i, \pm j, \pm k\}$ which is generated inside $SU(2)$ by the elements $\begin{pmatrix} i & 0 \\ 0 & i \end{pmatrix}$ and $\begin{pmatrix} 0 & -1 \\ 1 & 0 \end{pmatrix}$.
}. The ALG-$D_4$ moduli spaces are strongly parabolic $\SU(2)$-Hitchin moduli spaces on the four-punctured sphere \cite{FMSW21} and the ALE-$D_4$ are  certain Nakajima quiver varieties known as $n=4$ hyperpolygon spaces \cite{KN90}.  Note that we are only considering the subfamily of gravitational instantons that admit a triholomorphic $\U(1)$-action.  The relevant gauge theoretic map from $n=4$ hyperpolygon spaces to strongly parabolic $\SL(2,\C)$-Higgs bundle moduli spaces (also called  $\SU(2)$-Hitchin moduli spaces) on the four-punctured sphere is described in \cite{GM11}.  It preserves the holomorphic symplectic structures \cite{BFGM15}.

Moreover, a similar result holds for arbitrary $n$: the hyperk\"ahler metrics on well-tuned families of parabolic $\SU(2)$-Hitchin moduli spaces on the $n$-punctured sphere converge to the $n$-hyperpolygon space. In this case, the hyperk\"ahler metric on hyperpolygon space is known to be quasi-asymptotically conical \cite{DimakisRochon}, a higher-dimensional generalization of ALE. We note that at this time there is no parallel theorem about $\SU(2)$-Hitchin moduli spaces on the $n$-punctured sphere having a hyperk\"ahler type that generalizes ALG.

\bigskip

These hyperk\"ahler metrics are difficult to access. In particular, the hyperk\"ahler metric on the Hitchin moduli space is written in terms of solutions of gauge-theoretic systems of coupled nonlinear elliptic (modulo gauge) partial differential equations.  (In \Cref{sec:preliminaries}, we will introduce many of the relevant notions including parabolic Higgs bundles and describe the hyperk\"ahler metric. But for the sake of brevity, we will here assume that the reader has some familiarity with at least ordinary Higgs bundles, in order to give a succinct description of this work and its relation to other works.) The hyperk\"ahler metric on parabolic $\SU(2)$-Hitchin moduli space is understood as one approaches the noncompact end of the Hitchin moduli space in \cite{MSWW16, Mochizuki, FredricksonSLn, Fre19, FMSW21,mochizuki2024asymptoticbehaviourhitchinmetric,mochizuki2024comparisonhitchinmetricsemiflat} (``$R \to \infty$''), as the PDE decouples in the limit.  However, in this paper we are taking a very different limit ($``R \to 0$'', while simultaneously degenerating the boundary conditions near the punctures).  

An ``$R \to 0$'' type limit appears in \cite{DFKMMN16, CW18, CFW24}, where the conformal limit of a parabolic Higgs bundle is computed in \cite{CFW24}. To review, fix a stable parabolic Higgs bundle $(\mathcal{E}, \varphi)$ and consider the family of harmonic metrics $h_R$ solving the $R$-rescaled Hitchin equation
\begin{equation}\label{eq:Hitchinintro}
    R^{-1}F_{\nabla(\mathcal{E}, h_R)}^\perp + R [\varphi, \varphi^{\dagger_{h_R}}]=0,
\end{equation}
where $\nabla(\mathcal{E}, h_R)$ is the Chern connection.
Let $\zeta \in \C^\times$ be the twistor parameter where $\zeta=0$ corresponds to the Higgs bundle moduli space, and fix $\frac{\zeta}{R}=\hbar $.
Collier--Fredrickson--Wentworth prove that the conformal limit
\[ \lim_{R,\zeta \to 0} \frac{R}{\zeta} \varphi + \nabla(\mathcal{E}, h_R) + \zeta R \varphi^{\dagger_{h_R}} \]
exists in cases that include (1) strongly parabolic Higgs bundles or (2) weakly parabolic Higgs bundles with full flags. Then, they discuss how the conformal limit interacts with the natural stratification of the space of parabolic logarithmic $\lambda$-connections by $\C^\times$-limits. One common feature shared by conformal limit and our limit is that both are joint limits with special tuning. In the conformal limit, $R \to 0$ and $\zeta \to 0$; in our limit, $R \to 0$  while the parabolic weights $\alpha_i \to \frac{1}{2}$. 

The degeneration of the boundary conditions appears in \cite{Jud98}, and for parabolic Higgs bundles in \cite{KW18}.  For parabolic Higgs bundles, the harmonic metric $h_R$ is singular at the divisor $D \subset C$, and the boundary condition for the harmonic metric $h_R$ is determined by weighted flags at those fixed points. In the case of $\SL(2,\C)$ Higgs bundles, the weighted flag at $p_i \in D$ is
$$\begin{array}[column sep = -5pt]{ccccccc}
\hspace{2.9cm}   &0   &\subset    &F_i        &\subset    &E_{p_i}    &\\
\hspace{2.9cm}   &    &           &1-\alpha_i &<          &\alpha_i,  &\qquad\alpha_i \in (0, \frac{1}{2}).
\end{array}$$
We consider a degeneration of the boundary condition $\alpha_i \to \frac{1}{2}$ from the full flag case to the not-full-flag case. In the special cases where a point of the Hitchin moduli space can be interpreted as a uniformization metric on the punctured surface $\hat C=C\sm D$, this is a degeneration from conical singularities to cuspidal singularities at $D$.

The joint limit has the effect of sending the volume of fibers of the Hitchin fibration to $\infty$ (``$R \to 0$'') while preserving the holomorphic symplectic form. We tune the $R \to 0$ and $\alpha_i \to \frac{1}{2}$ limits so that 
\[ \alpha_i(R) = \frac{1}{2} - R \beta_i\]
for some fixed $\beta_i$.
This tuning of the degenerating boundary condition $\alpha_i(R)$ is carefully chosen; when $n=4$, one can interpret this choice as the choice to hold part of the cohomology data of the real symplectic structure constant as $R$ varies (see \Cref{fig: metric degeneration}).

\bigskip

We prove the following result:
\begin{namedtheorem}[(c.f. Theorem \ref{thm: HK degeneration})]
    Fix generic $\vec\beta\in(0,\infty)^n$. Let $\alpha_i(R)=\frac12-R\beta_i$, 
let $\mathcal{X}(\vec \beta)$ be the $n$-hyperpolygon space, let $\mathcal M_R(\vec\alpha(R))$ be the moduli space of solutions to the $R$-rescaled Hitchin's equations on the $n$-punctured sphere with parabolic weights $\vec{\alpha}(R)$, and let 
\[ \mathcal{T}_R: \mathcal{X}(\vec \beta) \to \mathcal M_R(\vec\alpha(R))\]
 be the natural embedding in \cite{GM11}.

As $R \to 0$,
   the pullback of the family of metrics $\mathcal{T}_R^*(g_{R,\vec\alpha(R)})$ on the Hitchin moduli moduli space $\mathcal M_R(\vec\alpha(R))$ converges pointwise to the metric $2\pi\cdot g_{\mathcal X(\vec\beta)}$ on hyperpolygon space $\mathcal X(\vec\beta)$.
\end{namedtheorem}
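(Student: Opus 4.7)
The plan is to compare the $L^2$ hyperk\"ahler metric on $\mathcal{M}_R(\vec\alpha(R))$ with the hyperpolygon metric by an asymptotic analysis of the rescaled Hitchin equation \eqref{eq:Hitchinintro} along the image of $\mathcal{T}_R$ in the joint limit $R \to 0$, $\alpha_i(R) \to \frac{1}{2}$. The first step is to construct an explicit approximate harmonic metric $h_\app$ on each Higgs bundle in the image. Since the Godinho--Mandini embedding produces Higgs bundles whose Higgs fields have nilpotent residues at the punctures, and since $\alpha_i(R) = \frac{1}{2} - R\beta_i$ is small but nonzero, the local model near each puncture $p_i$ is a parabolic Higgs bundle with almost-equal weights. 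I would expect a closed-form fibered solution of \eqref{eq:Hitchinintro} on a disk around $p_i$, with leading behavior determined by $\beta_i$, to exist and to glue smoothly to a near-trivial Hermitian metric in the bulk. A standard inverse-function argument on the linearization of the Hitchin operator at $h_\app$ should then upgrade this to the true harmonic metric $h_R$ with error $o(1)$ in an appropriate weighted norm.

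Once $h_R$ is controlled, I would identify tangent vectors: a tangent vector $v$ to $\mathcal{X}(\vec\beta)$ at a hyperpolygon $p$ corresponds via $d\mathcal{T}_R$ to an infinitesimal variation $(\dot A_v, \dot \varphi_v)$ of Higgs bundle data on $\hat C$ solving the linearized complex and real moment-map equations with respect to $h_R$. Using $h_\app$, I would write $(\dot A_v, \dot \varphi_v)$ as a model piece supported essentially near the punctures plus an $o(1)$ correction, with the model piece encoding precisely the hyperpolygon tangent data. Plugging this into the $L^2$ pairing
\eq{ g_{R, \vec\alpha(R)}(d\mathcal{T}_R(v), d\mathcal{T}_R(w)) = \int_{\hat C} \Bigl( R^{-1} \IP{\dot A_v, \dot A_w} + R \IP{\dot\varphi_v, \dot\varphi_w}_{h_R} \Bigr)\, \text{dvol} }
and localizing near each puncture, the angular integration would yield the universal $2\pi$ factor, while the radial and algebraic integrals reproduce the flat $\mathbb{H}^n$ pairing descended to the hyperpolygon K\"ahler quotient. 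Passing to the pointwise limit as $R\to 0$ then gives the stated identity.

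The main obstacle is the first step. Because $\alpha_i \to \frac{1}{2}$ lies exactly on the boundary of the stability region for strongly parabolic Higgs bundles, the harmonic metric degenerates in a way that cannot be treated as a small perturbation of a fixed stable Higgs bundle; moreover, the $R^{-1}$ and $R$ factors in \eqref{eq:Hitchinintro} are pulling the equation in opposite directions, so $h_R$ is not controlled by either its $R\to 0$ limit or its $\alpha \to 1/2$ limit alone. The well-tuned scaling $\alpha_i(R) = \frac{1}{2} - R\beta_i$ is precisely what is needed to make the local model at each puncture nondegenerate as $R \to 0$, but identifying the correct local model, computing the exact constants (including the overall $2\pi$), and justifying matched asymptotics uniformly on compact subsets of $\mathcal X(\vec\beta)$ constitute the technical heart of the argument.
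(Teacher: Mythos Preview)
Your proposal is correct and follows essentially the same strategy as the paper: the paper builds the approximate harmonic metric by gluing explicit Kim--Wilkin type local models (with a free constant fixed to $c=|x_i|^{-2}$ so that the real hyperpolygon moment map makes the glued metric extend over $\infty$), corrects via an implicit function theorem in weighted $b$-Sobolev spaces, then does the same for the harmonic tangent representative $\dot\nu_R$, and finally evaluates the $L^2$ norm by an integration by parts that reduces it to a boundary term at each puncture plus a bulk integral, each computed explicitly in the local model to give $2\pi(|\dot x_i|^2+|\dot y_i|^2)$. The only point you may be underestimating is that the ``angular integration yields $2\pi$'' step is less immediate than it sounds: the paper's key local computation (their Proposition~4.11) requires the explicit closed form of both the model metric and its first variation, and the cancellations that produce exactly $|\dot x_i|^2+|\dot y_i|^2$ rely delicately on the unitary hyperpolygon tangent conditions $\dot y_i x_i+y_i\dot x_i=0$ and $x_i^\dagger\dot x_i-\dot y_i y_i^\dagger=0$.
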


We briefly explain the proof.
Hyperpolygon space can be constructed as a hyperk\"ahler manifold via a hyperk\"ahler quotient construction or as a holomorphic symplectic manifold via a holomorphic symplectic quotient construction. In the first, one must choose representatives which solve both real and complex moment map equations---a ``unitary hyperpolygon'' in our terminology---while in the second, one must only choose a representative which solves the complex moment map equation---a ``hyperpolygon'' in our terminology.  The Hitchin moduli space $\mathcal{M}_R(\vec \alpha(R))$ and the Higgs bundle moduli space $\mathcal{M}^{\mathrm{Higgs}}(\vec \alpha(R))$ are similarly related.  Godinho--Mandini \cite{GM11} construct a map $\mathcal{T}: \mathcal{X}(\vec \beta) \to \mathcal{M}^{\mathrm{Higgs}}(\vec \alpha(R))$ preserving the holomorphic symplectic structures coming from the complex moment maps \cite{BFGM15}. 

The map $\mathcal{T}_R$ above factors through $\mathcal{T}$.
Namely, a point in $\mathcal{M}_R(\vec \alpha(R))$ is a Higgs bundle in $\mathcal{M}^{\mathrm{Higgs}}(\vec \alpha(R))$ together with the hermitian metric $h_R$ on the underlying complex bundle that solves $R$-rescaled Hitchin's equations (the real moment map). 
The key idea in our proof of \Cref{thm: HK degeneration} involves a delicate construction of approximate solutions to Hitchin's equations (the real moment map), which uses all the hyperpolygon moment map data to ensure the metric is adapted to the parabolic structure and globally defined.  

This result was independently proved by different methods by Lynn Heller, Sebastian Heller, and Claudio Meneses in \cite{HHM}. Their work builds on \cite{HHT2025}. A similar observation is made in \cite[Appendix A]{BTX}, and we would like to better understand the connection with Coulomb branches and Higgs branches, at the level of hyperk\"ahler (rather than holomorphic symplectic) geometry. 
\bigskip

This more complicated theorem is of a similar type to the simple degeneration of ALF-$A_n$ gravitational to ALE-$A_n$ gravitational instantons. All ALF-$A_n$ and ALE-$A_n$ gravitational instantons can be obtained using the Gibbons-Hawking ansatz, as 
the total space of a principal $U(1)$ bundle over $\R^3$. In the case of ALF-$A_n$, the hyperk\"ahler  multi-Taub-NUT metrics are specified by a harmonic function
    $$V_R(x)=R+\sum_{i=1}^n\frac1{4\pi|x-p_i|}$$
for $R>0$ and distinct points $p_1,\dots,p_{n+1}\in\R^3$, up to translation.  
The hyperk\"ahler metric is 
\begin{equation}
    g_R=V_R \norm{\de \vec{x}}^2 + V_R^{-1} \left(\frac{1}{2 \pi} \Theta\right)^2,
\end{equation}
where $\vec{x} \in \mathbb{R}^3$ and $\Theta$ is a connection on the $U(1)$  bundle whose curvature $F_R$ is related to the potential $V_R$ by $F_R = -2 \pi \star \de V_R$. Note that $\Theta$ doesn't depend on the constant $R$, so the potential determines the entire hyperk\"ahler structure.  As $R\to0$, the volume of the $U(1)$-fibers tends toward infinity and the metric degenerates to an ALE-$A_n$ instanton called the multi-Eguchi-Hanson metric.

\bigskip

\subsection*{Organization}
The paper is organized as follows:

In \Cref{sec:preliminaries}, we introduce the $n$-sided hyperpolygon spaces and the strongly parabolic Hitchin moduli spaces on $\mathbb{CP}^1$ with $n$ punctures, we describe their hyperkähler metrics, and the map from hyperpolygons to strongly parabolic Higgs bundles. 

The $U(1)$-action on each hyperk\"ahler space gives rise to a Morse--Bott function.  In \Cref{sec: Torelli Numbers}, we compute the value of this function at the corresponding fixed points and show that they match.   Then, we restrict to $n=4$ for concreteness (the real dimension four case), and explore the topology and geometry of the two families of moduli spaces.

In \Cref{sec: Local Model}, we introduce local model solutions of the $\SU(2)$-Hitchin equations on the disk. We conduct a detailed local analysis of these model solutions and their first variation. 

In \Cref{sec: The Hyperkahler Metrics}, we use the local models to construct approximate harmonic metrics for Higgs bundles on $\CP^1$ with $n$-punctures.  We then use an implicit function theorem argument perturb our approximate metrics to actual solutions, and finally prove our main theorem.

\subsection*{Notational Conventions} We take hermitian forms $h$ to be conjugate linear in the first slot and $\C$-linear in the second slot.  We identify such forms with their Gram matrices, e.g.\ $h(u,v)=u^\dagger hv$.  If $h$ is a hermitian form on $V$ and $B\in\End(V)$, then the $h$-adjoint is $B^{\dagger_h}=h^{-1}B^\dagger h$, where $\dagger$ (without the $h$) denotes the standard complex conjugation.  For the standard 1-forms $\de\bar z$ and $\de z$ in a coordinate chart on a Riemann surface $C$, the Hodge star is given by $\star\de\bar z=i\de z$ and $\star\de z=-i\de\bar z$, extended $\mathcal C_C^\infty$-antilinearly.

Given a holomorphic structure $\delbar_E$ and a metric $h$ on a vector bundle $E$, we denote the Chern connection by $\nabla(\delbar_E,h)$.

For a matrix $A\in\mathfrak{gl}(n,\C)$ we use $A^\perp$ to denote the orthogonal projection onto $\mathfrak{sl}(n,\C)$.  In other words, $A^\perp:=A-\frac1n\tr(A)$ is the trace-free part of $A$.

\subsection*{Acknowledgements}

The authors would like to thank Nick Addington, Sergey Cherkis, Sze Hong Kwong, Rafe Mazzeo, Claudio Meneses, Nick Proudfoot, Steve Rayan, Laura Schaposnik, Hartmut Weiss, Richard Wentworth, and Graeme Wilkin for useful discussions. AY particularly thanks Nick Proudfoot for his help with hyperpolygons at the start of this project.  LF is partially supported by NSF grant DMS-2005258.  AY is supported by NSF grant RTG-2039316.

\section{Preliminaries}\label[section]{sec:preliminaries}
In this section, we summarize relevant background on hyperpolygon spaces, Hitchin moduli spaces, and their hyperk\"ahler metrics.  Finally, we describe gauge-theoretic embeddings from the hyperpolygon spaces to the Higgs bundle moduli spaces constructed in \cite{GM11}, and provide a corrected statement and proof of their theorem showing that the map is well-defined.  Both moduli spaces have distinguished complex structures $J_1$ and holomorphic symplectic forms $\Omega_{J_1}$, and the family of embeddings preserves these complex structures and preserves the holomorphic symplectic forms up to a factor of $2\pi$.

\subsection{Hyperpolygon Spaces}
Nakajima quiver varieties were introduced by Kronheimer and Nakajima \cite{KN90, Nak94}.  In his foundational paper \cite{Nak94}, Nakajima constructs quiver varieties as hyperkähler reductions associated with a general finite graph (quiver).  The term ``hyperpolygon space'' and its formal construction were introduced by Konno in \cite{Kon02}. These are Nakajima quiver varieties for a particularly simple quiver (see Figure \ref{fig: quiver}).

In this section, we review the construction of hyperpolygon space via finite-dimensional hyperk\"ahler quotient.

\subsubsection{Construction}
Let $Q_n$ be a quiver consisting one central node labelled 2 and $n$ exterior nodes labelled 1, and arrows from each exterior node to the central node.  Let $\tilde Q_n$ be the associated Nakajima quiver (see \Cref{fig: quiver}), and let $X=\Rep\tilde Q_n$ be the space of representations.  Under the identification
    $$X\cong T^*\left(\bigoplus_{i=1}^n\Hom(\C,\C^2)\right)\cong\Hom(\C^n,\C^2)\oplus\Hom(\C^2,\C^n),$$
 $(\bx,\by) \in \Hom(\C^n,\C^2)\oplus\Hom(\C^2,\C^n),$ denotes an element of $X$; we denote the columns of $\bx$ by $x_1,\dots,x_n$ and the rows of $\by$ by $y_1,\dots,y_n$.  The group $G_\C=\left(\SL(2,\C)\times(\C^\times)^n\right)/(\Z/2)$ acts by change of coordinates on the nodes, i.e.\ if $g=(A,t)\in G_\C$ and $(\bx,\by)\in X$, then $g\cdot(\bx,\by)=(A\bx t^{-1},t\by A^{-1})$ (here we identify $t\in(\C^\times)^n$ with a diagonal matrix in $\GL(n,\C)$).

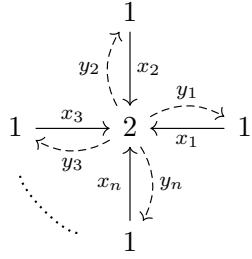
\begin{figure}[t]
$$
\begin{tikzcd}[row sep=1cm, column sep=1cm]
                &1\arrow{d}{x_2}    &\\
    1\arrow{r}{x_3}  &2\arrow["y_1", r, dashed, bend left]\arrow["y_2", u, dashed, bend left]\arrow["y_3", l, dashed, bend left]\arrow["y_n", d, dashed, bend left]
                        &1\arrow{l}{x_1} \\
                &1\arrow{u}{x_n}
                    \arrow[lu, no head, thick, dotted, shorten <=3ex, shorten >=3ex, bend left=45, xshift=.4ex, yshift=.8ex]
\end{tikzcd}
$$
\caption{The Nakajima quiver $\tilde Q_n$}
\label{fig: quiver}
\end{figure}

The space $X\cong T^*\C^{2n}$ has a standard hyperkähler structure $(g,J_1,\omega_{J_1},\Omega_{J_1})$ where $\omega_{J_1}$ and $\Omega_{J_1}$ are respectively real and holomorphic symplectic forms.  
The action $G_\C\acts X$ preserves $\Omega_{J_1}$ and admits an algebraic moment map
    $$\mu_\C=(\mu_{\SL(2,\C)},\mu_{(\C^\times)^n})\from X\to\mathfrak g_\C^*=\mathfrak{sl}(2,\C)^*\times(\C^n)^*$$
given by
\begin{equation}\label{eqn: HP complex moment maps}
    \mu_{\SL(2,\C)}(\bx,\by)=\sum_{i=1}^n (x_iy_i)^\perp,\qquad
        \mu_{(\C^\times)^n}(\bx,\by)=(y_1x_1,\dots,y_nx_n).
\end{equation}
Here, we write $A^\perp$ to denote the trace-free part $A-\frac12\tr(A)\text{Id}$.  
Similarly, the action $G\acts X$ preserves $\omega_{J_1}$ and admits the real moment map
    $$\mu_\R=(\mu_{\SU(2)},\mu_{U(1)^n})\from X\to\mathfrak g^*=\mathfrak{su}(2)^*\times(\mathfrak u(1)^n)^*$$
given by
\begin{equation}\label{eqn: HP real moment map}
    \mu_{\SU(2)}(\bx,\by)=\frac i2\sum_i (x_ix_i^\dagger)^\perp-(y_i^\dagger y_i)^\perp,\qquad
        \mu_{U(1)^n}=\frac i2(|x_i|^2-|y_i|^2)_{i=1}^n.
\end{equation}
We identify these Lie algebras with their duals using the trace pairing, and we also identify $(\mathfrak u(1)^n)^*=(i\R)^n$ with $\R^n$.

The moment maps $\mu_\C,\mu_\R$ are respectively $G_\C$- and $G$-equivariant.  In order to perform hyperkähler reduction, we must select GIT parameters in the centers $Z(\mathfrak g_\C^*)$ and $Z(\mathfrak g^*)$ so that their preimages under the moment maps are preserved by the group actions.  For the complex parameter, we always choose $0\in\mathfrak g_\C^*$. 
\begin{definition}
    A representation $(\mathbf x,\mathbf y)$ of $\tilde Q_n$ is a \textit{hyperpolygon} if $\mu_\C(\mathbf x,\mathbf y)=0$. 
\end{definition}
The real parameter is an element of $Z(\mathfrak g^*)=Z(\mathfrak{su}(2)^*\oplus(i\R)^n)\cong0\oplus(i\R)^n$ which we identify with $\R^n$.
\begin{definition}
    Given a choice of $\vec \beta \in \R^n$, a representation $(\mathbf x,\mathbf y)$ of $\tilde Q_n$ is a \textit{unitary hyperpolygon} if $\mu_\C(\mathbf x,\mathbf y)=0$ and $\mu_{\R}(\mathbf x, \mathbf y)=\vec \beta$.
\end{definition}

\begin{definition}\label{def:straight}
Given a hyperpolygon $(\bx,\by)$, we say that a subset $I\subset\{1,\dots,n\}$ is \textit{straight}\cite{Kon02} if the vectors $\{x_i\}_{i\in I}$ are pairwise proportional.  For $I\subseteq[n]:=\{1,\dots,n\}$, let
\begin{equation}\label{eq:W}
    W_I(\vec\beta)=\sum_{i\in I}\beta_i-\sum_{i\notin I}\beta_i.
\end{equation}
\end{definition}

\begin{definition}\label{def: stable hyperpolygon}
A hyperpolygon $(\mathbf x,\mathbf y)$ is $\vec\beta$-\emph{stable} \cite{Kon02} if and only if :
\begin{itemize}
    \item $x_i\ne0$ for all $i$, and 
    \item there is no straight subset $I$ with $y_i=0$ for all $i\notin I$ and $W_I(\vec \beta)>0$.
\end{itemize}
\end{definition}

\begin{definition}
The \textit{hyperpolygon space} $\mathcal X(\vec\beta)$ is the moduli space \[X\twoslash_{(0,\vec\beta)}G_\C=\mu^{-1}_{\C}(0)^{\vec\beta\text{-st}}/G_\C\] of $\vec\beta$-stable hyperpolygons up to the action of $G_\C$. 
\end{definition}

From its construction via a finite-dimensional holomorphic symplectic quotient, $\mathcal X(\vec\beta)$ inherits a holomorphic symplectic structure $(J_1^{HP}, \Omega_{J_1}^{HP})$.

As a consequence of the Kempf--Ness theorem, a hyperpolygon $(\bx,\by)$ is $\vec\beta$-stable if and only if there exists some $g\in G_\C$ such that $g\cdot(\bx,\by)\in\mu_{\R}^{-1}(\vec\beta)$.  Therefore, $\mathcal X(\vec\beta)$ can also be constructed as the finite-dimensional hyperkähler quotient \[\mathcal X(\vec \beta)=X\fourslash_{(0, \beta)} G= (\mu_{\C}^{-1}(0)\cap\mu_{\R}^{-1}(\vec\beta))/G.\]
Given $\widetilde{Q}_n$, 
the dimension of hyperpolygon space from is $\dim_{\R}\mathcal X(\vec \beta)= 4(n-3)$. 
\bigskip

\begin{figure}[h]
\centering
\resizebox{4cm}{!}{
\begin{tikzpicture}
\tikzstyle{every node}=[font=\LARGE]
\draw [ fill=black , line width=1pt ] (16,2) circle (.2cm);
\draw [line width=4pt] (16,2) -- (10,5);
    \draw [decorate, decoration = {brace,amplitude=12pt,raise=12pt}] (16,2) -- (10,5)
        node[midway,xshift=-6ex,yshift=-4em,font=\Huge]{\scalebox{1.2}{$\sqrt2\beta_1$}};
\draw [line width=4pt, ->, >=Stealth, dashed] (10,5) -- (5,7.5);
    \node[font=\Huge, xshift=0em, yshift=3.5em] at (10.5,4.75) {\scalebox{1.2}{$v_1$}};
\draw [ color=green!40!black, line width=4pt, ->, >=Stealth, dashed] (0,10) -- (5,7.5);
    \node[font=\Huge, xshift=2em, yshift=2em, color=green!40!black] at (2.5,8.75) {\scalebox{1.2}{$w_1$}};

\draw [ fill=black , line width=1pt ] (0,10) circle (.2cm);
\draw [line width=4pt] (0,10) -- (3,16);
    \draw [decorate, decoration = {brace,amplitude=12pt,raise=12pt}] (0,10) -- (3,16)
        node[midway,xshift=-15ex,yshift=2.5em,font=\Huge]{\scalebox{1.2}{$\sqrt2\beta_2$}};
\draw [line width=4pt, ->, >=Stealth, dashed] (3,16) -- (5,20);
    \node[font=\Huge, xshift=3.5em, yshift=0em] at (2.5,15) {\scalebox{1.2}{$v_2$}};
\draw [ color=green!40!black, line width=4pt, ->, >=Stealth, dashed] (7,24) -- (5,20);
    \node[font=\Huge, xshift=2em, yshift=-3em, color=green!40!black] at (6,22) {\scalebox{1.2}{$w_2$}};

\draw [ fill=black , line width=1pt ] (7,24) circle (.2cm);
\draw [line width=4pt] (7,24) -- (10.5,24);
    \draw [decorate, decoration = {brace,amplitude=12pt,raise=12pt}] (7,24) -- (10.5,24)
        node[midway,xshift=0ex,yshift=5em,font=\Huge]{\scalebox{1.2}{$\sqrt2\beta_3$}};
\draw [line width=4pt, ->, >=Stealth, dashed] (10.5,24) -- (12.5,24);
    \node[font=\Huge, xshift=3em, yshift=-2.5em] at (9.25,24) {\scalebox{1.2}{$v_3$}};
\draw [ color=green!40!black, line width=4pt, ->, >=Stealth, dashed] (14.5,24) -- (12.5,24);
    \node[font=\Huge, xshift=0em, yshift=-2.5em, color=green!40!black] at (13.5,24) {\scalebox{1.2}{$w_3$}};

\draw [ fill=black , line width=1pt ] (14.5,24) circle (.2cm);
\draw [line width=4pt] (14.5,24) -- (15, 24-22/3);
    \draw [decorate, decoration = {brace,amplitude=12pt,raise=12pt}] (14.5,24) -- (15, 24-22/3)
        node[midway,xshift=14ex,yshift=.2em,font=\Huge]{\scalebox{1.2}{$\sqrt2\beta_4$}};
\draw [line width=4pt, ->, >=Stealth, dashed] (15, 24-22/3) -- (15.5, 24-2*22/3);
    \node[font=\Huge, xshift=-3em, yshift=-1em] at (15, 24-22/3) {\scalebox{1.2}{$v_4$}};
\draw [ color=green!40!black, line width=4pt, ->, >=Stealth, dashed] (16,2) -- (15.5, 24-2*22/3);
    \node[font=\Huge, xshift=-3em, yshift=4em, color=green!40!black] at (15.7, 6) {\scalebox{1.2}{$w_4$}};

\draw [line width=4pt, loosely dotted] (16,2) -- (7,24);x
\draw [line width=4pt, blue] (11.5,15.5) arc (65:160:1.5);
\end{tikzpicture}
}
\caption{A hyperpolygon represented as vectors in $\mathfrak{su}(2)\cong\R^3$}
\label{fig: hyperpolygon}
\end{figure}
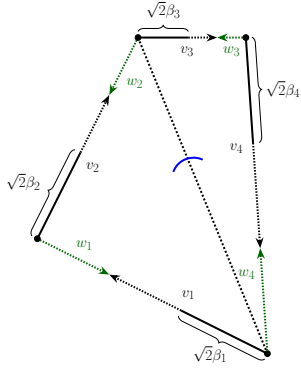

\begin{remark}[Geometric interpretation of unitary hyperpolygons]
Let us briefly interpret the geometric meaning of a unitary hyperpolygon $(\bx,\by)\in\mu_\C^{-1}(0)\cap\mu_\R^{-1}(\vec\beta)$, building on \cite[Remark 2.6]{HP03}. Consider the $\mathfrak{su}(2)$ matrices $v_i=(x_ix_i^\dagger)^\perp$ and $w_i=(y_i^\dagger y_i)^\perp$.  We calculate the norms
\begin{align}
    |v_i|^2=\tr\left(\left(x_ix_i^\dagger-\frac12\tr(x_ix_i^\dagger)\text{Id}\right)^\dagger\left(x_ix_i^\dagger-\frac12\tr(x_ix_i^\dagger)\text{Id}\right)\right)=\frac{|x_i|^4}2
        \label{eqn: v norm}\\
    |w_i|^2=\tr\left(\left(y_i^\dagger y_i-\frac12\tr(y_i^\dagger y_i)\text{Id}\right)^\dagger\left(y_i^\dagger y_i-\frac12\tr(y_i^\dagger y_i)\text{Id}\right)\right)=\frac{|y_i|^4}2
        \label{eqn: w norm}
\end{align}
so $|v_i|=|x_i|^2/\sqrt2$ and $|w_i|=|y_i|^2/\sqrt2$.  The moment maps \eqref{eqn: HP real moment map} thus constrain $\mu_{\SU(2)}(\bx,\by)=\sum v_i-w_i$ and $|v_i|-|w_i|=\sqrt2\beta_i$.

The assignment $x_i\mapsto v_i$ is a 2-to-1 map $\C^2/U(1)\to\mathfrak{su}(2)\cong\R^3$ ramified over 0, and it intertwines the standard left action $\SU(2)\acts\C^2$ with the adjoint action $\SU(2)\acts\mathfrak{su}(2)$, which in turn coincides with the action $\SO(3)\acts\R^3$.  The analogous statement holds for $(\C^2)^*/U(1)\to\mathfrak{su}(2)$ given by $y_i\mapsto w_i$.  One can check that the complex moment map condition $0=\mu_{\C^\times,i}=y_ix_i$ implies $v_i$ and $w_i$ are proportional as vectors in $\R^3$ and point in opposite directions\footnote{In \cite[Figure 5]{HP03}, the hyperpolygon image doesn't show $v_i, w_i$ as proportional.}, so $\sqrt{2\beta_i}=|v_i|-|w_i|=|v_i+w_i|$.  Thus, given a hyperpolygon $(\bx,\by)\in\mu_\C^{-1}(0)\cap\mu_\R^{-1}(\vec\beta)$ we can form an $n$-sided polygon in $\R^3$ with each side having the form $v_i-w_i$ and satisfying the relation $|v_i+w_i|=\sqrt2\beta_i$.  The action of the $\U(1)^n$ factor of $G$ has no effect on our polygon, and the action of the $\SU(2)$ factor corresponds to transforming the polygon by $\SO(3)\acts\R^3$.

Lastly, note that map from unitary hyperpolygons (up to action of $G$) to polygons in $\R^3$ satisfying these constraints (up to the action of $\SO(3)$) is not surjective.  The geometric construction of the polygon above never uses the condition $0=\mu_{\SL(2,\C)}(\bx,\by)=\sum x_iy_i$.  One example of these ``hidden'' constraints is that we can never have exactly one nonzero $w_i$, since that means exactly one $y_i$ is nonzero, and since all $x_i\ne0$ the equation $\sum x_iy_i=0$ is impossible.  Similarly, it is also impossible to have exactly three pairwise nonproportional $w_i$ with the rest all being 0, though this is harder to see (see \Cref{lemma: nilpotent cone components}).  As a consequence, a 3-sided hyperpolygon space with $n=3$ exterior nodes is either empty or a single point of the form $(\bx,\mathbf0)$, depending on the chamber of $\vec\beta$.
\end{remark}

\subsubsection{Tangent Space and Hyperk\"ahler Metric}
In this section, we write an expression for the hyperk\"ahler metric on $\mathcal{X}(\vec \beta)$ in terms of $(\mathbf{x}, \mathbf{y})$ and 
$(\dot\bx, \dot\by) \in T_{(\mathbf{x}, \mathbf{y})}\mathcal{X}(\vec \beta)$.  We begin with some general theory.

\bigskip

Suppose a Lie group (possibly a Banach Lie group) $G$ acts properly on a Hilbert space $(V,h)$, say $h=g+i\omega$.  Suppose further that the action of $G$ admits a moment map $\mu$ with respect to $\omega$, and consider the quotient $X=V\twoslash_\alpha G=\mu^{-1}(\alpha)/G$ for $\alpha\in Z(\mathfrak g)$.
\begin{proposition}\label[proposition]{prop: symplectic quotient tangent space}
    If $X$ is smooth at a point $[p]$ with $p\in\mu^{-1}(\alpha)$, then the tangent space of $X$ at $[p]$ is isometric to the orthogonal complement in $T_pV$ of the subspace generated by the infinitesimal action of $\mathfrak g_\C$ at $p$.
\end{proposition}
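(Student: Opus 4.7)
The plan is to convert both steps of the symplectic reduction into $g$-orthogonality conditions inside $T_pV$. Because $X$ is smooth at $[p]$, I have the standard identification
\[
T_{[p]}X \;\cong\; T_p(\mu^{-1}(\alpha))\,\big/\,T_p(G\cdot p) \;=\; \ker(d\mu_p)\,\big/\,(\mathfrak g \cdot p),
\]
where $\mathfrak g \cdot p \subset T_pV$ denotes the image of the infinitesimal $G$-action at $p$.

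I would rewrite both factors as $g$-orthogonal complements using the defining moment-map identity $\langle d\mu_p(v),\xi\rangle = \omega(\xi\cdot p,v)$ together with the compatibility $\omega(u,v) = g(Ju,v)$, which follows from $h = g+i\omega$ under the convention that $h$ is conjugate-linear in the first slot (a short computation: $g(Ju,v)=\Re h(iu,v)=\Re(-i h(u,v))=\Im h(u,v)=\omega(u,v)$). This yields $\ker(d\mu_p) = (J(\mathfrak g\cdot p))^{\perp_g}$. Since $G$ preserves $h$ and hence $g$, the orbit tangent space $\mathfrak g\cdot p$ already lies in $\ker(d\mu_p)$, and taking the quotient by $\mathfrak g\cdot p$ amounts to intersecting with $(\mathfrak g\cdot p)^{\perp_g}$. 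Combining,
\[
T_{[p]}X \;\cong\; \bigl(\mathfrak g \cdot p + J(\mathfrak g\cdot p)\bigr)^{\perp_g} \;=\; \bigl(\mathfrak g_\C \cdot p\bigr)^{\perp_g},
\]
with the induced metric on the quotient equal to the restriction of $g$ (standard Hilbert-space fact: the $g$-orthogonal complement of a closed subspace is isometric to the Hilbert quotient by it). This gives the claimed isometric identification.

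The main subtlety is justifying the last equality $\mathfrak g \cdot p + J(\mathfrak g\cdot p) = \mathfrak g_\C \cdot p$: the sum must be direct and must realize the full infinitesimal $G_\C$-orbit at $p$. This is automatic when $p$ has trivial (or, in an orbifold sense, discrete) $G$-stabilizer, and smoothness of $X$ at $[p]$ is precisely the hypothesis that guarantees this. In the Banach setting one additionally needs properness of the $G$-action together with a symplectic slice theorem to present $X$ as a local manifold quotient near $[p]$; both are standard under the stated hypotheses, and no new analytic input is needed beyond properness.
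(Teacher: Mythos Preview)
Your proof is correct and follows essentially the same route as the paper: both identify $\ker d\mu_p = (i\mathfrak g\cdot p)^{\perp_g}$ via the moment-map/K\"ahler identity, then intersect with $(\mathfrak g\cdot p)^{\perp_g}$ to obtain $(\mathfrak g_\C\cdot p)^{\perp_g}$. Your write-up is in fact slightly more explicit than the paper's about the identity $\omega(u,v)=g(Ju,v)$; one minor remark is that the equality $\mathfrak g\cdot p + J(\mathfrak g\cdot p) = \mathfrak g_\C\cdot p$ holds automatically from holomorphicity of the $G_\C$-action and does not require discreteness of the stabilizer (directness of the sum is irrelevant for taking orthogonal complements).
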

\begin{proof}
    For simplicity, assume the stabilizer of $p$ in $G_\C$ is discrete (else, we can replace $\mathfrak g_\C$ with $\mathfrak g_\C/\mathfrak g_\C^p$ where $\mathfrak g_\C^p$ denotes the Lie algebra of the stabilizer of $p$).  Let $\rho\from G\times V\to V$ be the action map.  The construction of the metric on $T_{[p]}X$ is defined through the exact sequence
    \begin{equation}\label{eqn: tangent space exact sequence}
        0\to\de\rho\big|_{(1,p)}(\mathfrak g)\to T_p\mu^{-1}(\alpha)\to T_{[p]}X\to0
    \end{equation}
    by identifying $T_{[p]}X$ with the orthogonal complement in $T_p\mu^{-1}(\alpha)$ of $\de\rho\big|_{(1,p)}(\mathfrak g)$.  The vectors generated by the infinitesimal action of $i\mathfrak g$ on $p$ span the orthogonal complement of $\ker\de\mu|_p$ (for these are both Lagrangian subspaces for $\omega$ by definition of moment maps), so a vector $v\in T_pV$ is orthogonal to the action of the complexified Lie algebra $\mathfrak g_\C$ if and only if it solves $\de\mu(v)=0$.  Hence, \eqref{eqn: tangent space exact sequence} can be extended to the split exact sequence of Hilbert spaces
    \begin{equation}\label{eqn: tangent space exact sequence complexified}
        0\to\de\rho\big|_{(1,p)}(\mathfrak g_\C)\to T_p V\to T_{[p]}X\to0
    \end{equation}
    where $T_{[p]}X$ is isometrically identified with $\de\rho\big|_{(1,p)}(\mathfrak g_\C)^\perp$.  This proves the result.
\end{proof}

Now suppose $G$ acts properly on a complex vector space $W$ with a hyperkähler structure $(g,J_1,\omega_{J_1},\Omega_{J_1})$ such that $G$ preserves $\omega_{J_1}$ and $G_\C$ preserves $\Omega_{J_1}$, and suppose these actions admit real and complex moment maps $\mu_\R$, $\mu_\C$.  Let $\alpha_\C\in Z(\mathfrak g_\C), \alpha_\R\in Z(\mathfrak g_\R)$, and consider the hyperkähler quotient
    $$Y=W\hspace{-.5em}\fourslash_{(\alpha_\C,\alpha_\R)}\hspace{-.5em}G=\left(\mu_\C^{-1}(\alpha_\C)\cap\mu_\R^{-1}(\alpha_\R)\right)/G.$$
\Cref{prop: symplectic quotient tangent space} gives the following:
\begin{corollary}\label[corollary]{cor: hyperkahler quotient tangent space}
    If $Y$ is smooth at $[p]$ for $p\in\mu_\C^{-1}(\alpha_\C)\cap\mu_\R^{-1}(\alpha_\R)$, then the tangent space of $Y$ at $[p]$ is isometric to the orthogonal complement in $\ker\de\mu_\C\big|_p$ of the subspace generated by $\mathfrak g_\C$ at $p$.  In other words, we have an isometric splitting of the exact sequence:
    \begin{equation}\label{eqn: HK tangent space exact sequence}
    \begin{tikzcd}[column sep=1.5em]
        0\arrow{r}  &\de\rho\big|_p(\mathfrak g_\C)\arrow{r}    &\ker\de\mu_\C\big|_p\arrow{r}  &T_{[p]}Y\arrow{r}\arrow[l, bend right=33, dashed] &0
    \end{tikzcd}
    \end{equation}
\end{corollary}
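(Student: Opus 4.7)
The plan is to reduce \Cref{cor: hyperkahler quotient tangent space} to the Kähler case established in \Cref{prop: symplectic quotient tangent space} by realizing the hyperkähler quotient as a Kähler quotient in stages. First I would show that $V:=\mu_\C^{-1}(\alpha_\C)\subset W$ is a $J_1$-complex Hilbert submanifold near $p$, and hence inherits a Kähler structure $(g,J_1,\omega_{J_1})$ from $W$. This is because $\mu_\C$ is the (holomorphic) moment map for the holomorphic symplectic form $\Omega_{J_1}=\omega_{J_2}+i\omega_{J_3}$, so $\ker\de\mu_\C|_p$ is $J_1$-invariant; smoothness of $Y$ at $[p]$ forces $\de\mu_\C|_p$ to be surjective onto a complement of the stabilizer directions, which suffices to apply the implicit function theorem near $p$.

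Second, $G_\C$-equivariance of $\mu_\C$ and the condition $\alpha_\C\in Z(\mathfrak g_\C^*)$ imply that $V$ is preserved by the $G_\C$-action, so the infinitesimal action $\de\rho|_p(\mathfrak g_\C)$ lies inside $T_pV=\ker\de\mu_\C|_p$. The restricted $G$-action on $V$ preserves the Kähler form $\omega_{J_1}|_V$, and $\mu_\R|_V$ is a moment map for this $G$-action. Therefore
\[
    Y \;=\; \left(\mu_\C^{-1}(\alpha_\C)\cap\mu_\R^{-1}(\alpha_\R)\right)/G \;=\; V\twoslash_{\alpha_\R}G,
\]
expresses $Y$ as an ordinary Kähler quotient of the Kähler Hilbert manifold $V$ at level $\alpha_\R$.

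Finally, I would apply \Cref{prop: symplectic quotient tangent space} to this Kähler quotient at the point $p$. The proposition identifies $T_{[p]}Y$ isometrically with the orthogonal complement in $T_pV=\ker\de\mu_\C|_p$ of the subspace generated by the infinitesimal $\mathfrak g_\C$-action at $p$. This is precisely the isometric splitting of \eqref{eqn: HK tangent space exact sequence} asserted in the corollary.

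The only real obstacle is bookkeeping around possibly nondiscrete stabilizers and the smoothness of $V$ at $p$; both are handled exactly as in the proof of the Proposition, by replacing $\mathfrak g_\C$ with $\mathfrak g_\C/\mathfrak g_\C^p$ and using smoothness of $Y$ at $[p]$ to guarantee that $V$ is a genuine Kähler submanifold near $p$ so that the Proposition applies verbatim.
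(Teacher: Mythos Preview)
Your proposal is correct and follows exactly the approach the paper intends: the paper presents the corollary as an immediate consequence of \Cref{prop: symplectic quotient tangent space} with no further argument, and your reduction---viewing $Y$ as the K\"ahler quotient $V\twoslash_{\alpha_\R}G$ of the $J_1$-K\"ahler submanifold $V=\mu_\C^{-1}(\alpha_\C)$---is precisely the reduction that makes the proposition applicable. Your additional bookkeeping (that $\ker\de\mu_\C|_p$ is $J_1$-invariant, that $\de\rho|_p(\mathfrak g_\C)\subset\ker\de\mu_\C|_p$ by equivariance, and the stabilizer caveat) merely fills in the details the paper leaves implicit.
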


\bigskip

Returning to the case of hyperpolygon space, we have $G=\left(\SU(2)\times(\U(1))^n\right)$ and $G_\C=\left(\SL(2,\C)\times(\C^*)^n\right)$ acting on $X=\Rep\tilde Q$, and the moment maps $\mu_\C$ and $\mu_\R$ decompose into
    $$\mu_{\SL(2,\C)}(\mathbf x,\mathbf y)=\sum_jx_jy_j,\qquad \mu_{\C^\times,j}(\mathbf x,\mathbf y)=y_jx_j,$$
    $$\mu_{\SU(2)}(\mathbf x,\mathbf y)=\frac i2\sum_j(x_jx_j^\dagger-y_j^\dagger y_j)^\perp,\qquad \mu_{\U(1),j}(\mathbf x,\mathbf y)=\frac i2(x_j^\dagger x_j-y_jy_j^\dagger).$$
Suppose $(\mathbf x,\mathbf y)\in\mu_\C^{-1}(0)\cap\mu_\R^{-1}(\vec\beta)$.  The tangent space to $[({\mathbf x},{\mathbf y})]\in\mathcal X(\vec\beta)$ can be identified with the subspace of $\ker\de\mu_\C\big|_{(\bx,\by)}$ consisting of vectors $(\dot\bx,\dot\by)$ which are orthogonal to the vector fields on $\Rep(\tilde Q)$ generated by $\mathfrak g_\C=\mathfrak{sl}(2,\C)\oplus\C^n$.  An element $A\in\mathfrak{sl}(2,\C)$ generates the vector $(A\mathbf x,-\mathbf yA)\in T_{({\mathbf x},{\mathbf y})}\Rep(\tilde Q)$, so the orthogonality condition on a $(\dot {\mathbf x},\dot {\mathbf y})$ is
\begin{align*}
    0=\langle(A\mathbf x,-\mathbf yA),(\dot {\mathbf x},\dot {\mathbf y})\rangle
        &=\tr((A\mathbf x)^\dagger\dot {\mathbf x})-\tr((\mathbf yA)^\dagger\dot {\mathbf y})\\
        &=\tr(A^\dagger(\dot{\mathbf x}\mathbf x^\dagger-{\mathbf y}^\dagger\dot {\mathbf y}))\\
        &=\langle A,\dot{\mathbf x}\mathbf x^\dagger-{\mathbf y}^\dagger\dot {\mathbf y}\rangle_{\mathfrak{sl}(2,\C)}
\end{align*}
where $\langle-,-\rangle_{\mathfrak{sl}(2,\C)}$ is the trace pairing.  Similarly, an element $\lambda_j$ of the $j$th factor of $\C^\times$ in $G$ generates the vector $(-x_j\lambda_j,\lambda_j y_j)$, so the orthogonality condition requires
    $$0=\langle(-x_j\lambda_j,\lambda_j y_j),(\dot x_j,\dot y_j)\rangle=-\tr((x_j\lambda_j)^\dagger\dot x_j)+\tr((\lambda_j y_j)^\dagger\dot y_j)=\bar\lambda_j(-x_j^\dagger\dot x_j+\dot y_j y_j^\dagger).$$
These orthogonality equations must hold for all $A\in\mathfrak{sl}(2,\C)$ and $\lambda_j$, so we get:
\begin{proposition}\label[proposition]{prop: hyperpolygon space tangent space}
    The equations
    \begin{equation}\label{eqn: linearized moment map}
        0=\de\mu_{\SL(2,\C)}(\dot {\mathbf x},\dot {\mathbf y})=\frac i2\sum(\dot x_jy_j+x_j\dot y_j),\qquad 0=\de\mu_{\C^*,j}(\dot {\mathbf x},\dot {\mathbf y})=\frac i2(\dot y_jx_j+y_j\dot x_j)
    \end{equation}
    \begin{equation}\label{eqn: gauge orbit orthogonality}
        0=\sum(\dot x_j x_j^\dagger-y_j^\dagger\dot y_j)^\perp\qquad 0=x_j^\dagger\dot x_j-\dot y_j y_j^\dagger
    \end{equation}
    cut out an isometric copy of $T_{({\mathbf x},{\mathbf y})}\mathcal X(\vec\beta)$ in $T_{(\bx,\by)}X$.
\end{proposition}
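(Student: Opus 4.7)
The plan is to deduce this as a direct consequence of \Cref{cor: hyperkahler quotient tangent space}, applied to the hyperkähler quotient $\mathcal X(\vec\beta)=X\fourslash_{(0,\vec\beta)}G$. Since $(\bx,\by)$ is $\vec\beta$-stable, the $G_\C$-stabilizer of $(\bx,\by)$ is trivial (up to a finite kernel) and $\mathcal X(\vec\beta)$ is smooth at $[(\bx,\by)]$, so the corollary applies and identifies $T_{[(\bx,\by)]}\mathcal X(\vec\beta)$ isometrically with the orthogonal complement, inside $\ker\de\mu_\C\big|_{(\bx,\by)}\subset T_{(\bx,\by)}X$, of the infinitesimal $G_\C$-orbit $\de\rho\big|_{(\bx,\by)}(\mathfrak g_\C)$. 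Thus the proof reduces to two explicit computations: identifying these two linear conditions in coordinates.

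For the kernel of the linearized complex moment map, I would differentiate the bilinear expressions of $\mu_{\SL(2,\C)}$ and $\mu_{\C^\times,j}$ from \eqref{eqn: HP complex moment maps} using only the product rule, giving $\de\mu_{\SL(2,\C)}\big|_{(\bx,\by)}(\dot\bx,\dot\by) = \sum_j(\dot x_j y_j + x_j \dot y_j)^\perp$ and $\de\mu_{\C^\times,j}\big|_{(\bx,\by)}(\dot\bx,\dot\by) = \dot y_j x_j + y_j \dot x_j$. Setting these to zero yields \eqref{eqn: linearized moment map} (the scalar factor $\tfrac{i}{2}$ in the stated form is irrelevant to the kernel).

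For the orthogonality conditions, the computation is essentially carried out in the paragraph preceding the statement, but I would record it more systematically. An element $A\in\mathfrak{sl}(2,\C)$ generates the tangent vector $(A\bx,-\by A)$ on $\Rep\tilde Q_n$, where the sign on the $\by$-side reflects that $G_\C$ acts on $\Hom(\C^2,\C^n)$ from the right. Pairing with $(\dot\bx,\dot\by)$ under the standard Hermitian form on $X$ and using the trace pairing on $\mathfrak{sl}(2,\C)$ gives
\[
\IP{(A\bx,-\by A),(\dot\bx,\dot\by)} = \langle A,\,\dot\bx\bx^\dagger-\by^\dagger\dot\by\rangle_{\mathfrak{sl}(2,\C)}.
\]
Requiring this to vanish for every traceless $A$ forces the trace-free part $(\dot\bx\bx^\dagger-\by^\dagger\dot\by)^\perp$ to vanish, which is the first equation of \eqref{eqn: gauge orbit orthogonality}. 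An analogous computation, using that $\lambda_j\in\C$ in the $j$th $\C^\times$-factor generates $(-x_j\lambda_j,\lambda_j y_j)$, yields $x_j^\dagger\dot x_j - \dot y_j y_j^\dagger = 0$, which is the second equation.

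No step is a genuine obstacle; the content is a bookkeeping exercise assembling \Cref{cor: hyperkahler quotient tangent space} with the bilinear forms of $\mu_\C$ and $\de\rho$. The only things to keep track of are the $(\cdot)^\perp$ trace-free projection and the left/right sign conventions in the infinitesimal action. The stability hypothesis enters only to ensure that the splitting \eqref{eqn: HK tangent space exact sequence} is available at $(\bx,\by)$, so that the identification \emph{is} an isometry onto $T_{[(\bx,\by)]}\mathcal X(\vec\beta)$ and not merely an inclusion.
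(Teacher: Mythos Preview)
Your proposal is correct and follows essentially the same approach as the paper: the paper's argument is the paragraph immediately preceding the proposition, which likewise applies \Cref{cor: hyperkahler quotient tangent space} and then computes the infinitesimal $\mathfrak{sl}(2,\C)$- and $\C^\times$-actions to derive the orthogonality conditions \eqref{eqn: gauge orbit orthogonality}. The only cosmetic difference is that you also spell out the product-rule computation of $\de\mu_\C$ leading to \eqref{eqn: linearized moment map}, which the paper leaves implicit.
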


\begin{definition}
We shall call solutions $(\dot\bx,\dot\by)$ to \eqref{eqn: linearized moment map}  and \eqref{eqn: gauge orbit orthogonality} \emph{unitary lifts} of tangent vectors in $\mathcal X(\vec\beta)$, or \emph{unitary deformations} of $(\bx,\by)$.
\end{definition}
In summary, to evaluate the metric on $\mathcal X(\vec\beta)$ on a pair of tangent vectors $v_1,v_2\in T_p(\mathcal X(\vec\beta))$, one must choose a unitary hyperpolygon $(\bx,\by)$ representing $p$ and unitary lifts $(\dot\bx_1,\dot\by_1),(\dot\bx_2,\dot\by_2)$ of $v_1,v_2$, and pair these lifts using the natural hyperkähler metric on $X=\Rep(\tilde Q)$.  In particular, we have the following:
\begin{proposition}\label[proposition]{prop: metric on hyperpolygon space}
    The norm of a unitary deformation $(\dot\bx,\dot\by)\in T_{(\bx,\by)}\mathcal X(\vec\beta)$ is
        $$|(\dot\bx,\dot\by)|_{g_\text{HP}}^2=\sum_i|\dot x_i|^2+|\dot y_i|^2.$$
\end{proposition}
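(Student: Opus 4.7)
The plan is to appeal directly to the hyperk\"ahler quotient construction. By \Cref{cor: hyperkahler quotient tangent space}, the tangent space $T_{[(\bx,\by)]}\mathcal X(\vec\beta)$ is \emph{isometrically} identified with the subspace of $\ker\de\mu_\C|_{(\bx,\by)} \subset T_{(\bx,\by)}X$ orthogonal (in the flat hyperk\"ahler metric on $X$) to the infinitesimal $\mathfrak g_\C$-action at $(\bx,\by)$. \Cref{prop: hyperpolygon space tangent space} asserts that this is precisely the subspace of unitary deformations, cut out by \eqref{eqn: linearized moment map} and \eqref{eqn: gauge orbit orthogonality}. So the content of the proposition is simply that the ambient metric on $X$, restricted to unitary deformations, takes the displayed form.

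First I would record that the natural hyperk\"ahler structure on $X \cong \Hom(\C^n,\C^2)\oplus \Hom(\C^2,\C^n) \cong T^*\C^{2n}$ is the flat one coming from viewing each $x_i\in\C^2$ and each $y_i\in(\C^2)^*$ as coordinates on $\C^{2n}\oplus(\C^{2n})^*$ equipped with its standard Hermitian inner product. In these coordinates the metric $g$ on a tangent vector $(\dot\bx,\dot\by)\in T_{(\bx,\by)}X$ is
\begin{equation*}
    |(\dot\bx,\dot\by)|_g^2 = \sum_{i=1}^n \tr(\dot x_i^\dagger \dot x_i) + \tr(\dot y_i \dot y_i^\dagger) = \sum_{i=1}^n |\dot x_i|^2 + |\dot y_i|^2,
\end{equation*}
exactly the formula claimed. (This is the same metric used implicitly in the orthogonality computations preceding \Cref{prop: hyperpolygon space tangent space}.)

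Then I would conclude by combining the two facts: if $(\dot\bx,\dot\by)$ is a unitary deformation representing $v \in T_{[(\bx,\by)]}\mathcal X(\vec\beta)$, then by \Cref{cor: hyperkahler quotient tangent space} we have $|v|_{g_{\mathrm{HP}}}^2 = |(\dot\bx,\dot\by)|_g^2$, and by the previous paragraph this equals $\sum_i |\dot x_i|^2 + |\dot y_i|^2$.

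There is no real obstacle here; the one small thing to be careful about is that the identification in \Cref{cor: hyperkahler quotient tangent space} is an \emph{isometry} rather than just a linear isomorphism, which is why the splitting \eqref{eqn: HK tangent space exact sequence} was emphasized to be isometric. With that in hand, the proposition is immediate.
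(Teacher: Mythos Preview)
Your proposal is correct and takes essentially the same approach as the paper: the proposition is stated as an immediate consequence of \Cref{cor: hyperkahler quotient tangent space} and \Cref{prop: hyperpolygon space tangent space}, with the ambient flat metric on $X\cong T^*\C^{2n}$ restricting to the displayed formula. The paper does not give a separate proof beyond the summary paragraph preceding the proposition, and your write-up simply makes that reasoning explicit.
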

When $n=4$, and $\vec \beta \in (0, \infty)^4$ generic, the hyperk\"ahler manifold $\mathcal X(\vec\beta)$ is known to be  an ALE-$D_4$ gravitational instanton.
For general $n$ and $\vec \beta$ generic $\mathcal X(\vec \beta)$ is a hyperk\"ahler manifold of quasi-asymptotically conical type \cite{DimakisRochon}.

\subsection{\texorpdfstring{Moduli Space of Strongly Parabolic $\SL(2,\C)$-Higgs Bundles}{Moduli Space of Strongly Parabolic SL(2,C)-Higgs Bundles}}

Hitchin's equations for a Higgs bundle on a Riemann surface arise as the dimensional reduction of the self-dual Yang-Mills equations from four dimensions to two dimensions \cite{Hit87}.  For surfaces of genus $g>1$, the moduli space of solutions up to unitary gauge equivalence form a hyperkähler manifold.  The famous non-abelian Hodge correspondence (NAHC) provides existence of solutions which are gauge-equivalent to any given stable Higgs bundle.  These results were extended by Simpson to the non-compact case of a punctured Riemann surface (of any genus) with prescribed parabolic local system structures at each puncture \cite{Sim90}.  For an $\SL(2,\C)$-bundle $E$ over the $n$-punctured sphere $\C\P^1\sm\{p_1,\dots,p_n\}$, a choice a parabolic structure amounts to a flag $\mathcal F_i$ in $E_{p_i}$ and corresponding parabolic weights $\alpha_1,\dots,\alpha_n$ at each puncture. 

In this subsection, we will review strongly parabolic $\SL(2,\C)$-Higgs bundles, the construction of strongly parabolic $\SU(2)$-Hitchin moduli spaces as an infinite-dimensional hyperk\"ahler quotient, its tangent spaces, and its hyperk\"ahler metric.

\subsubsection{Parabolic Bundles}  We briefly review parabolic bundles, focusing on our context.
\begin{definition}[Parabolic Bundle with Full Flags]
Let $C$ be a Riemann surface and let $D=\{p_1,\dots,p_n\}$ a divisor on $C$ with distinct $p_i$.   A \emph{parabolic bundle with full flags} on $(C,D)$ is a holomorphic vector bundle $(E,\delbar_\mathcal E)$ of rank $k$ and the data of a weighted full flag at each $p_i$. I.e.\ at each $p_i$, we choose a full flag $\mathcal F_i=F_i^\bullet$ in $E_{p_i}$ and parabolic weights $\alpha_i^{(1)},\dots,\alpha_i^{(k)} \in (0, 1)$:
$$
\begin{array}[column sep = -5pt]{ccccccccc}
    0   &\subset   &F_i^{(1)}       &\subset    &\cdots &\subset    &F_i^{(k)}      &=          &E_{p_i}\\[4pt]
      &        &\alpha_i^{(1)}  &>          &\cdots &>          &\alpha_i^{(k)} &          &
\end{array}
$$
We use $\mathcal E$ to denote the data $(E, \delbar_{\mathcal E}, \vec\alpha, \mathcal F)$.
\end{definition}

\begin{definition}
    The \textit{parabolic degree} of a parabolic bundle $\mathcal E$ is $\text{pardeg}\,\mathcal E=\deg E+\sum_{i,j}\alpha_i^{(j)}$.  The \textit{slope} of $\mathcal E$ is $\text{slope}\,\mathcal E=\frac{\text{pardeg}\,\mathcal E}{\rk\mathcal E}$.
\end{definition}

\begin{definition}
    A parabolic bundle $\mathcal E$ is \emph{(semi)stable} if for every proper parabolic subbundle $\mathcal S\subset\mathcal E$ we have $\textrm{slope}\,\mathcal S<\textrm{slope}\,\mathcal E$ (respectively $\text{slope}\,\mathcal S\leq\text{slope}\,\mathcal E$).
\end{definition}

The parabolic structure on $E$ induces one on $\det E$ with weights $\sum_k\alpha_i^{(k)}$ at $p_i$.
\begin{definition}
An \emph{$\SL(k, \C)$-parabolic bundle with full flags}  is a rank $k$ parabolic bundle with full flags $\mathcal E=(E,\delbar_{\mathcal E},\vec\alpha,\mathcal F)$ together with an isomorphism between $\det \mathcal E$ and the trivial bundle with a fixed parabolic structure. 
\end{definition}

Note that the reduction of structure group to $\SL(k,\C)$ requires the induced parabolic weights on $\det E$ be integral.

\begin{remark}[Simplified notation for $\SL(2,\C)$-Parabolic Bundles]\label[remark]{rem:simplified}
As we will primarily discuss the $\rk E=2$ case, it is useful to develop simplified notation.  The flags $\mathcal F_i$ are each determined by a 1-dimensional subspace $F_i\subset E_{p_i}$, and the parabolic weights $\alpha_i^{(1)}+\alpha_i^{(2)}=1$ are determined by $\alpha_i=\alpha_i^{(2)}$. 
\end{remark}

\begin{example}\label[example]{ex:holrank2}
Moreover, we will take $C=\CP^1$. Then the Birkhoff--Grothendieck theorem classifies holomorphic vector bundles on $\CP^1$. Going forward, we will fix $\det \mathcal{E} \simeq \mathcal{O}$, with parabolic weight $1$ at $p_i \in D$, i.e. near $p_i$ $h_{\det}\simeq |z-p_i|^{2 \cdot 1}$. The adapted Hermitian-Einstein metric is (up to a scale)
\begin{equation}\label{eq:hdet}
h_{\det}(e, e)= (1+|z|^2)^{-n}\prod_{i=1}^n |z-p_i|^{2},
\end{equation} 
in the standard trivialization $e$ of $\mathcal{O}$.
Consequently, in the rank $2$ case, $(E, \delbar_E) \simeq \mathcal{O}(d) \oplus \mathcal{O}(-d)$. The additional weighted flag data is described above in \Cref{rem:simplified}.
\end{example}

\subsubsection{\texorpdfstring{Strongly Parabolic $\SL(2,\C)$-Higgs Bundles}{Strongly Parabolic SL(2,C)-Higgs Bundles}}

\begin{definition}
    A \emph{strongly parabolic $\SL(k,\C)$-Higgs field} on $\mathcal E$ is a global holomorphic section $\varphi$ of $\Omega^{1,0}(\End_0\mathcal E)\otimes\mathcal O(D)\cong\End_0\mathcal E\otimes K_C(D)$, where the subscript `$0$' denotes trace-free endomorphisms.  At each point $p_i \in D$, the residue is strictly upper triangular, i.e. 
    \[\mathrm{res}_{p_i} \varphi: F_i^{(k)} \to F_i^{(k-1)}.\] 
\end{definition}
For rank $2$, $\mathrm{res}_{p_i} \varphi: E_{p_i} \to F_i$ and $\mathrm{res}_{p_i} \varphi:F_i \to 0$.

\begin{remark}
A weakly parabolic Higgs field is  one where the residue is upper triangular, but need not be strictly upper triangular. We do not consider this generalization in this paper.
\end{remark}

\begin{definition}
    A \textit{strongly parabolic $
    SL(k,\C)$-Higgs bundle with full flags} is a pair $(\mathcal E, \varphi)$ such that $\mathcal E$ is a $\SL(k,\C)$-parabolic bundle with full flags, and $\varphi$ is a strongly parabolic $\SL(k,\C)$-Higgs field on $\mathcal E$.
\end{definition}

\begin{definition}
A Higgs bundles $(\mathcal{E}, \varphi)$ is \emph{(semi)stable} if for every proper parabolic subbundle $\mathcal{S} \subset \mathcal{E}$ preserved by $\varphi$, we have $\mathrm{slope} \; \mathcal{S} <  \mathrm{slope} \; \mathcal{E}$ (respectively, $\mathrm{slope} \; \mathcal{S} \leq \mathrm{slope} \;  \mathcal{E}$).
\end{definition}
For generic weights, semistability implies stability.

\begin{example} Continuing \Cref{ex:holrank2}, on $\C\P^1$ we can assume $D \subset \C \subset \C\P^1$ by making a conformal transformation.  Every holomorphic bundle of rank $2$ of (holomorphic) degree $0$ is isomorphic to $\mathcal{O}(d) \oplus \mathcal{O}(-d)$.  Given flags $(F_1, \cdots, F_n)$,  a compatible Higgs field has the form \[\varphi=\sum\frac{\varphi_i}{z-p_i}\de z\] where each $\varphi_i=\mathrm{res}_{p_i} \varphi$ is strictly upper triangular with respect to the flag and $\sum\varphi_i=0$ (this ensures holomorphicity at $\infty$).
\end{example}

Fix $R>0$.  The $R$-nonabelian Hodge correspondence defines a one-to-one correspondence between polystable Higgs bundles $(\mathcal{E}, \varphi)$ and harmonic bundles $(\mathcal{E}, \varphi, h_R)$.  In the parabolic setting this was established by Simpson in \cite{Sim90}.
The weighted flag can be thought of boundary conditions for the hermitian metric $h_R$. There is a class of hermitian metrics on the holomorphic vector bundle $\mathcal{E}|_{C-D}$ called \emph{acceptable}. This is a condition on the norm of the curvature of the Chern connection $\nabla(\delbar_E, h)$
\cite[p.736]{Sim90}. The key property of acceptable hermitian metrics is that the growth rates of local meromorphic sections induces a filtration of $(E, \delbar_E)|_{C-D}$ by coherent subsheaves. This gives a weighted flag at each point $p \in D$. We say an acceptable hermitian metric on parabolic bundle $\mathcal{E}$ is \emph{adapted} if the weighted flag from the hermitian metric agrees with the weighted flag on $\mathcal{E}$. 
In this rank 2 case, the last condition is equivalent to the asymptotic description
    $$h\sim\begin{pmatrix}|z-p_i|^{2(1-\alpha_i)}&h_{12}\\h_{21}&|z-p_i|^{2\alpha_i}\end{pmatrix}$$
in any basis with $\langle e_1\rangle=F_i$, where $h_{12},h_{21}\in O(|z-p_i|)^{2\alpha_i}$.

\begin{theorem}[Nonabelian Hodge Correspondence]
    Fix $R>0$.  A parabolic Higgs bundle $(\mathcal{E},\varphi)$ is $\vec\alpha$-stable if and only if there exists a hermitian metric $h_R$ on the underlying complex vector bundle $E$ that (1) is adapted to the parabolic structure and (2)  solves \begin{equation} \label{eq:Hitchin}
 R^{-1} F_{\nabla(\delbar_E, h_R)}^\perp + R[\varphi, \varphi^{\dagger_{h_R}}]=0,
\end{equation}
where $F_{\nabla(\delbar_E, h_R)}^\perp$ is the trace-free part of the curvature of the Chern connection $\nabla(\delbar_E, h_R)$.
\end{theorem}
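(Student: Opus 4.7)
The statement is Simpson's non-abelian Hodge correspondence for strongly parabolic Higgs bundles \cite{Sim90}, restricted to rank two with trivialized determinant and with the $R$-rescaling made explicit. The factor $R$ is inessential: $(E,\delbar_E,\varphi,h_R)$ solves \eqref{eq:Hitchin} if and only if $(E,\delbar_E,R\varphi,h_R)$ solves the unrescaled equation, so one may reduce to $R=1$. The adapted boundary data and the weights are unaffected by this rescaling. The plan is then to follow Simpson's two-sided proof, and I sketch each direction in turn.

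For the easy direction (existence implies $\vec\alpha$-stability), fix an adapted solution $h_R$ and a $\varphi$-invariant parabolic subbundle $\mathcal{S}\subset\mathcal{E}$. Let $\pi_\mathcal{S}$ be orthogonal $h_R$-projection onto $\mathcal{S}$ and $\gamma$ its second fundamental form. Biquard's parabolic Chern--Weil identity expresses $\text{pardeg}\,\mathcal{S}$ as the integral of $\tfrac{i}{2\pi}\tr\bigl(\pi_\mathcal{S}\,F^\perp_{\nabla(\delbar_E,h_R)}\bigr)$ minus $\tfrac{1}{2\pi}|\gamma|^2$ over $C\sm D$, plus convergent boundary terms in $\vec\alpha$. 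Substituting \eqref{eq:Hitchin} to replace $R^{-1}F^\perp$ by $-R[\varphi,\varphi^{\dagger_{h_R}}]$, and using $\varphi$-invariance of $\mathcal{S}$ to show that the resulting trace is pointwise nonpositive, yields $\text{slope}\,\mathcal{S}\le\text{slope}\,\mathcal{E}$, with equality only in the polystable case.

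For the hard direction (stability implies existence), fix a smooth reference adapted metric $h_0$ obtained by gluing the model forms $\diag\bigl(|z-p_i|^{2(1-\alpha_i)},\,|z-p_i|^{2\alpha_i}\bigr)$ near each $p_i$ to an arbitrary smooth extension on $C\sm D$, and parametrize candidate solutions as $h_R=h_0 e^s$ with $s$ a trace-free $h_0$-self-adjoint endomorphism decaying appropriately at $D$. The Donaldson functional $\mathcal{M}(h_0,h_R)$ has Euler--Lagrange equation \eqref{eq:Hitchin} and is convex along geodesic paths in the space of metrics. I would run a continuity method, for instance deforming $\varphi\mapsto t\varphi$ with $t\in[0,1]$: openness follows from the implicit function theorem applied to the Fredholm Laplace-type linearization on weighted Sobolev spaces whose weights are dictated by $\vec\alpha$, while closedness requires uniform a priori estimates on $s$.

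The main obstacle is the closedness step, specifically obtaining a $C^0$ bound on $s$ in the presence of the punctures. Simpson's main estimate reduces this to an $L^1$ bound on $s$; if that bound fails along a sequence, one normalizes $\hat s_k=s_k/\|s_k\|_{L^1}$, extracts a weak limit $\hat s_\infty$, and shows by a spectral decomposition argument that the eigenspaces of $\hat s_\infty$ determine a $\varphi$-invariant parabolic subsheaf with slope strictly greater than $\text{slope}\,\mathcal{E}$, contradicting $\vec\alpha$-stability. The principal technical difficulty throughout is preserving the adapted boundary condition at each puncture under limits and perturbations, which forces all the functional-analytic setup to be in weighted function spaces tailored to $\vec\alpha$, and requires using local model solutions (of the type constructed in \Cref{sec: Local Model}) as barriers for the comparison principle near the punctures.
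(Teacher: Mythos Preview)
The paper does not prove this theorem: it is quoted as Simpson's result (``In the parabolic setting this was established by Simpson in \cite{Sim90}'') and no argument is given beyond the citation. So there is no paper proof to compare against; your sketch is an outline of the standard Simpson argument, and the stability direction and the identification of the $C^0$ bound via the destabilizing-subsheaf contradiction are broadly correct.

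There is, however, a real gap in the existence direction as you have written it. Your proposed continuity path $\varphi\mapsto t\varphi$ has endpoint $t=0$ equal to the Hermitian--Einstein problem for the underlying parabolic bundle $\mathcal E$ with no Higgs field, and that problem has a solution only when $\mathcal E$ is itself parabolic (poly)stable. Stability of $(\mathcal E,\varphi)$ does not imply stability of $\mathcal E$: already in the setting of this paper, the Higgs bundles on $\mathcal O(-1)\oplus\mathcal O(1)$ discussed in \S\ref{subsec: The nilpotent cone} are $\vec\alpha$-stable while the underlying bundle is never stable for any weights. So your continuity method has no starting point in general and cannot be run as stated. Simpson's actual proof avoids this by running the Donaldson heat flow (equivalently, directly minimizing the functional) and using the Main Estimate to control $\|s\|_{C^0}$ in terms of $\|s\|_{L^1}$ plus the functional value; alternatively one can perturb the equation itself so that the $t=0$ problem is trivially solvable. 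Either fix is standard, but the path you chose does not work.
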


\begin{definition}
The equation in \eqref{eq:Hitchin} is called the \emph{$R$-rescaled Hitchin's equation}.
The hermitian metric $h_R$ solving \eqref{eq:Hitchin} 
and is known as the \emph{$R$-harmonic metric}. 
A triple $(\mathcal{E}, \varphi, h_R)$ consisting of a Higgs bundle $(\mathcal{E}, \varphi)$ and harmonic metric $h_R$ solving \eqref{eq:Hitchin} is called an \emph{$R$-harmonic bundle}.
\end{definition}

\bigskip

\subsubsection{\texorpdfstring{Hitchin Moduli Spaces and Strongly Parabolic $\SL(2,\C)$-Higgs Bundle Moduli Spaces}{Hitchin Moduli Spaces and Strongly  Parabolic SL(2,C)-Higgs Bundle Moduli Space}}
The Hitchin moduli space $\mathcal{M}_R(\vec \alpha)$ consists of $R$-harmonic bundles $(\mathcal{E}, \varphi, h_R)$, up to equivalence. The Higgs bundle moduli space $\mathcal{M}^{\text{Higgs}}(\vec \alpha)$ consists of Higgs bundles $(\mathcal{E}, \varphi)$, up to equivalence.
The Hitchin moduli space and Higgs bundle moduli space when $D=\emptyset$ were introduced and studied in \cite{Hit87}, and analogues of important properties (e.g. hyperk\"ahler structure, algebraic completely integrable system structure, spectral data, etc.) continue to hold in the parabolic setting. 
In \cite{Yokogawa}, Yokogawa constructed the coarse moduli space of semistable strongly parabolic Higgs bundles, fixing a Riemann surface $(C,D)$ and a parabolic line bundle $\det \mathcal E$.  In \cite{Kon02}, Konno constructed the Higgs bundle moduli spaces and Hitchin moduli spaces analytically, proving that for generic parabolic weights, the Hitchin moduli space is a smooth manifold with a hyperk\"ahler structure. 

\bigskip
We briefly explain the analytic constructions, because we are interested in the hyperk\"ahler structure; particularly, we are interested in the sense in which $\mathcal{M}^{\text{Higgs}}(\vec \alpha)$ arises as an infinite-dimensional holomorphic symplectic quotient, and the sense in which $\mathcal{M}_R(\vec \alpha)$ arises as an infinite-dimensional hyperk\"ahler quotient. 

We begin with the smooth category. The moduli space that we are describing here might be called the \emph{algebraic Dolbeault moduli space}, mirroring the terminology in \cite{BiquardBoalch}. Fix a Riemann surface---here $C=\mathbb{CP}^1$ and divisor $D=\{p_1, \cdots, p_n\}$. By making a conformal transformation, we can assume, without loss of generality, that $D \subset \C \subset \mathbb{CP}^1$. Fix the trivial complex vector bundle $E \to \CP^1$ with weighted flags at $p_i \in D$ determined by $(F_i, \alpha_i)$ (see  \Cref{rem:simplified}). On $\det E$, fix a holomorphic structure $\delbar_{\det E}$ (here $(\det E, \delbar_{\det E})\simeq \mathcal{O}$ since $\deg E =0$), induced parabolic weights $1$ at $p_i \in D$ and a hermitian metric $h_{\mathrm{det}}$ adapted to the parabolic weights such that $F^{\perp}_{\nabla(\delbar_{\det E}, h_{\det E})}=0$, as described in \eqref{eq:hdet}.  Moreover, fix $h_0$ on $E$ adapted to the parabolic structure, as in \cite[p. 255]{Kon02}, and call it the \emph{background metric} on the underlying complex vector bundle $E$. 

Let $\mathfrak G=\mathcal E(\SU(2))$ be the group of smooth special unitary gauge transformations which preserve each flag $\mathcal F_i$, i.e. $g|_{p_i}$ is upper triangular with respect to $0 \subset F_i \subset E_{p_i}$; let $\mathfrak G_\C=\mathcal E(\SL(2,\C))$ be the similar group of smooth complex gauge transformations.  We also use $\mathcal E(\mathfrak{su}(2))$ and $\mathcal E(\mathfrak{sl}(2,\C))$ for the analogous Lie algebras.  

The algebraic Dolbeault moduli space is the space of smooth pairs $(\delbar_E, \varphi)$ where $\delbar_E$ is a smooth holomorphic structure on $E$ which induces the fixed $\delbar_{\det E}$ on $\det E$, and $\varphi$ is a global section of $\Omega^{1,0}(C, \mathfrak{sl}(E)) \otimes \mathcal{O}(D)$ where (1) $\delbar_E \varphi = 0$ and (2) $\mathrm{res}_{p_i} \varphi$ is strictly upper triangular with respect to the flag $\mathcal F_i$, all considered up to equivalences
\begin{equation}
(\delbar_E, \varphi) \mapsto (g \circ \delbar_E \circ g^{-1}, g \varphi g^{-1})
\end{equation}
for $g$ in the smooth complex gauge group $\mathfrak{G}_{\C}$.  It is also possible to make the space of Higgs bundles and complex gauge group smaller by fixing a framing at $p_i \in D$ (see \cite{CFW24}) though we will not do this.

\bigskip
One can't prove good properties for this algebraic Dolbeault moduli space. Instead, one defines an analytic version of the Dolbeault moduli space where one considers an appropriate analytic completion of the space $\mathcal{D}$  of smooth pairs $(\delbar_E, \varphi)$ such that there is a bijective map $\iota: \mathcal{D}/\mathfrak{G}_{\C} \rightarrow \widehat{\mathcal{D}}/\widehat{\mathfrak{G}}_{\C}$ as sets. Define $\mu_{\C}(\delbar_E, \varphi) = \sqrt{-1}\delbar_E \varphi$. The space $\widehat{\mathcal{D}}$ is the kernel of $\mu_{\C}$ in some configuration space $\widehat{\mathcal{A}}_{J_1}$, where the subscript $J_1$ marks part of the hyperk\"ahler structure we will soon discuss. For example, in \cite{Kon02}, $\widehat{\mathcal{A}}$ is based on the $L^p$-theory in \cite{Biquard}, following \cite{LockhartMcOwen}; in \cite{CFW24}, $\widehat{\mathcal{A}}$ is weighted Sobolev spaces $L^{2,1}_{\vec \delta}$; one could also use $b$-weighted H\"older spaces and Sobolev spaces from geometric microlocal analysis, as in \cite{FMSW21}.  We discuss the weighted $b$-Sobolev spaces in \Cref{sec:analytic}, though we purposely leave the configuration space $\widehat{\mathcal{A}}$ unspecified here.  The goal is to show that the moduli space $\mathcal{M}^{\mathrm{Higgs}, \mathrm{an}, R}=\widehat{\mathcal{D}}/\widehat{\mathcal{G}}^{\C}$ is a smooth holomorphic symplectic manifold. In these types of arguments:
\begin{enumerate}
\item[(1)] The holomorphic symplectic structure is constructed via holomorphic symplectic quotient \cite[p. 256]{Kon02}.
Just as the hyperpolygon space $\mathcal{X}(\vec \beta) = X\twoslash_{(0, \vec \beta)} G_\C$, the holomorphic symplectic structure arises from constructing the Higgs bundle moduli space as an infinite-dimensional holomorphic symplectic quotient
    \[ \mathcal{M}^{\mathrm{Higgs}, \mathrm{an}, R}(\vec \alpha) = \mathcal{A}\!\!\twoslash\limits_{(0,\vec\alpha)}\!\!\mathfrak{G}_{\C}= \mu_\C^{-1}(0)^{\vec\alpha\text{-st}} /\mathfrak{G}_{\C}.\]
\item[(2)] The tangent space to $\mathcal{M}^{\mathrm{Higgs}, \mathrm{an}, R}$ is identified with the cohomology of the complex-analytic deformation complex
\[ 0 \to C^0 \overset{d^0}{\to} C^1  \overset{d^1}\to C^2 \to 0, \]
where $d^0$ and $d^1$ are given by the operator
\begin{equation}\label{eq:D''}\mathbb{D}''_R:=R^{-1/2} \delbar_E + R^{1/2} \varphi, \end{equation} $C^0$ is the space of infinitesimal complex gauge transformations, $C^1$ the space of infinitesimal Higgs bundle deformations, and $C^2$ represents the obstruction. Note that 
we've added the factor of $R>0$, but the holomorphic symplectic structure is independent of $R$.
\end{enumerate}
The manifold structure of $\mathcal{M}^{\mathrm{Higgs, an}, R}$ comes from the isomorphism with the Hitchin moduli space.

\bigskip

Similarly, one also constructs the Hitchin moduli space analytically in order to get the full hyperk\"ahler structure. 
There is a map from the deformation space $\widehat{\mathcal{A}}_{J_1} \to \mathcal{A}^{h_0}$ given by 
    \[(\delbar_E, \varphi) \mapsto (\nabla=\delbar_E + \del_E^{h_0}, \Psi = \varphi - \varphi^{\dagger_{h_0}}),\]
where $\nabla, \Psi$ are $h_0$-unitary.
The induced complex gauge group action is 
\begin{equation}
\nabla \to g \circ \delbar_E \circ g^{-1} + (g^{-1})^{\dagger_{h_0}} \circ \del_E^{h_0} \circ g^{\dagger_{h_0}} \qquad \Psi \mapsto g \varphi g^{-1} + (g^{-1})^{\dagger_{h_0}} \varphi^{\dagger_{h_0}}g^{\dagger_{h_0}}.
\end{equation}
When $g$ is not unitary, one can view this as a change of hermitian metric.
Inside the complex gauge group $\widehat{\mathcal{G}}_{\C}$ there is a real gauge group $\widehat{\mathcal{G}}$ of $\SU(E)$ gauge transformations. 

The main idea of the construction is to realize Hitchin's equations as the level sets of the moment maps of the unitary gauge action, namely $\mu_{\C}(\nabla , \Psi) =0$ and $\mu_{\R, R}(\nabla, \Psi) =0$ where
\begin{align}
\mu_{\C}(\nabla , \Psi) &:= i \nabla^{0,1} \Psi^{1,0} = i \delbar_E \varphi \nonumber\\
    \mu_{\R, R}(\nabla, \Psi) &:= -\frac{1}{2}\left(R^{-1}F^\perp_\nabla + R \Psi \wedge \Psi^{\dagger_{h_0}}\right)=-\frac{1}{2} \left(R^{-1} F^\perp_{\nabla(\delbar_E, h_0)} + R [\varphi, \varphi^{\dagger_{h_0}}]\right). \label{eq:Hitchinmomentmap}
\end{align}
The Hitchin moduli space is the space of solutions of Hitchin's equations up to unitary gauge transformations, and it has a rich structure.

In these types of constructions:
\begin{enumerate}
\item[(1)] The hyperk\"ahler structure is constructed via an infinite-dimensional hyperk\"ahler quotient\footnote{Note the analogy with the space of unitary hyperpolygons $\mathcal{X}(\vec \beta)=X\fourslash_{(0, \vec \beta)} G$, }
    \[\mathcal M_R(\vec\alpha)=\mathcal A^{h_0}\fourslash\limits_{(0,0)}\mathfrak G=(\mu_{\C,R}^{-1}(0)\cap\mu_{\R,R}^{-1}(0))/\mathfrak G.\]
The $R$-dependent hyperk\"ahler structure $(g_R, J_1, J_{2,R}, J_{3,R}, \omega_{J_1,R}, \omega_{J_2}, \omega_{J_3})$ on $\mathcal{A}^{h_0}$ is described in detail in \cite{FMSWforthcoming}\footnote{Our conventions match theirs with $\lambda_1 = \frac{R}{2}, \lambda_2=R^{-2}, \vartheta =0$.}, and $J_1$ and $\Omega_{J_1}=\omega_{J_2} + i \omega_{J_3}$ are independent of $R$\footnote{
    The space $\mathcal{A}^{h_0}$ is an affine space modeled on $L^{1,2}_{\vec \delta}(E(\mathfrak{su}(2)) \otimes T^*\CP^1)$ centered at some fixed $(\nabla_0, \Psi_0)$ such that, in an $h_0$-unitary frame near $p_i$ with $e_i \in F_i$,
        \[\delbar_{E,0} = \delbar  + \frac{1}{2} \begin{pmatrix} 1- \alpha_i & 0 \\ 0 & \alpha_i \end{pmatrix} \frac{\de \overline{z}}{\overline{z}-\overline{p}_i}.\]
    The hyperk\"ahler structure is 
    \begin{align}
        \omega_{J_1,R}((\dot \nabla_1,\dot\Psi_1),(\dot \nabla_2,\dot\Psi_2))
            &=\Im\left(\int_C R^{-1}\tr((\dot\nabla_1^{0,1})^{\dagger_{h_0}}\wedge\dot \nabla_2^{0,1})+R\tr((\dot\Psi_1^{1,0})^{\dagger_{h_0}}\wedge\dot\Psi_2^{1,0})\right),
                \nonumber\\
        \Omega_{J_1}((\dot \nabla_1,\dot\Psi_1),\dot \nabla_2,\dot\Psi_2)
            &=-\int_C\dot \nabla_1^{0,1}\wedge\dot\Psi_2^{1,0}-\dot \nabla_2^{0,1}\wedge\dot\Psi_1^{1,0}
                \nonumber\\
        J_{1,R}(\dot \nabla^{0,1},\dot\Psi^{1,0})&=(i\dot \nabla^{0,1},i\dot\Psi^{1,0})
                \nonumber\\
        J_{2,R}(\dot \nabla^{0,1},\dot\Psi^{1,0})&=(R(\dot\Psi^{1,0})^{\dagger_{h_0}},R^{-1}(\dot \nabla^{0,1})^{\dagger_{h_0}})
                \nonumber\\
        J_{3,R}(\dot \nabla^{0,1},\dot\Psi^{1,0})&=(iR(\dot\Psi^{1,0})^{\dagger_{h_0}},iR^{-1}(\dot \nabla^{0,1})^{\dagger_{h_0}}).
    \end{align}
    These analytic spaces are designed so that the integrands are integrable.  In the algebraic formulation, both Higgs field deformations have simple poles, which when paired could give a $\frac{1}{r}$ term in the integrand. The more restricted analytic deformation spaces prohibit this. 
}.
\item [(2)] The tangent space to the moduli space $\mathcal{M}$ is identified with the elliptic deformation complex
\[ 0 \to C^0 \overset{d^0}{\to} C^1  \overset{d^1}\to C^2 \to 0, \]
where
\begin{align*}
    d^0(\dot{\gamma})
        &=  \left(-R^{-1/2}d_\nabla \dot{\gamma}, -R^{1/2}[\Psi, \dot{\gamma}]\right)
\end{align*}
and
\begin{align*}
    d^1(\dot \nabla, \dot \Psi) 
        &=(d^{1,\R},\mathbb{D}''_R)(\dot \nabla, \dot \Psi)\\
        &= ( R^{-1/2}d_\nabla \dot{\nabla} + R^{1/2}[\Psi, \dot{\Psi}], R^{-1/2} \nabla^{0,1} \dot{\Psi}^{1,0}+ R^{1/2} [\dot{\nabla}^{0,1}, \Psi^{1,0}]);
\end{align*}
the components of $d^1$ are respectively the linearization of $\mu_\R$ and $\mu_\C$ at a solution of Hitchin's equations.
Here, $C^0$ is the space of infinitesimal \emph{unitary} gauge transformations, $C^1$ is infinitesimal deformations of pairs solving Hitchin's equations, and $C^2$ represents the obstruction. The tangent space is then identified with the kernel of $d^1 \oplus d_0^*$.
As Hitchin observes in \cite[p. 85]{Hit87} the kernel of $d^1 \oplus d_0^*$ is the kernel of 
\begin{equation}\label{eq:1st} \mathbb{D}'_{R,h_0} \oplus \mathbb{D}''_R\end{equation}
for $\mathbb{D}'_{R,h_0}$, the $h_0$-adjoint of $\mathbb{D}''_R$.    Elaborating a bit, the maps $d^{1,\R}$ and $d_0^*$ are real and imaginary parts of $\mathbb{D}'_{R,h_0}$\footnote{ In the case of the $4d$ anti-self-dual Yang-Mills equations (of which Hitchin's equations are a dimensional reduction), the linearization of $F_A^+=0$ and the Coulomb gauge condition can similarly be packaged into a single equation \cite[p. 55]{DonaldsonKronheimer}.}, as in \Cref{prop: symplectic quotient tangent space}.
(For more discussion see \cite[Proposition 2.2]{Fre19}.)
Lastly, 
\item [(3)] one uses a Kuranishi slice within the tangent space to give local coordinate charts on $\mathcal{M}$. Because $\mathcal{M}$ admits a smooth structure, $\mathcal{M}$ is a smooth hyperk\"ahler manifold.  Note that this is the only place smoothness is established for any of the moduli spaces! 
\end{enumerate}

\begin{remark}[Moment Maps are Distribution Valued] \label{rem:momentmaps} Having established the moment maps of the gauge group in \eqref{eq:Hitchinmomentmap}, we can now observe why we set $ \alpha_i(R) = \frac{1}{2}- \beta_i$.  Note that $\partial_{\overline{z}} \frac{1}{z} =\pi \delta_0$, the Dirac distribution supported at $0$. Consequently, 
in a coordinate $z$ centered at $p_i \in D$, for we have
\begin{align*}
    R^{-1}F_D^\perp + R[\varphi, \varphi^{\dagger_{h_R}}]
        &=R^{-1} \begin{pmatrix} \frac{1}{2}-\alpha_i(R)& \\ &  \alpha_i(R)- \frac{1}{2} \end{pmatrix}   \pi \delta_{p_i} d \overline{z} \wedge d z. 
\end{align*}
With our choice of $\alpha_i(R)= \frac{1}{2} - R \beta_i$, we are effectively holding the $R$-dependent distribution supported at $D$ constant.

While we restrict to the strongly parabolic case in this paper, note that in the weakly parabolic case and in a frame where 
    \[\delbar_E =\delbar, \varphi= \begin{pmatrix}m_{p_i} & * \\ 0 & -m_{p_i} \end{pmatrix} \frac{dz}{z} + \mathrm{hol},\]
one has 
    \[ \delbar_E \varphi =\begin{pmatrix}m_{p_i} & * \\ 0 & -m_{p_i} \end{pmatrix} \pi \delta_{p_i} d \overline{z} \wedge d z. \]
One can view our choice of $m_i=0$ as choosing the value $\mu_{\C, R}$ to be the zero distribution, rather than a more general distribution supported at $D$.
\end{remark}

\subsubsection{Tangent Space and Hyperk\"ahler Metric}\label[section]{sec:hitchinhk} We will unpack the above discussion in terms of triples $(\delbar_E, \varphi, h_R)$ solving the $R$-rescaled Hitchin's equations, choosing the background metric to be $h_R$ itself. There are two interesting operators:
\begin{align*}
\mathbb{D}''_R&=R^{-1/2} \delbar_E + R^{1/2}[\varphi, \cdot]\\
\mathbb{D}'_{R, h_R}&=R^{-1/2} \del_E^{h_R} + R^{1/2}[\varphi^{*_{h_R}}, \cdot].
\end{align*}
Note that $(\mathbb{D}''_R)^2=0$ since $\delbar_E \varphi=0$ and similarly $(\mathbb{D}'_{R, h_R})^2=0$.  Because $R^{-1} F_{\nabla(\delbar_E, h_R)}+ R[\varphi, \varphi^{*_{h_R}}]=0$, we have $F_{\mathbb{D}_R''+\mathbb{D}'_{R,h_R}}=0$, hence $\mathbb{D}'_{R, h_R} \mathbb{D}''_R=\mathbb{D}''_R \mathbb{D}'_{R, h_R}$.

We will call such a deformation in the kernel of the operator $\mathbb{D}''_R \oplus \mathbb{D}'_{R, h_R}$ appearing in \eqref{eq:1st} \emph{harmonic}, by analogy with the Hodge theorem.
Said another way, given a Higgs bundle $(\delbar_E, \varphi)$ for which $h_R$ is the $R$-harmonic metric, consider the family of deformations 
\begin{eqnarray} \label{eq:defhol}
 (\delbar_E)_\eps &=& \delbar_E + \eps \dot \eta\\ \nonumber
 \varphi_\eps &=& \varphi + \eps \dot \varphi
\end{eqnarray}
where $\dot \eta$ is $(0,1)$-valued in $\End E$ (giving an infinitesimal deformation of the holomorphic structure), $\dot{\varphi}$ is $(1,0)$-valued in $\End E$, and 
\begin{equation}
 \delbar_E \dot \varphi + [\dot \eta, \varphi]=0
\end{equation}
so that $(\dot{\eta}, \dot{\varphi})$ solves the infinitesimal version of
the Higgs bundle equation $\delbar_E \varphi=0$ (i.e. $R^{-1/2} \dot{\eta} + R^{1/2} \dot{\varphi}$ is in the kernel of $\mathbb{D}''_R$).
Notice that $[(\dot{\eta}_1, \dot{\varphi}_1)]=[(\dot{\eta}_2, \dot{\varphi}_2)]$ if there is an infinitesimal $\End E$-valued gauge transformation $\dot{\gamma}_R$  such that 
\begin{eqnarray} \label{eq:infinitesimal}
 \dot{\eta}_2 - \dot{\eta}_1 &=& -\delbar_E \dot{\gamma}_R\\ \nonumber
 \dot{\varphi}_2 - \dot{\varphi}_1 &=& -[\varphi, \dot{\gamma}_R],
\end{eqnarray}
i.e., the difference is in the image of $\mathbb{D}''_R$.  
To find the harmonic representative in the hypercohomology class $[(\dot{\eta}, \dot{\varphi})]$, denoted 
\begin{equation}
\label{eq:harmonic}
  \mathtt{H}_R:=R^{-1/2}\underbrace{\left(\dot{\eta} - \delbar_E \dot{\gamma}_R\right)}_{\mathtt{H}_R^{0,1}} + R^{1/2}\underbrace{\left(\dot{\varphi}-[\varphi, \dot{\gamma}_R]\right)}_{\mathtt{H}^{0,1}_R},
\end{equation} 
we must solve for $\dot{\gamma}_R$ in some appropriate analytic space such that  
\begin{equation}\label{eq:ingaugetriple}
 \mathbb{D}'_{R, h_R} \mathbb{D}''_{R} \dot{\gamma}_R = \mathbb{D}'_{R, h_R}(R^{-1/2} \dot{\eta} + R^{1/2} \dot{\varphi}),
\end{equation}
i.e. 
\[ R^{-1} \del_E^{h_R} \delbar_E \dot{\gamma}_R- R^{-1}\del_E^{h_R} \dot{\eta} + R\left[\varphi^{\dagger_{h_R}}, [\varphi, \dot{\gamma}_R] - \dot{\varphi}\right]=0.\]

The natural hyperk\"ahler metric is 
\begin{equation}
|(\dot{\eta}, \dot{\varphi})|^2_R = 2\int_{C} R^{-1} \langle \mathtt{H}_{R}^{0,1}, \mathtt{H}_{R}^{0,1}\rangle_{h_R} +  R \langle \mathtt{H}_{R}^{1,0}, \mathtt{H}_{R}^{1,0}\rangle_{h_R},
\end{equation}
where $\langle \alpha, \beta \rangle = \mathrm{Tr}(\alpha \wedge \star \beta^{\dagger_{h_R}})$ is a two-form.
As in \cite{Fre19}, by integrating by parts and using the harmonicity above, we can write the hyperk\"ahler metric as: \begin{equation}\label{eq: simplified hk expression}
|(\dot{\eta}, \dot{\varphi})|^2_R = 2\int_{C} R^{-1} \langle\mathtt{H}_{R}^{0,1}, \dot{\eta}\rangle_{h_R} +  R \langle\mathtt{H}_{R}^{1,0}, \dot{\varphi}\rangle_{h_R}.
\end{equation}
\begin{remark}\label[remark]{rem: gamma deforms metric}
Geometrically, one can view the $\mathfrak{sl}(E)$-valued section $\dot{\gamma}_R$ as giving a deformation of the hermitian metric 
\begin{equation}
 h_{R,\eps}(w_1,w_2) = h_R(\e^{\eps \dot{\gamma_R}} w_1, \e^{\eps \dot{\gamma_R}} w_2).
\end{equation}
This only depends on $h_R$-hermitian part of $\dot{\gamma}_R$ and not the part valued in $\mathfrak{su}(E, h_R)$.
\end{remark}

\subsection{Map from Hyperpolygons to Parabolic Higgs Bundles}
\label{subsec: Hyperpolygons to Higgs Map}
Let $\vec\alpha\in(0,\frac12)^n$ and $\vec\beta\in(0,\infty)^n$.  We define a map from  hyperpolygon space to the moduli space of strongly parabolic Higgs bundles on $(\C\P^1, D=\{p_1, p_2, \cdots, p_n\})$:
\begin{align} \mathcal{T}: \mathcal{X}(\vec \beta) &\to \mathcal{M}^{\mathrm{Higgs}}(\vec \alpha) \\ \nonumber 
(\mathbf x,\mathbf y) &\mapsto (\mathcal E_{(\bx, \by)}, \varphi_{(\bx, \by)})\end{align}
for $\beta_i = 1 - 2\alpha_i$.
Let $\mathcal E=\mathcal E_{(\bx,\by)}$ be the trivial rank-2 bundle on $C=\C\P^1$ with standard holomorphic structure $\delbar$ and parabolic structure given by $F_i=\langle x_i\rangle$ and $\alpha_i$ for each $i=1,\dots,n$, and let\footnote{The assumption $\mu_\C(\mathbf x,\mathbf y)=0$ ensures holomorphicity at $\infty$ and $\tr\varphi=0$.}
    $$\varphi=\varphi_{(\mathbf x,\mathbf y)}=\sum_{i=1}^n\frac{x_iy_i}{z-p_i}\de z.$$
The following theorem was proved in \cite{GM11}, though we provide a minor correction (see \Cref{rmk: chamber condition is necessary}).
\begin{theorem}\label[theorem]{thm: map from hyperpolygon to Higgs}
    Suppose $\vec\alpha$ is generic and satisfies\footnote{
        Note that this condition corresponds to 
        \begin{equation}\label{eq:Wbeta}
            W_{[n]}(\vec \beta) < 2.
        \end{equation}
    }
     \begin{equation}\label{eq:missingassumption}
    W_{[n]}(\vec\alpha):=\sum_{i=1}^n\alpha_i>\frac{n-2}{2}, \end{equation} and set $\beta_i=1-2\alpha_i$ for $i=1,\dots,n$.  Then the map $\mathcal T$ constructed above sends $\vec\beta$-stable hyperpolygons to $\vec\alpha$-stable Higgs bundles.
\end{theorem}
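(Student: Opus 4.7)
Suppose for contradiction that $(\mathcal{E}_{(\bx,\by)}, \varphi_{(\bx,\by)})$ is not $\vec\alpha$-stable, so there is a proper $\varphi$-invariant parabolic line subbundle $\mathcal{L} \subset \mathcal{E}$ with $\mathrm{pardeg}\,\mathcal{L} \geq \mathrm{slope}\,\mathcal{E} = n/2$. Replacing $\mathcal{L}$ by its saturation and using that $\mathcal{L}$ embeds in the trivial rank-$2$ bundle, write $\mathcal{L} \simeq \mathcal{O}(-d)$ with $d \geq 0$, and set $I := \{i : \mathcal{L}_{p_i} = F_i\}$. A direct computation of parabolic weights gives
\begin{equation*}
\mathrm{pardeg}\,\mathcal{L} = -d + \sum_{i\in I}(1-\alpha_i) + \sum_{i\notin I}\alpha_i = -d + \tfrac{n}{2} + \tfrac{1}{2}W_I(\vec\beta),
\end{equation*}
where the second equality uses $\beta_i = 1 - 2\alpha_i$, so destabilization is equivalent to $W_I(\vec\beta) \geq 2d$. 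Since $\beta_i > 0$ for all $i$, one has $W_I(\vec\beta) \leq W_{[n]}(\vec\beta)$, and the hypothesis $\sum_i\alpha_i > (n-2)/2$ is precisely $W_{[n]}(\vec\beta) < 2$. Hence $W_I(\vec\beta) < 2$, forcing $d = 0$: $\mathcal{L}$ is trivial, spanned by a constant vector along some line $\ell \subset \C^2$, so $I = \{i : x_i \in \ell\}$ and the vectors $\{x_i\}_{i\in I}$ are pairwise proportional, making $I$ straight.

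The heart of the argument is the pointwise use of $\varphi$-invariance together with the complex moment maps. For each $i$, the condition $\mu_{\C^\times,i}(\bx,\by) = y_i x_i = 0$ combined with $x_i \neq 0$ (a consequence of $\vec\beta$-stability) shows that whenever $y_i \neq 0$ the one-dimensional kernel of $y_i$ must coincide with $F_i = \mathrm{span}(x_i)$. Consequently the residue $R_i := \mathrm{res}_{p_i}\varphi = x_i y_i$ is a nilpotent rank-one endomorphism with $\mathrm{im}(R_i) = \ker(R_i) = F_i$, and the only $R_i$-invariant $1$-dimensional subspace of $E_{p_i}$ is $F_i$ itself. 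Therefore $y_i \neq 0$ forces $\mathcal{L}_{p_i} = F_i$, i.e.\ $i \in I$; contrapositively, $i \notin I \Rightarrow y_i = 0$. Collecting: $I$ is straight, $y_i = 0$ for $i \notin I$, and $W_I(\vec\beta) \geq 0$; the genericity of $\vec\alpha$ (which rules out any $W_J(\vec\beta) = 0$) upgrades this to $W_I(\vec\beta) > 0$, directly contradicting the stability criterion of \Cref{def: stable hyperpolygon}.

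The main obstacles I foresee are the careful parabolic-degree bookkeeping in the opening display, and the more subtle observation that $\mu_{\C^\times,i} = 0$ is exactly what collapses $\ker y_i$ onto the flag and upgrades $\varphi$-invariance to the hyperpolygon stability input. The role of the hypothesis $W_{[n]}(\vec\beta) < 2$ is precisely to rule out destabilizers with $d \geq 1$, for which $\mathcal{L}_{p_i}$ could vary non-trivially among the punctures and the above argument would no longer deliver a straight subset; this failure mode is the content of the correction recorded in \Cref{rmk: chamber condition is necessary}.
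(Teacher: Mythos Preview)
Your proof is correct and follows essentially the same approach as the paper: both split into the cases $d=0$ and $d\geq 1$, use the hypothesis $W_{[n]}(\vec\beta)<2$ to rule out $d\geq 1$, and for $d=0$ identify $I$ as straight with $y_i=0$ for $i\notin I$ via the residue/$\varphi$-invariance argument, then invoke hyperpolygon stability. The only cosmetic differences are that you argue by contradiction and handle $d\geq 1$ via the uniform bound $W_I(\vec\beta)\leq W_{[n]}(\vec\beta)$, whereas the paper argues directly and bounds the slope in the $d\geq 1$ case by taking $I=[n]$ as the worst case.
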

\begin{proof}   
    Suppose $(\mathbf x,\mathbf y)$ is a $\vec\beta$-stable hyperpolygon, as in \Cref{def: stable hyperpolygon}. Consider the Higgs bundle constructed above. Since $\det \mathcal{E}$ is $\mathcal{O}$ with parabolic weight $(1-\alpha_i)+\alpha_i =1$ at $p_i \in D$, $\text{pardeg}\,\mathcal E=n$, so $\text{slope}\,\mathcal E=\frac n2$.  To prove stability, we need only consider holomorphic line subbundles $\mathcal L\subset\mathcal E$ with the induced parabolic structure, since that maximizes slope.  Since $\mathcal E=\mathcal O_{\P^1}^2$, such a holomorphic subbundle is isomorphic to $\mathcal O(-d)$ for some $d\geq0$.  We consider the $d=0$ and $d>0$ cases separately.

    First we consider subbundles of the form $\mathcal L\cong\mathcal{O}$ preserved by $\varphi$. The inclusion $\mathcal L\cong\mathcal O\into\mathcal E$ is given by a $2\times1$ matrix $M$ of constant global sections.  Set
        $$I_{\mathcal L}=\{i\mid L_{p_i}=F_i\},$$
    which is necessarily a straight subset (see \Cref{def:straight}) of $[n]$ because $x_i\in F_i$ is parallel to $M$ as vectors. Furthermore, $y_i=0$ for any $i\notin I_{\mathcal L}$, as otherwise the residue $\varphi_i=x_iy_i$ for some $i\notin I_{\mathcal L}$ will have image $\im\varphi_i=F_i$ not contained in $L_{p_i}$.   Therefore, the $\vec\beta$-stability of $(\mathbf x,\mathbf y)$ implies $W_{I_{\mathcal L}}(\vec \beta)<0$ (See \Cref{def:straight}).  We have
        $$\text{slope}\,\mathcal L=\sum_{i\in I_{\mathcal L}}(1-\alpha_i)+\sum_{i\notin I_{\mathcal L}}\alpha_i=\sum_{i\in I_{\mathcal L}}\left(1-\frac12(1-\beta_i)\right)+\sum_{i\notin I_{\mathcal L}}\frac12(1-\beta_i)=\frac n2+\frac12W_{I_{\mathcal L}}(\vec\beta)<\frac n2$$
    so $\mathcal L$ does not destabilize $\mathcal E$.

    Now suppose $\mathcal L=\mathcal O(-d)\into\mathcal E$ for $d>0$.  The highest possible slope $\mathcal L$ can have occurs when 
    $I_{\mathcal L}=[n]$, meaning $\mathcal L$ meets all the 1-dimensional levels $F_i$ of the flags in $\mathcal E$ nontrivially.  In this case, 
        $$\text{slope}\,\mathcal L=-d+\sum_{i=1}^n(1-\alpha_i)\!=\!-d+\sum_{i=1}^n\left(1-\frac12(1-\beta_i)\right)\!=-d+\frac n2+\frac{1}{2}W_{[n]}(\vec\beta)<-d + \frac n2  +1<\frac n2$$
    so again $\mathcal L$ does not destabilize $\mathcal E$.   
    
    This proves $\mathcal T\from\mathcal X(\vec\beta)\to\mathcal M^\text{Higgs}(\vec\alpha)$ is well-defined.
\end{proof}\
\begin{remark}\label[remark]{rmk: chamber condition is necessary}
    In the case where $W_{\{i\}}(\vec\beta)<0$ for all $i$, there always exists a stable hyperpolygon of the form $(\bx,\mathbf 0)$ whose corresponding Higgs bundles admit subbundles of the form $\mathcal L\cong\mathcal O(-1)\into\mathcal E$ such that $I_{\mathcal L}=[n]$.\footnote{
        Let $\mathcal L=\mathcal O(-1)\into E$ be any holomorphic embedding of the tautological bundle, and pick $x_i\in \mathcal L_{p_i}$ for each $i$.  Then $(\bx,\mathbf 0)$ is stable because no subset with $|I|\geq2$ is straight, and it has the desired property.
    }  Thus, the assumption $W_{[n]}(\vec\alpha)>(n-2)/2$ is necessary for this construction to preserve stability.  In fact, the set of problematic hyperpolygons is precisely the $\C^\times$-upward flow of this $(\bx,\mathbf0)$ (see \Cref{subsec: The U(1)-Action and Moment Map}).
\end{remark}
\begin{theorem}\label[theorem]{thm: image of T}
    The map $\mathcal T\from\mathcal X(\vec\beta)\to\mathcal M^\text{Higgs}(\vec\alpha)$ defined above is a bijection onto the locus of Higgs bundles with underlying holomorphic bundle isomorphic to $\mathcal O_{\C\P^1}^2$.
\end{theorem}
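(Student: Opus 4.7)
The plan is to break \Cref{thm: image of T} into two pieces---injectivity over the image, and surjectivity onto the locus of Higgs bundles with underlying bundle $\mathcal O_{\CP^1}^2$---both of which amount to setting up a careful dictionary between the data $(\bx,\by)$ and $(\mathcal E,\varphi)$, and then reversing the slope computation in \Cref{thm: map from hyperpolygon to Higgs}.

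For injectivity, I would start from a parabolic Higgs bundle isomorphism $g\from(\mathcal E_{(\bx,\by)},\varphi_{(\bx,\by)})\to(\mathcal E_{(\bx',\by')},\varphi_{(\bx',\by')})$ and show it descends from a $G_\C$-equivalence. Global automorphisms of $\mathcal O_{\CP^1}^2$ are constant, and compatibility with the fixed trivialization of $\det\mathcal E=\mathcal O$ reduces the group to $\SL(2,\C)$. Flag-preservation gives $gx_i=t_i x_i'$ for some $t_i\in\C^\times$, and matching residues $g(x_iy_i)g^{-1}=x_i'y_i'$ then forces $y_i'=t_i y_i g^{-1}$---precisely the $G_\C$-action on $X$. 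The stabilizer of this action on the stable locus is the diagonal $\Z/2$ already quotiented inside $G_\C$.

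For surjectivity, given a stable $(\mathcal E,\varphi)$ with $\mathcal E\simeq\mathcal O^2$ (trivialized so that $\det\mathcal E=\mathcal O$), each flag becomes a line $F_i\subset\C^2$; I would pick any nonzero $x_i\in F_i$ and use the strongly parabolic condition to factor $\mathrm{res}_{p_i}\varphi=x_iy_i$ with $y_i(x_i)=0$. The identity $y_i(x_i)=0$ is exactly $\mu_{\C^\times,i}(\bx,\by)=0$, and holomorphicity of $\varphi$ at $\infty$ yields $\sum_i x_iy_i=0$, i.e.\ $\mu_{\SL(2,\C)}(\bx,\by)=0$, so $(\bx,\by)$ is a hyperpolygon with every $x_i\ne0$. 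The residual ambiguity (the choice of trivialization and of each generator $x_i$) is absorbed exactly by the $G_\C$-action.

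The main step---and the one I expect to be the most delicate---is showing that this $(\bx,\by)$ is $\vec\beta$-stable. I would argue by contradiction: suppose there is a straight subset $I\subseteq[n]$ with $y_i=0$ for $i\notin I$ and $W_I(\vec\beta)>0$. Positivity of $\vec\beta$ forces $I\ne\emptyset$, so $L:=\langle x_i\rangle$ (for any $i\in I$) is a well-defined line, and $\mathcal L=L\otimes\mathcal O\into\mathcal E$ is $\varphi$-invariant (each residue is zero off $I$ and lands in $L$ on $I$). Since $I_{\mathcal L}\supseteq I$ and each $\beta_j>0$, enlarging $I$ to $I_{\mathcal L}$ only grows $W$, so $W_{I_{\mathcal L}}(\vec\beta)\geq W_I(\vec\beta)>0$; the slope formula from \Cref{thm: map from hyperpolygon to Higgs} then yields $\mathrm{slope}\,\mathcal L>n/2=\mathrm{slope}\,\mathcal E$, contradicting $\vec\alpha$-stability. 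The subtle point to keep track of is that positivity of the $\beta_j$ is essential for the monotonicity $I\subseteq I_{\mathcal L}\Rightarrow W_{I_{\mathcal L}}\geq W_I$; outside this chamber the argument would need to be reformulated.
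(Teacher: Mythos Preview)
Your proposal is correct and follows essentially the same route as the paper. The paper defers injectivity to \cite{GM11} while you sketch it directly; for surjectivity both arguments choose $x_i\in F_i$, factor the nilpotent residue as $\varphi_i=x_iy_i$, and then run the same destabilizing-line-bundle argument, with your monotonicity step $I\subseteq I_{\mathcal L}\Rightarrow W_{I_{\mathcal L}}(\vec\beta)\geq W_I(\vec\beta)$ being exactly the paper's inequality $\text{slope}\,\mathcal L\geq\frac n2+\frac12 W_I(\vec\beta)$ unpacked.
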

\begin{proof} Define \[\mathcal{M}_0^{\mathrm{Higgs}}(\vec \alpha)=\{[(\mathcal E, \varphi)] \in \mathcal{M}^{\mathrm{Higgs}}(\vec \alpha): (E, \delbar_E) \simeq O_{\C\P^1}^2 \text{ as holomorphic bundles}\}. \]
    We prove surjectivity and refer the reader to \cite{GM11} for the proof of injectivity.  Suppose $(\mathcal E,\mathcal F,\varphi)\in\mathcal M_0^\text{Higgs}(\vec\alpha)$.  Since there is a trivialization in which $\delbar_E = \delbar$, the Higgs field can be written as $\varphi=\sum\frac{\varphi_i}{z-p_i}\de z$ where each $\varphi_i\in\End_0(\mathcal E)$.  For all $i$, pick $x_i\in\Hom(\C,\C^2)\cong\C^2$ which spans $F_i\subset E_{p_i}$ and $y_i\in\Hom(\C^2,\C)$ such that $\varphi_i=x_iy_i$ (this can be done since $\varphi_i$ is nilpotent, and the resulting pair will have $y_ix_i=\tr(x_iy_i)=0$).  Since $\varphi$ is holomorphic at $\infty$ we have $\sum\varphi_i=\sum x_iy_i=0$, so $\mu_\C(\mathbf x,\mathbf y)=0$ and $(\mathbf x,\mathbf y)$ is a hyperpolygon. It is clear from the construction that $\mathcal T(\mathbf x,\mathbf y)=(\mathcal E,\varphi)$.

    We chose $x_i\ne0$ for all $i$, so to prove stability we need to show that if $I$ is a straight subset and $y_i=0$ for $i\notin I$, then $W_I(\vec\beta)<0$.  In this case, we have $F_i=\langle x_i\rangle=\langle x_j\rangle=F_j$ for all $i,j\in I$ and $\varphi_i=0$ for $i\notin I$.  If $I=\emptyset$ then $W_I<0$ is automatic, so assume there is some $i_0\in I$.  The line bundle $L\subset E$ given by the constant section $z\mapsto x_{i_0}\in E_{p_{i_0}}$ has degree 0, intersects $F_i$ nontrivially for all $i\in I$, and is preserved by $\varphi$ as $\varphi=\sum_{i\in I}\frac{x_iy_i}{z-p_i}\de z$.  Give $\mathcal L=L$ the induced parabolic structure from $\mathcal E$.  Then
        $$\text{slope}\,\mathcal L=\text{pardeg}\,\mathcal L\geq\sum_{i\in I}(1-\alpha_i)+\sum_{i\notin I}\alpha_i=\frac n2+\frac12 W_I(\vec\beta).$$
    We assumed $(\mathcal E, \mathcal F,\varphi)$ is stable, so $\text{slope}\,\mathcal L<\text{slope}\,\mathcal E=\frac n2$ and thus $W_I(\vec\beta)<0$.  This proves $(\mathbf x,\mathbf y)$ is a $\vec\beta$-stable hyperpolygon.  
\end{proof}

The key result about this map is 
\begin{theorem}[\cite{BFGM15}]\label[theorem]{thm: T preserves holomorphic symplectic forms}
The holomorphic symplectic structure $(J_1, \Omega_{J_1})$ on the space $\mathcal{M}^{\mathrm{Higgs}}(\vec \alpha)$ and the 
holomorphic symplectic structure $(J^{HP}_1, \Omega^{HP}_{J_1})$ on $\mathcal{X}(\vec \beta)$ are related by
\[\mathcal{T}^*(J_1, \Omega_{J_1}) = (J^{HP}_1,2\pi\Omega^{HP}_{J_1}).\]
\end{theorem}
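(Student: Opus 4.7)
The plan is to verify both $J_1$-compatibility and the symplectic identity by lifting $\mathcal T$ to the pre-quotient level and computing on explicit tangent vector representatives. Both moduli spaces arise as holomorphic symplectic quotients: $\mathcal X(\vec\beta) = X\twoslash_{(0,\vec\beta)} G_\C$ with ambient form $\Omega_X = \sum_i \de y_i\wedge \de x_i$ on $X = T^*\C^{2n}$, and $\mathcal M^{\mathrm{Higgs}}(\vec\alpha) = \mathcal A\twoslash_{(0,\vec\alpha)}\mathfrak G_\C$ with ambient form $\Omega_{\mathcal A}((\dot\eta_1,\dot\varphi_1),(\dot\eta_2,\dot\varphi_2)) = -\int_C \tr(\dot\eta_1\wedge \dot\varphi_2 - \dot\eta_2 \wedge \dot\varphi_1)$.

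The $J_1$-compatibility is immediate, since the recipe $(\bx,\by)\mapsto(\mathcal E_{(\bx,\by)},\varphi_{(\bx,\by)})$ is defined by manifestly holomorphic data: the underlying holomorphic structure $\delbar$ on $\mathcal O_{\CP^1}^2$ is fixed, the flags $\langle x_i\rangle$ depend holomorphically on $\bx$, and the Higgs field $\varphi_{(\bx,\by)} = \sum_i \frac{x_i y_i}{z-p_i}\,\de z$ is polynomial in $(\bx,\by)$. Hence $\mathcal T$ is a morphism of complex manifolds, and the holomorphic symplectic forms on both sides are of type $(2,0)$ with respect to their respective complex structures.

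For the symplectic identity, fix a stable hyperpolygon $(\bx,\by)$ and deformations $(\dot\bx_k,\dot\by_k)$, $k = 1,2$, in the kernel of $d\mu_\C$. The naive pushforward $(0,\dot\varphi_k^0)$ with $\dot\varphi_k^0 = \sum_i\frac{\dot x_{k,i}y_i + x_i \dot y_{k,i}}{z-p_i}\de z$ moves the flag at each $p_i$ and so does not represent a tangent vector with the fixed parabolic structure. The fix is to choose smooth complex gauge transformations $\dot g_k$ compactly supported in small disks around each puncture, satisfying $\dot g_k(p_i)\,x_i\equiv \dot x_{k,i}\pmod{\langle x_i\rangle}$; the gauge-corrected representative becomes $(\dot\eta_k,\tilde\varphi_k) = (\bar\partial\dot g_k,\,\dot\varphi_k^0 - [\dot g_k,\varphi])$. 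Substituting into $\Omega_{\mathcal A}$ and applying Stokes' theorem on $C\sm\bigcup_i D_\eps(p_i)$ as $\eps \to 0$, every term reduces to a contour integral around each $p_i$ of a meromorphic $1$-form with simple poles. The Cauchy residue theorem extracts a factor of $2\pi i$ at each pole, and the moment-map equation $y_ix_i = 0$ together with its linearization $\dot y_{k,i}x_i + y_i\dot x_{k,i} = 0$ cancel all gauge-dependent cross-terms, leaving
\eq{\mathcal T^*\Omega_{J_1} = 2\pi\sum_i(\dot y_{1,i}\dot x_{2,i} - \dot y_{2,i}\dot x_{1,i}) = 2\pi\,\Omega_X((\dot\bx_1,\dot\by_1),(\dot\bx_2,\dot\by_2)),}
which descends to $\mathcal T^*\Omega_{J_1} = 2\pi\,\Omega^{HP}_{J_1}$ on the quotients.

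The main obstacle is the combinatorial cancellation of gauge-dependent contributions after antisymmetrization, which crucially uses the moment-map constraints and their linearizations: the unconstrained ``diagonal'' freedom in $\dot g_k(p_i)$ (i.e., the ambiguity $\dot g_k(p_i)x_i = \dot x_{k,i} + c_{k,i}x_i$) would otherwise leave the answer apparently dependent on the gauge choice, so verifying the cancellation is where most of the bookkeeping lies. Modulo this, the $2\pi$ factor is a direct manifestation of the Cauchy residue $\oint\frac{\de z}{z-p_i} = 2\pi i$.
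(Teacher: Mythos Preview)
The paper does not give its own proof of this statement; it is cited from \cite{BFGM15}. Your outline is essentially the argument carried out there: pass to pre-quotient representatives on both sides, trade the flag deformation for a deformation of the holomorphic structure via a compactly supported smooth infinitesimal gauge element near each puncture, and reduce the pairing $\Omega_{\mathcal A}$ to a sum of residues, which produces the factor $2\pi$.

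Two points of care are worth flagging. First, check the sign in your condition on $\dot g_k(p_i)$ against the paper's own $\dot\nu_{\text{flags}}$ in Proposition~\ref{prop:nuflags}, where the normalization is $-\dot\nu_{\text{flags},i}x_i=\dot x_i$ rather than $+\dot x_i$; this sign feeds directly into the residue bookkeeping. Second, the cross-terms of the form $\int_C\tr(\delbar\dot g_1\wedge[\dot g_2,\varphi])$ deserve a line of their own: because each $\dot g_k$ is constant on a small neighborhood of $p_i$, the form $\delbar\dot g_k$ is supported on an annulus where $\varphi$ is holomorphic and bounded, so Stokes on the annulus together with cyclicity of trace (rewriting $\tr(\dot g_1[\dot g_2,\varphi])=\tr([\dot g_1,\dot g_2]\varphi)$) shows these contributions cancel after antisymmetrization in $k=1,2$. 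Once those are dispatched, the diagonal ambiguity $c_{k,i}$ in $\dot g_k(p_i)x_i=\pm\dot x_{k,i}+c_{k,i}x_i$ drops out via $y_ix_i=0$ and its linearization $\dot y_{k,i}x_i+y_i\dot x_{k,i}=0$, exactly as you anticipate.
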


\begin{remark}\label[remark]{rem: our alpha convention}
    Varying $\vec\alpha$ and $\vec\beta$ within their chambers has no effect on the stability of Higgs bundles and hyperpolygons, so the map $\mathcal T$ remains well-defined.  It this case, we'll say $\vec\alpha$ and $\vec\beta$ are in \emph{corresponding chambers}.  In the above, we followed the conventions of \cite{GM11,BFGM15} and used $\alpha_i=\frac12(1-\beta_i)$ for all $i$.  Going forward, we shall instead be interested in setting $\alpha_i(R)=\frac12-R\beta_i$ for any generic $\vec\beta$ and $R>0$ small enough that $W_{[n]}(\vec\alpha(R))>(n-2)/2$.  In this case, the map $\mathcal T\from\mathcal X(\vec\beta)\to\mathcal M^\text{Higgs}(\vec\alpha(R))$ is still well-defined.
\end{remark}

\subsection{Induced Map on Tangent Spaces}\label[section]{sec:mapontan}

Let $\vec\beta \in (0, \infty)^n$ be generic and let $\vec\alpha$ be in the corresponding chamber as discussed in \Cref{rem: our alpha convention}.  The map $\mathcal T\from\mathcal X(\vec\beta)\to\mathcal M^\text{Higgs}(\vec\alpha)$ induces the map of tangent bundles 
\begin{equation}
\begin{array}{lrlrl}d\mathcal{T}&: &T\mathcal{X}(\vec \beta) &\to &T\mathcal{M}^{\mathrm{Higgs}}(\vec \alpha) \\ \nonumber 
d\mathcal{T}\Big|_{(\bx, \by)} &: &T_{(\bx, \by)} \mathcal{X}(\vec \beta) &\to &T_{\mathcal{T}(\bx, \by)}\mathcal{M}^{\mathrm{Higgs}}(\vec \alpha) \\ \nonumber 
& &(\dot\bx,\dot\by) &\mapsto &(\dot{\eta}, \dot{\mathcal{F}}, \dot{\varphi})
\end{array}
\end{equation}

\begin{lemma}
Given $(\dot \bx, \dot \by) \in T_{(\bx, \by)} \mathcal{X}(\vec \beta)$, the associated $(\dot{\eta}, \dot{\mathcal{F}}, \dot{\varphi})$ has $\dot{\eta}=0$, 
\[ \dot{\varphi} = \sum_{i=1}^n \frac{\dot{x}_i y_i + x_i \dot y_i}{z-p_i} \de z.\]
The flag $F_i=\langle x_i \rangle$ deforms by $\dot{F}_i = \langle\dot{x}_i \rangle,$ i.e. 
\begin{align*}
    \mathrm{res}_{p_i}(\varphi + \eps \dot{\varphi})(x_i + \eps \dot{x_i})= O(\eps^2).
\end{align*}
\end{lemma}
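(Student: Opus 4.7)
The plan is to differentiate the explicit formula defining $\mathcal T$ termwise and invoke the (linearized) hyperpolygon moment map equations from \Cref{prop: hyperpolygon space tangent space} to verify consistency with the parabolic structure. First I would note that, by construction, $\mathcal T$ lands entirely in the locus of Higgs bundles whose underlying holomorphic bundle is the trivial rank-two bundle equipped with the standard $\delbar$-operator. Thus any smooth curve $(\bx(\eps), \by(\eps))$ in $\mathcal X(\vec\beta)$ produces a path of Higgs bundles whose holomorphic structures are all literally equal to $\delbar$, and differentiating yields $\dot\eta = 0$ on the nose. Differentiating the formula $\varphi_{(\bx, \by)} = \sum_i \frac{x_i y_i}{z-p_i} \de z$ in $\eps$ at $\eps=0$ gives the claimed expression for $\dot\varphi$.

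For the flag deformation, the curve of flags $F_i(\eps) := \langle x_i + \eps \dot{x}_i \rangle \subset E_{p_i}$ has first-order tangent vector $\dot F_i = \langle \dot{x}_i \rangle$, interpreted as a class in $\Hom(F_i, E_{p_i}/F_i)$.

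To verify the compatibility condition $\mathrm{res}_{p_i}(\varphi + \eps\dot\varphi)(x_i + \eps\dot{x}_i) = O(\eps^2)$---which encodes the strongly parabolic condition for the deformed pair, namely that the deformed residue kills the deformed flag---I would expand:
\begin{align*}
    \bigl(x_i y_i + \eps(\dot{x}_i y_i + x_i \dot{y}_i)\bigr)(x_i + \eps\dot{x}_i)
    &= x_i(y_i x_i) + \eps\bigl[\dot{x}_i(y_i x_i) + x_i(\dot{y}_i x_i + y_i \dot{x}_i)\bigr] + O(\eps^2).
\end{align*}
The scalar $y_i x_i$ vanishes by the complex moment map condition $\mu_{\C^\times, i}(\bx, \by) = 0$ from \eqref{eqn: HP complex moment maps}, and the scalar $\dot{y}_i x_i + y_i \dot{x}_i$ vanishes by the linearized equation $\de\mu_{\C^\times, i}(\dot\bx, \dot\by) = 0$ from \eqref{eqn: linearized moment map}. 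The argument is essentially algebraic, so there is no substantial obstacle; the one subtlety is simply recognizing that the flag tangent vector is represented by $\dot{x}_i$ modulo $F_i$, and that the $O(\eps)$ cancellation of the residue on the deformed flag is precisely the content of the $\C^\times$ factor of the linearized complex moment map.
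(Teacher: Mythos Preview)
Your proposal is correct and follows essentially the same approach as the paper: view the tangent vector as a first-order family, differentiate the explicit formula for $\varphi$, and verify the residue condition by expanding and invoking $y_ix_i=0$ together with the linearized $\C^\times$ moment map equation $\dot y_ix_i+y_i\dot x_i=0$. The only difference is cosmetic: you group the $O(\eps)$ terms as $\dot x_i(y_ix_i)+x_i(\dot y_ix_i+y_i\dot x_i)$ whereas the paper writes $x_iy_i\dot x_i+x_i\dot y_ix_i$ after first dropping the $\dot x_iy_ix_i$ term, but the same two identities do the work in both cases.
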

\begin{proof}
In this case, is is convenient to view 
$(\dot \bx, \dot \by) \in T_{(\bx, \by)} \mathcal{X}(\vec \beta)$ 
 as the family
\[ (\bx_\eps, \by_\eps) = (\bx + \eps \dot \bx, \by + \eps \dot \by) \]
solving the hyperpolygon equations to first order in $\eps$, and differentiate.
Then, $\dot{\varphi}$ is as claimed.
Note that in the analytic framework of \Cref{sec:hitchinhk}, the flags are fixed, and all deformations can be captured by the change of holomorphic structure $\dot{\eta}$.  In contrast, here, the underlying holomorphic bundle doesn't change, i.e. $\dot{\eta}=0$; the deformation of the flag is tracked by the following computation: \begin{align*}
    \mathrm{res}_{p_i}(\varphi + \eps \dot{\varphi})(x_i + \eps \dot{x_i})
        &=(x_i y_i + \eps \dot{x}_i y_i + \eps x_i \dot{y_i})(x_i + \eps \dot{x_i})\\
        &=\eps\left(x_iy_i\dot x_i+x_i\dot y_ix_i\right) + O(\eps^2)\\
        &=O(\eps^2),
\end{align*}
where the second equality uses $y_ix_i=0$ and the last equality uses $\dot y_ix_i+y_i\dot x_i=0$ from \eqref{eqn: linearized moment map}.
\end{proof}
\begin{proposition}\label[proposition]{prop:nuflags}
Define the infinitesimal smooth gauge transformation 
\begin{equation}\label{eq:nuflags}
    \dot{\nu}_{\mathrm{flags}}:=\sum_{i} \chi_i \dot{\nu}_{\mathrm{flags}, i} \qquad \dot{\nu}_{\mathrm{flags},i}=-\frac{(\dot{x}_i x_i^\dagger)^\perp}{|x_i|^2}- x_i^\dagger \dot{x}_i \frac{(x_i x_i^\dagger)^\perp}{|x_i|^4},
\end{equation}
where $\chi_i$ is a bump function that is $1$ on $B_{\delta}(p_i)$ and $0$ outside $B_{2\delta}(p_i)$, and $\delta$ is small enough so that $\{B_{2 \delta}(p_i)\}$ do not intersect.
Note that $\dot{\eta}- \delbar_E \dot{\nu}_{\mathrm{flags}}$ vanishes on $B_{\delta}(p_i)$; consequently, the residue of the Higgs field deformation is defined. With this $\dot{\nu}_{\mathrm{flags}, i}$, the deformation of the flags with respect to $\dot{\varphi} + [\dot{\nu}_{\mathrm{flags}}, \varphi]$ is trivial, i.e.
\[     \mathrm{res}_{p_i} \left( \varphi + \eps (\dot{\varphi} + [\dot{\nu}_{\mathrm{flags}}, \varphi]) \right)(x_i) =O(\eps^2). \]
\end{proposition}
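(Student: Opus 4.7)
The plan is to reduce the proposition to two computations: (a) verifying that $\dot\eta - \delbar_E \dot\nu_{\mathrm{flags}}$ vanishes on $B_\delta(p_i)$ so that the residue of the gauged deformation makes sense; and (b) showing that $\dot\nu_{\mathrm{flags},i} x_i = -\dot x_i$, which together with the complex moment map equation and its linearization forces the vanishing at order $\varepsilon$.

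For (a), the preceding lemma gives $\dot\eta = 0$, and on $B_\delta(p_i)$ the cutoff $\chi_i \equiv 1$ makes $\dot\nu_{\mathrm{flags}} = \dot\nu_{\mathrm{flags},i}$ a constant matrix, hence $\delbar_E \dot\nu_{\mathrm{flags}} = 0$ there. This justifies taking residues of $\dot\varphi + [\dot\nu_{\mathrm{flags}}, \varphi]$ at $p_i$.

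For (b), first compute
\[
\mathrm{res}_{p_i}\!\big(\varphi + \varepsilon(\dot\varphi + [\dot\nu_{\mathrm{flags}}, \varphi])\big)(x_i) = x_i y_i x_i + \varepsilon\!\left((\dot x_i y_i + x_i \dot y_i)x_i + [\dot\nu_{\mathrm{flags},i}, x_iy_i]x_i\right).
\]
The $\varepsilon^0$ term vanishes by $y_i x_i = 0$ (from $\mu_{\C,i}(\bx,\by)=0$). In the $\varepsilon^1$ term, the same identity kills $\dot x_i (y_ix_i)$ and $\dot\nu_{\mathrm{flags},i}(x_iy_ix_i)$, leaving $x_i(\dot y_i x_i) - x_i y_i \dot\nu_{\mathrm{flags},i} x_i$. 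Apply the linearized complex moment map \eqref{eqn: linearized moment map} to replace $\dot y_i x_i = -y_i\dot x_i$, giving
\[
-\,x_i y_i\!\left(\dot x_i + \dot\nu_{\mathrm{flags},i} x_i\right).
\]
It now suffices to show that $\dot x_i + \dot\nu_{\mathrm{flags},i} x_i \in \ker y_i$; I will in fact prove the stronger statement that this expression vanishes.

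This last step is a direct calculation using $A^\perp = A - \tfrac{1}{2}\tr(A)\mathrm{Id}$, together with $\tr(x_ix_i^\dagger) = |x_i|^2$ and $\tr(\dot x_i x_i^\dagger) = x_i^\dagger \dot x_i$. One obtains $(\dot x_i x_i^\dagger)^\perp x_i = |x_i|^2 \dot x_i - \tfrac{1}{2}(x_i^\dagger \dot x_i)x_i$ and $(x_i x_i^\dagger)^\perp x_i = \tfrac{1}{2}|x_i|^2 x_i$; substituting these into \eqref{eq:nuflags} yields $\dot\nu_{\mathrm{flags},i} x_i = -\dot x_i$, which closes the proof. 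There is no real obstacle here beyond careful bookkeeping of the $^\perp$ projection; the main conceptual point is that the two terms in the definition of $\dot\nu_{\mathrm{flags},i}$ are precisely engineered so that the trace-part corrections cancel against the component of $\dot x_i$ parallel to $x_i$, leaving the exact identity $\dot\nu_{\mathrm{flags},i} x_i = -\dot x_i$ rather than merely a congruence modulo $\langle x_i\rangle$.
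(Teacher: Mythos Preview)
Your proof is correct and follows essentially the same approach as the paper: both establish the key identity $\dot\nu_{\mathrm{flags},i}\,x_i=-\dot x_i$ by expanding the trace-free projections, and then combine this with $y_ix_i=0$ and the linearized moment map $\dot y_ix_i+y_i\dot x_i=0$ to show the $\varepsilon$-order term vanishes. The only cosmetic difference is the order of substitution---the paper inserts $\dot\nu_{\mathrm{flags},i}\,x_i=-\dot x_i$ first and then cancels, while you first simplify using $y_ix_i=0$ and reduce to $-x_iy_i(\dot x_i+\dot\nu_{\mathrm{flags},i}x_i)$---but the content is identical.
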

\begin{proof}
  Notice that
\begin{align}
-\dot\nu_{\text{flags},i}x_i
    &=\left(\frac{(\dot{x}_i x_i^\dagger)^\perp}{|x_i|^2}+ x_i^\dagger \dot{x}_i \frac{(x_i x_i^\dagger)^\perp}{|x_i|^4}\right)x_i
        \notag\\
    &=\frac{\dot x_ix_i^\dagger x_i}{|x_i|^2}-\frac{\tr(\dot x_ix_i^\dagger)}{2|x_i|^2}x_i+x_i^\dagger\dot x_i\frac{x_ix_i^\dagger x_i}{|x_i|^4}-x_i^\dagger\dot x_i\frac{\tr(x_ix_i^\dagger)}{2|x_i|^4}x_i
        \notag\\
    &=\dot x_i+ \left(-\frac{x_i^\dagger\dot x_i}{2|x_i|^2}+\frac{x_i^\dagger\dot x_i}{|x_i|^2}-\frac{x_i^\dagger\dot x_i}{2|x_i|^2}\right)x_i
        \notag\\
    &=\dot x_i,
\end{align}
so $-\dot\nu_{\text{flags},i}$ takes $x_i$ to $\dot x_i$, and thus $F_i$ to $\dot F_i$.
 Consequently, it makes sense to talk about the residue of the Higgs field deformation: 
 \begin{align*}
   \left(\mathrm{res}_{p_i} (\dot{\varphi} + [\dot{\nu}_{\mathrm{flags}}, \varphi]\right)(x_i)
        &=\left(\dot{\varphi}_i + [\dot{\nu}_{\mathrm{flags}, i}, \varphi_i]\right)(x_i)\\
    &=\left(\dot x_iy_i+x_i\dot y_i+\dot\nu_{\text{flags},i}x_iy_i-x_iy_i\dot\nu_{\text{flags},i}\right)(x_i)\\
    &=\dot{x_i}y_ix_i + x_i\dot y_ix_i -\dot{x_i} y_i x_i + x_i y_i \dot{x_i}\\
    &=0.
\end{align*}
\end{proof}

\begin{remark}[Notation: $\dot{\gamma}$ versus $\dot{\nu}$]
We will only use $\dot{\gamma}$ for infinitesimal gauge transformations which are in the appropriate analytic space described in \Cref{sec:hitchinhk}. We will use $\dot{\gamma}$ for more generic infinitesimal gauge transformations, i.e. those that infinitesimally change the flag. 
\end{remark}

\begin{remark}\label{rem: nu flag simple form}
    Fix an $i$.  We drop that subscript $x:=x_i$, $y:=y_i$ in favor of writing the components as $x=(x_1,0)^\intercal$ and $y=(0,y_2)$ in an appropriate frame, and similarly write $\dot x=(\dot x_1,\dot x_2)^\intercal$ and $\dot y=(\dot y_1,\dot y_2)$.  Then \eqref{eq:nuflags} becomes
    \begin{equation}\label{eqn: nu flag simple form}
        \dot\nu_{\text{flags},i}=\begin{pmatrix}-\frac{\dot x_1}{2x_1}&0\\-\frac{\dot x_2}{x_1}&\frac{\dot x_1}{2x}\end{pmatrix}+\begin{pmatrix}-\frac{\dot x_1}{2x_1}&0\\0&\frac{\dot x_1}{2x_1}\end{pmatrix}=\begin{pmatrix}-\frac{\dot x_1}{x_1}&0\\-\frac{\dot x_2}{x_1}&\frac{\dot x_1}{x_1}\end{pmatrix}.
    \end{equation}
    Returning the subscript $i$, now it is clear to see $-\dot\nu_{\text{flags,i}}x_i=\dot x_i$.
\end{remark}

\bigskip

In the rest of this paper, we write $\vec \alpha$ as a function of $R$.
We define the family of parabolic weights 
\begin{equation}
\alpha_i(R)=\frac12-R\beta_i, 
\end{equation}
which as $R\to0$ approaches the vertex $(\frac12,\frac12,\cdots,\frac12)$.  Here $R$ is the same rescaling factor in Hitchin's equation \eqref{eq:Hitchin}.  For shorthand, we shall denote the distinguished real symplectic form $\omega_{J_1,R,\vec\alpha(R)}$ on $\mathcal M_R(\vec\alpha(R))$ by $\omega_R$.  
Let $R_{\max}>0$ be the solution to \begin{equation}W_{[n]}(\vec\alpha(R_{\max}))=(n-2)/2,\end{equation}
 so that $\mathcal T\from\mathcal X(\vec\beta)\to\mathcal M^\text{Higgs}(\vec\alpha(R))$ is well-defined for all $R\in(0,R_{\max})$.

\bigskip

Define $\mathcal{T}_R: \mathcal{X}(\vec \beta) \to \mathcal{M}_R(\vec \alpha(R))$ via the following composition:
\begin{equation}\label{eq:TR}
\begin{array}{l l c l c l c l}
\mathcal{T}_R : &\mathcal{X}(\vec \beta) &\overset{\mathcal{T}}{\rightarrow} 
&\mathcal{M}^{\mathrm{Higgs}}(\vec \alpha) &\overset{\mathrm{id}}{\rightarrow} 
&\mathcal{M}^{\mathrm{Higgs}}(\vec \alpha(R)) &\overset{\mathrm{NAHC}_R}{\rightarrow} 
&\mathcal{M}_R(\vec \alpha(R))\\
&(\bx, \by) &\mapsto &(\mathcal E_{(\bx, \by)}, \varphi_{(\bx, \by)}) &\mapsto &(\mathcal E_{(\bx, \by)}, \varphi_{(\bx, \by)})&\mapsto&(\mathcal E_{(\bx, \by)}, \varphi_{(\bx, \by)}, h_R)
\end{array}
\end{equation}
We note that since the family $\vec \alpha(R)$ for $R \in (0,R_{\max})$ stays within a chamber, $\vec \alpha$-stable Higgs bundles coincide with $\vec \alpha(R)$-stable Higgs bundles, thus the identity map appears above.

In order to prove \Cref{thm: HK degeneration}, we need to evaluate the pullback $\mathcal T_R^*g_R$ on a deformation $(\dot\bx,\dot\by)\in T_{(\bx,\by)}\mathcal X(\vec\beta)$.  This is done by \emph{pushing forward} the unitary deformation $(\dot\bx,\dot\by)$ to the harmonic deformation of $\mathcal T(\bx,\by)$
in $\mathcal M_R(\vec\alpha(R))$. 
Let $(\dot\bx, \dot \by)$ be a unitary deformation of $(\bx, \by)$. 
In light of  \Cref{prop:nuflags}, the 
harmonic representative with background metric $h_R$ is 
\begin{equation}\label{eq:harmtan} \mathtt{H}_R := R^{-1/2}(-\delbar_E \dot{\nu}_R) + R^{1/2}(\dot{\varphi} - [\varphi, \dot{\nu}_R]), \end{equation}
for 
\[ \dot{\nu}_R = \dot{\nu}_{\mathrm{flags}} + \dot{\gamma}_{\mathrm{cor},R}, \]
where the correction term $\dot{\gamma}_{\mathrm{cor}, R}$ in the space of analytic infinitesimal gauge transformations satisfies \eqref{eq:ingaugetriple}
\begin{equation}
 \mathbb{D}'_{R, h_R} \mathbb{D}''_{R} \dot{\gamma}_{\mathrm{cor},R} = \mathbb{D}'_{R, h_R}(R^{-1/2} (\dot{\eta}- \delbar_E \dot{\nu}_{\mathrm{flags}})+ R^{1/2} (\dot{\varphi} +[\dot{\nu}_{\mathrm{flags}}, \varphi])),
\end{equation}
We will often write that 
\begin{equation}
 \mathbb{D}'_{R, h_R} \mathbb{D}''_{R} \dot{\nu}_R= \mathbb{D}'_{R, h_R}(R^{-1/2} \dot{\eta}+ R^{1/2} \dot{\varphi}),
\end{equation}
though this is more informal since $\dot{\nu}_R$ does not lie in a fixed function space. Expanding this, and using that $\delbar_E=\delbar$ and $\dot{\eta}=0$, we have
\begin{equation}\label{eqn: Coulomb gauge equation}
    0=R^{-1}\del^{h_R}\delbar\dot\nu_R-R[\varphi^{\dagger_{h_R}},\dot\varphi+[\dot\nu_R,\varphi]].
\end{equation}
Because of $\dot{\nu}_{\mathrm{flags}}$, the expression for the hyperk\"ahler metric typically now has a boundary term:
\begin{align}\label{eq:hk expression}
    \|(\dot\varphi,\dot \eta, \dot\gamma_R)\|_{g_R}^2
   &=\int_{\P^1}R^{-1}|\dot{\eta} - \delbar\dot\nu_R|_{h_R}^2+R|\dot\varphi+[\dot\nu_R,\varphi]|_{h_R}^2 
            \notag\\
        &=\int_{\P^1}R^{-1}\langle \dot{\eta}, \dot{\eta} - \delbar\dot\nu_R \rangle_{h_R}^2+R\langle\dot{\varphi}, \dot\varphi+[\dot\nu_R,\varphi]\rangle_{h_R}^2 +R^{-1} d\langle -\dot{\nu}_R, \dot{\eta}-\delbar\dot{\nu}_R \rangle
            \notag\\
        &=\sum_{i=1}^n\left(\lim_{\delta'\to0}\int_{\partial B_{\delta'}(p_i)}R^{-1}\langle\dot\nu_R,\delbar\dot\nu_R\rangle_{h_R} \right)+\int_{\mathbb{P}^1}R\langle\dot\varphi,\dot\varphi+[\dot\nu_R,\varphi]\rangle_{h_R}.
\end{align}

\begin{remark}[Background Metric $h_0=\mathrm{Id}$]\label[remark]{rem:backgroundh0}
In much of the paper, it will be convenient to fix the background metric $h_0=\mathrm{Id}$ on the trivial complex vector bundle $E$ underlying $\mathcal{O} \oplus \mathcal{O}$. Let $h_R^{1/2}$ be an $h_0$-hermitian matrix such that in the above trivialization, $h_R( \cdot, \cdot) = h_0(h_R^{1/2}, h_R^{1/2})$.
The $h_0$-unitary pair solving the $R$-rescaled Hitchin's equations is $(\nabla, \Psi)$ with 
\[ \nabla^{0,1} =  h_R^{1/2}  \circ \delbar \circ h_R^{-1/2} ,  \qquad \Psi^{1,0}= h_R^{1/2} \varphi  h_R^{-1/2}.
 \]
The associated harmonic representative of the tangent space is similarly
\begin{equation} \label{eq:defun}
 \dot \nabla^{0,1}
=h_R^{1/2}  \left( \dot{\eta} - \delbar_E \dot{\gamma}_R \right) h_R^{-1/2} \qquad \dot{\Psi}^{1,0} = h_R^{1/2} \left(\dot{\varphi} + [\dot{\gamma}_R, \varphi] \right)  h_R^{-1/2}.
\end{equation}
\end{remark}

\subsection{Relevant Analytic Setup}\label[section]{sec:analytic}

We begin by defining the basic $b$-Sobolev spaces on $C$. Let $D$ be a finite set of points on $C$. For each $p \in D$, let $z_p$ be a holomorphic coordinate centered at $p \in D$ and introduce polar coordinates $z_p = r e^{i \theta}$.
We define the space of $b$-vector fields $\mathcal{V}_b$ so that near 
 each point $p \in D$, $\mathcal{V}_b$  is the span over  smooth functions on the blowup $C_P$ of $C$ at $P$ of the generating $b$-vector fields
$r\partial_r$ and $\partial_\theta$. 
The weighted $b$-Sobolev spaces with $\ell$ derivatives and weight $\delta$ are defined by 
\[
L_\delta^{\ell,2} = \{u = r^\delta v: 
V_1 \cdots V_j v \in L^2\ \text{ for all } j \leq \ell \mbox{ and } V_i \in \mathcal{V}_b\}.
\]
We refer the reader to \cite{MazzeoWeiss} for a highly readable extended discussion of the $b$-calculus in a similar context.\footnote{For the uninitiated, the 
first thing to observe is that the Laplacian $\nabla = \del_x^2 + \del_y^2$ on a punctured disk in $\R^2$ can be written in terms of these $b$-vector fields as $r^{-2}\left((r \del_r)^2 + \del_\theta^2\right)$.
}

This definition adapts to sections of vector bundles over $C$.  
In particular, let $\mathcal D_C$ be the sheaf of differential operators generated by the $b$-vector fields $\mathcal V_b$ and let $\mathscr S$ be a $\mathcal D_C$-module equipped with a norm $|\cdot|\from\mathscr S\to \mathcal C_{\geq0}^0$ taking sections of $\mathscr S$ to positive functions.  Then for $\delta>0$ and $U\subseteq C$, we use $L_\delta^{\ell,p}(U,\mathscr S)$ to denote the $b$-weighted Sobolev completion of $\mathscr S(U)$.
For the space of global sections we write $L_\delta^{k,p}(\mathscr S)=L_\delta^{k,p}(C,\mathscr S)$.

\begin{remark}The weighted Sobolev spaces in \cite{LockhartMcOwen} and used \cite{CFW24} are very similar to the standard $b$-calculus weighted Sobolev spaces defined above. In Lockhart--McOwen, one blows up $p_i \in D$, replacing the neighborhood with an infinite cylindrical end ($t \to \infty$) with coordinate $t$ related to $r$ by $-t=\log r$. Differentiation is done with respect to $t$. Note that 
 $\del_t = \frac{\del r}{\del_t} \del_r = -r \del_r$, so $\del_t$ is a $b$-vector field. The $b$-calculus framework is slightly more general that Lockhart--McOwen's set up. In Lockhart--McOwen's set up, one fixes $\nabla_0$ such that on the infinite cylinder, $\nabla_0$ is exactly
 \[ \nabla_0 = \de + \sqrt{-1}\begin{pmatrix} 1-\alpha_i & 0 \\ 0 & \alpha_i \end{pmatrix}d \theta.\]
Given an $R$-harmonic bundle $(\delbar_E, \varphi, h_R)$, one might like to take $\nabla_0$ to be the associated Chern connection in an $h_R$-unitary frame, however this Chern connection differs from $\nabla_0$ by non-zero subleading terms. The $b$-calculus permits such subleading terms.
\end{remark}

\begin{remark}
    In the construction of the moduli space discussed \Cref{sec:hitchinhk}, one looks at the indicial roots of $\nabla_0$  (or the Chern connection associated to an $R$-harmonic bundle), and picks $\delta$ to lie in the interval adjacent to $0$ (e.g. see \cite[Remark 3.2]{CFW24}). Here, that first positive indicial root is at $1-2 \alpha_i$. Consequently, the spaces one uses to construct the moduli space depends on the parabolic weight. Consequently, if $\alpha_i = \frac{1}{2} - R \beta_i$, the first positive indicial root converges to $0$ as $R \to 0$.
\end{remark}

\section{\texorpdfstring{Interlude: Moment Map at $\C^\times$-Fixed Points}{Interlude: Moment Map at C*-Fixed Points}}
\label{sec: Torelli Numbers}

The hyperk\"ahler spaces $\mathcal X(\vec \beta) = X\fourslash_{(0, \vec \beta)} G$ and $\mathcal M_R(\vec \alpha)$ inherit $U(1)$ actions, and the moment maps of these actions with respect to the relevant K\"ahler form ($\omega_{J_1}^{HP}$ or $\omega_{J_1, R}$, respectively) are Morse-Bott functions. We compute the values of the moment map at the $U(1)$-fixed points.  In \Cref{thm: Morse functions agree at fixed points}
we prove that for generic parameters $\vec\beta\in(0,\infty)^n$ and parabolic weights $\alpha_i(R) := \frac{1}{2} - R \beta_i$ for $R\in(0,R_{\max})$, we have at each $U(1)$-fixed point $(\bx,\by)$
\[ M_R(\mathcal{T}_R(\bx, \by)) = 2 \pi M(\bx, \by)\]
(recall $R_{\max}$ solves $W_{[n]}(\alpha(R_{\max}))=(n-2)/2$).  Then, we specialize to the case $n=4$.  In this case, both spaces are of real dimension four.
We describe the topology of these two spaces, and then use action-angle coordinates and the values of the moment map to integrate the respective symplectic forms over degree 2 homology generators on both spaces and compare.

\subsection{\texorpdfstring{The $\U(1)$-Action and Moment Map}{The U(1)-Action and Moment Map}}
\label{subsec: The U(1)-Action and Moment Map}
The group $\C^\times$ acts on $\mathcal X(\vec\beta)$ by $\lambda\cdot(\bx,\by)=(\bx,\lambda\by)$ and on $\mathcal M^{\mathrm{Higgs}}(\vec\alpha)$ by $\lambda\cdot(\mathcal E,\varphi)=(\mathcal E,\lambda\varphi)$.  It is easy to verify that these actions preserves $\vec\beta$- and $\vec\alpha$-stability respectively, and $\mathcal T\from\mathcal X(\vec\beta)\to\mathcal M^{\mathrm{Higgs}}(\vec\alpha)$ is $\C^\times$-equivariant with respect to this action.  The restrictions of these actions to $U(1)\subset\C^\times$ on the hyperkähler manifolds admit moment maps given by
    \begin{equation}\label{eqn: Morse-Bott functions}
        M_\text{HP}(\bx,\by)=\frac i2\sum|_iy_i|^2,\qquad\qquad M_R(\mathcal E,\varphi,h_R)=\frac i2\int R|\varphi|_{h_R}^2,
    \end{equation}
where the formulas require $(\bx,\by)$ be unitary and $h_R$ be the $R$-harmonic metric for $(\mathcal E,\varphi)$.  
 The moment maps $M_\text{HP}, M_R$ of the $\U(1)$-action with respect to the K\"ahler forms $\omega^{HP}_{J_1}$ and $\omega_{J_1, R}$ are Morse-Bott functions. 
 
 In this section, we prove in \Cref{thm: Morse functions agree at fixed points} that at corresponding $U(1)$-fixed points, the moment maps agree, up to a factor of $2 \pi$. 
 
\subsubsection{Moment Map on Hyperpolygon Space}
We first consider the $n$-sided hyperpolygon space.  Let $\mathcal S=\{I\subset\{1,\dots, n\}\mid |I|\geq 2 \text{ and }I \text{ is short}\}$.  
The fixed point set decomposes as
\begin{equation}\label{eqn: fixed point decomposition}
    \mathcal X(\vec\beta)^{\C^\times}=X_\emptyset\sqcup\bigsqcup_{I\in\mathcal S}X_I
\end{equation}
where $X_\emptyset$ consists of hyperpolygons of the form $(\bx,\mathbf 0)$, and $X_I$ consists of hyperpolygons of the form $(\bx,\by)$ with $\by\ne0$ such that $I$ and $I^c$ are both straight and $y_i=0$ for all $i\notin I$ (see \cite{Kon02,HP03}).  Furthermore, $X_{I}$ is diffeomorphic to $\mathbb{CP}^{|I|-2}$\cite{Kon02}.  Depending on the value of $\beta$, $X_\emptyset$ can be empty.  However, setting
\begin{equation}
    \mathcal{I} =\{I \in \mathcal{S} \cup \{ \emptyset\} : X_I \ne \emptyset \}
\end{equation}
we have that $|\mathcal I|$ is the same for all generic $\vec\beta$.\footnote{
    $X_\emptyset$ is empty if and only if $W_{\{i\}}(\vec\beta)>0$ for some $i\in\{1,\dots,n\}$, in which case we gain the short subset $I=\{1,\dots,n\}\sm\{i\}\in\mathcal S$.
}

\begin{proposition}\label[proposition]{prop: HP Morse function} 
    Consider a unitary hyperpolygon $(\bx,\by)\in\mathcal X(\vec\beta)^{\U(1)}$.
    \begin{itemize}
            \item If $(\bx,\by)\in X_\emptyset$, then $M_\text{HP}(\bx,\by)=0$.
            \item If $(\bx,\by)\in X_I$, then
        $$M_\text{HP}(\bx,\by)=\frac1{2i}W_I(\vec\beta)=\frac i2\left(\sum_{i\in I^c}\beta_i-\sum_{i\in I}\beta_i\right).$$
    \end{itemize}
\end{proposition}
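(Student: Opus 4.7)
The first bullet is immediate from the defining formula $M_{\text{HP}}(\bx, \by) = \tfrac{i}{2}\sum_i |y_i|^2$, since $\by = \mathbf{0}$ on $X_\emptyset$.

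For $(\bx, \by) \in X_I$ with $I \in \mathcal S$, the plan is to choose a convenient unitary representative dictated by the structure of $X_I$ and then extract $\sum_i|y_i|^2$ from the moment map equations. The $U(1)$-fixed-point condition says that the infinitesimal generator $(0, i\by)$ of the action on $X$ equals the infinitesimal action of some $(A, s) \in \mathfrak{su}(2) \oplus (i\R)^n$ at $(\bx,\by)$. The $\bx$-component $A x_i = s_i x_i$ forces each $x_i$ to be an eigenvector of the skew-Hermitian operator $A$. Since both $I$ and $I^c$ are straight, the lines $L_I = \mathrm{span}\{x_i : i \in I\}$ and $L_{I^c} = \mathrm{span}\{x_i : i \in I^c\}$ are both eigenlines of $A$. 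Arguing that a genuine $U(1)$-fixed point (as opposed to a $\Z/2$-fixed point) forces distinct eigenvalues on $L_I$ and $L_{I^c}$, these two lines must be orthogonal. After an $SU(2)$-gauge transformation, I may assume $x_i = (x_i^1, 0)^{T}$ for $i \in I$ and $x_i = (0, x_i^2)^{T}$ for $i \in I^c$.

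In this frame, the complex moment map equation $y_i x_i = 0$ forces $y_i = (0, y_i^2)$ for $i \in I$, while $y_i = 0$ for $i \in I^c$ by definition of $X_I$. The $U(1)^n$-moment map gives $|x_i|^2 - |y_i|^2 = 2\beta_i$, and the $SU(2)$-moment map together with its trace (computed from the $U(1)^n$ equation) is equivalent to
$$\sum_i x_i x_i^\dagger - \sum_i y_i^\dagger y_i = \Big(\sum_i \beta_i\Big)\,\mathrm{Id}.$$
In the chosen frame both sides are diagonal, and reading off one of the diagonal entries while substituting $\sum_{i \in I^c}|x_i|^2 = 2\sum_{i \in I^c}\beta_i$ gives $\sum_i |y_i|^2 = \sum_{i \in I^c}\beta_i - \sum_{i \in I}\beta_i = -W_I(\vec\beta)$. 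Plugging into the formula yields $M_{\text{HP}}(\bx,\by) = -\tfrac{i}{2}W_I(\vec\beta) = \tfrac{1}{2i}W_I(\vec\beta)$, as claimed.

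The main subtlety I expect is justifying the orthogonality of $L_I$ and $L_{I^c}$ at a genuine $U(1)$-fixed point, i.e.\ ruling out the degenerate case where $A$ has a repeated eigenvalue (equivalently $A = 0$), which I will handle by showing that such a configuration can only be fixed by a $\Z/2$-subgroup and is excluded by the $\vec\beta$-stability hypothesis together with $\vec\beta \in (0, \infty)^n$. Once the orthogonality is established, the remainder reduces to routine linear algebra with the explicit moment maps.
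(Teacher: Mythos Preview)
Your approach is correct and genuinely different from the paper's. The paper works entirely in $\mathfrak{su}(2)\cong\R^3$ via the vectors $v_i=(x_ix_i^\dagger)^\perp$ and $w_i=(y_i^\dagger y_i)^\perp$: it uses that $v_i$ and $w_i$ are antiparallel (from $y_ix_i=0$), that the straight subsets $I$ and $I^c$ make the corresponding sums collinear, and then extracts $\sum|w_i|$ from the norm identity $\sum_{i\in I}(v_i-w_i)=-\sum_{i\in I^c}v_i$ together with $|v_i|-|w_i|=\sqrt2\beta_i$. Your route---gauge so that $L_I=\langle e_1\rangle$ and $L_{I^c}=\langle e_2\rangle$, then read off the $(2,2)$ entry of $\sum x_ix_i^\dagger-\sum y_i^\dagger y_i=(\sum\beta_i)\mathrm{Id}$---is more algebraic and arguably more direct once orthogonality is in hand. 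The paper's argument never needs orthogonality explicitly because it only manipulates norms, which is its advantage; yours is shorter once the frame is fixed.

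One clarification on the orthogonality step: ruling out $A=0$ is not quite the right target. From $\by\ne0$ you do get $A\ne0$ (indeed the eigenvalues are forced to be $\pm i/2$), but the infinitesimal fixed-point equation alone does \emph{not} prevent $L_I=L_{I^c}$: if all $x_i$ lie in the $+i/2$-eigenspace, the equations $Ax_i=s_ix_i$ and $y_iA=(s_i-i)y_i$ are still consistent with $s_i=i/2$ for every $i$. What actually excludes $L_I=L_{I^c}$ is stability: if all $x_i$ were proportional then $[n]$ would be straight with $W_{[n]}(\vec\beta)=\sum\beta_i>0$, violating $\vec\beta$-stability. (Equivalently, $\mu_{\SU(2)}=0$ fails when all $x_i$ are parallel, since $\sum(|x_i|^2+|y_i|^2)N>0$.) Once $L_I\ne L_{I^c}$ is established, the eigenspaces of the skew-Hermitian $A$ give orthogonality for free, and your computation goes through.
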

\begin{proof} For the moment map equation, we assume $(\bx,\by)$ is a unitary representative of the $\C^\times$-fixed point, i.e. it solves the real moment map equation $\mu_\R(\bx,\by)=\vec\beta$. 
\begin{itemize}\item 
If $(\bx,\by)\in X_\emptyset$, then $\by=0$ so $M_\text{HP}(\bx,\by)=0$.  

\item 
Assume $(\bx,\by)\in X_I$ for some short subset with $|I|\geq2$, so $I$ and $I^c$ are both straight and $y_i=0$ for $i\in I^c$.  

Put $v_i=(x_ix_i^\dagger)^\perp$ and $w_i=(y_i^\dagger y_i)^\perp$, where $A^\perp$ denotes the trace free part of a matrix $A$.  The equations \eqref{eqn: v norm} and \eqref{eqn: w norm} imply
    $$|v_i|=\frac{|x_i|^2}{\sqrt2}\quad\text{and}\quad|w_i|=\frac{|y_i|^2}{\sqrt2}.$$
The condition $y_ix_i=0$ implies $v_i$ and $w_i$ point in opposite directions for $i\in I$.  Since $I$ is straight,
    $$\left|\sum_{i\in I}\left(v_i-w_i\right)\right|=\sum_{i\in I}|v_i|+|w_i|.$$
On the other hand, since $y_i=0$ and $|x_i|^2=2\beta_i$ for $i\in I^c$ we have
    $$\left|\sum_{i\in I^c}v_i\right|=\sum_{i\in I^c}\frac1{\sqrt2}|x_i|^2=\sum_{i\in I^c}\sqrt2\beta_i.$$
We can write the moment map condition as
    $$0=\mu_{\SU(2)}(\bx,\by)=\sum_{i=1}^n(x_ix_i^\dagger)^\perp-(y_i^\dagger y_i)^\perp=\sum_{i\in I}(v_i-w_i)+\sum_{i\in I^c}v_i.$$
Since $I$ and $I^c$ are straight and $v_i$, $w_i$ point in opposite directions, this yields
    $$0=\left|\sum_{i\in I}v_i-\sum_{i\in I}w_i\right|-\left|\sum_{i\in I^c}v_i\right|=\sum_{i\in I}\left|v_i\right|+\sum_{i\in I}\left|w_i\right|-\sqrt2\sum_{i\in I^c}\beta_i,$$
    i.e. 
    \[ \sum_{i\in I}\left|w_i\right| = \sqrt2\sum_{i\in I^c}\beta_i- \sum_{i\in I}\left|v_i\right|.\]
    However, we can rewrite $|v_i|$ using $2\beta_i=|x_i|^2-|y_i|^2=\sqrt2(|v_i|-|w_i|)$ to get
    $$\sum_{i\in I}|w_i|
    =\sqrt2\sum_{i\in I^c}\beta_i-\sum_{i\in I}(\sqrt2\beta_i+|w_i|),$$
and solve for $|w_i|$. Consequently, 
\begin{equation}
    M_\text{HP}(\bx,\by)=\frac i2\sum_{i\in I}|y_i|^2=\frac i{\sqrt2}\sum_{i\in I}|w_i|=\frac i2\left(\sum_{i\in I^c}\beta_i-\sum_{i\in I}\beta_i\right).
\end{equation}
\end{itemize}
\end{proof}

\subsubsection{Moment Maps on the Family of Hitchin Moduli Spaces}
If an $R$-harmonic bundle $(\mathcal{E}, \mathcal{\varphi}, h_R)$ is fixed by the $U(1)$ action, then there is decomposition
\begin{align}
\mathcal{E} &= \mathcal{E}_1 \oplus \cdots \oplus \mathcal{E}_{\ell} \nonumber \\
\varphi&=\mathcal{E}_i \to \mathcal{E}_{i-1} \otimes K(D) 
\end{align}
of $\mathcal{E}$ as a direct sum of parabolic bundles such that $\mathcal{E}_i$ are mutually orthogonal respect to $h_R$. In the rank $2$ case, there are only two options: 
\begin{itemize}\item[($\ell=1$)]$\varphi \equiv 0$ and $\mathcal{E}$ is a stable parabolic bundle, or \item[($\ell=2$)] $\mathcal{E} = \mathcal{L}_1 \oplus \mathcal{L}_2$ \[ \varphi =
\begin{pmatrix} 0 & \varphi_{12}\\
0 & 0 \end{pmatrix}\]
where $\varphi_{12}: \mathcal{L}_2 \to \mathcal{L}_1 \otimes K(D)$.
\end{itemize}

\begin{proposition}\label{prop:morsehitchin}
Consider an $R$-harmonic bundle $(\delbar_E, \varphi, h_R) \in \mathcal{M}_{R}(\vec \alpha)$, and recall the Morse-Bott function $M_R$ in \eqref{eqn: Morse-Bott functions}.
\begin{itemize}
    \item If $\varphi \equiv 0$, then $M_R(\mathcal{E}, \varphi, h_R)=0$.
    \item If $\varphi \not\equiv 0$, then setting $I=\{i : F_i \in \mathcal{L}_1 \} \subset \{1, 2, \cdots, n\}$, we have
    \begin{align*}
        M_R(\delbar_E, \varphi, h_R)
            &= - \pi i R^{-1} \left( \deg \mathcal{L}_1 + \sum_{i \in I} (\frac{1}{2}-\alpha_i) + \sum_{i \in I^c} (\alpha_i- \frac{1}{2}) \right).
    \end{align*}
\end{itemize}
\end{proposition}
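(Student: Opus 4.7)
The proof splits according to the two cases of the $\C^\times$-fixed structure. The case $\varphi \equiv 0$ is immediate from the definition of $M_R$ in \eqref{eqn: Morse-Bott functions}: if the Higgs field vanishes, then the integrand is zero. The rest of the plan concerns the case $\varphi \not\equiv 0$.

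First I would work in an $h_R$-unitary frame adapted to the orthogonal splitting $\mathcal{E} = \mathcal{L}_1 \oplus \mathcal{L}_2$. In this frame $h_R = \mathrm{diag}(h_1,h_2)$, the only nonzero entry of $\varphi$ is the off-diagonal $\varphi_{12} : \mathcal{L}_2 \to \mathcal{L}_1 \otimes K(D)$, and $\varphi^{\dagger_{h_R}}$ is strictly lower triangular. A direct computation then shows that $[\varphi, \varphi^{\dagger_{h_R}}]$ is diagonal and trace-free, with $(1,1)$-component $\varphi_{12}\wedge\varphi_{12}^{\dagger_{h_R}}$---which, up to the sign convention used in the definition of $M_R$, is precisely the integrand $R|\varphi|^2_{h_R}$. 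Meanwhile, the trace-free curvature in the diagonal frame is $F^\perp_{\nabla(\delbar_E,h_R)} = \tfrac{1}{2}\mathrm{diag}(F_1-F_2,\,F_2-F_1)$, where $F_j$ is the curvature of the Chern connection on $\mathcal{L}_j$. Reading off the $(1,1)$-component of the $R$-rescaled Hitchin equation \eqref{eq:Hitchin} gives the pointwise identity
\[ R\,[\varphi, \varphi^{\dagger_{h_R}}]_{11} \,=\, -\tfrac{1}{2R}\,(F_1 - F_2), \]
so integrating over $\CP^1$ reduces the computation of $\int R|\varphi|^2_{h_R}$ to a computation of $\int(F_1 - F_2)$.

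The Chern--Weil step is where the parabolic structure enters. Since $h_R$ restricts to a hermitian metric $h_j$ on $\mathcal{L}_j$ adapted to its induced parabolic structure, the standard Chern--Weil formula for parabolic line bundles gives $\tfrac{i}{2\pi}\int_{\CP^1} F_j = \mathrm{pardeg}\,\mathcal{L}_j$. Using the definition of the induced parabolic weights---at $p_i$ with $i\in I$, where $(\mathcal{L}_1)_{p_i}=F_i$, the induced weight on $\mathcal{L}_1$ is $1-\alpha_i$, while at $p_i$ with $i\in I^c$, where $(\mathcal{L}_1)_{p_i}$ is transverse to $F_i$, the induced weight is $\alpha_i$, and symmetrically for $\mathcal{L}_2$---together with $\deg\mathcal{L}_1 + \deg\mathcal{L}_2 = 0$ (since $\det\mathcal{E}\cong\mathcal{O}$), a short calculation yields
\[ \mathrm{pardeg}\,\mathcal{L}_1 - \mathrm{pardeg}\,\mathcal{L}_2 = 2\Big(\deg\mathcal{L}_1 + \sum_{i\in I}(\tfrac{1}{2}-\alpha_i) + \sum_{i\in I^c}(\alpha_i - \tfrac{1}{2})\Big). \]

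Assembling the three ingredients---the off-diagonal Hitchin equation, Chern--Weil for adapted metrics on parabolic line bundles, and the parabolic-degree computation just above---and inserting into $M_R = \tfrac{i}{2}\int R|\varphi|^2_{h_R}$ produces the claimed formula with the stated $-\pi i R^{-1}$ prefactor. The only genuine subtlety is keeping the Hodge-star and orientation conventions of the paper (as spelled out in the notational conventions) aligned with the sign conventions of Chern--Weil for singular adapted metrics, so that the factors of $i$, $\pi$, and $2$ come out in the normalization stated in the proposition; this is where I expect to have to be most careful, and I would verify the signs by cross-checking a single simple example, e.g.\ the trivial decomposition $\mathcal{L}_1 = \mathcal{O}$ with $I = \emptyset$.
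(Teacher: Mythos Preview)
Your proposal is correct and follows essentially the same route as the paper: use the block-diagonal structure of $h_R$ and the strictly upper-triangular form of $\varphi$ to rewrite the integrand $R|\varphi|^2_{h_R}$ as the $(1,1)$-entry of $R[\varphi,\varphi^{\dagger_{h_R}}]$, replace this via the Hitchin equation by $-R^{-1}F^\perp|_{\mathcal{L}_1}$, and integrate the curvature using the parabolic Chern--Weil formula. The paper phrases the resulting number as $\mathrm{pardeg}\,\mathcal{L}_1 - \tfrac{1}{2}\,\mathrm{pardeg}\,\det\mathcal{E}$ rather than your $\tfrac{1}{2}(\mathrm{pardeg}\,\mathcal{L}_1 - \mathrm{pardeg}\,\mathcal{L}_2)$, but these are the same, and your explicit bookkeeping of the induced weights on $\mathcal{L}_1,\mathcal{L}_2$ is exactly what unpacks the paper's final line. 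One minor caution: your proposed sanity-check example $\mathcal{L}_1=\mathcal{O}$, $I=\emptyset$ forces all residues of $\varphi_{12}$ to vanish and hence $\varphi=0$ on $\CP^1$, so you would need a different test case (e.g.\ $|I|=2$).
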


\begin{proof}\hfill
     \begin{itemize}
        \item[($\ell=1$)] It is clear that $\mu$ vanishes when on stable parabolic bundles since $\varphi \equiv 0$. 
        \item[($\ell=2$)] 
The key observation is that because $\varphi$ is strictly upper triangular  triangular and $h_R$ respects the splitting, 
\[ \varphi^{\dagger_{h_R}} = \begin{pmatrix} 0 & 0 \\ (\varphi^{\dagger_{h_R}})_{21} & 0 \end{pmatrix}, \]
hence 
     \[
     [ \varphi, \varphi^{\dagger_{h_R}}] = \begin{pmatrix} \varphi_{12} \wedge (\varphi^{\dagger_{h_R}})_{21}	 & 0 \\ 
 0 & (\varphi^{\dagger_{h_R}})_{21}	 \wedge \varphi_{12} 
 \end{pmatrix}
     \]
     and Hitchin's equations break into two scalar-valued-$2$-form equations 
     \begin{align*}
\;\;F^\perp_{\nabla(\delbar_E, h_R)}|_{\mathcal{L}_1} + R^2 \varphi_{12} \wedge (\varphi^{\dagger_{h_R}})_{21} &=0,\\   \;\; F^\perp_{\nabla(\delbar_E, h_R)}|_{\mathcal{L}_2} + R^2 (\varphi^{\dagger_{h_R}})_{21}	 \wedge \varphi_{12} &=0.  
    \end{align*}
     Using
     \[
      \varphi \wedge \varphi^{\dagger_{h_R}} = \begin{pmatrix} \varphi_{12} \wedge (\varphi^{\dagger_{h_R}})_{21}	 & 0 \\ 
 0 & 0
 \end{pmatrix}
     \]
     we thus obtain
\begin{align*}
      M_R(\mathcal E, \varphi, h_R) 
      &= \frac{i}{2} \int_C  R |\varphi|^2_{h_R} \\
      &= \frac{1}{2} \int_C \mathrm{Tr}( \varphi \wedge \varphi^{\dagger_{h_R}})\\
      &= \frac{1}{2} \int_C R \varphi_{12} \wedge (\varphi^{\dagger_{h_R}})_{21}\\
      &= -\frac{1}{2} \int_C R^{-1} F^\perp_{\nabla(\delbar_E, h)}|_{\mathcal{L}_1} \\
      &= - \pi i  R^{-1} \left( \mathrm{pdeg} \mathcal{L}_1(\vec \alpha) -\mathrm{pdeg} \det \mathcal{E} (\vec \alpha)^{1/2} \right)\\
    &= - \pi i R^{-1} \left( \deg \mathcal{L}_1 + \sum_{i \in I} \left(\frac{1}{2}-\alpha_i\right) + \sum_{i \in I^c} \left(\alpha_i- \frac{1}{2}\right) \right).
    \end{align*}
    \end{itemize}

    \end{proof}
    
\subsubsection{Comparison of Moment Maps at Fixed Points}

\begin{theorem}\label[theorem]{thm: Morse functions agree at fixed points}  
    Let $\vec\beta \in (0, \infty)^n$ be generic, let $\alpha_i(R)= \frac{1}{2} - R \beta_i$ for $R>0$, and assume $R_\text{max}$ solves $W_{[n]}(\vec\alpha(R_\text{max}))=(n-2)/2$ as in \Cref{thm: map from hyperpolygon to Higgs}.  For $R\in(0,R_\text{max})$, let $M_\text{HP}$ and $M_R=M_{R,\vec\alpha(R)}$ be the Morse-Bott functions on $\mathcal{X}(\beta)$ and $\mathcal{M}_R(\vec \alpha(R))$ described in \eqref{eqn: Morse-Bott functions}.
    For each $\U(1)$-fixed point $(\bx,\by)$ and corresponding $\U(1)$-fixed harmonic bundle $\mathcal T_R(\bx,\by)=(\mathcal E,\varphi,h_R)$, and for all $0<R<R_\text{max}$, we have \begin{equation} M_R(\mathcal E,\varphi,h_R)=2\pi M_\text{HP}(\bx,\by).\end{equation}
\end{theorem}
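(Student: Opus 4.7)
The plan is to verify the identity stratum by stratum using the $U(1)$-fixed point decomposition \eqref{eqn: fixed point decomposition} on $\mathcal{X}(\vec\beta)$, identify the corresponding $U(1)$-fixed $R$-harmonic bundle $\mathcal{T}_R(\bx,\by)$ explicitly, and then apply Propositions \ref{prop: HP Morse function} and \ref{prop:morsehitchin} to both sides and compare. The key observation is that $\mathcal{T}$ is $\C^\times$-equivariant, so a $U(1)$-fixed hyperpolygon produces a Higgs bundle whose NAHC image is a $U(1)$-fixed harmonic bundle.

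If $(\bx,\by)\in X_\emptyset$, then $\by=0$, so $\varphi_{(\bx,\by)}\equiv 0$ and both sides vanish. For $(\bx,\by)\in X_I$ with $|I|\ge 2$, the straightness of $I$ and $I^c$ produces two (generically distinct) constant lines in $\mathcal{O}\oplus\mathcal{O}=\mathcal{E}$: take $\mathcal{L}_1$ to be the constant line bundle spanned by any $x_i$ with $i\in I$, and $\mathcal{L}_2$ the one spanned by any $x_i$ with $i\in I^c$. Because $y_ix_i=0$ (complex moment map) and $y_i=0$ for $i\in I^c$, each residue $x_iy_i$ annihilates $\mathcal{L}_1$ and sends $\mathcal{L}_2$ into $\mathcal{L}_1$; hence $\mathcal{T}(\bx,\by)$ splits holomorphically as $\mathcal{L}_1\oplus\mathcal{L}_2$ with $\varphi\colon\mathcal{L}_2\to\mathcal{L}_1\otimes K(D)$. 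This is the Higgs bundle decomposition of the $U(1)$-fixed harmonic bundle (as in the $\ell=2$ case before Proposition \ref{prop:morsehitchin}), and $\deg\mathcal{L}_1=\deg\mathcal{L}_2=0$ since both are constant subbundles of the trivial bundle. Moreover, $F_i=\langle x_i\rangle=\mathcal{L}_1|_{p_i}$ exactly when $i\in I$, so the index set in Proposition \ref{prop:morsehitchin} coincides with the original $I$.

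Substituting $\alpha_i(R)=\tfrac12-R\beta_i$ and $\deg\mathcal{L}_1=0$ into Proposition \ref{prop:morsehitchin} causes the $R^{-1}$ and $R$ factors to cancel:
\[
    M_R(\mathcal{E},\varphi,h_R)
        =-\pi i R^{-1}\Big(R\sum_{i\in I}\beta_i - R\sum_{i\in I^c}\beta_i\Big)
        =-\pi i\,W_I(\vec\beta).
\]
Proposition \ref{prop: HP Morse function} gives $M_\text{HP}(\bx,\by)=\tfrac{1}{2i}W_I(\vec\beta)$, and since $2\pi\cdot\tfrac{1}{2i}=-\pi i$, we obtain $M_R=2\pi M_\text{HP}$ as claimed.

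The main subtle point is the identification of the holomorphic splitting $\mathcal{E}=\mathcal{L}_1\oplus\mathcal{L}_2$ in terms of the hyperpolygon data, and in particular the verification that both $\mathcal{L}_i$ have degree zero. This is the structural feature that allows the linear $R$-dependence of $\alpha_i(R)$ to cancel the $R^{-1}$ scaling in Hitchin's equation, so that the comparison with $M_\text{HP}$---itself independent of $R$---holds uniformly for every $R\in(0,R_\text{max})$.
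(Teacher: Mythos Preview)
Your proof is correct and follows essentially the same approach as the paper: both split into the $X_\emptyset$ and $X_I$ cases, identify the splitting $\mathcal{E}=\mathcal{L}_1\oplus\mathcal{L}_2$ into degree-zero constant line bundles determined by the straight subsets $I$ and $I^c$, and then substitute $\alpha_i(R)=\tfrac12-R\beta_i$ into Proposition~\ref{prop:morsehitchin} to compare with Proposition~\ref{prop: HP Morse function}. One small remark: your hedge ``generically distinct'' is unnecessary, since stability of $(\bx,\by)$ forces $[n]$ not to be straight (otherwise $I=[n]$ would violate the stability condition with $W_{[n]}(\vec\beta)>0$), so $\mathcal{L}_1\ne\mathcal{L}_2$ always.
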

\begin{proof} \hfill
\begin{itemize}
\item[($\ell=1$)]
    Suppose $(\bx,\by)\in X_\emptyset$, hence $M_\text{HP}(\bx,\by)=0$.  Then $\by=0$, so the corresponding Higgs bundle has $\varphi=0$.  Therefore, $M_R(\mathcal E,\varphi,h_R)=0$.
\item[($\ell=2$)]
Suppose $(\bx,\by)\in X_I$ for $I \neq \emptyset$. Pick a basis in which $x_i\in\langle e_1\rangle$ for $i\in I$ and $x_i\in\langle e_2\rangle$ for $i\in I^c$.  Since $y_i=0$ for $i\in I^c$, the Higgs bundle has the form
        $$\varphi=\begin{pmatrix}0&\varphi_{12}\\0&0\end{pmatrix}.$$
Since $\mathcal{E}= \mathcal{O} \oplus \mathcal{O}$ as holomorphic bundles, then $\mathcal{L}_1 \simeq \mathcal{O}$.
Moreover, the set $I$ appearing in the statement of \Cref{prop:morsehitchin} agrees with the set $I$ appearing in the statement of  \Cref{prop: HP Morse function}.
Then 
     \begin{align*}
        M_R(\mathcal E,\varphi,h_R)    &= -\pi i R^{-1} \left( \deg \mathcal{L}_1 + \sum_{i \in I} (\frac{1}{2}-\alpha_i(R)) + \sum_{i \in I^c} (\alpha_i(R)- \frac{1}{2}) \right)\\
                       &=- \pi i \left(\sum_{i\in I}\beta_i-\sum_{i\in I^c}\beta_i\right)\\\
                       &=  2\pi \cdot \frac{1}{2i} W_{I}(\vec \beta)\\
            &=2\pi M(\bx,\by).
    \end{align*}
    \end{itemize}
\end{proof}

\subsection{\texorpdfstring{Example: $n=4$ (ALG-$D_4$ to ALE-$D_4$ degeneration)}{Example: n=4 (ALG-D4 to ALE-D4 degeneration)}}
We now restrict to $n=4$.

\subsubsection{Chamber Structures and the Biswas Polytope} 
\label{subsec: Chamber Structures and the Biswas Polytope}

Following Nakajima in \cite{Nak94}, we define $\vec \alpha$ to be generic if $\vec \alpha$-semistability  implies $\vec \alpha$-stability.  The complement is a union of codimension one planes in $(0, \frac{1}{2})^4$ which we call \emph{walls}. 

Biswas \cite{Bis02} provides a criterion for the existence of stable parabolic vector bundles (with no Higgs field) with parabolic weights $\vec\alpha$, which we adapt to the case $n=4$ and parabolic degree 4. The function $
W_I$ defined in \eqref{eq:W} plays an important role.

\begin{theorem}[\cite{Bis02}]
    Let $D=\{p_1,\dots,p_4\}\subset\C\P^1$, $\vec\alpha\in(0,\frac12)^4$, and let $E\to\C\P^1$ be a trivial bundle of rank 2.  There exists holomorphic structure $\delbar_E$ and flags $\mathcal F=\{F_i\}_{i=1}^4$ such that $(E,\delbar_E,\mathcal F,\vec\alpha)$ is stable if and only of $0<W_I(\vec\alpha)<1$ for all subsets $I\subset\{1,2,3,4\}$ with $|I|=3$.
\end{theorem}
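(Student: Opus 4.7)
The plan is to translate the stability condition $\mathrm{slope}\,\mathcal{L} < \mathrm{slope}\,\mathcal{E} = 2$ for every holomorphic line subbundle $\mathcal{L} \subset \mathcal{E}$ into a finite list of numerical inequalities in $\vec\alpha$, and to verify that this list reduces to $0 < W_I(\vec\alpha) < 1$ for $|I|=3$. For any such $\mathcal{L}$, set $I_{\mathcal{L}} = \{i : L_{p_i} = F_i\}$ and write $\mathcal{L} \cong \mathcal{O}(-d)$ with $d = -\deg L \geq 0$. A direct computation of induced parabolic weights gives
\begin{equation*}
    \mathrm{slope}\,\mathcal{L} \;=\; -d + \sum_{i \in I_{\mathcal{L}}}(1 - \alpha_i) + \sum_{i \notin I_{\mathcal{L}}}\alpha_i \;=\; -d + |I_{\mathcal{L}}| - W_{I_{\mathcal{L}}}(\vec\alpha),
\end{equation*}
so stability is equivalent to $W_{I_{\mathcal{L}}}(\vec\alpha) > |I_{\mathcal{L}}| - d - 2$ for every $\mathcal{L}$ that embeds in $\mathcal{E}$.

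For sufficiency, I would take $\bar\partial_E$ with $(E, \bar\partial_E) \cong \mathcal{O}^{\oplus 2}$ and choose flags $\{F_i\}$ in general position: pairwise distinct in $\mathbb{P}(\mathbb{C}^2)$ and with no single embedding $\mathcal{O}(-1) \hookrightarrow \mathcal{O}^{\oplus 2}$ hitting all four $F_i$. Under this genericity, a constant line ($d = 0$) has $|I_{\mathcal{L}}| \leq 1$ (pinned by one flag), and an embedded $\mathcal{O}(-1)$ ($d = 1$, a degree-one map $\mathbb{P}^1 \to \mathbb{P}(E)$ pinned by three points) has $|I_{\mathcal{L}}| \leq 3$. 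The only two nonautomatic stability inequalities that survive are $W_{\{i\}} > -1$ from the $d=0,\,|I|=1$ case (equivalent to $W_J < 1$ for the complementary triple $J$) and $W_I > 0$ from the $d=1,\,|I|=3$ case. All remaining cases, including all $d \geq 2$ where $\mathrm{slope}\,\mathcal{L} \leq -2 + 4 + \sum_i\alpha_i < 2$, are automatic from $\alpha_i \in (0, 1/2)$. Hence $0 < W_I < 1$ for $|I|=3$ exactly cuts out the stable locus for such choices.

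For necessity, suppose some $(\bar\partial_E, \{F_i\})$ is stable. Since $E$ is smoothly trivial, $(E, \bar\partial_E) \cong \mathcal{O}(k) \oplus \mathcal{O}(-k)$ for some $k \geq 0$. Choosing $F_i$ to avoid the fibers of the maximal subbundle $\mathcal{O}(k)$ gives $|I|=0$ there, so stability forces $k + \sum_i\alpha_i < 2$; thus $k\in\{0,1\}$ (with $k=1$ demanding the stronger $\sum\alpha_i<1$). In either case one applies two universal test families. For each $i_0$, the constant line through any $v \in F_{i_0}$ produces a sub-line bundle with $I \ni i_0$, yielding $W_J < 1$ for $J = \{1,\ldots,4\}\setminus\{i_0\}$; for each triple $J = \{i_1, i_2, i_3\}$, a suitable $\mathcal{O}(-1) \hookrightarrow \mathcal{E}$ matching $F_{i_k}$ at $p_{i_k}$ produces a sub-line bundle with $I \supseteq J$, yielding $W_J > 0$. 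In degenerate flag configurations where these test subbundles pick up extra indices in $I$, the stability inequality still implies the claimed $W$-inequalities by a short computation leveraging $\alpha_i < 1/2$; alternatively, openness of stability in the flag parameters allows a reduction to the generic case.

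The main obstacle I anticipate is the bookkeeping in the degenerate-flag cases of the necessity direction—particularly when four flags happen to align on a single $\mathcal{O}(-1)$ image, forcing $|I|=4$ in the test subbundle—together with the brief subsidiary check that nontrivial $\bar\partial_E$ does not produce stable structures outside the stated range. Both are elementary case analyses using $\alpha_i < 1/2$ rather than conceptual hurdles, and should close out the proof.
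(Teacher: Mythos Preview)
The paper does not prove this theorem at all: it is stated with attribution to \cite{Bis02} and used as input, so there is no in-paper argument to compare your proposal against.

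That said, your outline is essentially the standard direct argument and is sound in structure. The slope formula $\mathrm{slope}\,\mathcal L=-d+|I_{\mathcal L}|-W_{I_{\mathcal L}}(\vec\alpha)$ is correct, and your sufficiency direction (generic flags on $\mathcal O^{\oplus2}$, reducing the only nontrivial constraints to $|I|=1,d=0$ and $|I|=3,d=1$) is clean and complete. In the necessity direction, the monotonicity trick you allude to---that $W_{\{i_0\}}=W_{I_L}-2\sum_{j\in I_L\setminus\{i_0\}}\alpha_j>(|I_L|-2)-(|I_L|-1)=-1$ whenever $i_0\in I_L$ and $d=0$---really does absorb the degenerate-flag cases for the $W_J<1$ inequalities, so that part is fine. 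The one place that deserves more than a sentence is the $k=1$ case: there are no saturated degree-$0$ sublines in $\mathcal O(1)\oplus\mathcal O(-1)$, so your ``constant line through $F_{i_0}$'' test is unavailable. One instead shows that stability against $\mathcal O(1)$ forces $|I^*|\le1$ (where $I^*$ indexes flags lying in the $\mathcal O(1)$ summand), that $|I^*|=1$ is killed by the $\mathcal O(-1)$ through the other three flags, and that $|I^*|=0$ gives $\sum\alpha_i<1$, from which $W_J<1$ is automatic and $W_J>0$ follows from the $\mathcal O(-1)$ tests. This is exactly the ``brief subsidiary check'' you flag, and it does close; just be aware it is a genuine case split rather than a one-liner.
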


\begin{definition}
    The subset of $(0,\frac12)^4$ such that $0<W_I(\vec\alpha)<1$ for all $|I|=3$ is called the \textit{Biswas polytope} $\mathscr B$.  \end{definition}

These eight hyperplanes $W_I(\vec \alpha)=0, 1$ for $|I|=3$ are chamber walls.  The Biswas polytope is further divided into 16 chambers by four hyperplanes: Three of these walls are given by $W_{I}(\vec \alpha)=0$ for $|I|=2$ (note that $I$ and $I^c$ define the same walls) and the fourth is $W_{\{1, 2, 3, 4\}}(\vec \alpha)=1$. At these walls, $\mathcal M^\text{Higgs}(\vec\alpha)$ is singular.

Similarly, $\mathcal X(\vec\beta)$ is singular if and only if $W_I(\vec\beta)=0$ for some $I$ with $|I|=2$ or $|I|=3$; else we say $\vec\beta$ is generic.  For any $R>0$, the assignment $\alpha_i\leftrightarrow\frac12-R\beta_i$ maps the walls $W_I(\vec\alpha)=1$ to $W_I(\vec\beta)=0$ for all $|I|=3$, and the walls $W_I(\vec\alpha)=0$ to $W_I(\vec\beta)=0$ for $|I|=2$.  The additional walls for $\vec\alpha$, namely $W_I(\vec \alpha)=0$ for $|I|=3$ and $W_{\{1, 2, 3, 4\}}(\vec \alpha)=1$, have no analogue for the hyperpolygon parameters, so genericity of $\vec\alpha$ implies that of $\vec\beta$, but the converse does not hold.  

We continue to assume that $\vec\alpha$ and $\vec \beta$ are in corresponding chambers---in particular $W_{\{1, 2, 3, 4\}}(\vec \alpha)>2$ so that $\mathcal{T}$ preserves stability (see \Cref{rmk: chamber condition is necessary}).  To avoid dealing with multiple cases, for this section we will also assume that $\vec \alpha \in \mathscr B$.  The story outside the Biswas polytope is entirely analogous.

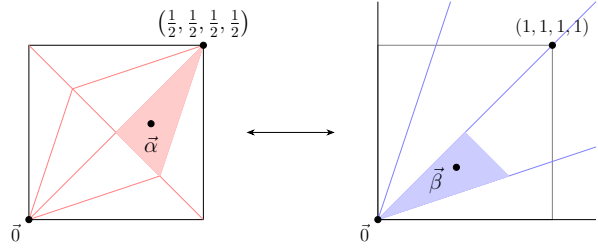
\begin{figure}
\begin{center}
\resizebox{8cm}{!}{
\begin{tikzpicture}
    \draw (0,0) rectangle +(4cm,4cm);
    \draw[red!50] (0,4) -- (4,0);
    \draw[red!50] (0,0) -- (4,4);
    \draw[red!50] (0,0) -- (3,1);
    \draw[red!50] (3,1) -- (4,4);
    \draw[red!50] (0,0) -- (1,3);
    \draw[red!50] (1,3) -- (4,4);
    \filldraw[red!20] (2,2) -- (3,1) -- (4,4) -- cycle;
    \filldraw (2.8,2.2) circle (2pt)
        node[below=1.5mm]{\Large$\vec\alpha$};
    \filldraw (0,0) circle (2pt);
    \filldraw (4,4) circle (2pt);
    \node[xshift = -3mm, yshift = -3mm] at (0,0) {\large$\vec0$};    
    \node[xshift = 0mm, yshift = 5mm] at (4,4) {\Large$\left(\frac12,\frac12,\frac12,\frac12\right)$};

    \draw[<->, >=Stealth] (5,2) -- (7,2);

    \draw[black!50] (8,0) rectangle +(4cm,4cm);
    \draw (8,0) -- (13,0);
    \draw (8,0) -- (8,5);
    \draw[blue!50] (8,0) -- (13,5);
    \draw[blue!50] (8,0) -- (13,5/3);
    \draw[blue!50] (8,0) -- (8+5/3,5);
    \filldraw[blue!20] (8,0) -- (10,2) -- (11,1) -- cycle;
    \filldraw (9.8,1.2) circle (2pt)
    node[below=3mm,left=1.8mm]{\Large$\vec\beta$};
    \filldraw (8,0) circle (2pt);
    \filldraw (12,4) circle (2pt);
    \node[xshift = -3mm, yshift = -3mm] at (8,0) {\large$\vec0$};    
    \node[xshift = 0mm, yshift = 4mm] at (12,4) {\large$(1,1,1,1)$};
\end{tikzpicture}
}
\end{center}
\caption{The chamber structures for $\vec\alpha$ (left) and $\vec\beta$ (right).}
\label{fig: chamber structures}
\end{figure}

\subsubsection{\texorpdfstring{Stratification by $\C^\times$-flow}{Stratification by C*-flow}}
\begin{definition}
    Given a hyperpolygon $(\bx,\by)$, the limit $[(\bx_0,\by_0)]=\lim_{\lambda\to0}\lambda\cdot[(\bx,\by)]$ exists and is a $\C^\times$-fixed point.  We say $(\bx,\by)$ \textit{flows down} to $(\bx_0,\by_0)$.  On the other hand, if $(\bx_0,\by_0)$ is a $\C^\times$-fixed point, the set of points $(\bx,\by)$ which flow down to $(\bx_0,\by_0)$ is called the \textit{upward flow} of $(\bx_0,\by_0)$.
\end{definition}
The moment map $M$ is a Morse-Bott function, and it gives rise to the Morse stratification
    $$\mathcal X(\vec\beta)=
    \bigsqcup_{I\in\mathcal I} Y_I$$
consisting of the upward flows of the various fixed point components.
However, $\mathcal M^\text{Higgs}(\vec\alpha)$ has one additional $\C^\times$-fixed point consisting of the unique Higgs bundle (up to equivalence) on $K_C^{1/2}\oplus K_C^{-1/2}=\mathcal O(-1)\oplus\mathcal O(1)$ with flags $F_i=\left\langle\left(\begin{smallmatrix}1\\0\end{smallmatrix}\right)\right\rangle$ for all $i$ and Higgs field
    $$\varphi=\begin{pmatrix}0&\varphi_{12}\\0&0\end{pmatrix},$$
    where $\varphi_{12}$ is a holomorphic section of $\mathcal{O}(-2) \otimes K_C(D)\cong\mathcal O$.
The Morse stratification (also called the Birula--Białynicki stratification) is
    $$\mathcal M^{\mathrm{Higgs}}(\vec\alpha)=
    \bigsqcup_{I\in\mathcal I}\mathcal Y_I\sqcup\mathcal Y_{\{1,2,3,4\}}.$$
    The strata $\mathcal Y_I$ for $|I|=2,4$ are embedded copies of $\C$ called the \textit{Hitchin sections}, and $\mathcal Y_{\{1,2,3,4\}}$ is called the \textit{distinguished Hitchin section}.  Outside $\mathcal Y_{\{1,2,3,4\}}$, the Higgs bundles all have underlying holomorphic structure $\mathcal O_{\C\P^1}^2$, so \Cref{thm: image of T} states $\mathcal T$ is a bijection onto $
    \bigsqcup_{I\in\mathcal I}\mathcal Y_I= \mathcal{M}^{\mathrm{Higgs}}(\vec \alpha) - \mathcal{Y}_{\{1, 2, 3, 4\}}$, which is open and dense in $\mathcal M_R(\vec\alpha)$. 

\subsubsection{The Nilpotent Cone}\label{subsec: The nilpotent cone} 
The moduli space $\mathcal M=\mathcal M^{\mathrm{Higgs}}(\vec\alpha)$ is equipped with the so-called Hitchin fibration \begin{equation}\mathcal H\from\mathcal M^{\mathrm{Higgs}}(\vec \alpha) \to\mathcal B\end{equation} which assigns to a Higgs bundle $(\mathcal E,\varphi)$ the spectral data of $\varphi$.  Since $\rk\varphi=2$ and $\tr\varphi=0$, the spectral data of $(\mathcal E, \varphi)$ is determined by $\det\varphi \in H^0(\C\P^1,K_{\C\P^1}^{\otimes2}(D))\cong\C$, hence $\mathcal B\cong\C$.  The fibers of $\mathcal H$ are holomorphic Lagrangian with respect to $\Omega_{J_1}$, so $\mathcal H$ gives $\mathcal M_R(\vec\alpha)$ the structure of an algebraic completely integrable system. 

Similarly, because $\mathcal{T}: \mathcal{X}(\vec \beta)=X\twoslash_{(0, \vec \beta)} G_{\C} \to \mathcal{M}^{\mathrm{Higgs}}(\vec \alpha)$ is a map of holomorphic symplectic manifolds,
\begin{equation}\mathcal H\circ\mathcal T: \mathcal{X}(\vec \beta) \to \mathcal{B}\end{equation}
is a holomorphic Lagrangian fibration on $\mathcal X(\vec\beta)$, but with noncompact fibers.  We emphasize that this depends on the divisor $D$ via the cross ratio of the four points. (Consequently, hyperpolygon spaces actually admits a whole family of holomorphic Lagrangian fibrations.)

The \textit{nilpotent cone} of $\mathcal M$ is the fiber $\mathcal H^{-1}(0)\subset\mathcal M$ consisting of Higgs bundles with nilpotent Higgs fields. 
We shall describe the components of the nilpotent cone of $\mathcal M^{\mathrm{Higgs}}(\vec \alpha)$.  
We are primarily interested in comparing the nilpotent cone in $\mathcal{M}^{\mathrm{Higgs}}(\vec \alpha)$ with its preimage under $\mathcal T$, which we will call the nilpotent cone of $\mathcal X(\vec \beta)$.
We refer the reader to \cite{Men22} for a full treatment of the former. 

\begin{lemma}\cite{Men22}
    Suppose $(\mathcal E,\varphi)$ is in the nilpotent cone of $\mathcal M$ and $\mathcal E=\mathcal O\oplus\mathcal O$.  Then $\ker\varphi$ is a holomorphic subbundle of $\mathcal E$, and one of the following holds:
    \begin{itemize}
        \item $\varphi=0$,
        \item $\ker\varphi\cong\mathcal O$,
        \item $\ker\varphi\cong\mathcal O(-1)$.
    \end{itemize}
\end{lemma}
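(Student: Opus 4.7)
The plan is to split into cases based on whether $\varphi$ vanishes identically. If $\varphi=0$, the first bullet holds trivially. Otherwise, since $(\mathcal E, \varphi)$ lies in the nilpotent cone we have $\det\varphi\equiv 0$; combined with $\tr\varphi=0$, Cayley--Hamilton forces $\varphi^{2}=0$ as a morphism $\mathcal E \to \mathcal E \otimes K_C(D)^{\otimes 2}$. In particular $\varphi$ is nonzero but of generic rank one, so $\ker\varphi$ is a rank-one subsheaf of $\mathcal E$.

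Next I would verify that $\ker\varphi$ is a holomorphic \emph{subbundle}, not merely a subsheaf. The induced map $\mathcal E/\ker\varphi \hookrightarrow \mathcal E \otimes K_C(D)$ identifies the quotient with $\im\varphi$. Since $\mathcal E \otimes K_C(D)$ is locally free on the smooth curve $\CP^1$, this quotient is torsion-free, which forces $\ker\varphi$ to be saturated and therefore a line subbundle. Writing $\ker\varphi\cong\mathcal O(-d)$ for some $d\geq 0$ (every line subbundle of $\mathcal O\oplus\mathcal O$ has nonpositive degree), the short exact sequence
\[
0 \to \mathcal O(-d) \to \mathcal O^{\oplus 2} \to \im\varphi \to 0
\]
pins down $\im\varphi\cong\mathcal O(d)$.

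The final step exploits $\varphi^{2}=0$: since $\varphi$ annihilates $\im\varphi$, there is an induced inclusion of subsheaves $\im\varphi \hookrightarrow (\ker\varphi)\otimes K_C(D)$ inside $\mathcal E \otimes K_C(D)$. On $\CP^1$ with $|D|=4$ we have $K_C(D)\cong\mathcal O(-2+4)=\mathcal O(2)$, so this becomes a nontrivial map $\mathcal O(d)\hookrightarrow\mathcal O(2-d)$, which exists only when $d\leq 2-d$, i.e.\ $d\leq 1$. Combined with $d\geq 0$, this yields $\ker\varphi\cong\mathcal O$ when $d=0$ and $\ker\varphi\cong\mathcal O(-1)$ when $d=1$, exhausting the listed cases.

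The main care needed is in passing from the matrix identity $\varphi^{2}=0$ to the sheaf-theoretic statement that the composition $\mathcal E \xrightarrow{\varphi} \mathcal E \otimes K_C(D) \xrightarrow{\varphi \otimes \mathrm{id}} \mathcal E \otimes K_C(D)^{\otimes 2}$ vanishes, and then reading off the inclusion $\im\varphi \subseteq (\ker\varphi) \otimes K_C(D)$ by kernel-chasing. Once this bookkeeping with the $K_C(D)$-twist is in place, the rest reduces to elementary facts about line bundles on $\CP^1$, so I do not anticipate a deeper obstacle.
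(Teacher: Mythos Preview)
Your proof is correct and takes a genuinely different route from the paper. The paper works entirely in coordinates: it writes $\varphi=\begin{pmatrix}a&b\\c&-a\end{pmatrix}\frac{\de z}{\prod(z-p_i)}$ with $a,b,c$ polynomials of degree $\leq 2$, uses $a^2+bc=0$, and then does a case analysis on which of $a,b,c$ vanish identically, exhibiting explicit generating sections of $\ker\varphi$ and reading off the degree from the polynomial degrees. Your argument instead extracts the single structural fact $\varphi^2=0$ via Cayley--Hamilton, then bounds $d$ by the nonzero inclusion $\im\varphi\cong\mathcal O(d)\hookrightarrow(\ker\varphi)\otimes K_C(D)\cong\mathcal O(2-d)$. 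This is cleaner and coordinate-free, and it makes transparent why the bound depends only on $\deg K_C(D)=n-2$ (so for general $n$ one would get $d\leq\lfloor (n-2)/2\rfloor$). The paper's approach, on the other hand, gives explicit generators of $\ker\varphi$ which could be useful if one later wants to describe the nilpotent-cone components concretely.
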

\begin{proof}
    Suppose $\det\varphi=0$.  Since $\varphi$ is meromorphic, $\varphi=\sum\limits_i\frac{\varphi_i}{z-p_i}\de z$ for some constant matrices $\varphi_i\in\mathfrak{sl}(2,\C)$ with $\sum\limits_i\varphi_i=0$.
    Combining terms, we can write
        $$\varphi=\begin{pmatrix}a&b\\c&-a\end{pmatrix}\frac{\de z}{\prod z-p_i}$$
    for polynomials $a,b,c$ of degree at most 2 (the $z^3$ terms cancel because $\sum\varphi_i=0$), and $0=\prod|z-p_i|^2\det\varphi=(-a^2-bc)$.  If $\varphi=0$ we are done.  Suppose any of $a,b,c$ are identically 0.  Then $a^2+bc=0$ implies either $a,c=0$ or $a,b=0$, so $\ker\varphi=\langle e_1\rangle$ or $\ker\varphi=\langle e_2\rangle$.  In both cases $\ker\varphi\cong\mathcal O$.
    
    Now assume $a,b,c\ne0$.  Then $\left(\begin{smallmatrix}b\\-a\end{smallmatrix}\right)$ and $\left(\begin{smallmatrix}a\\c\end{smallmatrix}\right)$ generically generate $\ker\varphi$.  If $a$ is constant, so are $b$ and $c$, and again $\ker\varphi=\mathcal O$.  Otherwise, at least one of $b,c$ have a common root with $a$; say $b$ does.  Cancelling the common linear factor, the section $s(z)=\left(\begin{smallmatrix}b\\-a\end{smallmatrix}\right)$ is either constant or given by two (at most) linear polynomials, so either $\deg\ker\varphi=0$ or $\deg\ker\varphi=-1$.
\end{proof}

The nilpotent cone in $\mathcal M$ can be decomposed into five components whose closures generate $H_2(\mathcal M,\Z)$ based on the three possibilities for $\ker\varphi$ in the above lemma.  The component with $\varphi=0$ is the moduli space $\mathcal N(\vec\alpha)$ of parabolic vector bundles, and it is called the \textit{central sphere} of $\mathcal M$ because it intersects all the other components, which are called \textit{exterior spheres}.  The component of the nilpotent cone consisting of Higgs bundles with $\deg\ker\varphi=-1$ is called the \textit{distinguished exterior sphere}.  The remaining three components of the nilpotent cone consist of Higgs bundles with $\deg\ker\varphi=0$.
\begin{lemma}\label[lemma]{lemma: nilpotent cone components}
    Suppose $\vec\alpha$ is in the Biswas polytope, $(\mathcal E,\varphi)$ is an $\vec\alpha$-stable parabolic Higgs bundle in the nilpotent cone, and $I=I_\varphi:=\{i\in\{1,2,3,4\}\mid\varphi_i\ne0\}$.  Then:
    \begin{enumerate}[label=(\roman*)]
    \item If $\varphi=0$, $I=\emptyset$.
    \item If $\varphi\ne0$ and $\ker\varphi \cong \mathcal O$, then $|I|=2$.
    \item If $\varphi\ne0$ and $\ker\varphi\cong\mathcal O(-1)$, then $|I|=4$.
    \end{enumerate}
\end{lemma}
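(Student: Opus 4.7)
The plan is to proceed case by case. Part (i) is immediate: if $\varphi=0$, every residue $\varphi_i$ vanishes, so $I=\emptyset$.

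For (ii) and (iii), the guiding observation is that whenever $\varphi_i\ne 0$, strict parabolicity forces $\ker\varphi_i=F_i$; combined with $\varphi^2=0$ and $\mathrm{rank}\,\varphi_i=1$, this also gives $\mathrm{im}\,\varphi_i=F_i$. Since the image subsheaf of $\varphi$ is contained in $\mathcal L = \ker\varphi$, we conclude $\mathcal L_{p_i}=F_i$ at every $i\in I$. Writing $J=\{i\in\{1,2,3,4\}\colon \mathcal L_{p_i}=F_i\}$, we always have $I\subseteq J$.

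For (ii), endow $\mathcal L\cong\mathcal O$ with its induced parabolic structure and compute
\[
 \mathrm{slope}\,\mathcal L = 0 + \sum_{i\in J}(1-\alpha_i) + \sum_{i\notin J}\alpha_i = |J| - W_J(\vec\alpha).
\]
Stability of $(\mathcal E,\varphi)$ then gives $|J| - W_J(\vec\alpha) < \mathrm{slope}\,\mathcal E = 2$. Case analysis: $|J|=4$ forces $W_J>2$, impossible since each $\alpha_i<\tfrac12$; $|J|=3$ forces $W_J>1$, violating the Biswas condition $0<W_I(\vec\alpha)<1$ for $|I|=3$. Hence $|J|\le 2$, so $|I|\le 2$. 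On the other hand, $\varphi\ne 0$ rules out $|I|=0$, and the constraint $\sum_i\varphi_i=0$ (encoding holomorphicity at $\infty$) rules out $|I|=1$. Therefore $|I|=2$.

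For (iii) the argument is structural rather than numerical. Since $\varphi^2=0$ and $\mathcal L=\ker\varphi$, the Higgs field factors as
\[
 \varphi\colon \mathcal E \twoheadrightarrow \mathcal E/\mathcal L \xrightarrow{\;\tilde\varphi\;} \mathcal L\otimes K_{\CP^1}(D) \hookrightarrow \mathcal E\otimes K_{\CP^1}(D).
\]
With $\mathcal L\cong\mathcal O(-1)$ we have $\mathcal E/\mathcal L\cong \mathcal O(1)$ and $\mathcal L\otimes K_{\CP^1}(D)\cong \mathcal O(-1+n-2)=\mathcal O(1)$, so $\tilde\varphi$ is given by a constant $c\in\C$. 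Since $\varphi\ne 0$, $c\ne 0$ and $\tilde\varphi$ is an isomorphism of line bundles, in particular nonvanishing at every $p_i$. Unwinding this nonvanishing in terms of the simple pole of $K_{\CP^1}(D)$ at $p_i$ yields $\mathrm{res}_{p_i}\varphi=\varphi_i\ne 0$ for every $i$, i.e.\ $|I|=4$.

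The one slightly technical step is the identification $\mathcal L_{p_i}=F_i$ when $\varphi_i\ne 0$, together with the matching claim in (iii) that nonvanishing of $\tilde\varphi$ at $p_i$ is equivalent to $\varphi_i\ne 0$. Both reduce to the same local computation: in a coordinate $z$ near $p_i$, expand the pole term $\varphi_i/(z-p_i)\,\de z$ and compare against a local frame of $\mathcal L\otimes K_{\CP^1}(D)$ inside $\mathcal E\otimes K_{\CP^1}(D)$.
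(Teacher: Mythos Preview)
Your proof is correct. Parts (i) and (ii) follow the paper's approach; in (ii) you are in fact slightly more careful, computing the parabolic slope of $\mathcal L$ via the set $J=\{i:\mathcal L_{p_i}=F_i\}$ rather than $I$ itself. The paper writes the slope formula directly in terms of $I$, which strictly speaking only yields a lower bound since $I\subseteq J$ and $1-\alpha_i>\alpha_i$, but the conclusion is unaffected either way.

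Part (iii) is where the approaches genuinely diverge. The paper argues by linear algebra on the residues: since $\deg\ker\varphi=-1$, the nonzero residues cannot all be proportional (else their common kernel line would give a degree-$0$ constant subbundle of $\ker\varphi$), so two of them, say $\varphi_1$ and $\varphi_2$, are nonproportional rank-one nilpotents and their sum is invertible; then $\varphi_3+\varphi_4=-(\varphi_1+\varphi_2)$ is invertible as well, forcing both $\varphi_3,\varphi_4\ne0$. Your route is sheaf-theoretic: factoring $\varphi$ through $\tilde\varphi\colon\mathcal E/\mathcal L\to\mathcal L\otimes K_{\CP^1}(D)$ and computing both sides to be $\mathcal O(1)$ makes $\tilde\varphi$ a nonzero constant, hence nowhere vanishing, which unwinds to $\varphi_i\ne0$ for all $i$. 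Your argument is conceptually cleaner and would adapt with no extra casework to other $n$ or other degrees of $\ker\varphi$; the paper's argument is more elementary, avoiding the local-frame comparison between $\tilde\varphi(p_i)$ and $\mathrm{res}_{p_i}\varphi$.
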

\begin{proof}
    (i) is obvious.  For (ii), suppose $\varphi\ne0$ and $\deg\ker\varphi=0$.  Note that the condition $\sum\varphi_i=0$ makes $|I|=1$ impossible.  The subbundle $L:=\ker\varphi\subset\mathcal E$ has induced parabolic weights
        $$\alpha_i(L)=
        \begin{cases}
            1-\alpha_i
                &\text{if }L_{p_i}=F_i\\
            \alpha_i
                &\text{if }L_{p_i}\cap F_i=\emptyset.
        \end{cases}$$
    Since $\deg L=0$,
        $$\text{slope}\,L=\text{pardeg}\,L=\sum_{i\in I}(1-\alpha_i)+\sum_{i\notin I}\alpha_i=|I|-W_I(\vec\alpha).$$
    Since $\text{slope}\,E=2$, stability implies $W_I(\vec\alpha)>|I|-2$.  But we assumed $0<W_I(\vec\alpha)<1$ for all $I$ with $|I|=3$, and $W_{\{1,2,3,4\}}(\vec\alpha)>2$ is impossible, so we must have $|I|=2$.

    If $\deg\ker\varphi=-1$, at least two of the residues $\varphi_i$ are nonproportional, say $\varphi_1$ and $\varphi_2$.  Since each $\varphi_i$ is nilpotent rank 1, $\varphi_1+\varphi_2$ has full rank,\footnote{Say $a\in\ker\varphi_1$ and $b\in\ker\varphi_2$.  Since $\varphi_1,\varphi_2$ are nonproportional, so are $a,b$.  Now $(\varphi_1+\varphi_2)a=c_1b$ and $(\varphi_1+\varphi_2)b=c_2a$ for some $c_1,c_2\ne0$.  This shows $\rk(\varphi_1+\varphi_2)=2$.} so $\varphi_3+\varphi_4=-\varphi_1-\varphi_2$ does too.  Therefore, $|I|=4$.
\end{proof}

This lemma allows us to label the components of the nilpotent cone according to the subsets $I_\varphi$: the central sphere is labelled $\mathcal M_\emptyset$, the distinguished exterior sphere is labelled $\mathcal M_{\{1,2,3,4\}}$, and the remaining three exterior spheres are labelled $\mathcal M_I$ for subsets $I$ with $|I|=2$ and $W_I(\vec\alpha)>0$ (see \Cref{fig: nilpotent cone}).  For Higgs bundles on these spheres only two residues of $\varphi$ are nonzero, so the condition $\sum\varphi_i=0$ implies they are proportional.  Note that the correspondence $\beta_i=1-2\alpha_i$ maps the region $W_I(\vec\alpha)>0$ to $W_I(\vec\beta)<0$.  Therefore the three exterior spheres are labelled by the short subsets (the term `short' will always be with respect to $\vec\beta$), and in fact the preimages $\mathcal X_I:=\mathcal T^{-1}(\mathcal M_I)$ for $|I|=2$ consists precisely of the hyperpolygons for which $I$ is straight.
\begin{remark}
    A similar lemma holds when $\vec\alpha$ is outside the Biswas polytope, but the case $\varphi=0$ becomes impossible.  Since we must assume that $W_{\{1, 2, 3, 4\}}(\vec \alpha)>1$ for $\mathcal{T}$ to preserve stability, we find that the central sphere is parameterized by Higgs bundles with $|I_\varphi|=3$.
\end{remark}
    
\begin{remark}[Hyperpolygon Core] In \cite{HP03}, Harada--Proudfoot discuss the ``hyperpolygon core.'' In this case, the hyperpolygon core is four spheres in $D_4$ configuration. The nilpotent cone contains the hyperpolygon core and an additional copy of $\C^\times$ that depends on the cross-ratio of the points of $D$. 
\end{remark}
\begin{figure}
\begin{center}
\resizebox{5cm}{!}{
\begin{tikzpicture}
        \draw (0,0) circle (2cm);
        \shade[ball color = red!80, opacity = 0.7] (0,0) circle (2cm);
        \node[font=\huge] at (2,-2) {$\mathcal M_\emptyset$};
        \draw (0,3) circle (1cm);
        \shade[ball color = black!60, opacity = 0.5] (0,3) circle (1cm);
        \filldraw[red] (0,4) circle (3pt);
        \node[font=\huge] at (0,5) {$\mathcal M_{\{1,2,3,4\}}$};
        \draw (-3,0) circle (1cm);
        \shade[ball color = black!60, opacity = 0.5] (-3,0) circle (1cm);
        \filldraw[red] (-4,0) circle (3pt);
        \node[font=\huge] at (5,0) {$\mathcal M_{I_3}$};
        \draw (0,-3) circle (1cm);
        \shade[ball color = black!60, opacity = 0.5] (0,-3) circle (1cm);
        \filldraw[red] (0,-4) circle (3pt);
        \node[font=\huge] at (0,-5) {$\mathcal M_{I_2}$};
        \draw (3,0) circle (1cm);
        \shade[ball color = black!60, opacity = 0.5] (3,0) circle (1cm);
        \filldraw[red] (4,0) circle (3pt);
        \node[font=\huge] at (-5,0) {$\mathcal M_{I_1}$};
\end{tikzpicture}
}
\end{center}
\caption{The nilpotent cone of $\mathcal M(\vec\alpha)$.  The three exterior spheres are labelled by the three short subsets $I_1,I_2,I_3\in\mathcal I$.  The $\C^\times$-fixed points are shown in red.}
\label{fig: nilpotent cone}
\end{figure}
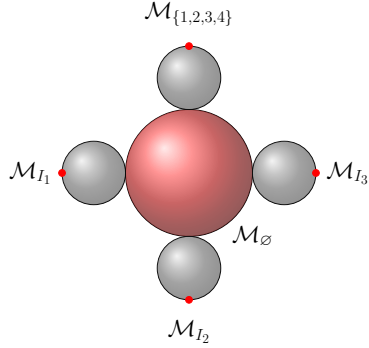

\subsubsection{Computation of Torelli Numbers}
The four components $\mathcal X_I$ of the nilpotent cone ($I=\emptyset$ and the three short $|I|=2$) in hyperpolygon space generate $H_2(\mathcal X(\vec\beta),\Z)$.  Meanwhile, the nilpotent cone of the Hitchin system $\mathcal M_R(\vec\alpha(R))$ consists of five components $\mathcal M_I$ ($I=\emptyset$, three $|I|=2$, and one $|I|=4$) which generate $H_2(\mathcal M_R(\vec\alpha),\Z)$.
\begin{definition}
    Let $(X,g,J_1,J_2,J_3,\omega_{J_1},\omega_{J_2},\omega_{J_3})$ be an ALE (resp.\ ALG) gravitational instanton, and suppose $S_1,\dots, S_R$ are generators for $h_2(X,\Z)$.  The \emph{period matrix} of $X$ is $(\langle[\omega_{J_j}],S_i\rangle)_{i,j}$ where $\langle[\omega_{J_j}],S_i\rangle=\int_{S_i}\omega_{J_j}$.  The entries of the period matrix are called the \emph{Torelli numbers} of the instanton.
\end{definition}
A Torelli-type theorem proved by Kronheimer \cite{Kro89} states that these values on an ALE space determines the hyperkähler structure, hence our interest in comparing these numbers between $\mathcal X(\vec\beta)$ and $\mathcal M_R(\vec\alpha(R))$. 

\begin{theorem}\label[theorem]{thm: Torelli numbers agree}
    The Torelli numbers $\tau_{I,j}^\text{HP}:=\int_{\mathcal X_I}\omega_{J_j}^\text{HP}$ and $\tau_{I,j,R}:=\int_{\mathcal M_I}\omega_{J_j,R}$ satisfy $\tau_{I,j,R}=2\pi\tau_{I,j}^\text{HP}$ for all short subsets $|I|=2$, $j=1,2,3$, and $R\in(0,1]$.
\end{theorem}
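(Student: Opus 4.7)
The proof naturally splits into two cases: $j \in \{2,3\}$ and $j=1$. The plan is to handle $j=2,3$ directly from the fact that $\mathcal{T}$ preserves the holomorphic symplectic form (up to $2\pi$), and to handle $j=1$ via a Duistermaat--Heckman calculation that feeds on the moment-map identifications already established in \Cref{thm: Morse functions agree at fixed points}.

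For $j=2,3$, I would write $\Omega_{J_1}=\omega_{J_2}+i\omega_{J_3}$ on $\mathcal{M}_R(\vec\alpha(R))$ and $\Omega^{HP}_{J_1}=\omega^{HP}_{J_2}+i\omega^{HP}_{J_3}$ on $\mathcal{X}(\vec\beta)$. Since the paper records that $\Omega_{J_1}$ is independent of $R$, and \Cref{thm: T preserves holomorphic symplectic forms} gives $\mathcal{T}^*\Omega_{J_1}=2\pi\Omega^{HP}_{J_1}$, separating real and imaginary parts yields $\mathcal{T}^*\omega_{J_j,R}=2\pi\omega^{HP}_{J_j}$ for $j=2,3$, independently of $R$. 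By \Cref{thm: image of T}, $\mathcal{T}$ restricts to a diffeomorphism $\mathcal{X}_I\to\mathcal{M}_I$ for every short subset $|I|=2$ (these spheres lie in the open locus where the underlying holomorphic bundle is trivial). A straightforward change of variables then gives
\[
\tau_{I,j,R}=\int_{\mathcal{M}_I}\omega_{J_j,R}=\int_{\mathcal{X}_I}\mathcal{T}^*\omega_{J_j,R}=2\pi\int_{\mathcal{X}_I}\omega^{HP}_{J_j}=2\pi\tau^{HP}_{I,j},\qquad j=2,3.
\]

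For $j=1$, the map $\mathcal{T}$ does \emph{not} pull $\omega_{J_1,R}$ back to a multiple of $\omega^{HP}_{J_1}$, so a different argument is needed. The plan is to use the $U(1)$-action on each exterior sphere. For $|I|=2$, the component $X_I\cong\CP^{|I|-2}$ is a single point, so the $U(1)$-action on $\mathcal{X}_I\cong S^2$ has exactly two fixed points: the unique point of $X_I$, and a point in $X_\emptyset$ at which $\mathcal{X}_I$ meets the central sphere $\mathcal{X}_\emptyset$. The analogous picture holds on $\mathcal{M}_I$. By \Cref{prop: HP Morse function} and \Cref{prop:morsehitchin} the moment maps $M_\text{HP}$ and $M_R$ vanish at the first type of fixed point, and by \Cref{thm: Morse functions agree at fixed points} they satisfy $M_R=2\pi M_\text{HP}$ at the second type.

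Finally, applying Duistermaat--Heckman (or an elementary action-angle computation) to each $U(1)$-invariant symplectic $2$-sphere, the symplectic area equals $2\pi$ times the difference of the real moment-map values $-iM$ at the two fixed points. Applied to $(\mathcal{X}_I,\omega^{HP}_{J_1},-iM_\text{HP})$ and to $(\mathcal{M}_I,\omega_{J_1,R},-iM_R)$, this produces
\[
\tau^{HP}_{I,1}=-2\pi i\,M_\text{HP}(X_I),\qquad \tau_{I,1,R}=-2\pi i\,M_R(\text{fixed pt in }\mathcal{M}_I)=-2\pi i\cdot 2\pi\,M_\text{HP}(X_I),
\]
so $\tau_{I,1,R}=2\pi\tau^{HP}_{I,1}$, as required. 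A bonus of this argument is that it explains why $\tau_{I,1,R}$ is $R$-independent on the short exterior spheres: the $R$-factors in $M_R$ (via $\alpha_i(R)=\tfrac12-R\beta_i$) cancel against the $R^{-1}$ in \Cref{prop:morsehitchin} precisely when $\deg\mathcal{L}_1=0$, which is the case for $|I|=2$ but fails on the distinguished Hitchin section $\mathcal{M}_{\{1,2,3,4\}}$. The main thing to verify carefully is that orientation conventions on the $2$-spheres are consistent between the two sides, so that the Duistermaat--Heckman signs match; this is the only mild obstacle, and it is readily checked from the $\C^\times$-equivariant identification $\mathcal{T}_*$ on the fixed loci.
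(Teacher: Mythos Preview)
Your proof is correct, and for $j=1$ it is essentially identical to the paper's: both integrate in action--angle coordinates $\de\theta\wedge\de M$ and appeal to \Cref{thm: Morse functions agree at fixed points} for the moment-map values at the two $U(1)$-fixed points of each exterior sphere. (A harmless slip: it is the intersection point with $X_\emptyset$ where $M_\text{HP}$ vanishes, not the point of $X_I$; since only the difference matters, and $M_R=2\pi M_\text{HP}$ at \emph{both} fixed points, this does not affect the argument.)

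For $j=2,3$ your route is genuinely different from the paper's. You pull back $\Omega_{J_1}=\omega_{J_2}+i\omega_{J_3}$ along $\mathcal T$ using \Cref{thm: T preserves holomorphic symplectic forms}, which requires knowing that $\mathcal T$ restricts to a diffeomorphism $\mathcal X_I\to\mathcal M_I$ for short $|I|=2$ (true, since these spheres lie in the $\mathcal O^2$-locus described in \Cref{thm: image of T}). The paper instead observes that the nilpotent cone is holomorphic Lagrangian for $\Omega_{J_1}$ on \emph{both} sides, so $\omega_{J_2}$ and $\omega_{J_3}$ restrict to zero on every component and the identity $0=2\pi\cdot 0$ is immediate. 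The paper's argument is shorter and avoids invoking the diffeomorphism onto $\mathcal M_I$; your argument is a bit more robust in that it would still give the comparison even if the integrals were nonzero, and it makes transparent that the $R$-independence for $j=2,3$ comes directly from the $R$-independence of $\Omega_{J_1}$.
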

\begin{proof}
    Recall that the nilpotent cones on both spaces are holomorphic Lagrangian with respect to $\Omega_{J_1}^\text{HP}=\omega_{J_2}^\text{HP}+i\omega_{J_3}^\text{HP}$ and $\Omega_{J_1}^\text{Higgs}=\omega_{J_2}^\text{Higgs}+i\omega_{J_3}^\text{Higgs}$, so the Torelli numbers associated to the symplectic forms $\omega_{J_2}^\text{HP},\omega_{J_3}^\text{HP}$ and $\omega_{J_2},\omega_{J_3}$ all vanish.  By definition of moment maps, the comparison of $\omega_{J_1}^\text{HP}$ and $\omega_{J_1,R}$ follows immediately from \Cref{thm: Morse functions agree at fixed points} by integrating with respect to action-angle coordinates $\de\theta\wedge\de M$ and $\de\theta\wedge\de M_R$.
\end{proof}
\begin{remark}
    The analogous statement holds for the integrals over the central spheres $\mathcal M_\emptyset$ and $X_\emptyset$, though we omit the proof as it is somewhat technical and we do not need the result (see \cite{Kho05} and \cite{HellerHeller}).  However, a consequence is that $\mathcal T_R^*([\omega_{J_j, R}])=2\pi[\omega^{\text{HP}}_{J_j}]$ for all $j=1,2,3$ and $R \in (0, R_{\max})$.
\end{remark}

This computation motivates \Cref{fig: metric degeneration}.  Ultimately, we will prove the stronger result $\lim_{R \to 0}\mathcal{T}_R^* g_R \to 2 \pi g_{\mathrm{HP}}$.

\tikzset{
  pics/torus/.style n args={3}{
    code = {
      \providecolor{pgffillcolor}{rgb}{1,1,1}
      \begin{scope}[
          yscale=cos(#3),
          outer torus/.style = {draw,line width/.expanded={\the\dimexpr6\pgflinewidth+#2*2},line join=round},
          inner torus/.style = {draw=pgffillcolor,line width={#2*2}}
        ]
        \draw[outer torus] circle(#1);\draw[inner torus] circle(#1);
        \draw[outer torus] (180:#1) arc (180:360:#1);\draw[inner torus,line cap=round] (180:#1) arc (180:360:#1);
      \end{scope}
    }
  }
}
\tikzset{
  pics/opentorus/.style n args={3}{
    code = {
      \providecolor{pgffillcolor}{rgb}{1,1,1}
      \begin{scope}[
          xscale=cos(#3)
        ]
        \begin{axis}[axis x line = none, axis y line = none, width = 24cm, height = 16cm, at = {(-5.2cm,-5.3cm)}]
            \addplot[black, line width = 3pt]{x^2};
        \end{axis}
        \draw[line width = 1pt] (6.8,-1.5) arc (270:90:2);
        \draw[line width = 1pt] (5.7,-1.1) arc (-90:90:1.6);
    \end{scope}
    }
  }
}
\begin{figure}
\begin{center}
\resizebox{12cm}{!}{
\begin{tikzpicture}
            \draw (0,-8) ellipse (10cm and 2cm)
            node[below=3cm,font=\Huge]{\scalebox{1.8}{$R=1$}};
            \pic[rotate=-90] at (-6,0) {torus={3.25cm}{7.5mm}{72}};
            \draw (-6,-4) -- (-6,-8);
            \pic[rotate=-90] at (6,0) {torus={3.25cm}{7.5mm}{76}};
            \draw (6,-4) -- (6,-8);
            \draw (0,0) circle (2cm);
            \shade[ball color = green!60, opacity = 0.7] (0,0) circle (2cm);
            \draw (0,3) circle (1cm);
            \shade[ball color = green!60, opacity = 0.7] (0,3) circle (1cm);
            \draw (-3,0) circle (1cm);
            \shade[ball color = green!60, opacity = 0.7] (-3,0) circle (1cm);
            \draw (0,-3) circle (1cm);
            \shade[ball color = green!60, opacity = 0.7] (0,-3) circle (1cm);
            \draw (3,0) circle (1cm);
            \shade[ball color = green!60, opacity = 0.7] (3,0) circle (1cm);
            \draw (0,-4) -- (0,-8);

            \draw (24,-8) ellipse (10cm and 2cm)
            node[below=3cm,font=\Huge]{\scalebox{1.8}{$R=0.5$}};
            \pic[rotate=-90] at (18,1) {torus={4.1cm}{9mm}{74}};
            \draw (18,-4) -- (18,-8);
            \draw (24,0) circle (2cm);
            \shade[ball color = green!60, opacity = 0.7] (24,0) circle (2cm);
            \draw (24,4) circle (2cm);
            \shade[ball color = green!60, opacity = 0.7] (24,4) circle (2cm);
            \draw (21,0) circle (1cm);
            \shade[ball color = green!60, opacity = 0.7] (21,0) circle (1cm);
            \draw (24,-3) circle (1cm);
            \shade[ball color = green!60, opacity = 0.7] (24,-3) circle (1cm);
            \draw (27,0) circle (1cm);
            \shade[ball color = green!60, opacity = 0.7] (27,0) circle (1cm);
            \draw (24,-4) -- (24,-8);
                \pic[rotate=-90] at (30,1) {torus={4.1cm}{9mm}{77}};
                \draw (30,-4) -- (30,-8);

            \draw (48,-8) ellipse (10cm and 2cm)
            node[below=3cm,font=\Huge]{\scalebox{1.8}{$R\to0$}};
        \pic[rotate=0] at (39.4,0) {opentorus={3.25cm}{7.5mm}{75}};
            \draw (41,-4) -- (41,-8);
        \pic[rotate=0] at (53.4,0) {opentorus={3.25cm}{7.5mm}{75}};
            \draw (55,-4) -- (55,-8);
            \draw (48,0) circle (2cm);
            \shade[ball color = green!60, opacity = 0.7] (48,0) circle (2cm);
            \shade[ball color = green!60, opacity = 0.5] (48,3) circle (1cm);
            \shade[ball color = green!60, opacity = 0.4] (48,4) circle (2cm);
            \shade[ball color = green!60, opacity = 0.4] (48,5) circle (3cm);
            \begin{axis}[axis x line = none, axis y line = none, width = 14cm, height = 8.8cm, at = {(41.805cm,1.4cm)}]
                \addplot[black, line width = 1.5pt]{x^2};
            \end{axis}
            \draw (45,0) circle (1cm);
            \shade[ball color = green!60, opacity = 0.7] (45,0) circle (1cm);
            \draw (48,-3) circle (1cm);
            \shade[ball color = green!60, opacity = 0.7] (48,-3) circle (1cm);
            \draw (51,0) circle (1cm);
            \shade[ball color = green!60, opacity = 0.7] (51,0) circle (1cm);
            \draw (48,-4) -- (48,-8);
\end{tikzpicture}
}
\end{center}
\caption{ALG Degenerate Limit as $R\to0$}
\label{fig: metric degeneration}
\end{figure}

\section{Local Model}\label{sec: Local Model}

Fix $\beta \in \R^+$. In this section, we construct a family of solutions of Hitchin's equations over the punctured unit disk parameterized by $R$ with parabolic weights $\alpha(R) = \frac{1}{2} - R \beta$  and $1-\alpha(R)$ that are extremely well-behaved as $R \to 0$ (see \Cref{def: Local Harmonic Model Metric}).  Given a deformation of the Higgs bundle, we then construct a family of endomorphisms $\dot\nu_R$ which solve the complex Coulomb gauge equation \eqref{eqn: Coulomb gauge equation}.  Finally, we relate the norms of the Higgs bundle deformations to deformations of a single side of a hyperpolygon.

\subsection{Remarks on Joint Limit}

First, we make a few more detailed remarks on this joint limit $\alpha \to \frac{1}{2}$ and $R \to 0$. 

Let $(\mathcal E, \mathcal F, \varphi)=\mathcal T(\bx,\by)$. A metric $h_R$ solves the $R$-rescaled Hitchin equations if and only if it solves the original Hitchin equation for the bundle $(\mathcal E,\mathcal F,R\varphi)$. We will emphasize the dependence of $h_R$ on the parabolic weights by writing $h_{R, \vec \alpha}$ in this subsection.

We consider the case $n=4$ for simplicity. As a first case, suppose the $\C^\times$-limit $\lim_{R\to0}(\mathcal E, \mathcal F, R\varphi)$ is an $\vec \alpha$-stable parabolic bundle $(\mathcal E,\mathcal F,0)$.  As $R\to0$, the solution $h_{R, \vec \alpha}$ limits to the Kähler-Einstein metric $h_{\natural, \vec \alpha}$ for the parabolic bundle $(\mathcal E,\mathcal F,0)$.  As a second case, suppose $(\mathcal E, \mathcal F, \varphi)$ is in one of the Hitchin sections such that $\mathcal E \simeq \mathcal{O} \oplus \mathcal{O}$. 
 Using the family of gauge transformations $g_R=\text{diag}(R^{1/2},R^{-1/2})$, we find that the $\C^\times$-limit is an $\vec \alpha$-stable Higgs bundle
    $$(\mathcal E,\mathcal F, \varphi_0):=\lim_{R\to0}g_R\cdot(\mathcal E, \mathcal F, \varphi).$$
The solution $h_{R, \vec \alpha, 0}$ associated to $(\mathcal E, \mathcal F, \varphi_0)$ is given in terms of $g_R$ and the uniformization metric $g_{\natural, \vec \alpha}$ on $\C\P^1\sm D$ with conical singularities  depending on $\alpha_i$ at the punctures, and in certain cases it provides a sufficiently good approximation of $h_{R, \vec \alpha}$\footnote{
    For $R\ne0$, the true solution $h_{R,\vec\alpha}$ arises from the uniformization metric $g_{\natural, \vec \alpha, \varphi}$ with conical singularities depending on $\alpha_i$ in a \emph{different conformal class} on $(\CP^1, D)$ with cross-ratio of the four points depending on $\varphi$.
} as $R \to 0$.  
This is the approach used in \cite{CFW24} to calculate the \textit{conformal limit} of a Higgs bundle.

The difficulty arises from the fact that we wish to simultaneously vary the parabolic weights $\vec\alpha(R)$ with $R$.  This does not affect the notion of $\vec\alpha$-stability since we stay within a single chamber when $\vec \alpha$ is sufficiently close to $(\frac{1}{2}, \cdots, \frac{1}{2})$, but it does change the asymptotic behavior of the solutions $h_{R, \vec \alpha(R)}$ near the punctures.  A na\"ive approach might be to decouple the rescaling factor $R$ from the parameterization of the parabolic weights $\vec\alpha(R')$, first taking the limit $R \to 0$ above and then taking the $R' \to 0$ limit. 
We will focus on the second case where $(\mathcal E, \mathcal F, \varphi)$ is in one of the three Hitchin sections. Since the solution of Hitchin's equations $h_{R, \vec \alpha(R'), 0}$ can be written in terms of $g_R$ and $g_{\natural, \vec \alpha(R')}$,  we could then take the limit $R'\to0$ to degenerate the conical singularities to cuspidal singularities as described in \cite{KW18}, extending \cite{Jud98}.  However, it is precisely the relative rates at which $R\to0$ and $\alpha_i\to\frac12$ that encodes the data of the hyperpolygon weights $\beta_i$ (see \Cref{fig: relative rates of R and R'}),
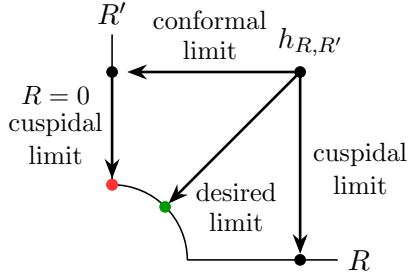
\begin{figure}
    \begin{center}
    \begin{tikzpicture}
        \draw[line width = .6pt] (1,0) -- (3,0);
        \node[right=3cm]{$R$};
        \draw[line width = .6pt] (0,1) -- (0,3);
        \node[above=3cm]{$R'$};
        \draw[line width = .6pt] (1,0) arc (0:90:1);
        \draw[line width=1pt, ->, >=Stealth] (2.5,2.5) -- (0.2,2.5);
        \node[font=\small] at (1.3,3.2) {conformal};
        \node[font=\small] at (1.3,2.8) {limit};
        \filldraw[black] (0,2.5) circle (2pt);
        \draw[black, line width=1pt, ->, >=Stealth] (2.5,2.5) -- (2.5,0.08);
        \node[font=\small] at (3.3,1.4) {cuspidal};
        \node[font=\small] at (3.3,1.0) {limit};
        \filldraw[red!80] (0,1) circle (2pt);
        \filldraw[black] (2.5,0) circle (2pt);
        \draw[black, line width=1pt, ->, >=Stealth] (0,2.5) -- (0,1.08);
        \node[font=\small] at (-.75,2.2) {$R=0$};
        \node[font=\small] at (-.75,1.8) {cuspidal};
        \node[font=\small] at (-.75,1.4) {limit};
        \filldraw[red!80] (0,1) circle (2pt);
        \draw[black, line width=1pt, ->, >=Stealth] (2.5,2.5) -- (1.5/2,1.5/2);
        \filldraw[green!60!black] (1.41/2,1.41/2) circle (2pt);
        \node[font=\small] at (1.65,0.9) {desired};
        \node[font=\small] at (1.65,0.5) {limit};
        \filldraw[black] (2.5,2.5) circle (2pt)
        node[right=4pt, above=2pt]{$h_{R,R'}$};
    \end{tikzpicture}
    \caption{This figure illustrates the dependence of the limit on the relative rates of $R\to0$ and $\alpha_i\to\frac12$.}
    \label{fig: relative rates of R and R'}
    \end{center}
\end{figure}
so this limiting metric is not a sufficiently good approximation for our purposes.  The upshot is that while the conformal limit story was very dependent of the stratification of the Higgs bundle moduli space by $\C^\times$-fixed points, our approximation strategy doesn't make use of the $\C^\times$-action. Rather than needing different approximations depending on the limiting $\C^\times$-fixed Higgs bundle, we only have a single case.

\bigskip

Nevertheless, the na\"ive approach described above inspires our approximation strategy away from the punctures.  Near the punctures, we require more sophisticated local models, which we now present.

\subsection{Background: Kim--Wilkin's harmonic metric}
Let $\mathcal E\to \hat B$ be a rank 2 holomorphically trivial bundle on the punctured unit disk $\hat B$ with parabolic structure
\begin{equation}\label{eq:weightmodel}\begin{array}{ccccc}
    0   &\subset    &\langle e_1\rangle &\subset    &\mathcal E_0\\
      &         &\theta             &>          &-\theta
\end{array}\end{equation}
for $\theta \in (0, \frac{1}{2})$ and
  \begin{equation} \label{eq:phimodel} \varphi=\begin{pmatrix}0&\phi\\0&0\end{pmatrix}\frac{\de z}z.\end{equation}
  Note the sum of the parabolic weights $-\theta, \theta$ is zero, and consequently the induced parabolic structure on $\det \mathcal{E}$ is trivial.
Kim and Wilkin \cite{KW18}\footnote{
    We briefly summarize the results in \cite{KW18}. Kim and Wilkin fix $R=1$. For a given stable parabolic Higgs bundle, they consider how the harmonic metric depends on the parabolic weights. Half of their main result is that the harmonic metric depends analytically on the choice of weights, and the local models play a key role in their proof. Notably $\lim_{\theta \to 0} h_\theta = \begin{pmatrix} -(\log r)^{-1} & 0\\ 0 & - \log r \end{pmatrix}$ and the family depends analytically on $\theta$. The other half of their main result is that the harmonic metric depends analytically on the choice of stable Higgs bundle.
} provide a harmonic metric for $(\delbar,\varphi)$ with $\phi=1$:
    $$h_\theta=\begin{pmatrix}\frac{\theta r^\theta}{1-r^{2\theta}}&0\\0&\frac{1-r^{2\theta}}{\theta r^\theta}\end{pmatrix}$$
where $r=|z|$.

We shall adapt this model metric to work for general $\phi$ and to solve the $R$-rescaled Hitchin equations.  Crucially, we discover an extra free parameter $c$ in the space of solutions which we shall require for our gluing procedure later.
 Notice that in the rest of the paper the parabolic weights are in $(0, 1)$, as we use the convention $\alpha_{\det}=\alpha^{(1)}+\alpha^{(2)}=1$.  Here, we match Kim--Wilkin's convention $\theta_{\det}=0$ so the parabolic weights $-\theta, \theta$ are in $(-\frac{1}{2}, \frac{1}{2})$.
Our harmonic metric will only differ by multiplication by the factor
$\sqrt{h_{\det}}$, where $h_{\det}$ is a Kähler-Einstein connection of $(\det E,\delbar)$ with parabolic weight 1 at the puncture.  

\subsection{Local Model and Properties}
\begin{proposition}\label{prop: delbar del log lambda}
    Let $(\mathcal E,\varphi)$ be the parabolic Higgs bundle on $\hat{B}$ described above in \eqref{eq:weightmodel} and \eqref{eq:phimodel}.  Then for any $c>0$, the metric $h_R=\diag(\lambda_R,\lambda_R^{-1})$ with
    \begin{equation}\label{eqn: KW local model general form}
        \lambda_R=\frac{R^{-1}\theta cr^{\theta}}{1-|\phi|^2c^2r^{2\theta}}
    \end{equation}
    solves the $R$-rescaled Hitchin equations at points in $\hat B$ with $r^\theta<(|\phi|c)^{-1}$.
\end{proposition}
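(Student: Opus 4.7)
The plan is to verify the equation by direct substitution, since the statement is essentially an explicit ansatz check. There are three ingredients to compute: the trace-free Chern curvature $F^\perp_{\nabla(\delbar, h_R)}$, the commutator $[\varphi,\varphi^{\dagger_{h_R}}]$, and then the resulting scalar ODE for $\lambda_R$. Because $h_R$ is diagonal and the holomorphic structure is the trivial one $\delbar_E=\delbar$, I would first observe that the Chern curvature takes the simple form
\[ F_{\nabla(\delbar,h_R)} \;=\; \bar\partial\partial \log h_R \;=\; \diag\!\bigl(\bar\partial\partial\log\lambda_R,\; -\bar\partial\partial\log\lambda_R\bigr),\]
which is already trace-free, so $F^\perp = F$, and in particular $\det h_R=1$ so no trace correction is needed.

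Next I would compute $\varphi^{\dagger_{h_R}} = h_R^{-1}\varphi^\dagger h_R$ using the formula $B^{\dagger_h}=h^{-1}B^\dagger h$ from the notational conventions. Because $h_R$ is diagonal and $\varphi$ is strictly upper triangular, $\varphi^{\dagger_{h_R}}$ is strictly lower triangular with $(2,1)$-entry $\lambda_R^{2}\bar\phi\,d\bar z/\bar z$. A quick $2\times 2$ matrix multiplication then gives
\[ [\varphi,\varphi^{\dagger_{h_R}}] \;=\; |\phi|^2\lambda_R^{2}\,\diag(1,-1)\,\frac{dz\wedge d\bar z}{|z|^2}. \]
Since both $F^\perp$ and $[\varphi,\varphi^{\dagger_{h_R}}]$ are proportional to $\diag(1,-1)$, the matrix equation $R^{-1}F^\perp + R[\varphi,\varphi^{\dagger_{h_R}}]=0$ reduces to the single scalar equation
\[ R^{-1}\,\bar\partial\partial\log\lambda_R \;+\; R\,|\phi|^2\lambda_R^{2}\,\frac{dz\wedge d\bar z}{|z|^2} \;=\; 0,\]
which, since $\lambda_R$ depends only on $r=|z|$, becomes the radial ODE
\[ \partial_r\!\bigl(r\,\partial_r\log\lambda_R\bigr) \;=\; \frac{4R^2|\phi|^2\lambda_R^{2}}{r}. \]

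The remaining step is a purely algebraic verification. The natural change of variable is $w := |\phi|^2 c^2 r^{2\theta}$, under which both $\log\lambda_R$ and $\lambda_R^2$ simplify: $\log\lambda_R = \log(R^{-1}\theta c) + \theta\log r - \log(1-w)$ and $\lambda_R^2 = R^{-2}\theta^2 c^2 r^{2\theta}/(1-w)^2$. Using $r\partial_r w = 2\theta w$, a short computation gives $r\partial_r\log\lambda_R = \theta(1+w)/(1-w)$, and differentiating once more produces $\partial_r(r\partial_r\log\lambda_R) = 4\theta^2 w/(r(1-w)^2)$, which equals the right-hand side after cancellation of the $R$ factors against $\lambda_R^2$. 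The hypothesis $r^\theta<(|\phi|c)^{-1}$ is precisely the condition $w<1$, ensuring $\lambda_R>0$ so that $h_R$ is a genuine positive-definite Hermitian form on the specified region.

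I do not expect any real obstacle here: the proof is a direct calculation. The only points warranting care are the sign convention for $\bar\partial\partial\log h$ (so that the curvature term has the correct sign relative to the commutator term under the paper's convention $\star\de z=-i\de\bar z$), the identification of the trace-free projection (trivial since $\det h_R=1$), and tracking the two factors of $R^{\pm 1}$ through both terms of Hitchin's equation.
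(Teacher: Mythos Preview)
Your proposal is correct and follows essentially the same approach as the paper: reduce the matrix Hitchin equation to the scalar equation $\delbar\del\log\lambda_R = R^2|\phi|^2\lambda_R^2 r^{-2}\,\de\bar z\wedge\de z$ using the diagonal form of $h_R$ and the upper-triangular form of $\varphi$, then verify by direct computation. The only cosmetic difference is that the paper stays in complex coordinates and exploits $\delbar\del\log(r^\theta)=0$ to simplify before differentiating, whereas you pass to the radial ODE $\partial_r(r\,\partial_r\log\lambda_R)=4R^2|\phi|^2\lambda_R^2/r$ and use the substitution $w=|\phi|^2c^2r^{2\theta}$; both lead to the same two-line algebraic check.
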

\begin{remark}
    The $R$-dependence of $\lambda_{R}$ is standard for Higgs bundles that are strictly upper triangular: we can write $R \varphi =g_R \varphi g_R^{-1}$ for the family of gauge transformations $g_R=\text{diag}(R^{1/2},R^{-1/2})$.  Then $(\mathcal E,\varphi,h_R)$ is $R$-harmonic if and only if $(\mathcal E,R\varphi,h_1)$ is $1$-harmonic, so $h_R = g_R^{-1} h_1 (g_R^{-1})^{\dagger}$. 
\end{remark}

\begin{proof}
    Since $h_R=\text{diag}(\lambda_R,\lambda_R^{-1})$, we calculate
        $$[\varphi^{\dagger_{h_R}},\varphi]=[h_R^{-1}\varphi^\dagger h_R,\varphi]=-\lambda_R^2|\phi|^2\begin{pmatrix}1&0\\0&-1\end{pmatrix}\,\de\bar z\wedge\de z$$
    and 
        $$F_{\nabla(\delbar,h_R)}=\delbar(h_R^{-1}\del h_R)=\delbar\del\log\lambda_R\begin{pmatrix}1&0\\0&-1\end{pmatrix}.$$
    Hence, Hitchin's equation \eqref{eq:Hitchin} reduces to $\,\delbar\del\log\lambda_R=R^2\lambda_R^2|\phi|^2r^{-2}\,\de\bar z\wedge\de z$.  We will now show this via a direct computation, making use of the fact that $\delbar\del\log(r^\theta)=0$:
    \begin{align}
        \delbar\del\log\lambda_R
            &=-\delbar\del\log(1-|\phi|^2c^2r^{2\theta})
                \label{eqn: delbar del log lambda calculation}\\[2pt]
            &=\delbar\left(\frac{|\phi|^2c^2\theta r^{2\theta}}{1-|\phi|^2c^2r^{2\theta}}\,\frac{\de z}z\right)\notag\\[2pt]
            &=\frac{|\phi|^2c^2\theta^2 r^{2\theta}(1-|\phi|^2c^2r^{2\theta})-|\phi|^2c^2\theta r^{2\theta}(-|\phi|^2c^2\theta r^{2\theta})}{(1-|\phi|^2c^2r^{2\theta})^2}\,\frac{\de\bar z}{\bar z}\wedge\frac{\de z}z\notag\\[2pt]
            &=\frac{|\phi|^2c^2\theta^2r^{2\theta}}{(1-|\phi|^2c^2r^{2\theta})^2}r^{-2}\,\de\bar z\wedge\de z\notag\\[2pt]
            &=R^2\lambda_R^2|\phi|^2r^{-2}\,\de\bar z\wedge\de z.\notag\qedhere
    \end{align}

\end{proof}

We now apply this formula to the setting of a Higgs bundle obtained from a single ``branch'' $(x,y)=(x_i,y_i)$ of a hyperpolygon.  The parameter $c$ must be chosen carefully for the upcoming gluing procedure, and in this case we set $c=|x|^{-2}$. We write this generally without assuming $x\in\langle e_1\rangle$, as we assumed above.
\begin{definition}[Local Model Metric]\label{def: Local Harmonic Model Metric}
Fix $x\in\Hom(\C,\C^2)$, $y\in\Hom(\C^2,\C)$, and $R,\beta>0$ satisfying $yx=0$ and $|x|^2-|y|^2=2\beta$.  Let $\mathcal E$ be a trivial $\SL(2,\C)$-bundle over   with parabolic structure
\begin{equation}\label{eq:weightmodelnew}\begin{array}{ccccccc}
    0   &\subset&\langle x\rangle   &\subset    &\mathcal E_0       &&\\
    1   &>      &\frac12+R\beta_i   &>          &\frac12-R\beta_i   &>&0.
\end{array}\end{equation}
Let $\varphi = xy \frac{\de z}{z}$ and let $h_{\det}$ be a fixed adapted Hermitian--Einstein metric on $\det E$.  Set
\begin{equation}\label{eqn: N formula}
N = 2\frac{(x x^\dagger)^\perp}{|x|^2}.
\end{equation}
The \textit{local model metric} for $(x,y)$ is 
\begin{equation} \label{eq:localmetric} h_{\text{loc},R}=\sqrt{h_{\det}}\exp( N \log \lambda_{\text{loc},R})\end{equation} where
\begin{equation}\label{eqn: KW local model}
    \lambda_\text{loc,R}=\frac{2\beta|x|^{-2}r^{2R\beta}}{1-|y|^2|x|^{-2}r^{4R\beta}}=\frac{2\beta r^{2R\beta}}{|x|^2-|y|^2r^{4R\beta}}.
\end{equation}
\end{definition}

\begin{example}\label{ex:xine1}If we assume that $x\in\langle e_1\rangle$, then $N= \begin{pmatrix} 1 & 0 \\ 0 & -1 \end{pmatrix}$, hence \begin{equation}h_{\text{loc},R}=\sqrt{h_{\det}}\begin{pmatrix}\lambda_{\text{loc},R} & 0 \\ 0 & \lambda_{\text{loc},R}^{-1} \end{pmatrix}.\end{equation}
\end{example}

\begin{lemma}\label[lemma]{lemma:localproperties}\hfill
\begin{enumerate}
    \item The hermitian metric $h_{\mathrm{loc}, R}$ is an $R$-harmonic metric for $\delbar_E=\delbar$, $\varphi=xy\,\frac{\de z}z$ and is adapted to the parabolic structure. 
    \item The local model metric $\lambda_{\mathrm{loc}, R}$ is defined on the whole punctured unit disk.
    \item The pointwise limit is $\lim_{R \to 0} \lambda_{\mathrm{loc},R} = 1$. 
\end{enumerate}
\end{lemma}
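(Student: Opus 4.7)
The plan is to verify each claim by reducing to Proposition \ref{prop: delbar del log lambda} combined with elementary observations about scalar rescaling and asymptotics.

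For (1), the first step is to diagonalize $N$. Since $xx^\dagger$ is a rank-one positive hermitian matrix with trace $|x|^2$, in any $h_0$-orthonormal frame with first vector $x/|x|$ one has $N = \mathrm{diag}(1, -1)$, so $\exp(N \log \lambda_{\text{loc},R}) = \mathrm{diag}(\lambda_{\text{loc},R}, \lambda_{\text{loc},R}^{-1})$. The constraint $yx = 0$ forces $y = (0, y_2)$ in this frame with $|y_2| = |y|$, so $\varphi$ takes the strictly upper-triangular form of \eqref{eq:phimodel} with $|\phi|^2 = |x|^2 |y|^2$. Applying Proposition \ref{prop: delbar del log lambda} with $\theta = 2R\beta$ and $c = |x|^{-2}$ then reduces directly to the expression \eqref{eqn: KW local model} for $\lambda_{\text{loc},R}$. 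To incorporate the $\sqrt{h_{\det}}$ prefactor, I would use that multiplying a hermitian metric by a positive scalar function $\mu$ shifts the Chern connection by $\del \log \mu \cdot \mathrm{Id}$, so its curvature changes only in the trace and the trace-free curvature $F^\perp_{\nabla(\delbar_E, h_R)}$ is invariant; meanwhile $\varphi^{\dagger_{h_R}} = h_R^{-1}\varphi^\dagger h_R$ is manifestly invariant under scalar rescaling of $h_R$. Hence both sides of \eqref{eq:Hitchin} are unaffected by the prefactor, and the $R$-rescaled Hitchin equation for $h_{\text{loc},R}$ follows from the one for $\mathrm{diag}(\lambda_{\text{loc},R}, \lambda_{\text{loc},R}^{-1})$.

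Adaptedness to the parabolic structure \eqref{eq:weightmodelnew} then reduces to reading off leading-order asymptotics: \eqref{eq:hdet} gives $\sqrt{h_{\det}} \sim C r$ near the puncture, while $\lambda_{\text{loc},R}(z) \sim (2\beta/|x|^2)\, r^{2R\beta}$ as $r\to 0$. In the diagonal frame the entries of $h_{\text{loc},R}$ have leading order $r^{1+2R\beta}$ and $r^{1-2R\beta}$, which match $|z|^{2(1-\alpha)}$ and $|z|^{2\alpha}$ for $\alpha = \tfrac12 - R\beta$, as required.

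For (2), the only possible singularity of \eqref{eqn: KW local model} is where the denominator $|x|^2 - |y|^2 r^{4R\beta}$ vanishes. The assumption $|x|^2 - |y|^2 = 2\beta > 0$ combined with $r \in (0, 1]$ (so $r^{4R\beta} \leq 1$) gives
\[
|x|^2 - |y|^2 r^{4R\beta} \;\geq\; |x|^2 - |y|^2 \;=\; 2\beta \;>\; 0,
\]
so $\lambda_{\text{loc},R}$ is smooth and positive on the entire punctured unit disk. For (3), fix any $r > 0$ and observe $\lim_{R \to 0} r^{2R\beta} = \lim_{R \to 0} r^{4R\beta} = 1$, whence
\[
\lim_{R \to 0} \lambda_{\text{loc},R}(z) \;=\; \frac{2\beta}{|x|^2 - |y|^2} \;=\; 1.
\]

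I do not anticipate substantial difficulty: (2) and (3) are essentially one-line checks, and (1) amounts to matching Kim–Wilkin's model to Proposition \ref{prop: delbar del log lambda}. The only mildly subtle point is the scalar-rescaling observation that ensures the $\sqrt{h_{\det}}$ prefactor is harmless; with that in hand, the $R$-harmonicity and adaptedness fall out of direct computation.
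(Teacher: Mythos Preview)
Your proof is correct and follows essentially the same approach as the paper: reduce to Proposition~\ref{prop: delbar del log lambda} with $\theta=2R\beta$, $c=|x|^{-2}$, and $|\phi|^2=|x|^2|y|^2$, then check the denominator bound and the $R\to0$ limit directly. Your treatment is in fact more thorough than the paper's terse proof, which simply asserts (1) and (3) are ``immediate'' and gives a one-line domain check for (2); in particular, your explicit handling of the $\sqrt{h_{\det}}$ prefactor via trace-invariance of $F^\perp$ and your asymptotic verification of adaptedness to the weights $\tfrac12\pm R\beta$ fill in details the paper leaves implicit.
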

\begin{proof} Results (a), (c) are immediate from \Cref{prop: delbar del log lambda} with $c=|x|^{-2}$. For (b), note that 
the local model metric is defined on the disk $r^{4R\beta}<\frac{|x|^4}{|xy|^2}=\frac{|x|^2}{|y|^2}=\frac{2\beta+|y|^2}{|y|^2}$.  
\end{proof}

We now establish a couple of useful formulas involving $\lambda_{\mathrm{loc}, R}$.
\begin{lemma}\label[lemma]{lemma: integral of dd log lambda}
    Let $0<\delta<1$ and $B_\delta=B_\delta(0)$.  Then
        $$\lim_{R\to0}\int_{B_\delta}R^{-1}\delbar\del\log\lambda_{\text{loc},R}=2\pi i|y|^2$$
\end{lemma}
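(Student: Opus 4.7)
The plan is to reduce the claim to an explicit radial integral using the pointwise formula for $\delbar\del \log \lambda_{\text{loc}, R}$ already derived in the proof of Proposition \ref{prop: delbar del log lambda}. Specializing the calculation \eqref{eqn: delbar del log lambda calculation} to $\theta = 2R\beta$, $c = |x|^{-2}$, and (matching the $(1,2)$-entry of $xy$) $|\phi|^2 = |x|^2 |y|^2$, one obtains on the punctured disk
\[ R^{-1} \delbar \del \log \lambda_{\text{loc}, R} = R\, \lambda_{\text{loc}, R}^2\, |x|^2 |y|^2\, r^{-2}\, d\bar z \wedge dz. \]
The integrand is smooth away from $0$, and since $\lambda_{\text{loc}, R}^2 \sim r^{4R\beta}$ near the origin the resulting $2$-form is integrable over $B_\delta$ for every $R>0$.

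Next I would introduce polar coordinates via $d\bar z \wedge dz = 2i\, r\, dr\, d\theta$ and then perform the change of variables $u = r^{4R\beta}$. This substitution is tailored so that $du = 4R\beta\, r^{4R\beta - 1}\, dr$ simultaneously absorbs the prefactor $R$, the $r^{-1}$, and the leading $r^{4R\beta}$ coming from $\lambda_{\text{loc}, R}^2$. After the trivial $\theta$-integration contributes a factor $2\pi$, the problem collapses to the elementary rational integral
\[ \int_{B_\delta} R^{-1}\delbar\del\log\lambda_{\text{loc},R} = 4\pi i\, \beta\, |x|^2 |y|^2 \int_0^{\delta^{4R\beta}} \frac{du}{(|x|^2 - |y|^2 u)^2}, \]
which evaluates in closed form.

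Finally I would pass to the limit $R \to 0$: the upper endpoint $\delta^{4R\beta} \to 1$, so all dependence on $\delta$ disappears in the limit, and using the unitary polygon relation $|x|^2 - |y|^2 = 2\beta$ (\Cref{def: Local Harmonic Model Metric}) the final value simplifies to $4\pi i \beta \cdot |y|^2 / (|x|^2 - |y|^2) = 2\pi i |y|^2$, as claimed. There is no real obstacle in the argument; the only structural observation is that the substitution $u = r^{4R\beta}$ converts a radial integral that formally degenerates as $R\to 0$ (both the integrand and the relevant scale in $r$ collapse) into one whose limit can be read off by inspection.
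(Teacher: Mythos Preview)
Your proof is correct. You evaluate the integral directly by inserting the pointwise formula from \Cref{prop: delbar del log lambda} and making the substitution $u=r^{4R\beta}$, whereas the paper first computes $\del\log\lambda_{\text{loc},R}$ explicitly and then applies Stokes' theorem to turn the area integral into two circular boundary integrals (at $\partial B_\delta$ and at $\partial B_{\delta'}$ with $\delta'\to 0$). Both routes are entirely elementary; yours is arguably more self-contained for this lemma alone, while the paper's approach has the side benefit of recording the formula \eqref{eqn: del log lambda} for $\del\log\lambda_{\text{loc},R}$, which is reused in the later proofs of \Cref{lem: pairing of A dot in local model} and \Cref{lemma: pairing of Phi dot in local model}.
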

\begin{proof}
    Observe that
    \begin{equation}\label{eqn: del log lambda}
        \del\log\lambda_{\text{loc},R}
            =\left(R\beta-\frac{-2R\beta|y|^2r^{4R\beta}}{|x|^2-|y|^2 r^{4R\beta}}\right)\frac{\de z}z
                =R\beta\frac{|x|^2+|y|^2r^{4R\beta}}{|x|^2-|y|^2r^{4R\beta}}\frac{\de z}z.
    \end{equation}
    Applying Stokes' theorem for this punctured setting,
    \begin{align}
        \lim_{R\to0}\int_{B_\delta}R^{-1}\delbar\del\log\lambda_{\text{loc},R}
            &=\lim_{R\to0}\left(\oint_{\partial B_\delta}R^{-1}\del\log\lambda_{\text{loc},R}-\lim_{\delta'\to0}\oint_{\partial B_{\delta'}}R^{-1}\del\log\lambda_{\text{loc},R}\right)
                \notag\\
            &=\lim_{R\to0}\left(2\pi i \beta\frac{|x|^2+|y|^2\delta^{4R\beta}}{|x|^2-|y|^2\delta^{4R\beta}}-2\pi i\lim_{\delta'\to0}\left(\beta\frac{|x|^2+|y|^2(\delta')^{4R\beta}}{|x|^2-|y|^2(\delta')^{4R\beta}}\right) \right)
                \label{eqn: Stokes on a disc}\\
            &=2\pi i\left((\beta+|y|^2)-\beta\right)\\
            &=2\pi i|y|^2.
                \notag\qedhere
    \end{align}
\end{proof}

For the upcoming gluing construction, we use as building blocks the functions $\lambda_{\natural,R}=\log r^{2R\tilde\beta}$ with modified cone angles $\tilde\beta=\frac12(|x|^2+|y|^2)$.  These can be used to produce flat (K\"ahler-Einstein metrics) $h_{\natural,R}=\exp(N\log\lambda_{\natural,R})$ with conical singularities.  To glue these  with our local models, we require the next result which shows that on an annulus, $\lambda_\text{loc,R}$ closely matches $\lambda_{\natural,R}=\log r^{2R\tilde\beta}$.

\begin{lemma}[Gluing Lemma]\label[lemma]{lemma: annulus gluing}
Define $\tilde \beta=\frac12(|x|^2+|y|^2)$ and fix  $r_{\min},r_{\max}$ satisfying $0<r_{\min}<r_{\max}<1$.  On the annulus $r_{\min}\leq r\leq r_{\max}$, the differences
        $$\log\lambda_{\text{loc},R}-\log r^{2R\tilde\beta},\quad
        \del(\log\lambda_{\text{loc},R}-\log r^{2R\tilde\beta}),\quad
        \delbar(\log\lambda_{\text{loc},R}-\log r^{2R\tilde\beta})$$
    all decay uniformly like $R^2$.
\end{lemma}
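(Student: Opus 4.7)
The plan is a uniform Taylor expansion in $R$ about $R=0$, the key input being the algebraic identity $\tilde\beta = \beta + |y|^2$, which follows from $|x|^2 - |y|^2 = 2\beta$ and $\tilde\beta = \tfrac12(|x|^2 + |y|^2)$.

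First, I would write $\log \lambda_{\mathrm{loc}, R} = \log(2\beta) + 2R\beta \log r - \log(|x|^2 - |y|^2 r^{4R\beta})$. On the compact annulus $r \in [r_{\min}, r_{\max}]$, $\log r$ is bounded, so $R \mapsto r^{4R\beta} = e^{4R\beta\log r}$ is real-analytic in $R$ with derivatives uniformly bounded in $r$. Thus $\log \lambda_{\mathrm{loc},R}$ is real-analytic in $R$ with uniform Taylor bounds, and to obtain $O(R^2)$ decay it suffices to check that the $R^0$ and $R^1$ Taylor coefficients agree with those of $\log r^{2R\tilde\beta} = 2R\tilde\beta \log r$. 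The constant term is $\log(2\beta) - \log(|x|^2 - |y|^2) = 0$, and
$$\tfrac{\partial}{\partial R}\log \lambda_{\mathrm{loc},R}\Big|_{R=0} = 2\beta \log r + \frac{4\beta|y|^2 \log r}{|x|^2 - |y|^2} = 2(\beta+|y|^2)\log r = 2\tilde\beta\log r,$$
matching $\partial_R \log r^{2R\tilde\beta} = 2\tilde\beta\log r$ exactly, thanks to the identity above.

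Next, for the $(1,0)$-derivative I would use the explicit formula \eqref{eqn: del log lambda},
$$\partial \log \lambda_{\mathrm{loc}, R} = R\beta\,\frac{|x|^2 + |y|^2 r^{4R\beta}}{|x|^2 - |y|^2 r^{4R\beta}}\,\frac{\de z}{z},$$
and compare it to $\partial \log r^{2R\tilde\beta} = R\tilde\beta\,\de z/z$. The scalar coefficient agrees with $R\tilde\beta$ to first order in $R$, since $\beta\cdot\frac{|x|^2+|y|^2}{|x|^2-|y|^2} = \beta\cdot\tfrac{\tilde\beta}{\beta} = \tilde\beta$ at $R=0$. Hence the difference is an $O(R^2)$ scalar times $\de z/z$, which has bounded norm on the annulus. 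The $\bar\partial$ estimate follows by complex conjugation since $\log\lambda_{\mathrm{loc},R}$ is real-valued and radial.

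The main obstacle---more a point of care than a genuine difficulty---will be to keep careful track of the algebraic identity $\beta + |y|^2 = \tilde\beta$ that forces the linear-order cancellation in $R$. Without this specific relation between the local data $\beta, |x|, |y|$ and the gluing exponent $\tilde\beta$, one would obtain only $O(R)$ decay; $\tilde\beta$ is defined as $\tfrac12(|x|^2 + |y|^2)$ precisely so that the gluing model $r^{2R\tilde\beta}$ and the local model $\lambda_{\mathrm{loc}, R}$ agree to first order in $R$, which is exactly the content of this lemma.
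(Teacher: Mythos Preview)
Your proposal is correct and follows essentially the same approach as the paper: both arguments expand in $R$ and verify that the $R^0$ and $R^1$ Taylor coefficients cancel, the key algebraic input being exactly the identity $\tilde\beta=\beta+|y|^2$ (equivalently $\beta\frac{|x|^2+|y|^2}{|x|^2-|y|^2}=\tilde\beta$) that you single out. Your framing via uniform real-analyticity in $R$ on the compact annulus is a slightly cleaner way to justify the $O(R^2)$ remainder than the paper's more hands-on expansion, and your appeal to complex conjugation for the $\delbar$ estimate matches the paper's remark that the $\delbar$ case is ``identical.''
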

\begin{proof}
    We calculate
    \begin{align}
        \log\left(\frac{2\beta r^{2R\beta}}{|x|^2-|y|^2r^{4R\beta}}\right)-\log r^{2R\tilde\beta}
            &=\log(2\beta)+2R(\beta-\tilde\beta)\log r-\log\left(|x|^2-|y|^2r^{4R\beta}\right)
                \notag\\
            &=\log(2\beta)-2R|y|^2\log r-\log\left(2\beta-|y|^2(4R\beta\log r+O(R^2))\right)
                \notag\\
            &=-2R|y|^2\log r+\frac{|y|^2\cdot 4R\beta\log r}{2\beta}+O(R^2)
                \notag\\
            &\in O(R^2).
    \end{align}
    In the second equality we use the Taylor expansion of $r^{4R\beta}$ in $R$, and in the third equality we use $\log(a+Rb)=\log a+R\frac ba+O(R^2)$.  Next, we have
    \begin{align}
        \del(\log\lambda_{\text{loc},R}-\log r^{2R\tilde\beta})
            &=R\beta\frac{|x|^2+|y|^2r^{4R\beta}}{|x|^2-|y|^2r^{4R\beta}}\frac{\de z}z-R\tilde\beta\,\frac{\de z}z
                \notag\\
            &=R\left(\beta\frac{|x|^2+|y^2|}{|x|^2-|y|^2}+O(R)\right)\frac{\de z}z-\frac R2(|x|^2+|y|^2)\frac{\de z}z
                \notag\\
            &\in O(R^2)
    \end{align}
The proof for $\delbar(\log\lambda_{\text{loc},R}-\log r^{2R\tilde\beta})\in O(R^2)$ is identical.
\end{proof}

\subsection{Local First Variations and Properties}\label{subsec: Local First Variations}

We are interested in understanding how the model metric $h_{\mathrm{loc}, R}$ in \Cref{def: Local Harmonic Model Metric}
changes as the pair $(x,y)$ changes.  To this end, let $(x,y)$ be as in  \Cref{def: Local Harmonic Model Metric} and assume $(\dot x,\dot y)$ satisfies $\dot yx+y\dot x=0$ and $x^\dagger\dot x+\dot y y^\dagger=0$ as motivated by the unitary hyperpolygon tangent space equations \eqref{eqn: linearized moment map} and \eqref{eqn: gauge orbit orthogonality} in \Cref{prop: hyperpolygon space tangent space}.  In this section, we are interested in finding a harmonic representative of the hypercohomology class $[(\dot{\eta}, \dot{\mathcal{F}}, \dot{\varphi})]=d \mathcal{T}_{(x,y)}(\dot{x}, \dot{y})$. Since we are working locally, by harmonic representative we just mean $\dot{\nu}_R$ solving \eqref{eqn: Coulomb gauge equation}.

\subsubsection{Harmonic Representative}
\begin{definition}[Local first variation of $h_{\mathrm{loc}, R}$]\label{def: local first variation}
Pick a basis in which we can write $x=(x_1,0)^\intercal$, $y=(0,y_2)$, and $\dot x=(\dot x_1,\dot x_2)^\intercal$, $\dot y=(\dot y_1,\dot y_2)$.  Then $N=\mathrm{diag}(1, -1)$. 
The \emph{local first variation} is \begin{equation}\dot\nu_{\mathrm{loc}, R}=\begin{pmatrix}\dot\nu_{11,R}&0\\\dot\nu_{21,R}&-\dot\nu_{11,R}\end{pmatrix}\end{equation} where
    \begin{equation}\label{eqn: nu dot component formulas}
        \dot\nu_{11,R}=\frac{\dot y_2\bar y_2}{|x|^2-|y|^2r^{4R\beta}}(r^{4R\beta}-1),
            \qquad\dot\nu_{21,R}=\left(\lambda_{\text{loc},R}^2-1\right)\frac{\dot x_2}{x_1}.
    \end{equation}
\end{definition}
The origin of this expression will be explained in \Cref{lemma: nu is infinitesimal gauge transformation}, but first we prove the most important property in  \Cref{prop: local Coulomb gauge condition}: that it gives the harmonic representative of the Higgs bundle deformation. After that, we calculate the norm of the harmonic representative.

We encourage the reader to skip the technical proofs in this section upon a first reading; only \Cref{prop: local metric pairing} will be needed for our main result \Cref{thm: HK degeneration}.

\begin{lemma}\label[lemma]{prop: local Coulomb gauge condition}
  The $\dot\nu_{\mathrm{loc},R}$ defined in \Cref{def: local first variation} satisfies the complex Coulomb gauge equation \eqref{eq:ingaugetriple}, i.e. 
    \begin{equation}\label{eqn: Coulomb gauge equation local version}
        0=
        -R^{-1}\del^{h_R}\delbar\dot\nu_{\mathrm{loc}, R}+R[\varphi^{\dagger_{h_R}},\dot\varphi+[\dot\nu_{\mathrm{loc}, R},\varphi]].
    \end{equation}
\end{lemma}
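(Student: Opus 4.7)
The plan is a direct verification carried out component-by-component in the basis where $h_R$ is diagonal.  Choose the frame in which $x=(x_1,0)^{\intercal}$ and $y=(0,y_2)$, so $h_R=\diag(\lambda_R,\lambda_R^{-1})$ with $\lambda_R=\lambda_{\mathrm{loc},R}$, $\varphi=x_1y_2\,E_{12}\,\de z/z$, and a short calculation gives $\varphi^{\dagger_{h_R}}=\lambda_R^2\,\overline{x_1y_2}\,E_{21}\,\de\bar z/\bar z$.  Write $\dot\nu_{\mathrm{loc},R}=\dot\nu_{11,R}\,N+\dot\nu_{21,R}\,E_{21}$; since $h_R^{-1}\del h_R=(\del\log\lambda_R)\,N$, the bracket identities $[N,E_{21}]=-2E_{21}$, $[N,E_{12}]=2E_{12}$, and $[E_{21},E_{12}]=-N$ imply that both sides of \eqref{eqn: Coulomb gauge equation local version} decompose into an $N$-component plus an $E_{21}$-component, so it suffices to match two scalar equations.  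Explicitly the LHS splits as
\[
    \del^{h_R}\delbar\dot\nu_{\mathrm{loc},R}
      =\del\delbar\dot\nu_{11,R}\,N
        +\bigl(\del\delbar\dot\nu_{21,R}-2\del\log\lambda_R\wedge\delbar\dot\nu_{21,R}\bigr)\,E_{21}.
\]

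For the RHS, the complex moment-map constraint $\dot y\,x+y\,\dot x=0$ gives $x_1\dot y_1=-y_2\dot x_2$, so
\[
    \dot\varphi+[\dot\nu_{\mathrm{loc},R},\varphi]
      =\bigl(-y_2\dot x_2-x_1y_2\dot\nu_{21,R}\bigr)\,N\,\tfrac{\de z}{z}
        +\bigl(\dot x_1y_2+x_1\dot y_2+2x_1y_2\dot\nu_{11,R}\bigr)\,E_{12}\,\tfrac{\de z}{z}.
\]
Substituting $\dot\nu_{21,R}=(\lambda_R^2-1)\dot x_2/x_1$ collapses the diagonal coefficient to $-y_2\dot x_2\lambda_R^2$.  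Bracketing with $\varphi^{\dagger_{h_R}}$ then produces the RHS in its $N$/$E_{21}$-decomposed form, with $E_{21}$-coefficient $-2R\lambda_R^4\,\overline{x_1}\,|y_2|^2\dot x_2$ and $N$-coefficient $-R\lambda_R^2\,\overline{x_1y_2}\bigl(\dot x_1y_2+x_1\dot y_2+2x_1y_2\dot\nu_{11,R}\bigr)$, both multiplied by $\de\bar z\wedge\de z/|z|^2$.

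The $E_{21}$-equation is immediate: plugging in $\dot\nu_{21,R}=(\lambda_R^2-1)\dot x_2/x_1$ and using the pointwise identity $\del\delbar(\lambda_R^2)-2\del\log\lambda_R\wedge\delbar(\lambda_R^2)=2\lambda_R^2\,\del\delbar\log\lambda_R$ reduces matching to the Hitchin equation $\delbar\del\log\lambda_R=R^2\lambda_R^2|x_1y_2|^2\,\de\bar z\wedge\de z/|z|^2$ already verified in the proof of \Cref{prop: delbar del log lambda}.  For the $N$-equation invoke the orthogonality constraint from \Cref{prop: hyperpolygon space tangent space}, namely $x^\dagger\dot x=\dot y\,y^\dagger$ (equivalently $\overline{x_1}\dot x_1=\overline{y_2}\dot y_2$), to rewrite $\overline{x_1y_2}(\dot x_1y_2+x_1\dot y_2)=(|x|^2+|y|^2)\,\overline{y_2}\dot y_2$.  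After dividing out the common scalar $\overline{y_2}\dot y_2$, matching becomes a scalar PDE for $f(r)=(r^{4R\beta}-1)/(|x|^2-|y|^2r^{4R\beta})$, namely
\[
    \del\delbar f
      =-R^2\lambda_R^2\bigl[(|x|^2+|y|^2)+2|x|^2|y|^2\,f\bigr]\,\frac{\de\bar z\wedge\de z}{|z|^2}.
\]
Setting $u=r^{4R\beta}$ so that $\del u=2R\beta u\,\de z/z$ and $\delbar u=2R\beta u\,\de\bar z/\bar z$, two Leibniz applications give $\del\delbar f=-\tfrac{8R^2\beta^3u(|x|^2+|y|^2u)}{(|x|^2-|y|^2u)^3}\,\de\bar z\wedge\de z/|z|^2$, and the identity reduces to the algebraic relation $(|x|^2+|y|^2)(|x|^2-|y|^2u)+2|x|^2|y|^2(u-1)=2\beta(|x|^2+|y|^2u)$, which follows from $|x|^2-|y|^2=2\beta$.

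The main obstacle is the $N$-component: unlike the $E_{21}$-component, which collapses directly to the Hitchin equation for $\lambda_R$, the $N$-equation requires simultaneously invoking \emph{both} deformation constraints on $(\dot x,\dot y)$, and its validity reflects the precise choice of denominator $|x|^2-|y|^2r^{4R\beta}$ in the formula for $\dot\nu_{11,R}$ — shift that denominator and the polynomial identity above fails.
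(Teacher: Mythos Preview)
Your proof is correct and follows essentially the same route as the paper: work in the frame where $h_R$ is diagonal, split into diagonal ($N$) and lower-triangular ($E_{21}$) components, reduce the $E_{21}$-equation to the Hitchin equation for $\lambda_R$, and verify the $N$-equation by direct computation using both deformation constraints. Your packaging is slightly cleaner in places---the identity $\del\delbar(\lambda_R^2)-2\del\log\lambda_R\wedge\delbar(\lambda_R^2)=2\lambda_R^2\del\delbar\log\lambda_R$ and the substitution $u=r^{4R\beta}$ reducing the $N$-equation to a transparent algebraic identity---but the content is the same as the paper's calculation.
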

\begin{proof}
    We will drop some of the $\text{loc}$ and $R$ subscripts for notational simplicity.  Working in the basis described above, we calculate
    \begin{align}
        \delbar\dot\nu_{11}
            &=\dot y_2\bar y_2\frac{2R\beta\,r^{4R\beta}(|x|^2-|y|^2r^{4R\beta})-(r^{4R\beta}-1)(-|y|^2 2R\beta r^{4R\beta})}{(|x|^2-|y|^2r^{4R\beta})^2}\frac{\de\bar z}{\bar z}
                \notag\\
            &=\dot y_2\bar y_2\frac{2R\beta\left(|x|^2-|y|^2\right)r^{4R\beta}}{(|x|^2-|y|^2r^{4R\beta})^2}\frac{\de\bar z}{\bar z}
                =R\dot y_2\bar y_2\lambda^2\frac{\de\bar z}{\bar z}
                    \label{eqn: delbar nu11 formula}
    \end{align}
    and
    \begin{equation}\label{eqn: delbar nu12 formula}
        \delbar\dot\nu_{21}=\delbar(\lambda^2-1)\frac{\dot x_2}{x_1}=(2\lambda\delbar\lambda)\frac{\dot x_2}{x_1}=2\frac{\dot x_2}{x_1}\lambda^2\delbar\log\lambda.
    \end{equation}
    Recall $\varphi=xy\frac{\de z}z$ and $\dot\varphi=(\dot xy+x\dot y)\frac{\de z}z$.  Define
    \begin{equation}
        \dot\Phi=\dot\varphi+[\dot\nu,\varphi].
    \end{equation}
    In our chosen basis
        $$\dot\varphi=\begin{pmatrix}\dot\varphi_{11}&\dot\varphi_{12}\\0&-\dot\varphi_{11}\end{pmatrix}\frac{\de z}z=\begin{pmatrix}x_1\dot y_1&\dot x_1y_2+x_1\dot y_2\\0&\dot x_2y_2\end{pmatrix}\frac{\de z}z.$$
    Decompose $\dot\Phi=\dot\Phi_{(0)}+\dot\Phi_{(1)}$ into semisimple and nilpotent parts, and similarly decompose $\dot\nu=\dot\nu_{(-1)}+\dot\nu_{(0)}$ and $\dot\varphi=\dot\varphi_{(0)}+\dot\varphi_{(1)}$.  Using the assumption $x_1\dot y_1+\dot x_2y_2=0$,
    \begin{align}
        \dot\Phi_{(0)}
            =\dot\varphi_{(0)}+[\dot\nu_{(-1)},\varphi]
                &=\left(x_1\dot y_1-\dot\nu_{21}x_1y_2\right)\begin{pmatrix}1&0\\0&-1\end{pmatrix}\frac{\de z}z
                    \notag\\
                &=\left(x_1\dot y_1-(\lambda^2-1)\dot x_2y_2\right)\begin{pmatrix}1&0\\0&-1\end{pmatrix}\frac{\de z}z
                    \notag\\
                &=\lambda^2x_2\dot y_2\begin{pmatrix}1&0\\0&-1\end{pmatrix}\frac{\de z}z
                    \label{eqn: Phi_(0) formula}
    \end{align}
    and
    \begin{align}
        \dot\Phi_{(1)}&=\dot\varphi_{(1)}+[\dot\nu_{(0)},\varphi]=\left(\dot x_1y_2+x_1\dot y_2+2\dot\nu_{11}x_1y_2\right)\begin{pmatrix}0&1\\0&0\end{pmatrix}\frac{\de z}z.
            \label{eqn: Phi_(1) formula}
    \end{align}
    Now it is easy to see that the upper right entry of \eqref{eqn: Coulomb gauge equation local version} is zero.  Since the output of the PDE is trace-free, it suffices to prove equality for the upper left and lower left entries.  Looking at the upper left entry (and simplifying the tricky Lie brackets) gives the equation
    \begin{equation}\label{eqn: upperleft entry of Coulomb gauge eqn}
        -R^{-1}(\del\delbar\dot\nu_{11})=R\lambda^2\bar{x_1y_2}(\dot\varphi_{12}+2\dot\nu_{11}x_1y_2)r^{-2}\,\de\bar z\wedge\de z.
    \end{equation}
    Using \eqref{eqn: del log lambda} and \eqref{eqn: delbar nu11 formula}, the left-hand side is
    \begin{align}
        \text{LHS}
            =-R^{-1}\del(R\dot y_2\bar y_2\lambda^2)\frac{\de\bar z}{\bar z}
                &=-\dot y_2\bar y_2(2\lambda\del\lambda)\frac{\de\bar z}{\bar z}
                    =\dot y_2\bar y_2(2\lambda^2\del\log\lambda)\frac{\de\bar z}{\bar z}
                        \notag\\
            &=2\dot y_2\bar y_2\lambda^2\left(R\beta\frac{|x|^2+|y|^2r^{4R\beta}}{|x|^2-|y|^2r^{4R\beta}}\right)r^{-2}\,\de\bar z\wedge\de z
    \end{align}
    On the other hand, the right-hand side is
    \begin{align}
        \text{RHS}
            &=R\lambda^2\bar{x_1y_2}\left(\dot\varphi_{12}+\frac{2\dot y_2\bar y_2(r^{4R\beta}-1)}{|x|^2-|y|^2r^{4R\beta}}x_1y_2\right)r^{-2}\,\de\bar z\wedge\de z
                \notag\\
            &=R\lambda^2\left(\bar{x_1y_2}\dot x_1y_2+\bar{x_1y_2}x_1\dot y_2+\frac{2|x|^2|y|^2\dot y_2\bar y_2(r^{4R\beta}-1)}{|x|^2-|y|^2r^{4R\beta}}\right)r^{-2}\,\de\bar z\wedge\de z
                \notag\\
            &=R\dot y_2\bar y_2\lambda^2\left(|y|^2+|x|^2+\frac{2|x|^2|y|^2(r^{4R\beta}-1)}{|x|^2-|y|^2r^{4R\beta}}\right)r^{-2}\,\de\bar z\wedge\de z
                \notag\\
            &=R\dot y_2\bar y_2\lambda^2\!\left(\frac{|x|^2|y|^2+|x|^4-(|y|^2+|x|^2)|y|^2r^{4R\beta}+2|x|^2|y|^2r^{4R\beta}-2|x|^2|y|^2}{|x|^2-|y|^2r^{4R\beta}}\right)\!\frac{\de\bar z\wedge\de z}{r^2}
                \notag\\
            &=R\dot y_2\bar y_2\lambda^2\left(\frac{|x|^4-|x|^2|y|^2+(|x|^2|y|^2-|y|^4)r^{4R\beta}}{|x|^2-|y|^2r^{4R\beta}}\right)r^{-2}\,\de\bar z\wedge\de z
                \notag\\
            &=R\dot y_2\bar y_2\lambda^2\left(2\beta\frac{|x|^2+|y|^2r^{4R\beta}}{|x|^2-|y|^2r^{4R\beta}}\right)r^{-2}\,\de\bar z\wedge\de z.
    \end{align}
    The third equality used $\bar x_1\dot x_1=\dot y_2\bar y_2$ and the last used $2\beta=|x|^2-|y|^2$.  This proves \eqref{eqn: upperleft entry of Coulomb gauge eqn}.
    
    Using \eqref{eqn: Phi_(0) formula}, the lower left entries of \eqref{eqn: Coulomb gauge equation local version} give the equation
    \begin{equation}
        R^{-1}(\del\delbar\dot\nu_{21}-2(\del\log\lambda)\wedge\delbar\dot\nu_{21})=2R\lambda^2\,\bar{x_1y_2}(\lambda^2x_2\dot y_2)r^{-2}\,\de\bar z\wedge\de z
    \end{equation}
    By \eqref{eqn: delbar nu12 formula}, the left-hand side is
    \begin{align}
        \text{LHS}
            &=-R^{-1}(\delbar\del\dot\nu_{21}-2\delbar\dot\nu_{21}\wedge\del\log\lambda)
                \notag\\
            &=-R^{-1}(2\delbar(\lambda\del\lambda)-4\lambda\delbar\lambda\wedge\del\log\lambda)\frac{\dot x_2}{x_1}
                \notag\\
            &=-R^{-1}(2\delbar(\lambda^2\del\log\lambda)-4\lambda^2\delbar\log\lambda\wedge\del\log\lambda)\frac{\dot x_2}{x_1}
                \notag\\
            &=-R^{-1}\left(2\lambda^2\delbar\del\log\lambda\right)\frac{\dot x_2}{x_1}
                \notag\\
            &=-R\left(2|x|^2|y|^2\lambda^4\right)\frac{\dot x_2}{x_1}r^{-2}\,\de\bar z\wedge\de z
                \notag\\
            &=-2R\,\bar x_1\dot x_2y_2\,\bar y_2\lambda^4r^{-2}\,\de\bar z\wedge\de z
                \notag\\
            &=2R\,\bar x_1x_1\dot y_2\,\bar y_2\lambda^4r^{-2}\,\de\bar z\wedge\de z
    \end{align}
    which equals the right-hand side.  In the last equality we used $\dot x_2y_2=-x_1\dot y_1$.  This proves the lemma.
\end{proof}

\subsubsection{Infinitesimal deformation of local model metric} 

The next proposition explains the origin of the formula for $\dot{\nu}_{\mathrm{loc}, R}$ in \Cref{def: local first variation}.
Let $(x(t),y(t))=(x+t\dot x,y+t\dot y)$, let $N(t)=2\frac{x(t)x(t)^\dagger}{|x(t)|^2}$, and consider $h_{\mathrm{loc}, R}(t)$ the local model metric for $(x(t), y(t))$ given in \Cref{def: Local Harmonic Model Metric}. In \Cref{rem: gamma deforms metric}, we wrote that finding the infinitesimal gauge transformation to produce the harmonic representative can be thought of as finding infinitesimal change of hermitian metric. We now verify:

\begin{lemma}\label[lemma]{lemma: nu is infinitesimal gauge transformation} The local first variation
    $\dot\nu_{\mathrm{loc}, R}$ defined in \Cref{def: local first variation} 
    satisfies \[e^{t\dot\nu_{\mathrm{loc},R}^{\dagger}}h_{\mathrm{loc}, R}(0)e^{t\dot\nu_{\mathrm{loc},R}}=h_{\mathrm{loc},R}(t) + O(t^2).\]
\end{lemma}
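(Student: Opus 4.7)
My plan is to reduce the statement to a first-order identity in $t$ and then verify it by a direct computation in a convenient frame. Expanding both sides of the proposed equation,
\[
e^{t\dot\nu_{\mathrm{loc},R}^\dagger}\, h_{\mathrm{loc},R}(0)\, e^{t\dot\nu_{\mathrm{loc},R}} \;=\; h_{\mathrm{loc},R}(0) \;+\; t\bigl(\dot\nu_{\mathrm{loc},R}^\dagger\, h_{\mathrm{loc},R}(0) + h_{\mathrm{loc},R}(0)\,\dot\nu_{\mathrm{loc},R}\bigr) \;+\; O(t^2),
\]
while the right-hand side is $h_{\mathrm{loc},R}(0) + t\,\dot h_{\mathrm{loc},R}(0) + O(t^2)$. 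So the claim reduces to checking the first-order identity
\[
\dot h_{\mathrm{loc},R}(0) \;=\; \dot\nu_{\mathrm{loc},R}^\dagger\, h_{\mathrm{loc},R}(0) \;+\; h_{\mathrm{loc},R}(0)\,\dot\nu_{\mathrm{loc},R}.
\]
Since $\sqrt{h_{\det}}$ is a positive scalar that is independent of $(x,y)$, it factors out of both sides, and I will work throughout with $\tilde h := \exp(N \log \lambda_{\mathrm{loc},R})$.

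Because $N = 2xx^\dagger/|x|^2 - I$ is Hermitian with $N^2 = I$, the exponential simplifies to
\[
\tilde h \;=\; \tfrac{\lambda + \lambda^{-1}}{2}\, I \;+\; \tfrac{\lambda - \lambda^{-1}}{2}\, N.
\]
Differentiating at $t=0$ along $(x + t\dot x, y + t\dot y)$ gives
\[
\dot{\tilde h}(0) \;=\; \tfrac{\dot\lambda(1-\lambda^{-2})}{2}\, I \;+\; \tfrac{\dot\lambda(1+\lambda^{-2})}{2}\, N \;+\; \tfrac{\lambda-\lambda^{-1}}{2}\, \dot N,
\]
where $\dot\lambda$ and $\dot N$ are $t$-derivatives at $0$. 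I then compute these in the frame where $x = (x_1,0)^\intercal$ and $y = (0, y_2)$, so that $N = \mathrm{diag}(1,-1)$. A direct expansion of $\dot{(xx^\dagger/|x|^2)}$ shows that the diagonal contributions from $\dot A/B$ and $A\,\dot B/B^2$ cancel, leaving $\dot N$ purely off-diagonal with $(1,2)$-entry $2 x_1 \overline{\dot x_2}/|x_1|^2$ (and its conjugate in $(2,1)$). Using the linearized unitarity relation $\bar x_1 \dot x_1 = \dot y_2 \bar y_2$ from \eqref{eqn: gauge orbit orthogonality} to simplify $\tfrac{d}{dt}(|x|^2 - |y|^2 r^{4R\beta})$, I obtain
\[
\frac{\dot\lambda}{\lambda} \;=\; -\,\frac{2\,\Re(\bar x_1 \dot x_1)\,(1 - r^{4R\beta})}{|x_1|^2 - |y_2|^2 r^{4R\beta}}.
\]

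Finally, I match entry-by-entry with $\dot\nu^\dagger \tilde h(0) + \tilde h(0)\dot\nu$, whose diagonal is $\mathrm{diag}\bigl(2\Re(\dot\nu_{11,R})\,\lambda,\, -2\Re(\dot\nu_{11,R})\,\lambda^{-1}\bigr)$ and whose off-diagonal entries are $\overline{\dot\nu_{21,R}}\,\lambda^{-1}$ (upper-right) and $\dot\nu_{21,R}\,\lambda^{-1}$ (lower-left). Matching the off-diagonal part against $\tfrac{\lambda-\lambda^{-1}}{2}\dot N$ forces $\dot\nu_{21,R} = (\lambda^2 - 1)\dot x_2/x_1$, precisely the formula in \Cref{def: local first variation}. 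Matching the diagonal forces $2\Re(\dot\nu_{11,R}) = \dot\lambda/\lambda$, which agrees with the real part of the defined $\dot\nu_{11,R}$ after substituting $\bar x_1 \dot x_1 = \dot y_2 \bar y_2$. The imaginary part of $\dot\nu_{11,R}$ is unconstrained by this real equation; as in \Cref{rem: gamma deforms metric}, it parametrizes an $h$-skew-Hermitian infinitesimal gauge transformation that does not affect the metric deformation, so any choice is compatible (and our definition picks one). The whole argument is elementary linear algebra at a single point, and the only real obstacle is careful bookkeeping of complex conjugates and signs.
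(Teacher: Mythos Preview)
Your proof is correct and follows essentially the same strategy as the paper's: linearize both sides in $t$, compute $\dot N$ and $\dot\lambda/\lambda$ in the frame where $x=(x_1,0)^\intercal$, and match entries. The one methodological difference is that you first use $N^2=I$ to resolve $\exp(fN)=\cosh f\cdot I+\sinh f\cdot N$ and then differentiate this closed form, whereas the paper differentiates the matrix exponential $\exp(K+t\dot K)$ directly and has to handle the non-commutativity of $K$ and $\dot K$ via explicit Taylor expansions; your route sidesteps that complication and is a bit cleaner, but arrives at exactly the same matching conditions $2\Re(\dot\nu_{11,R})=\dot\lambda/\lambda$ and $\dot\nu_{21,R}=(\lambda^2-1)\dot x_2/x_1$.
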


\begin{remark}
Note $\dot{\nu}_R \to 0$ pointwise as $R\to0$, which is compatible with the fact that $\lambda_R \to 1$.
\end{remark}
\begin{proof} We again drop the subscripts $\mathrm{loc}$ and $R$ for notational simplicity. From \Cref{def: Local Harmonic Model Metric},
 $$h_R(t)=\sqrt{h_{\det}}\begin{pmatrix}\lambda_R(t)&0\\0&\lambda_R(t)^{-1}\end{pmatrix}=\sqrt{h_{\det}}\exp(\underbrace{f_R(t)N(t)}_{K_R(t)})$$
where $f_R(t):=\log\lambda_R(t)$ for
    $$\lambda_R(t)=\frac{2\beta r^{2R\beta}}{|x(t)|^2-|y(t)|^2r^{4R\beta}}.$$
One should think of $N(t)$ as a normalization of $(x(t)x(t)^\dagger)^\perp\in\mathfrak{su}(2)$.  We first compute the first variation $\dot N$ of $N(t)$.
    Using the quotient rule,
    \begin{align}
        \dot N
            &=2\frac{(\dot xx^\dagger+x\dot x^\dagger)^\perp|x|^2-(xx^\dagger)^\perp(x^\dagger\dot x+\dot x^\dagger x)}{|x|^4}
                \notag\\
            &=\frac2{|x|^2}
            \begin{pmatrix}
                \dot x_1\bar x_1+x_1\bar{\dot x}_1  &x_1\bar{\dot x}_2\\
                \dot x_2\bar x_1                    &0
            \end{pmatrix}^\perp
            -\frac{\bar x_1\dot x_1+\bar{\dot x}_1x_1}{|x|^4}
            \begin{pmatrix}
                |x|^2   &0\\
                0       &-|x|^2
            \end{pmatrix}
                \notag\\
            &=\frac2{|x|^2}
            \begin{pmatrix}
                0                   &x_1\bar{\dot x}_2\\
                \bar x_1\dot x_2    &0
            \end{pmatrix}
                \notag\\
            & =2\begin{pmatrix}0&\bar{\left(\frac{\dot x_2}{x_1}\right)}\\\frac{\dot x_2}{x_1}&0\end{pmatrix}.
                \label{eqn: N dot formula}
    \end{align}

Let $K_R(t)=f_R(t)N(t)$. Then the first variation $\dot{K}_R$ (from $K_R(t)=K_R+t\dot K_R+O(t^2)$) can be written in terms of $N, \dot{N}, f_R \dot{f}_R$, as follows.
        \begin{align*}
        \dot K
            &=\dot f_RN+f_R\dot N\\
            &=-\frac{(x^\dagger\dot x+\dot x^\dagger x)-(\dot y y^\dagger+y^\dagger\dot y)r^{4R\beta}}{|x|^2-|y|^2r^{4R\beta}}N+f_R\dot N\\
            &=\frac{2\text{Re}(\dot y_2\bar y_2)}{|x|^2-|y|^2r^{4R\beta}}(r^{4R\beta}-1)N+f_R\dot N.
    \end{align*}
    Using the formulas for $N$ and $\dot{N}$ (\eqref{eqn: N formula} and \eqref{eqn: N dot formula} respectively), we can write
        $$\dot K_R=\begin{pmatrix}a_R&\bar b_R\\b_R&-a_R\end{pmatrix}$$
    where
    \begin{equation}\label{eqn: a and b formulas}
        a_R=\frac{2\text{Re}(\dot y_2\bar y_2)}{|x|^2-|y|^2r^{4R\beta}}(r^{4R\beta}-1),
            \qquad
                b_R=2f_R\frac{\dot x_2}{x_1}.
    \end{equation}
    
    Observe that $K=\begin{pmatrix}f_R&0\\0&-f_R\end{pmatrix}$.  We once again drop the subscript $R$ and write $\dot\nu=\begin{pmatrix}n&0\\m&-n\end{pmatrix}$ for $n$ and $m$ complex-valued functions, respectively.  A close look at the Taylor expansions reveals
        $$\frac{\de}{\de t}\Big|_{t=0}\exp(K+t\dot K)=\begin{pmatrix}ae^f&\frac{\bar b\sinh f}f\\\frac{b\sinh f}f&b(\sinh f-\cosh f)\end{pmatrix}$$
        $$\frac{\de}{\de t}\Big|_{t=0}\left(\exp\left(t\begin{pmatrix}n&0\\m&-n\end{pmatrix}^\dagger\right)\cdot h_R\cdot\exp\left(t\begin{pmatrix}n&0\\m&-n\end{pmatrix}\right)\right)=\begin{pmatrix}2\Re(n)e^f&e^{-f}\bar m\\e^{-f}m&-2\Re(n)e^{-f}\end{pmatrix}.$$
    Setting the two expressions equal and solving for $\Re(n)$ and $m$ gives
        $$\begin{pmatrix}\Re(n)&0\\m&-\Re(n)\end{pmatrix}=\begin{pmatrix}a/2&0\\(b e^f\sinh f)/f&-a/2\end{pmatrix}=\begin{pmatrix}a/2&0\\b(e^{2f}-1)/(2f)&-a/2\end{pmatrix}.$$
    Substituting in the values for $a_R$ and $b_R$ from \eqref{eqn: a and b formulas} proves the lemma.
\end{proof}

\subsubsection{Properties}

Recall from \eqref{eq:ingaugetriple} that the hyperk\"ahler metric $g_R$ on $\mathcal{M}^{R}(\vec \alpha)$ is an $L^2$ norm of the harmonic representative
\begin{equation}
  \mathtt{H}_R:=R^{-1/2}\underbrace{\left(\dot{\eta} - \delbar_E \dot{\nu}\right)}_{\mathtt{H}_R^{0,1}} + R^{1/2}\underbrace{\left(\dot{\varphi}-[\varphi, \dot{\nu}]\right)}_{\mathtt{H}^{0,1}_R},
\end{equation} 
Applying Stokes' theorem, we obtained in \eqref{eq:hk expression}
\begin{equation}
|(\dot{\eta}, \dot{\mathcal{F}}, \dot{\varphi})|^2_{g_R} = \sum_{i=1}^n\left(\lim_{\delta'\to0}\oint_{\partial B_{\delta'}(p_i)} R^{-1} \langle\dot\nu_R,\delbar\dot\nu_R\rangle_{h_R}\right) + \int_{\C\P^1} R\langle\dot\varphi,\dot{\varphi}-[\varphi, \dot{\nu}]\rangle_{h_R}.
\end{equation}
We now evaluate this integral for our local models, restricting the integral to the domain with $B_{\delta}=B_{\delta}(p_i)$.  The following proposition is the main result of this section, and is a key ingredient in the proof of our main theorem, \Cref{thm: HK degeneration}.
\begin{restatable}{proposition}{localmetricpairing}\label{prop: local metric pairing}
    Let $0<\delta'<\delta<1$ and $B_\delta=B_\delta(0)$.  Then 
        $$\lim_{R\to0}\left(\lim_{\delta'\to0}\int_{\partial B_{\delta'}}R^{-1}\langle\dot\nu_R,\delbar\dot\nu_R\rangle_{h_R}+\int_{B_\delta}R\langle\dot\varphi,\dot\varphi+[\dot\nu_R,\varphi]\rangle_{h_R}\right)=2\pi i(|\dot x|^2+|\dot y|^2).$$
\end{restatable}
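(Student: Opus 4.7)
The plan is to compute both integrals explicitly in the adapted frame of \Cref{def: local first variation}, where $x = (x_1, 0)^\intercal$ and $y = (0, y_2)$, using the closed-form expressions \eqref{eqn: nu dot component formulas}, \eqref{eqn: delbar nu11 formula}, and \eqref{eqn: delbar nu12 formula}. Since the scalar factor $\sqrt{h_{\det}}$ in $h_{\mathrm{loc},R}$ commutes with the conjugation $h^{-1}(\cdot)h$ appearing in every $h_R$-pairing, I can work throughout with the diagonal matrix $h_R = \diag(\lambda_R, \lambda_R^{-1})$. The two pairings then reduce to concrete scalar traces, which I will evaluate separately and then combine in the limit $R \to 0$.

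For the boundary term, the integrand on $|z| = \delta'$ is a multiple of $dz/z$, so Cauchy's formula extracts the residue via $\oint dz/z = 2\pi i$. Since $\lambda_R \to 0$ as $r \to 0$ at fixed $R$ by \Cref{lemma:localproperties}, the diagonal piece $\dot\nu_{11}\overline{\delbar\dot\nu_{11}}$ is $O(\lambda_R^2)$ and vanishes as $\delta' \to 0$. The surviving contribution is the lower-triangular piece $\dot\nu_{21}\lambda_R^{-2}\overline{\delbar\dot\nu_{21}}$, in which $\lambda_R^{-2}$ cancels the $\lambda_R^2$ from \eqref{eqn: delbar nu12 formula} and leaves the finite residue $-2R\beta|\dot x_2/x_1|^2\, dz/z$. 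After integrating and multiplying by $R^{-1}$, this contributes a universal multiple of $\beta |\dot x_2|^2/|x_1|^2$ in the $R \to 0$ limit.

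For the bulk term, I will first use the decomposition $\dot\Phi = \dot\Phi_{(0)} + \dot\Phi_{(1)}$ from the proof of \Cref{prop: local Coulomb gauge condition}, together with the unitary constraints $\bar x_1 \dot x_1 = \dot y_2 \bar y_2$ and $x_1 \dot y_1 + y_2 \dot x_2 = 0$, to expand $\mathrm{tr}(\dot\varphi \cdot h_R^{-1}\dot\Phi^\dagger h_R)$ into three terms, each proportional to $\lambda_R^2$. Passing to polar coordinates $z = r e^{i\theta}$ and making the substitution $s = r^{4R\beta}$, for which $dr/r = ds/(4R\beta s)$, the overall factor of $R$ cancels against the Jacobian $1/(4R\beta)$, so the radial integral becomes $R$-independent in the $R\to 0$ limit, with $s$ ranging over $(0, 1)$. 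The resulting definite integrals are evaluated by direct antidifferentiation:
\begin{equation*}
\int_0^1 \frac{4\beta^2 \, ds}{(|x|^2 - |y|^2 s)^2} = \frac{2\beta}{|x|^2}, \qquad \int_0^1 \frac{4\beta^2(s-1)\,ds}{(|x|^2 - |y|^2 s)^3} = -\frac{\beta}{|x|^4},
\end{equation*}
where the second collapses via the identity $(|x|^2 - 2\beta)^2 = |y|^4$.

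Adding the two limits and using $\beta + |y_2|^2 = \tilde\beta := (|x_1|^2 + |y_2|^2)/2$ to merge the contributions to $|\dot x_2|^2$ yields $4\pi i\tilde\beta\bigl(|\dot x_1|^2/|y_2|^2 + |\dot x_2|^2/|x_1|^2\bigr)$. Finally the unitary-constraint identities $|\dot y_1|^2 = (|y_2|^2/|x_1|^2)|\dot x_2|^2$ and $|\dot y_2|^2 = (|x_1|^2/|y_2|^2)|\dot x_1|^2$ rewrite this as $2\pi i(|\dot x_1|^2 + |\dot x_2|^2 + |\dot y_1|^2 + |\dot y_2|^2)$, as desired. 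The degenerate case $|y|^2 = 0$ will be handled as a separate direct computation in which the bulk integral simplifies to $\int_0^\delta r^{4R\beta - 1} dr = \delta^{4R\beta}/(4R\beta)$ and only the $|\dot y_2|^2$ piece survives. The main obstacle is the careful bookkeeping through the several trace terms with different $\lambda_R$-scalings and the tracking of signs across the $\dagger_{h_R}$-pairings; the nontrivial cancellation structure linking the boundary and bulk contributions makes the final coefficient match sensitive to the relative order of the limits $R \to 0$ and $\delta' \to 0$.
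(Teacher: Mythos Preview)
Your proposal is correct and follows essentially the same architecture as the paper's proof: split into the boundary and bulk contributions (the paper's Lemmas~\ref{lem: pairing of A dot in local model} and~\ref{lemma: pairing of Phi dot in local model}), use the semisimple/nilpotent decomposition $\dot\Phi=\dot\Phi_{(0)}+\dot\Phi_{(1)}$ for the bulk, and combine via the constraint identities $|\dot y_1|^2=(|y_2|^2/|x_1|^2)|\dot x_2|^2$ and $|\dot y_2|^2=(|x_1|^2/|y_2|^2)|\dot x_1|^2$.

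The one substantive difference is how the bulk integrals are evaluated. You substitute $s=r^{4R\beta}$, which cancels the overall $R$ against the Jacobian and reduces everything to two elementary rational integrals on $[0,1]$. The paper instead recognises each $\lambda^2 r^{-2}$-type integrand as a multiple of $R^{-2}\delbar\del\log\lambda$ (from Proposition~\ref{prop: delbar del log lambda}) and invokes Lemma~\ref{lemma: integral of dd log lambda} via Stokes' theorem. Your route is more elementary and self-contained; the paper's route explains \emph{why} the answer is clean, since it ties back to the harmonicity of the local model. Your repackaging of the final sum as $4\pi i\tilde\beta(|\dot x_1|^2/|y_2|^2+|\dot x_2|^2/|x_1|^2)$ is a nice touch not in the paper, and your separate handling of the $|y|^2=0$ case is an artifact of the substitution method (the paper's Stokes approach absorbs it automatically since $\delbar\del\log\lambda=0$ there).

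One small correction: the boundary integrand is a multiple of $\frac{\de\bar z}{\bar z}$, not $\frac{\de z}{z}$, since $\delbar\dot\nu$ is a $(0,1)$-form; the relevant integral is $\oint_{\partial B_{\delta'}}\frac{\de\bar z}{\bar z}=-2\pi i$, which you are implicitly using to get the correct sign.
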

The proof follows from \Cref{lem: pairing of A dot in local model} and \Cref{lemma: pairing of Phi dot in local model}.  As before, we will drop the subscripts $\text{loc}$ and $R$, but we will keep in mind that many of these objects still depend on $R$.
While the proposition is written without reference to a basis, our expression for $\dot{\nu}_{\mathrm{loc}, R}$ assumes that $x=(x_1,0)^\intercal$, $y=(0,y_2)$, and $\dot x=(\dot x_1,\dot x_2)^\intercal$, $\dot y=(\dot y_1,\dot y_2)$, so we will continue to assume that.  

\begin{lemma}\label[lemma]{lem: pairing of A dot in local model}
    Let $B_\delta=B_\delta(0)$ be the ball of radius $\delta$ centered at 0.  Then
        $$\lim_{R\to0}\lim_{\delta'\to0}\int_{\partial B_{\delta'}}R^{-1}\langle\dot\nu,\delbar\dot\nu\rangle_{h_R}=2\pi i(|\dot x_2|^2-|\dot y_1|^2).$$
\end{lemma}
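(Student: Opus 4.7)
The plan is as follows. Work in the frame specified just before the lemma statement, where $x=(x_1,0)^\intercal$ and $y=(0,y_2)$, so that $N=\diag(1,-1)$ and the local model metric takes the diagonal form $h_R=\sqrt{h_{\det}}\diag(\lambda,\lambda^{-1})$ with $\lambda=\lambda_{\mathrm{loc},R}$. In this frame $\dot\nu$ is lower-triangular with diagonal entries $\pm\dot\nu_{11,R}$, so a direct computation (using $h_R^{-1}\dot\nu^\dagger h_R$, noting the scalar $\sqrt{h_{\det}}$-factors cancel) gives
\begin{equation*}
\tr\bigl(\dot\nu^{\dagger_{h_R}}\delbar\dot\nu\bigr)=2\,\overline{\dot\nu_{11,R}}\,\delbar\dot\nu_{11,R}+\lambda^{-2}\,\overline{\dot\nu_{21,R}}\,\delbar\dot\nu_{21,R}.
\end{equation*}

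Next, substitute the explicit formulas for $\dot\nu_{ij,R}$ from \eqref{eqn: nu dot component formulas} and the already-computed expressions \eqref{eqn: delbar nu11 formula} for $\delbar\dot\nu_{11}$ and \eqref{eqn: delbar nu12 formula} for $\delbar\dot\nu_{21}$, together with the formula $\del\log\lambda=R\beta\,\frac{|x|^2+|y|^2 r^{4R\beta}}{|x|^2-|y|^2 r^{4R\beta}}\,\frac{\de z}{z}$ from \eqref{eqn: del log lambda}. Both pieces of $\tr(\dot\nu^{\dagger_{h_R}}\delbar\dot\nu)$ then become explicit $r$-dependent scalars times $\frac{\de\bar z}{\bar z}$. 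Parametrizing $\partial B_{\delta'}$ by $z=\delta'e^{i\theta}$ yields $\oint_{\partial B_{\delta'}}\frac{\de\bar z}{\bar z}=-2\pi i$, so the boundary integral is a single $r$-dependent coefficient evaluated at $r=\delta'$, times $-2\pi i$.

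The crux is the $\delta'\to 0$ behavior of $\lambda$: for any fixed $R>0$, $\lambda(\delta')=\frac{2\beta(\delta')^{2R\beta}}{|x|^2-|y|^2(\delta')^{4R\beta}}\to 0$. Consequently, the $\overline{\dot\nu_{11,R}}\,\delbar\dot\nu_{11,R}$ term—which carries a factor of $R\lambda^2$—vanishes in the limit. In the remaining term, $\lambda^{-2}$ cancels one $\lambda^2$ factor, while $(\lambda^2-1)\to-1$ and $\frac{|x|^2+|y|^2r^{4R\beta}}{|x|^2-|y|^2r^{4R\beta}}\to 1$. The explicit $R$ in $\del\log\lambda$ cancels against the $R^{-1}$ prefactor, producing the $R$-independent limit
\begin{equation*}
\lim_{R\to 0}\lim_{\delta'\to 0}\oint_{\partial B_{\delta'}} R^{-1}\langle\dot\nu,\delbar\dot\nu\rangle_{h_R}=\frac{4\pi i\,\beta\,|\dot x_2|^2}{|x_1|^2}.
\end{equation*}

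To finish, invoke the linearized hyperpolygon constraint \eqref{eqn: linearized moment map}: $\dot yx+y\dot x=0$ in our frame reduces to $\dot y_1 x_1+y_2\dot x_2=0$, giving $|\dot y_1|^2=\frac{|y_2|^2}{|x_1|^2}|\dot x_2|^2$. Combined with $2\beta=|x_1|^2-|y_2|^2$, this yields $\frac{4\pi i\,\beta|\dot x_2|^2}{|x_1|^2}=2\pi i(|\dot x_2|^2-|\dot y_1|^2)$, as claimed. The main conceptual point—mild rather than truly obstructive—is recognizing that the $(0,1)$-form $\delbar\dot\nu_{11,R}$ produces no boundary contribution because $\lambda^2\to 0$ at the puncture, so the entire boundary term comes from the \emph{lower-triangular} piece $\dot\nu_{21,R}$ through the combination $\lambda^{-2}(\lambda^2-1)\to-1$. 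Everything else is algebraic bookkeeping of $R$-powers.
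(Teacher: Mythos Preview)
Your proof is correct and follows essentially the same approach as the paper's: both compute $\tr(\dot\nu^{\dagger_{h_R}}\delbar\dot\nu)$ explicitly in the diagonal frame, observe that the diagonal contribution vanishes as $\delta'\to0$ because of the $\lambda^2$ factor, and reduce the surviving lower-triangular contribution to $-2\delbar\log\lambda\cdot\frac{|\dot x_2|^2}{|x_1|^2}$ before invoking $2\beta=|x_1|^2-|y_2|^2$ and $\dot y_1 x_1+y_2\dot x_2=0$. Your trace expansion $2\,\overline{\dot\nu_{11}}\,\delbar\dot\nu_{11}+\lambda^{-2}\overline{\dot\nu_{21}}\,\delbar\dot\nu_{21}$ in fact has the correct factor of $2$ on the diagonal term (the paper's display omits it), though this is immaterial since that term vanishes in the limit.
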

\begin{proof}
    We expand the trace pairing using \eqref{eqn: delbar nu11 formula} and \eqref{eqn: delbar nu12 formula}
    \begin{align*}
        \langle\dot\nu,\delbar\dot\nu\rangle_{h_R}
            &=\tr(h_R^{-1}\dot\nu^\dagger h_R(\delbar\dot\nu))=\bar{\dot\nu}_{11}\delbar\dot\nu_{11}+(\lambda^{-2}(\lambda^2-1)\cdot2\lambda\delbar\lambda)\frac{|\dot x_2|^2}{|x_1|^2}\\
            &=|\dot\nu_{11}\delbar\dot\nu_{11}|+(2\lambda\delbar\lambda-2\delbar\log\lambda)\frac{|\dot x_2|^2}{|x_1|^2}.
    \end{align*}
    Observe that the above is a radially symmetric function times $\frac{\de\bar z}{\bar z}$, so integration along $\partial B_{\delta'}$ simply replaces $\frac{\de\bar z}{\bar z}$ with $-2\pi i$ and $r$ with $\delta'$ in the above formulas.  Taking the limit as $\delta'\to0$ only leaves the last term, which, following the proof of \Cref{lemma: integral of dd log lambda}, integrates to
    \begin{align*}
        \lim_{R\to0}\lim_{\delta'\to0}\int_{\partial B_{\delta'}}R^{-1}\langle\dot\nu,\delbar\dot\nu\rangle_{h_R}
            &=-2\frac{|\dot x_2|^2}{|x_1|^2}\lim_{R\to0}\lim_{\delta'\to0}\int_{\partial B_{\delta'}}R^{-1}\delbar\log\lambda\\
            &=2\pi i\frac{|\dot x_2|^2}{|x|^2}(2\beta)\\
                &=2\pi i\frac{|\dot x_2|^2}{|x|^2}(|x|^2-|y|^2)\\
            &=2\pi i\left(|\dot x_2|^2-\frac{|\dot x_2|^2|y|^2}{|x|^2}\right)\\
            &=2\pi i\left(|\dot x_2|^2-|\dot y_1|^2\right)
    \end{align*}
    where the last equality uses $0=\dot yx+y\dot x=\dot y_1x_1+y_2\dot x_2$.
\end{proof}

\begin{lemma}\label[lemma]{lemma: pairing of Phi dot in local model}
    Let $0<\delta\leq1$ and let $B_\delta=B_\delta(0)$.  Then
        $$\lim_{R\to0}\int_{B_\delta}R\langle\dot\varphi,\dot\varphi+[\dot\nu,\varphi]\rangle_{h_R}\to2\pi i\left(2|\dot y_1|^2+|\dot x_1|^2+|\dot y_2|^2\right).$$
\end{lemma}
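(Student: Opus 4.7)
The plan is to evaluate the integral in closed form in a convenient basis, exploiting radial symmetry and a single change of variables.

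Work in the basis where $x=(x_1,0)^\top$, $y=(0,y_2)$, so $h_{\mathrm{loc},R}$ is diagonal and the scalar factor $\sqrt{h_{\det}}$ drops out of every $h_R$-adjoint computation. Using the decomposition of $\dot\Phi = \dot\varphi + [\dot\nu_{\mathrm{loc},R},\varphi]$ computed in the proof of \Cref{prop: local Coulomb gauge condition} (the semisimple part reduces to $\lambda^2 x_1 \dot y_1\,\mathrm{diag}(1,-1)\tfrac{dz}{z}$ after applying $y_2\dot x_2 = -x_1\dot y_1$), one obtains
\[
\dot\Phi = \begin{pmatrix}\lambda^2 x_1\dot y_1 & b + 2\dot\nu_{11}x_1 y_2 \\ 0 & -\lambda^2 x_1\dot y_1\end{pmatrix}\frac{dz}{z}, \qquad b := \dot x_1 y_2 + x_1\dot y_2.
\]
A direct trace calculation then gives
\[
\mathrm{Tr}(\dot\varphi^{\dagger_{h_R}}\wedge\dot\Phi) = \lambda^2\bigl(2|x_1\dot y_1|^2 + |b|^2 + 2\dot\nu_{11}\bar b\,x_1 y_2\bigr)\cdot\frac{d\bar z\wedge dz}{r^2},
\]
which is radially symmetric; the angular integration contributes a uniform factor of $4\pi i$.

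For the radial part, introduce the substitution $u = r^{4R\beta}$, so that $\tfrac{dr}{r} = \tfrac{du}{4R\beta u}$ and the overall factor of $R$ cancels. Since $\delta^{4R\beta}\to 1$ as $R\to 0$, the two master integrals reduce to elementary rational-function integrals:
\begin{align*}
\lim_{R\to 0}R\int_0^\delta\lambda^2\,\frac{dr}{r} &= \beta\int_0^1\frac{du}{(|x|^2-|y|^2u)^2} = \frac{1}{2|x|^2},\\
\lim_{R\to 0}R\int_0^\delta\lambda^2\,\dot\nu_{11}\,\frac{dr}{r} &= \beta\,\dot y_2\bar y_2\int_0^1\frac{u-1}{(|x|^2-|y|^2u)^3}\,du = -\frac{\dot y_2\bar y_2}{4|x|^4},
\end{align*}
both evaluated via $v=|x|^2-|y|^2u$ and the identity $|x|^2-|y|^2=2\beta$. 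Combining the three prefactors $2|x_1\dot y_1|^2$, $|b|^2$, and $2\bar b\,x_1y_2$ with these limits, the radial integral tends to
\[
|\dot y_1|^2 + \frac{|b|^2}{2|x|^2} - \frac{\bar b\,x_1 y_2\,\dot y_2\bar y_2}{2|x|^4}.
\]

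Finally, apply the unitary tangent conditions from \Cref{prop: hyperpolygon space tangent space}: $\dot y_1 x_1 + y_2\dot x_2 = 0$ and $\bar x_1\dot x_1 = \dot y_2\bar y_2$. The latter makes $\dot x_1\bar x_1 y_2\bar{\dot y}_2$ real and equal to $|y|^2|\dot y_2|^2$, giving
\[
|b|^2 = |\dot x_1|^2|y|^2 + |x|^2|\dot y_2|^2 + 2|y|^2|\dot y_2|^2, \qquad \bar b\,x_1 y_2\,\dot y_2\bar y_2 = |y|^2|\dot y_2|^2\bigl(|x|^2 + |y|^2\bigr),
\]
while the former forces $|\dot x_1|^2 = |y|^2|\dot y_2|^2/|x|^2$ and hence $\tfrac{|\dot x_1|^2|y|^2}{2|x|^2} = \tfrac{|y|^4|\dot y_2|^2}{2|x|^4}$, which is precisely the term needed to cancel against the contribution from the cross term. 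After this cancellation, the radial integral collapses to $\tfrac{1}{2}\bigl(2|\dot y_1|^2 + |\dot x_1|^2 + |\dot y_2|^2\bigr)$, and multiplying by the $4\pi i$ angular factor yields the claimed $2\pi i\bigl(2|\dot y_1|^2 + |\dot x_1|^2 + |\dot y_2|^2\bigr)$. The main obstacle is precisely this final algebraic step: individually, neither the $|\dot x_1|^2$ nor the $|\dot y_2|^2$ coefficients reduce to their target form, and the collapse only works after one sums all three contributions and imposes both moment-map tangent conditions simultaneously.
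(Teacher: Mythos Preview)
Your argument is correct and follows the same overall strategy as the paper: exploit radial symmetry, compute the trace $\tr(\dot\varphi^{\dagger_{h_R}}\dot\Phi)$ in the adapted frame, and then reduce to one-variable integrals that collapse under the tangent conditions. The one slip is the attribution ``the former forces $|\dot x_1|^2=|y|^2|\dot y_2|^2/|x|^2$'': that relation comes from \emph{the latter} condition $\bar x_1\dot x_1=\dot y_2\bar y_2$ (taking absolute values), not from $\dot y_1 x_1+y_2\dot x_2=0$. This is only a labeling error and does not affect the computation.

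Where you diverge from the paper is in the mechanics of the radial integration. The paper evaluates the $\lambda^2$-type integrals by rewriting them via the identity $\delbar\del\log\lambda=R^2\lambda^2|\phi|^2r^{-2}\,\de\bar z\wedge\de z$ and invoking \Cref{lemma: integral of dd log lambda} (Stokes), then handles the remaining cross term $B$ by producing an explicit antiderivative in $r$. Your substitution $u=r^{4R\beta}$ is more direct: it simultaneously cancels the factor of $R$ and converts both master integrals into standard rational-function integrals on $[0,1]$, sidestepping the need for Stokes or for guessing an antiderivative. The trade-off is that the paper's route makes the link to \Cref{lemma: integral of dd log lambda} (and hence to the later use in \Cref{thm: moment maps agree}) more visible, while yours is computationally cleaner and self-contained.
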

\begin{proof}
    The pairing decomposes as
    \begin{align*}
        \langle\dot\varphi,\dot\varphi+[\dot\nu,\varphi]\rangle_{h_R}
            &=\tr\left(\left(\dot\varphi_{(0)}^\dagger+\lambda^2\dot\varphi_{(1)}^{\dagger}\right)\dot\Phi\right)\\
            &=\tr\left(\dot\varphi_{(0)}^\dagger\dot\Phi_{(0)}+\lambda^2\dot\varphi_{(1)}^\dagger\dot\Phi_{(1)}\right)\frac{\de\bar z}{\bar z}\wedge\frac{\de z}z.
    \end{align*}
    Recalling $\delbar\del\log\lambda=R^2\lambda^2|x|^2|y|^2r^{-2}\,\de\bar z\wedge\de z$ and \Cref{lemma: integral of dd log lambda}, and substituting the formula \eqref{eqn: Phi_(0) formula} for $\dot\Phi_{(0)}$, the first term integrates to
    \begin{align}
        \lim_{R\to0}\int_{B_\delta}R\tr\left(\dot\varphi_{(0)}^\dagger\dot\Phi_{(0)}\right)
            &=\lim_{R\to0}\int_{B_\delta}2R|x_1|^2|\dot y_1|^2\lambda^2r^{-2}\,\de z\wedge\de z
                \notag\\
            &=2\frac{|\dot y_1|^2}{|y|^2}\lim_{R\to0}\int_{B_\delta}R^{-1}\delbar\del\log\lambda
            =4\pi i|\dot y_1|^2.
                \label{eqn: phi Phi semisimple parts integral}
    \end{align}
    Applying the formula \eqref{eqn: Phi_(1) formula} for $\dot\Phi_{(1)}$, the next term can be broken up into two parts $A$ and $B$:
    \begin{align*}
        \tr\left(\lambda^2\dot\varphi_{(1)}^\dagger\dot\Phi_{(1)}\right)r^{-2}\,\de\bar z\wedge\de z
            &=\left(\lambda^2\bar{\dot\varphi}_{12}(\dot\varphi_{12}+2\dot\nu_{11}x_1y_2)\right)r^{-2}\,\de\bar z\wedge\de z\\
            &=\underbrace{|\dot\varphi_{12}|^2\lambda^2r^{-2}\,\de\bar z\wedge\de z}_A+\underbrace{2\lambda^2\bar{\dot\varphi}_{12}x_1y_2\dot\nu_{11}r^{-2}\,\de\bar z\wedge\de z}_B.
        \end{align*}
    The integral of $RA$ is carried out the same way as above:
    \begin{align}
        \lim_{R\to0}\int_{B_\delta}RA
            &=\frac{|\dot\varphi_{12}|^2}{|x|^2|y|^2}\lim_{R\to0}\int_{B_\delta}\delbar\del\log\lambda
                =2\pi i\frac{|\dot\varphi_{12}|^2}{|x|^2}
                \notag\\
            &=2\pi i\frac1{|x|^2}\bar{(\dot x_1y_2+x_1\dot y_2)}(\dot x_1y_2+x_1\dot y_2)
                \notag\\
            &=2\pi i\frac1{|x|^2}\left(|y|^2|\dot x_1|^2+\bar{\dot x}_1\bar y_2x_1\dot y_2+\bar x_1\bar{\dot y}_2\dot x_1y_2+|x|^2|\dot y_2|^2\right)
                \notag\\
            &=2\pi i\frac1{|x|^2}\left(|y|^2|\dot x_1|^2+2|x|^2|\dot x_1|^2+|x|^2|\dot y_2|^2\right)
                \notag\\
            &=2\pi i\left(\frac{|y|^2|\dot x_1|^2}{|x|^2}+2|\dot x_1|^2+|\dot y_2|^2\right).
                \label{eqn: RA integral}
    \end{align}
    Next we integrate $RB$.  Recall from \eqref{eqn: nu dot component formulas} and \eqref{eqn: KW local model} the formulas
        $$\dot\nu_{11}=\frac{\dot y_2\bar y_2}{|x|^2-|y|^2r^{4R\beta}}(r^{4R\beta}-1),
            \qquad \lambda=\frac{2\beta r^{2R\beta}}{|x|^2-|y|^2r^{4R\beta}}.$$
    A routine integral calculation and repeated use of the relations $2\beta=|x|^2-|y|^2$ and $\bar x_1\dot x_1=\dot y_2\bar y_2$ yields
    \begin{align}
        \lim_{R\to0}\int_{B_\delta}RB
            &=(2i)2\pi\lim_{R\to0}\int_0^\delta2R\frac{(2\beta)^2r^{4R\beta}}{(|x|^2-|y|^2r^{4R\beta})^2}\bar{\dot\varphi}_{12}x_1y_2\frac{\dot y_2\bar y_2(r^{4R\beta}-1)r^{-1}}{|x|^2-|y|^2r^{4R\beta}}\,\de r
                \notag\\
            &=8\pi i(2\beta)^2|y|^2\bar{\dot\varphi}_{12}x_1\dot y_2\lim_{R\to0}\int_0^\delta R\frac{(r^{4R\beta}-1)r^{4R\beta-1}}{(|x|^2-|y|^2r^{4R\beta})^3}\,\de r
                \notag\\
            &=8\pi i(2\beta)^2|y|^2\bar{\dot\varphi}_{12}x_1\dot y_2\lim_{R\to0}\frac{(r^{4R\beta}-1)^2}{8\beta(|x|^2-|y|^2)(|x|^2-|y|^2r^{4R\beta})^2}\Big|_{r=0}^\delta
                \notag\\
            &=-2\pi i|y|^2\bar{\dot\varphi}_{12}x_1\dot y_2\frac{1}{|x|^4}
                \notag\\
            &=-2\pi i\frac{|y|^2}{|x|^4}\bar{(\dot x_1y_2+x_1\dot y_2)}x_1\dot y_2
                \notag\\
            &=-2\pi i\frac{|y|^2}{|x|^4}\left(|x|^2|\dot x_1|^2+|x|^2|\dot y_2|^2\right)
                \notag\\
            &=-2\pi i\frac{|y|^2}{|x|^2}\left(|\dot x_1|^2+|\dot y_2|^2\right)
                \notag\\
            &=-2\pi i\left(\frac{|y|^2|\dot x_1|^2}{|x|^2}+|\dot x_1|^2\right)
                \label{eqn: RB integral}
    \end{align}
    Combining \eqref{eqn: phi Phi semisimple parts integral}, \eqref{eqn: RA integral}, and \eqref{eqn: RB integral},
    \begin{equation}
        2\pi i\left(2|\dot y_1|^2+\frac{|y|^2|\dot x_1|^2}{|x|^2}+2|\dot x_1|^2+|\dot y_2|^2-\frac{|y|^2|\dot x_1|^2}{|x|^2}-|\dot x_1|^2\right)=2\pi i\left(2|\dot y_1|+|\dot x_1|^2+|\dot y_2|^2\right)
    \end{equation}
    as desired.
\end{proof}

\section{The Hyperkähler Metrics}\label{sec: The Hyperkahler Metrics}
Consider the map $\mathcal{T}_R:\mathcal X(\vec\beta)\into\mathcal M_R(\vec\alpha(R))$, which was defined for all $R\in(0,R_{\max})$.
Our main result in \Cref{thm: HK degeneration} states that in the limit $R\to0$, 
$\mathcal{T}_R^*(g_R)$ converges pointwise to $2\pi\cdot g_\text{HP}$ as $R \to 0$.

Together with \Cref{thm: T preserves holomorphic symplectic forms} ($\mathcal T^*\Omega_{J_1}=2\pi\,\Omega_{\text{HP},J_1}$), this proves that the full hyperk\"ahler structures converge.

In \Cref{subsec: analytic expansion of the harmonic metric}, we use the  local models in \Cref{sec: Local Model} (see \Cref{def: Local Harmonic Model Metric}) to produce an approximate harmonic metric $h_{\text{app},R}$, and use an implicit function theorem argument to prove its accuracy for small $R\ll1$.
Finally, in \Cref{sec:firstvar} we calculate the first variation of the approximation $h_{\text{app},R}$ as we perturb a Higgs bundle $(\mathcal E,\varphi)$, and use it to prove \Cref{thm: HK degeneration} in \Cref{sec:mainthm}.

Lastly, we prove that the Morse functions converge pointwise
    \begin{equation}\lim_{R \to 0} M_R= 2 \pi M_\text{HP},\end{equation}
in \Cref{thm: moment maps agree} of \Cref{sec:Morse} since the proof cleanly highlights some essential properties of the approximate harmonic metric $h_{\text{app}, R}$.  While this is a corollary of \Cref{thm: HK degeneration},  the convergence of the hyperk\"ahler metrics themselves requires a bit more notational complication and analytic set up; the complication can obscure just how beautiful the approximate harmonic metrics is!

\subsection{Analytic Expansion of the Harmonic Metric}
\label{subsec: analytic expansion of the harmonic metric}
In this section, we produce an approximation $h_{\text{app},R}$ for the harmonic metric associated to the Higgs bundle $(\mathcal E_{(\bx,\by)},\varphi_{(\bx,\by)})$ constructed in \Cref{subsec: Hyperpolygons to Higgs Map}.  First, we establish some notation.  Let $(\bx,\by) \in \mathcal{X}(\vec \beta)$ be a unitary hyperpolygon, and let
\begin{equation}N_i=\frac{2v_i}{|x_i|^2}=\frac{2(x_ix_i^\dagger)^\perp}{|x_i|^2} \qquad  i=1,\dots,n, \end{equation}
as in \eqref{eqn: N formula}.
\begin{lemma}[Interpretation of $\mu_{\SU(2)}(\bx, \by)=0$ condition]
    The condition $\mu_{\SU(2)}(\bx,\by)=0$ implies
    \begin{equation}
        \sum_i(2\beta_i+2|y_i|^2)N_i=\sum_i(|x_i|^2+|y_i|^2)N_i=0.
    \end{equation}
\end{lemma}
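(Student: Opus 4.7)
The plan is to unpack the two equalities separately. The equality between the two sums in the statement is immediate from the $\U(1)^n$-moment map condition: the unitary hyperpolygon hypothesis $\mu_{\U(1),i}(\bx,\by)=i\beta_i$ gives $|x_i|^2-|y_i|^2=2\beta_i$, hence $2\beta_i+2|y_i|^2=|x_i|^2+|y_i|^2$. So it suffices to establish
\[
    \sum_i (|x_i|^2+|y_i|^2)\,N_i \;=\; 0.
\]

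For this, I would use the geometric interpretation already developed in the remark on unitary hyperpolygons. Let $v_i=(x_ix_i^\dagger)^\perp$ and $w_i=(y_i^\dagger y_i)^\perp$ in $\mathfrak{su}(2)$, so $N_i=2v_i/|x_i|^2$. The complex moment-map constraint $\mu_{\C^\times,i}(\bx,\by)=y_ix_i=0$ forces $v_i$ and $w_i$ to be antiparallel vectors in $\R^3\cong\mathfrak{su}(2)$, while \eqref{eqn: v norm}--\eqref{eqn: w norm} give $|v_i|=|x_i|^2/\sqrt 2$ and $|w_i|=|y_i|^2/\sqrt 2$. Therefore
\[
    w_i \;=\; -\frac{|y_i|^2}{|x_i|^2}\,v_i,
\]
assuming $x_i\neq 0$ (which holds for stable hyperpolygons).

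Substituting this into the $\SU(2)$-moment map condition $0=\mu_{\SU(2)}(\bx,\by)=\frac{i}{2}\sum_i(v_i-w_i)$ gives
\[
    0 \;=\; \sum_i \left(1+\frac{|y_i|^2}{|x_i|^2}\right) v_i \;=\; \sum_i \frac{|x_i|^2+|y_i|^2}{|x_i|^2}\,v_i \;=\; \tfrac12\sum_i (|x_i|^2+|y_i|^2)\,N_i,
\]
which is the claim. No real obstacle is anticipated here; this is a direct bookkeeping consequence of the two moment-map equations already stated, and the only subtlety is invoking the antiparallel relation between $v_i$ and $w_i$ that the residue condition $y_ix_i=0$ enforces.
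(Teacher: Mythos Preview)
Your proof is correct and follows essentially the same approach as the paper: both rewrite $v_i-w_i$ as a multiple of $N_i$ using the antiparallel relation forced by $y_ix_i=0$, then invoke $\mu_{\SU(2)}(\bx,\by)=0$. Your version is in fact slightly more careful with the factor of $\tfrac12$ than the paper's own computation.
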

\begin{proof}
    Since $y_ix_i=0$ for all $i$, $w_i:=(y_i^\dagger y_i)^\perp=-|y_i|^2N_i$.  Thus,
    \begin{equation}\label{eqn: v - w formula}
        v_i-w_i=(|x_i|^2+|y_i|^2)N_i.
    \end{equation}
    The moment map \eqref{eqn: HP real moment map} implies that $2\beta_iN_i=(|x_i|^2-|y_i|^2)N_i$, so
    \begin{align*}
        \sum_i(2\beta_i+2|y_i|^2)N_i
            &=\sum_i(|x_i|^2+|y_i|^2)N_i=\sum_i v_i-w_i=\mu_{\SU(2)}(\bx,\by)=0.\qedhere
    \end{align*}
\end{proof}

\begin{definition}[Approximate harmonic metric $h_{\mathrm{app}, R}$]

   Let $(\bx,\by)$
    be a unitary hyperpolygon, fix $0<\delta\ll1$ so that $B_{2\delta}(p_i) \subset \C \subset \CP^1$ do not intersect,
    and consider the modified cone angles $\tilde\beta_i=\frac12(|x_i|^2+|y_i|^2)$.

  We define
    \begin{equation}\label{eqn: approximate metric}
        h_{\text{app}, R}=\sqrt{h_{\det}}\exp\left(\chi_\infty\Lambda_{\natural,R}+\sum_{i=1}^n\chi_i f_{i,R} N_i\right),
    \end{equation}
where:
\begin{itemize}
    \item Away from the punctures, we use the approximately flat\footnote{
        The curvature of $\sqrt{h_{\det}}\exp(\Lambda_{\natural,R})$ differs from $\delbar\del\Lambda_{\natural,R}=0$ by terms coming from the Baker-Campbell-Hausdorff formula (or more precisely the Zassenhaus formula), which all pointwise vanish to order $R^2$.
    } metric $\sqrt{h_{\det}}\exp(\Lambda_{\natural,R})$ where
        $$\Lambda_{\natural,R}=\sum_{i=1}^n\log|z-p_i|^{2R\tilde\beta_i}N_i=\sum_{i=1}^n\log|z-p_i|^R(|x_i|^2+|y_i|^2)N_i.$$
\item Near the punctures, we take our local models from \eqref{eqn: KW local model}:
    \begin{equation}\label{eqn: lambda_i definition}
        \lambda_{\text{loc},i,R}=\frac{2\beta_i |z-p_i|^{2R\beta_i}}{|x_i|^2-|y_i|^2|z-p_i|^{4R\beta_i}},
    \end{equation}
    and let $f_{i,R}=\log\lambda_{\text{loc},i,R}$.  
    \item Take $\chi_i$ be a bump function equal to 1 on $B_\delta(p_i)$ and equal to 0 outside $B_{2\delta}(p_i)$, and let $\chi_\infty=1-\sum\chi_i$.  
\end{itemize}
\end{definition}

\begin{proposition}[Construction of approximate hermitian metric $h_{\mathrm{app}, R}$]\label[proposition]{prop: h app construction} Shrinking $R_{\max}$ if necessary, $h_{\text{app},R}$ is a globally-defined hermitian metric adapted to the parabolic structure with weights $\vec \alpha(R)$ for all $R\in(0,R_{\max})$.
\end{proposition}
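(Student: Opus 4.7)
The plan is to verify three properties of the formula \eqref{eqn: approximate metric}: (i) positive definiteness and well-definedness on $\CP^1 \setminus D$, (ii) smooth extension across $\infty \in \CP^1$, and (iii) parabolically adapted asymptotics at each $p_i \in D$ with the correct weights $\alpha_i(R)$. For (i), the supports of the $\chi_i$ are pairwise disjoint by choice of $\delta$, so at any point at most one ``puncture'' term contributes in addition to the ``$\infty$'' term. The local factor $f_{i,R} = \log \lambda_{\text{loc},i,R}$ is real and smooth provided $|x_i|^2 > |y_i|^2 |z - p_i|^{4R\beta_i}$, which holds on $\mathrm{supp}\,\chi_i \subset B_{2\delta}(p_i)$ since $|x_i|^2 > |y_i|^2$ and $2\delta < 1$. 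Each $N_i$ is a trace-free $h_0$-hermitian endomorphism, so the exponential of any real linear combination is $h_0$-hermitian and positive definite; multiplying by the positive scalar $\sqrt{h_{\det}}$ yields an honest hermitian metric on $\mathcal{E}$ away from $D \cup \{\infty\}$.

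For (ii), introduce $w = 1/z$ near $\infty$; there $\chi_\infty \equiv 1$ and all $\chi_i \equiv 0$, so $h_{\text{app},R} = \sqrt{h_{\det}} \exp(\Lambda_{\natural,R})$. Writing $\log|z - p_i| = -\log|w| + \log|1 - p_i w|$,
\[
\Lambda_{\natural,R} = -2R \log|w| \sum_i \tilde\beta_i N_i + 2R \sum_i \tilde\beta_i \log|1 - p_i w|\, N_i.
\]
The coefficient of the singular term is $\tfrac{1}{2} \sum_i (|x_i|^2 + |y_i|^2) N_i$; using $y_i x_i = 0$ so that $w_i = -|y_i|^2 N_i$, this equals $\tfrac{1}{2} \sum_i (v_i - w_i) = \tfrac{1}{2} \mu_{\SU(2)}(\bx, \by) = 0$ by the unitary hyperpolygon condition. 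Hence $\Lambda_{\natural,R}$ extends smoothly to $w = 0$, and since \eqref{eq:hdet} shows $\sqrt{h_{\det}}$ is smooth and positive at $w = 0$, so does $h_{\text{app},R}$.

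For (iii), fix $i$ and choose an $h_0$-unitary frame $(e_1, e_2)$ near $p_i$ with $e_1 \in F_i$. Then $x_i$ is a scalar multiple of $e_1$, so $N_i = \mathrm{diag}(1, -1)$, and on $B_\delta(p_i)$ the metric reduces to $h_{\text{app},R} = \sqrt{h_{\det}}\, \mathrm{diag}(\lambda_{\text{loc},i,R}, \lambda_{\text{loc},i,R}^{-1})$. As $z \to p_i$, $\lambda_{\text{loc},i,R} \sim \tfrac{2\beta_i}{|x_i|^2} |z - p_i|^{2R\beta_i}$; combining this with $2R\beta_i = 1 - 2\alpha_i(R)$ and the asymptotic $\sqrt{h_{\det}} \sim C_i^{(0)} |z - p_i|$ from \eqref{eq:hdet} yields
\[
h_{\text{app},R} \sim \mathrm{diag}\bigl(C_i |z - p_i|^{2(1 - \alpha_i(R))},\ C_i^{-1} |z - p_i|^{2 \alpha_i(R)}\bigr),
\]
which is exactly the adapted form. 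This requires $\alpha_i(R) \in (0, 1)$, arranged by shrinking $R_{\max}$ so that $R\beta_i < \tfrac{1}{2}$ for every $i$ (which is also needed to keep $\vec\alpha(R)$ in the chosen chamber).

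The main non-routine point is step (ii): without the identity $\sum_i \tilde\beta_i N_i = 0$---precisely the $\mathfrak{su}(2)$ moment-map condition---the naive formula would pick up a $\log|w|$ singularity at $\infty$ and fail to define a global metric. This is the crucial place where the real hyperpolygon moment-map data gets encoded into the global geometry of the approximate metric, and it is also what forces the choice of modified cone angles $\tilde\beta_i = \tfrac{1}{2}(|x_i|^2 + |y_i|^2)$ rather than the naive $\beta_i$ in the definition of $\Lambda_{\natural,R}$.
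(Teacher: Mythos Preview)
Your proof is correct and follows the same three-step plan as the paper (extension across $\infty$, adaptedness at the $p_i$, positive definiteness), with the same key input $\sum_i \tilde\beta_i N_i = 0$ coming from $\mu_{\SU(2)}(\bx,\by)=0$. One small difference worth noting: your positive-definiteness argument is cleaner than the paper's --- you observe directly that the exponent is $h_0$-hermitian, so its exponential is automatically positive definite for \emph{every} $R$, whereas the paper argues on the gluing annulus that the exponent tends to $0$ uniformly as $R\to0$ and hence the eigenvalues approach $1$, invoking a shrinkage of $R_{\max}$ that is in fact unnecessary for this particular step.
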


\begin{proof}
   There are three things to show: (1) that $h_{\mathrm{app}, R}$ is smooth over infinity, (2) that $h_{\mathrm{app}, R}$ is adapted to the parabolic structure with weights $\vec \alpha(R)$, and (3) for $R$ sufficiently small $h_{\mathrm{app}, R}$ is positive definite.  For (1), note that $\Lambda_{\natural,R}$ extends over $\infty$ because $\sum 2\tilde\beta_iN_i=\sum v_i-w_i=0$.  For (2), since we assumed $\{B_{2 \delta}(p_i)\}_{i=1}^n$ do not intersect, on $B_{\delta}(p_i)$, $h_{\text{app}, R}= \sqrt{h_{\det}} \exp( f_{i, R} N_i)$ agrees with the local model metric \eqref{eq:localmetric} which by \Cref{lemma:localproperties} is adapted to the parabolic structure $(F_i,\alpha_i(R))$.  For (3), we need to verify positive-definiteness on the gluing annulus $B_{2 \delta}(p_i)\backslash B_{\delta}(p_i)$ where, since $\chi_j=0$ for $j \neq i$, we have 
   \[         h_{\text{app}, R}=\sqrt{h_{\det}}\exp\left((\chi_\infty 
\log|z-p_i|^{2R\tilde\beta_i} + \chi_i f_{i,R}) N_i + R
\sum_{j \neq i} 2\tilde\beta_j \log|z-p_j|N_j \right). \]
This exponent decays uniformly on the annulus as $R\to0$, so the eigenvalues of $h_{\app,R}$ all approach 1 uniformly. Thus, possibly after shrinking $R_{\max}$, it is positive-definite.
\end{proof}

\begin{remark}[Pointwise Convergence] 
Note that the metric $h_{\text{app},R}$ converges pointwise to the standard metric $h_0=\text{Id}$, which will simplify many calculations. If this seems surprising that something so canonical would appear in the $R \to 0$ limit, note that the map from hyperpolygons to Higgs bundles is picking a very special frame in which $\mathcal{E}= \mathcal{O} \oplus \mathcal{O}$. As a sanity check note that the $SU(2)$-action on the space of unitary hyperpolygons corresponds to a constant unitary gauge transformation on $\mathcal{O} \oplus \mathcal{O}$, which does not change the metric $h_0$.
\end{remark}
\bigskip

The next lemma utilizes the implicit function theorem to perturb our $h_{\text{app},R}$ to an actual solution for all $R$ in an interval $(0,R_{\max})$ (possibly shrinking the $R_{\max}$ introduced in \Cref{sec:mapontan}).  Let $h_0=\lim_{R\to0}h_{\text{app},R}=\text{Id}$, and let $i\End_0(\mathcal E,h_0)$ be the sheaf of trace-free $h_0$-hermitian endomorphisms equipped with the norm $|\cdot|_{h_0}$.  We use the weighted $b$-Sobolev spaces from \Cref{sec:analytic}.

\begin{proposition} \label[proposition]{prop: approximate h accuracy}
    Fix $\eps\in(0,\frac12)$.  There exists an $R_\text{max}>0$ and a family $\gamma_R\in L_{2-\eps}^{2,2}(i\End_0(\mathcal E,h_0))$ for $0\leq R<R_\text{max}$ which is analytic in $R$ such that $\gamma_0= 0$
    and $h_{\text{app},R}\cdot e^{\gamma_R}:=e^{\gamma_R^\dagger}h_{\text{app},R}e^{\gamma_R}$ is $R$-harmonic with respect to $(\mathcal E,\varphi)$ for all $0<R<R_\text{max}$.
\end{proposition}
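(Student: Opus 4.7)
The plan is to apply the implicit function theorem in a suitable family of weighted $b$-Sobolev spaces. Define the $R$-rescaled Hitchin map
\begin{equation*}
\mathcal F(R,\gamma) \;:=\; F^{\perp}_{\nabla(\delbar,\,h_{\text{app},R}\cdot e^{\gamma})} \;+\; R^{2}\,[\varphi,\varphi^{\dagger_{h_{\text{app},R}\cdot e^{\gamma}}}],
\end{equation*}
which is $R$ times the usual $R$-rescaled Hitchin operator so that the expression is regular at $R=0$. A solution $\mathcal F(R,\gamma_R)=0$ is exactly the assertion we wish to prove. At $R=0$ the approximate metric degenerates to $h_0=\mathrm{Id}$ with the trivial Chern connection $\delbar$; since $h_0$ has vanishing curvature and the complex moment map contribution is killed by the $R^{2}$ prefactor, we have $\mathcal F(0,0)=0$, so $\gamma_0=0$ is the correct initial value.

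Next I would estimate the error term $\mathcal F(R,0)$ for small $R>0$. By construction, $h_{\text{app},R}$ agrees with the \emph{exact} local solution from \Cref{def: Local Harmonic Model Metric} on each ball $B_{\delta}(p_{i})$, where $\mathcal F(R,0)=0$ identically. On the gluing annuli, \Cref{lemma: annulus gluing} says that the exponent of $h_{\text{app},R}$ differs from the corresponding exponent of the local model by $O(R^{2})$ in $\mathcal C^{2}$; combining this with the approximate flatness of $\Lambda_{\natural,R}$ (via Baker--Campbell--Hausdorff, the commutators of the $N_{i}$ enter at order $R^{2}$) and using that $\sum 2\tilde\beta_{i}N_{i}=0$ to handle the behavior at $\infty$, one finds $\|\mathcal F(R,0)\|_{L^{0,2}_{-\eps}}=O(R^{2})$.

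The crux is uniform invertibility of the linearization. Computing at $\gamma=0$ gives
\begin{equation*}
L_{R}\,\dot\gamma \;=\; -\del^{h_{\text{app},R}}\delbar\dot\gamma \;+\; R^{2}\bigl[\varphi^{\dagger_{h_{\text{app},R}}},[\varphi,\dot\gamma]\bigr],
\end{equation*}
acting on $i\End_{0}(\mathcal E,h_{0})$. As $R\to 0$ the background metric converges pointwise to $h_{0}=\mathrm{Id}$, so $L_{R}$ converges to the flat bundle-valued Laplacian $L_{0}=-\del\delbar$. I would then study the indicial roots of $L_{R}$ near each puncture: in the $h_{0}$-trivialization, the exponent $f_{i,R}N_{i}$ in $h_{\text{app},R}$ contributes perturbatively, and the indicial roots collapse to the integer indicial roots of $-\del\delbar$ as $R\to 0$. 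Choosing the weight $\delta=2-\eps\in(1,2)$ places it in the gap between the indicial roots $1$ and $2$, giving a Fredholm map
\begin{equation*}
L_{R}\colon L^{2,2}_{2-\eps}(i\End_{0}(\mathcal E,h_{0}))\longrightarrow L^{0,2}_{-\eps}(i\End_{0}(\mathcal E,h_{0}))
\end{equation*}
whose index and kernel are stable for small $R$; vanishing of the kernel for $L_{0}$ (which reduces to the scalar fact that $\del\delbar$ has no nontrivial solutions decaying like $r^{2-\eps}$ at the punctures while being smooth at $\infty$) then promotes to uniform invertibility of $L_{R}$ for $R\in[0,R_{\max})$ after shrinking $R_{\max}$ if necessary. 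This is the step I expect to be the main obstacle, because one needs the weight, the parabolic weights $\alpha_{i}(R)\to\tfrac12$, and the indicial roots of $L_{R}$ to play together consistently in the limit.

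Given uniform invertibility and the $O(R^{2})$ error bound, the analytic implicit function theorem applied to the real-analytic map $\mathcal F\colon [0,R_{\max})\times L^{2,2}_{2-\eps}\to L^{0,2}_{-\eps}$ produces a unique real-analytic family $R\mapsto\gamma_{R}$ with $\gamma_{0}=0$ and $\mathcal F(R,\gamma_{R})=0$. Finally, standard elliptic regularity for the second-order elliptic equation $L_{R}\gamma_{R}=-\mathcal F(R,0)+Q(\gamma_{R})$ (where $Q$ collects the quadratic and higher corrections) promotes $\gamma_{R}$ to the smoothness needed on $C\setminus D$, while the weighted Sobolev membership controls the behavior near $D$, completing the proof.
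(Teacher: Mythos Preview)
Your overall strategy---apply the analytic implicit function theorem with linearization degenerating to the flat Laplacian at $R=0$---is exactly the paper's, but there is a real gap in your error estimate. You assert that $\mathcal F(R,0)=0$ identically on each $B_\delta(p_i)$ because $h_{\app,R}$ coincides there with the local model. That is false: the local model solves Hitchin's equation for the \emph{single-pole} Higgs field $x_iy_i\,\frac{\de z}{z-p_i}$, whereas the global $\varphi=\sum_j\frac{x_jy_j}{z-p_j}\de z$ contributes cross terms $[\varphi_j^{\dagger_{h_{\app,R}}},\varphi_k]$ with $(j,k)\neq(i,i)$ that do not cancel against $F_\nabla^\perp$. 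These are precisely the terms the paper spends its Claim~1 on, splitting them into pieces $T_J,T_{\bar J},T_{J_0}$ and using $\lambda_{\text{loc},i,R}\le r^{2R\beta_i}$ to show $\tr(T^{\dagger_{h_{\app,R}}}T)\le Cr^{-2+4R\beta_i}$, hence $r^\eps T\in L^2$.

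More to the point, the issue is not merely the \emph{size} of the error but the \emph{space} it lives in. For a generic adapted metric the $[\varphi^\dagger,\varphi]$ term has leading order $r^{-2}\de\bar z\wedge\de z$ times an upper-triangular matrix, which only lies in $L^{0,2}_{-1-\eps_R}$ with $\eps_R<4R\beta_i$; as $R\to0$ this weight degenerates and one cannot run the implicit function theorem between fixed Banach spaces. The essential observation---absent from your argument---is that the specific choice $c=|x_i|^{-2}$ in the local model and the resulting cancellations push the error into $L^{0,2}_{-\eps}$ for $\eps$ \emph{independent of $R$}. Your $O(R^2)$ claim on the annuli is fine, but without the near-puncture analysis the map $\mathcal F$ is not even well-defined into the fixed target $L^{0,2}_{-\eps}$, and the rest of the argument cannot proceed.
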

The rough idea of the proof is as follows: For a generic hermitian metric $h$ adapted to the parabolic structure, the error in Hitchin's equation $F_{\nabla(\delbar_E,h)}^\perp+R^2[\varphi^{\dagger_{h}},\varphi]$ lives in the Sobolev space $L_{-1-\eps_R}^{0,2}(\Omega_C^{1,2}\otimes i\End_0(\mathcal E,h))$ for $0<\eps_R<4R\beta_i$, as the leading order term as $r\to0$ is asymptotically $r^{-2}\,\de\bar z\wedge\de z$ times an upper-triangular matrix.  Thus, the correction $\gamma_R$ to $h$ will need to lie in $L_{1-\eps_R}^{2,2}(i\End_0(\mathcal E,h))$, and its limiting behavior is poorly behaved because we lose regularity as $R\to0$.  However, we shall show that our construction $h_{\text{app},R}$ produces an error term in the \emph{subspace} $L_{-\eps}^{0,2}\subset L_{-1+\eps_R}^{0,2}$, where crucially $\eps\in(0,\frac12)$ is \emph{fixed} with respect to $R$.\footnote{
    Any $\eps\in(0,1)$ suffices, but we use $\eps\in(0,\frac12)$ for future convenience.
}
Thus, we may carry out a more refined implicit function theorem argument using a mapping $L_{2-\eps}^{2,2}\to L_{-\eps}^{0,2}$ to obtain our result.

To simplify our calculations, we work in unitary gauge with respect to the standard metric $h_0$.  Let $h_{\app,R}^{1/2}$ be the unique $h_0$-hermitian endomorphism solving $h_{\app,R}=(h_{\app,R}^{1/2})^\dagger h_{\app,R}$.  The Higgs bundle in unitary gauge is $(\nabla^{0,1}_{\app,R},\Psi_{\app,R})$ where
\begin{equation}
    \nabla _{\app,R}^{0,1}=\delbar + \delbar({h_{\app,R}^{1/2}})h_{\app,R}^{-1/2},
        \qquad \Psi_{\app,R}^{1,0}=h_{\app,R}^{1/2}\varphi\,h_{\app,R}^{-1/2}.
\end{equation}
Then $\nabla_{\app,R}=\nabla_{\app,R}^{0,1}+\nabla_{\app,R}^{1,0}$ where $\nabla_{\app,R}^{1,0}-\del=-(\nabla_{\app,R}^{0,1}-\delbar)^\dagger$.

\begin{proof}[Proof of \Cref{prop: approximate h accuracy}]
    For $\gamma\in L_{2-\eps}^{2,2}(i\End_0(\mathcal E,h_0))$, $e^\gamma$ acts like 
    \begin{align}
        \nabla_{\app,R}^{0,1}
            &\mapsto \nabla_\gamma^{0,1}:=e^\gamma\,\delbar(e^{-\gamma})+e^\gamma \nabla_{\app,R}^{0,1}e^{-\gamma}  \notag\\
        \Psi_{\app,R}^{1,0}
            &\mapsto\Psi_\gamma^{1,0}:=e^\gamma\Psi_{\app,R}^{1,0}e^{-\gamma}.
    \end{align}
    We again denote $\nabla_\gamma=\nabla_\gamma^{0,1}+\nabla_\gamma^{1,0}$ where $\nabla_\gamma^{1,0}-\del=-(\nabla_\gamma^{0,1}-\delbar)^\dagger$.  Note that we have dropped the subscripts $R$, but $\nabla_\gamma$ and $\Psi_\gamma^{1,0}$ still depend on $R$.  Consider the operator
        $$\mathcal N_R\from L_{2-\eps}^{2,2}(i\End_0(\mathcal E,h_0))\to L_{-\eps}^{0,2}(\Omega_C^{1,1}\otimes i\End_0(\mathcal E,h_0))$$
    given by
    \begin{equation}\label{eqn: def of N}
        \mathcal N_R(\gamma)=F_{\nabla_{\gamma}}^\perp+R^2\left[(\Psi_\gamma^{1,0})^\dagger,\Psi_\gamma^{1,0}\right].
    \end{equation}
    Observe $\mathcal N_R(\gamma)=0$ if and only if $(\nabla_\gamma^{0,1},\Psi_{\gamma}^{1,0})$ solves the $R$-rescaled Hitchin equation, i.e.  if and only if $e^{\gamma^\dagger}h_{\text{app},R}\,e^\gamma$ is the $R$-harmonic for $(\mathcal E,\varphi)$.

    \begin{claim}\label{claim1}
        $\mathcal N_R$ is well-defined.
    \end{claim}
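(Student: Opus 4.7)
The plan is to split $\mathcal{N}_R(\gamma) = \mathcal{N}_R(0) + (\mathcal{N}_R(\gamma) - \mathcal{N}_R(0))$ and handle the two pieces separately.

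First I would show $\mathcal{N}_R(0) \in L^{0,2}_{-\eps}$. The point is that $h_{\text{app},R}$ is \emph{exactly} $R$-harmonic on each $B_\delta(p_i)$: the cutoffs satisfy $\chi_i \equiv 1$ and $\chi_j, \chi_\infty \equiv 0$ for $j \neq i$ on this disk, so $h_{\text{app},R}|_{B_\delta(p_i)} = \sqrt{h_{\det}}\exp(f_{i,R}N_i)$ coincides with the local model metric of \Cref{def: Local Harmonic Model Metric}, which by \Cref{lemma:localproperties}(a) satisfies Hitchin's equation. Hence $\mathcal{N}_R(0)$ vanishes identically on $\bigcup_i B_\delta(p_i)$. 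The only regions where $\mathcal{N}_R(0)$ is potentially nonzero are the gluing annuli $B_{2\delta}(p_i)\setminus B_\delta(p_i)$ and the exterior region $C \setminus \bigcup_i B_{2\delta}(p_i)$, both compact and disjoint from $D$. On the gluing annuli the Gluing Lemma \Cref{lemma: annulus gluing} shows that $\log \lambda_{\text{loc},i,R}$ agrees with $\log r^{2R\tilde\beta_i}$ up to $O(R^2)$ in $C^2$, while on the exterior region $h_{\text{app},R} = \sqrt{h_{\det}}\exp(\Lambda_{\natural,R})$ has $\mathcal{N}_R(0) = O(R^2)$ coming only from Baker--Campbell--Hausdorff corrections to the noncommuting sum in $\Lambda_{\natural,R}$. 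Since $\mathcal{N}_R(0)$ is smooth and bounded with support away from $D$, it lies in every weighted space $L^{0,2}_\delta$, in particular in $L^{0,2}_{-\eps}$.

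Next I would address the nonlinear correction $\mathcal{N}_R(\gamma) - \mathcal{N}_R(0)$ for $\gamma \in L^{2,2}_{2-\eps}$. The key analytic input is the $b$-Sobolev embedding $L^{2,2}_{2-\eps}(i\End_0(\mathcal{E},h_0)) \hookrightarrow C^0$, valid since the weight $2-\eps > 0$ and the real dimension is $2$; in fact sections of $L^{2,2}_{2-\eps}$ vanish like $r^{2-\eps}$ at each puncture, so $e^{\pm \gamma}$ is bounded and tends to $\mathrm{Id}$ at $D$. Expanding
\[
F_{\nabla_\gamma}^\perp = F_{\nabla_{\text{app},R}}^\perp + \nabla_{\text{app},R}^{1,0}\nabla_{\text{app},R}^{0,1}(\text{polynomial in } e^{\pm\gamma})^\perp + (\text{lower order}),
\]
the top-order contribution is a trace-free projection of $\delbar\del\gamma$ with bounded coefficients; since $\delbar\del$ in Euclidean derivatives shifts $b$-weight by $-2$, this sends $L^{2,2}_{2-\eps} \to L^{0,2}_{-\eps}$. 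The Higgs contribution $R^2[(\Psi_\gamma^{1,0})^\dagger, \Psi_\gamma^{1,0}] = R^2\, e^\gamma[(\Psi_{\text{app},R}^{1,0})^{\dagger_{h_\gamma}}, \Psi_{\text{app},R}^{1,0}] e^{-\gamma}$ has the same pointwise asymptotics near each $p_i$ as the unperturbed Higgs term, which on $B_\delta(p_i)$ exactly cancels $F_{\nabla_{\text{app},R}}^\perp$; the difference is bounded by $\|\gamma\|_{L^{2,2}_{2-\eps}}$ times the exact term, so by Sobolev multiplication it again lands in $L^{0,2}_{-\eps}$.

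The main obstacle I anticipate is the weight bookkeeping near the punctures: while $\eps \in (0,\tfrac12)$ is \emph{fixed}, the singular scales of $\nabla_{\text{app},R}^{0,1}$ and $\Psi_{\text{app},R}^{1,0}$ involve $R\beta_i \to 0$, so one must argue that the $L^{0,2}_{-\eps}$-membership is uniform in $R$ on $(0, R_{\max})$. The rescue is that on $B_\delta(p_i)$ all $R$-dependent singular behavior is absorbed into $\nabla_{\text{app},R}^{0,1}$ and $\Psi_{\text{app},R}^{1,0}$ which \emph{balance exactly} in Hitchin's equation because $h_{\text{app},R}$ equals the harmonic local model there; hence every contribution to $\mathcal{N}_R(\gamma) - \mathcal{N}_R(0)$ carries at least one factor of $\gamma$ or of its derivatives, whose $b$-weight cushion $2-\eps$ is strong enough to absorb any residual $r^{-\eps}$ blow-up coming from the commutator structure.
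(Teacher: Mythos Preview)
Your plan for $\mathcal N_R(0)$ has a genuine gap. You claim that on $B_\delta(p_i)$ the approximate metric $h_{\app,R}$ agrees with the local model of \Cref{def: Local Harmonic Model Metric} and is therefore exactly $R$-harmonic, so that $\mathcal N_R(0)$ vanishes there. But the local model metric solves Hitchin's equations for the \emph{single-residue} Higgs field $x_iy_i\,\frac{\de z}{z-p_i}$, whereas $\mathcal N_R$ is built from the \emph{global} Higgs field $\varphi=\sum_j\frac{x_jy_j}{z-p_j}\,\de z$. On $B_\delta(p_i)$ the curvature $F_{\nabla_{\app,R}}^\perp$ cancels only the diagonal summand $j=k=i$ in $[\varphi^{\dagger_{h_{\app,R}}},\varphi]$; all the cross terms with $(j,k)\ne(i,i)$ survive, and these are singular at $p_i$ (of order $r^{-1}$ in norm), so $\mathcal N_R(0)$ is \emph{not} supported away from $D$.

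The paper's proof confronts exactly this point: it groups the surviving summands into $T_{\bar J}$, $T_J$, $T_{J_0}$, uses $\lambda_{\text{loc},i,R}\le r^{2R\beta_i}$ to bound $|T|\in O(r^{-1})$ and $|T^{\dagger_{h_{\app,R}}}|\in O(r^{-1+4R\beta_i})$, and then checks that $r^{2\eps}\tr(T^{\dagger_{h_{\app,R}}}T)$ is integrable. Your argument for the difference $\mathcal N_R(\gamma)-\mathcal N_R(0)$ is along the right lines and close to the paper's, but your treatment of the Higgs piece again relies on ``exact cancellation on $B_\delta(p_i)$,'' which is false for the same reason; the paper instead expands $[(\Psi_\gamma^{1,0})^\dagger,\Psi_\gamma^{1,0}]-[(\Psi_{\app,R}^{1,0})^\dagger,\Psi_{\app,R}^{1,0}]$ directly and uses the $r^{2-\eps}$ decay of $e^{c\gamma}-1$ against the $O(r^{-1+4R\beta_i})$ growth of $|\Psi_{\app,R}^{1,0}|$.
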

    \begin{proof}[Proof of Claim \ref{claim1}]
        Given $\gamma\in L_{2-\eps}^{2,2}(i\End_0(\mathcal E,h_0))$, we must show that $\mathcal N_R(\gamma)$ lies in $L_{-\eps}^{0,2}(\Omega^{1,1}\otimes i\End_0(\mathcal E,h_0))$.  This amounts to showing $\int_{B_\delta(p_i)}|r^\eps\mathcal N_R(\gamma)|^2<\infty$ for $i=1,\dots,n$.
        \noindent 
        Fix $i\in\{1,\dots,n\}$ and pick a trivialization of $\mathcal E$ such that $x_1\in\langle e_1\rangle$.  We expand this around $\gamma=0$, i.e. we consider the error from $h_{\text{app}, R}$ on $B_{\delta}(p_i)$:
        \begin{equation}\label{eqn: HE expanded}
            \mathcal N_R(0)
                =h_{\text{app},R}^{1/2}\left(F_{\nabla_{\text{app},R}}^\perp+R^2\sum_{j,k}\frac{[\varphi_j^{\dagger_{h_{\text{app},R}}},\varphi_k]}{(\bar{z-p_j})(z-p_k)}\de\bar z\wedge\de z\right)h_{\text{app},R}^{-1/2}.
        \end{equation}
        The construction of $h_{\text{app},R}$ ensures that the summand $j=k=i$ cancels with $F_{\nabla_{\text{app},R}}^\perp$ on $B_\delta(p_i)$.  The set of remaining indices can be decomposed as $J\sqcup\bar J\sqcup J_0$ depending on our fixed choice of $i$: 
        \begin{align*}
        \bar J&=\{(j,i)\mid j\ne i\}, \\
        J&=\{(i,k)\mid k\ne i\}, \\
        J_0&=\{(j,k)\mid j,k\ne i\}.
        \end{align*}
        Furthermore, we can decompose the residues $\varphi_j=\varphi_{j,(-1)}+\varphi_{j,(0)}+\varphi_{j,(1)}$ where
            $$\varphi_{j,(-1)}=\begin{pmatrix}0&0\\ *&0\end{pmatrix},\qquad \varphi_{j,(0)}=\begin{pmatrix}*&0\\0&*\end{pmatrix},\quad\varphi_{j,(1)}=\begin{pmatrix}0&*\\0&0\end{pmatrix}.$$
            Because of our choice of trivialization, 
        $\varphi_i=\varphi_{i,(1)}$ 
        $[\varphi_{j,(-1)}^\dagger,\varphi_i]=0$, and 
        \begin{equation}
        h_{\text{app},R}|_{B_{\delta}(p_i)}=\sqrt{h_{\det}}\begin{pmatrix} \lambda_{\text{loc},i,R}& 0 \\ 0 & \lambda_{\text{loc},i,R}^{-1}\end{pmatrix}.
        \end{equation}
        Consequently, 
            $$\varphi_{j,(-1)}^{\dagger_{h_{\text{app},R}}}=\lambda_{\text{loc},i,R}^{-2}\varphi_{j,(-1)}^\dagger,\qquad
            \varphi_{j,(0)}^{\dagger_{h_{\text{app},R}}}=\varphi_{j,(0)}^\dagger,\qquad
            \varphi_{j,(1)}^{\dagger_{h_{\text{app},R}}}=\lambda_{\text{loc},i,R}^2\varphi_{j,(1)}^\dagger.$$
        We calculate the contribution of the terms in \eqref{eqn: HE expanded} coming from $J,\bar J,J_0$:
        \begin{align*}
            &T_{\bar J}=
                R^2\sum_k\frac{[\varphi_i^{\dagger_{h_{\text{app},R}}},\varphi_k]}{(\bar{z-p_i})(z-p_k)}
                =R^2\sum_k\frac{\lambda_{i,R}^2[\varphi_i^\dagger,\varphi_k]}{(\bar{z-p_i})(z-p_k)}
            \\
            &T_J=
                R^2\sum_j\frac{[\varphi_j^{\dagger_{h_{\text{app},R}}},\varphi_i]}{(\bar{z-p_j})(z-p_i)}
                =R^2\sum_j\frac{[\varphi_{j,(0)},\varphi_i]+\lambda_{i,R}^2[\varphi_{j,(1)}^\dagger,\varphi_i]}{(\bar{z-p_j})(z-p_i)}\\
            &T_{J_0}=
                R^2\sum_{j,k\ne i}\frac{[\varphi_j^{\dagger_{h_{\text{app},R}}},\varphi_k]}{(\bar{z-p_j})(z-p_k)}
                =R^2\sum_{j,k\ne i}\frac{\lambda_{i,R}^{-2}[\varphi_{j,(-1)},\varphi_k]+[\varphi_{j,(0)},\varphi_k]+\lambda_{i,R}^2[\varphi_{j,(1)}^\dagger,\varphi_k]}{(\bar{z-p_j})(z-p_k)}
        \end{align*}
        Let $T=T_{\bar J}+T_J+T_{J_0}$, and note then that the equation in \eqref{eqn: HE expanded} is             \begin{align*}
            \mathcal N_R(0)
                &=h_{\text{app},R}^{1/2}\left(F_{\nabla_{\text{app},R}}^\perp+R^2\sum_{j,k}\frac{[\varphi_j^{\dagger_{h_{\text{app},R}}},\varphi_k]}{(\bar{z-p_j})(z-p_k)}\de\bar z\wedge\de z\right)h_{\text{app},R}^{-1/2}\\
                &=h_{\text{app},R}^{1/2}\left(F_{\nabla_{\text{app},R}}^\perp+ R^2  \frac{[\varphi_i^{\dagger_{h_{\text{app},R}}},\varphi_i]}{(\bar{z-p_i})(z-p_i)}\de\bar z\wedge\de z + R^2 T\de\bar z\wedge\de z\right)h_{\text{app},R}^{-1/2}\\
                            &=h_{\text{app},R}^{1/2}\left(R^2 T\de\bar z\wedge\de z\right)h_{\text{app},R}^{-1/2}\\
        \end{align*}
        Now we want to show that $\mathcal{N}_R(0) \in L^{0,2}_{-\eps}(\Omega^{1,1} \otimes i \End_0(\mathcal{E}, h_0))$. Let $r=|z-p_i|$.  Note from \eqref{eqn: KW local model}\footnote{
            In particular, the numerator is $2\beta r^{2 r \beta}$ and the denominator is $|x|^2-|y|^2r^{4R\beta}>2\beta$.
        }
        that $\lambda_{\text{loc},i,R}\leq r^{2R\beta_i}$ on $B_\delta(p_i)$, which leads to the key observation that $|T^{\dagger_{h_{\text{app},R}}}|\in O(r^{-1+4R\beta_i})$ and $|T|\in O(r^{-1})$, which is easily verified by checking one component $T_{\bar J},T_J,T_{J_0}$ at a time.  These two facts and the boundedness of $h_{\app,R}$ imply that there is some $C>0$ independent of $R$ such that $\tr(T^{\dagger_{h_{\app,R}}}T)<Cr^{-2+4R\beta_i}$.  Then
        \begin{align} \int_{B_\delta(p_i)}||z-p_i|^\eps T|_{h_{\app, R}}^2
            &=
            \frac1{2i}\int_{B_\delta(p_i)}r^{2\eps}\tr(T^{\dagger_{h_{\text{app},R}}}T)\,\de\bar z\wedge\de z \nonumber \\
                &<\frac1{2i}C\int_{B_\delta(p_i)}R^4r^{2\eps-2+4R\beta_i}\de\bar z\wedge\de z \nonumber \\
                    &=2\pi C\frac{R^3}{2\eps+4\beta_i}\delta^{2\eps+4R\beta_i}M<\infty.
                    \label{eqn: L2 norm of T}
        \end{align}

        We have reduced the claim to showing that  $\mathcal N_R(\gamma)-\mathcal N_R(0)\in L_{-\eps}^{0,2}(i\End_0(\mathcal E,h_0))$.  For this, we adapt the method of proof used in \cite[Proposition 6.1]{FMSW21}.  The difference \[F_{\nabla_\gamma}^\perp - F_{\nabla_{\app, R}}^\perp\] is schematically 
        \[ B_1(\gamma) \nabla_{\app, R}^{0,1} \nabla_{\app, R}^{1,0} \gamma+ B_2(\gamma)\nabla_{\app, R}^{0,1} \gamma+ B_3(\gamma)\nabla_{\app, R}^{1,0} \gamma\]
        with coefficients which are smooth functions of $\gamma$. 
        Our goal is to show that each of these summands lies in $L^{0,2}_{-\eps}$, by individually considering $\nabla_{\app, R}^{0,1} \nabla_{\app, R}^{1,0} \gamma$, $\nabla_{\app, R}^{0,1} \gamma$, and $\nabla_{\app, R}^{1,0} \gamma$.
        We use the fact  $|\nabla_{\app,R}-\de|<C_1r^{-1}$ 
        on $B_\delta(p_i)$ for some $C_1>0$. Now
        \begin{align*}
            \del_{\overline{z}} 
                &= r^{-1} \left( \frac{e^{i\theta}}{2} r \del_r + \frac{i}{2} \del_\theta \right), \\
            \del_{z}
                &= r^{-1} \left( \frac{e^{-i\theta}}{2} r \del_r - \frac{i}{2} \del_\theta \right), 
        \end{align*}
        are both $r^{-1}$ times a $b$-vector field, so $\nabla_{\app, R}^{0,1} \nabla_{\app, R}^{1,0} \gamma \in L^{0,2}_{-\eps}$, 
        $\nabla_{\app, R}^{0,1} \gamma \in L^{1,2}_{1-\eps}$, 
        $\nabla_{\app, R}^{1,0} \gamma \in L^{1,2}_{1-\eps}$.
        Since the coefficients $B_i(\gamma)$ are bounded in $L^\infty$, each of these summands lie in $L^{2}_{-\eps}$.

We expand the difference:
\begin{align} &\left[(\Psi_\gamma^{1,0})^\dagger,\Psi_\gamma^{1,0}\right] - \left[(\Psi_{\app, R}^{1,0})^\dagger,\Psi_{\app, R}^{1,0}\right]  \nonumber \\
&= \e^{-\gamma} \Psi_{\app, R}^{1,0}\e^{2\gamma} \wedge \Psi_{\app, R}^{0,1 }\e^{-\gamma} - \e^{\gamma} \Psi_{\app, R}^{0,1 }\wedge \e^{-2\gamma}\Psi_{\app, R}^{1,0}\e^{\gamma} - \Psi_{\app, R}^{1,0}\wedge \Psi_{\app, R}^{0,1 }\nonumber \\
&=  (\e^{-\gamma}-1) \Psi_{\app, R}^{1,0}\e^{2\gamma} \wedge \Psi_{\app, R}^{0,1 }\e^{-\gamma} - (\e^{\gamma}-1) \Psi_{\app, R}^{0,1 }\wedge \e^{-2\gamma}\Psi_{\app, R}^{1,0}\e^{\gamma} \nonumber  \\
& \qquad +  \Psi_{\app, R}^{1,0}(\e^{2\gamma}-1) \wedge \Psi_{\app, R}^{0,1 }\e^{-\gamma} -  \Psi_{\app, R}^{0,1 }\wedge (\e^{-2\gamma}-1)\Psi_{\app, R}^{1,0}\e^{\gamma} \nonumber \\
& \qquad +  \Psi_{\app, R}^{1,0} \wedge \Psi_{\app, R}^{0,1 }(\e^{-\gamma}-1)-  \Psi_{\app, R}^{0,1 }\wedge \Psi_{\app, R}^{1,0}(\e^{\gamma}-1)
\end{align}
We can show that each of these six summands are in $L^{0,2}_{-\eps}$. 
Each term has a term $\e^{c \gamma} -1$ for some constant $c$.
 We use the embedding $L^{2,2}_{2-\eps} \hookrightarrow L^\infty$ since $2-\eps>0$ to get a bound $\e^{c \gamma} -1 \in \mathcal{O}(r^{2-\eps})$.  We use the $L^\infty$ norm on the other $\e^{c' \gamma}$ terms appearing.
 Then since  $|\Psi^{1,0}_{\app,R}|<C_2r^{-1+4R\beta_i}$  for some $C_2$, in total is order $r^{(-2 + 8R\beta_i)+(2-\eps)}=r^{8R \beta_i - \eps}$.
 This embeds in $L^{0,2}_{-\eps}$.
   \end{proof}
    
    The construction of $h_{\text{app},R}$, \Cref{lemma: annulus gluing}, and the fact that $h_{\text{app},R}\to h_0$ uniformly on $K_{2\delta}=C\sm\bigcup B_{2\delta}(p_i)$ as $R\to0$ implies $\lim_{R\to0}\mathcal N_R(0)\to0$ in the $L^2$ sense on $K_\delta$.  The Fr\'echet derivative at $\gamma=0$ is
    \begin{align}
        \de\mathcal N_R\big|_0(\dot\gamma)
            &=\,\nabla^{0,1}\nabla^{1,0}\dot\gamma-\nabla^{1,0}\nabla^{0,1}\dot\gamma
                +R^2\left(\left[[\dot\gamma,\Psi^{1,0}]^\dagger,\Psi^{1,0}\right]+\left[(\Psi^{1,0})^\dagger,[\dot\gamma,\Psi^{1,0}]\right]\right)
                    \notag\\
            &=2\delbar\del\dot\gamma-[F_{\nabla_{\app,R}}^\perp,\dot\gamma]
                +R^2\left(\left[[\dot\gamma,\Psi^{1,0}]^\dagger,\Psi^{1,0}\right]+\left[(\Psi^{1,0})^\dagger,[\dot\gamma,\Psi^{1,0}]\right]\right)
    \end{align}
    At $R=0$ we recover the Laplacian
        $$d\mathcal N_0\big|_0=2 \delbar \del \from L_{2-\eps}^{2,2}(i\End_0(\mathcal E,h_0))\to L_{-\eps}^{0,2}(\Omega_C^{1,1}\otimes i\End_0(\mathcal E,h_r)).$$
    This operator is Fredholm with index 0, and its kernel is 0 because of the decay constraints on $\dot\gamma\in L_{2-\eps}^{2,2}(i\End_0(\mathcal E,h_0))$ at $D$ and the fact that $C=\C\P^1$ has genus 0.  Thus, the analytic implicit function theorem \cite{KrantzPark} provides the desired family $\gamma_R$ for $R\in(0,R_{\max})$. \end{proof}

\subsection{First Variation of Higgs Bundle Data}\label{sec:firstvar}
In order to prove \Cref{thm: HK degeneration}, we need to evaluate the pullback $\mathcal T_R^*g_R$ on a deformation $(\dot\bx,\dot\by)\in T_{(\bx,\by)}\mathcal X(\vec\beta)$.  This is done by \emph{pushing forward} $(\dot\bx,\dot\by)$ to a deformation in $\mathcal M_R(\vec\alpha(R))$ of $\mathcal T(\bx,\by)$.

Given a unitary hyperpolygon $(\bx, \by)$ let $(\mathcal{E}, \varphi, h_R)$ be the associated $R$-harmonic bundle. Given a unitary deformation $(\dot \bx, \dot \by) \in T_{(\bx, \by)}\mathcal{X}(\vec \beta)$, let $\mathtt{H}_R$ be the harmonic representative of the tangent space described in \eqref{eq:harmtan}. 
Then, as described in \Cref{sec:mapontan}, and particularly in \eqref{eq:hk expression},
\begin{align}
    \|\de \mathcal{T}_R(\dot \bx, \dot \by)\|_{g_R}^2
        &=\int_{\P^1}R^{-1}|\delbar\dot\nu_R|_{h_R}^2+R|\dot\varphi+[\dot\nu_R,\varphi]|_{h_R}^2
            \notag\\
        &=\frac1{2i}\sum_{i=1}^n\left(-\lim_{\delta'\to0}\int_{\partial B_{\delta'}(p_i)}\langle\dot\nu_R,\delbar\dot\nu_R\rangle_{h_R}\right)+\int_{\C\P^1}\langle\dot\varphi,\dot\varphi+[\dot\nu_R,\varphi]\rangle_{h_R}.
\end{align}

Equipped with $h_{\text{app},R}$ from \Cref{subsec: analytic expansion of the harmonic metric}, we now produce an approximation of the first variation $\dot\nu_{\text{app},R}$ by gluing our local models $\dot\nu_{\text{loc},i,R}$ from \Cref{def: local first variation} on the disks $B_{2\delta}(p_i)$ with the zero section everywhere else:
    $$\dot\nu_{\text{app},R}=\sum\chi_i\dot\nu_{\text{loc},i,R}$$
where the $\chi_i$ are bump functions equal to 1 on $B_\delta(p_i)$ and 0 outside $B_{2\delta}(p_i)$.
\begin{lemma}\label[lemma]{lemma: approximate nu dot accuracy}
    There is an analytic family (in $R$) of correction terms $\rho_R\in L_{\eps-2}^{2,2}(\End(\mathcal E))$ such that $\dot\nu_{\text{app},R}+\rho_R$ solves the complex Coulomb gauge equation for all $R$ in some interval $(0,R_{\max})$ and $\lim\limits_{R\to0}\rho_R=0$.
\end{lemma}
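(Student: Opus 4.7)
The Coulomb gauge equation \eqref{eqn: Coulomb gauge equation} is affine-linear in $\dot\nu_R$, so we reduce to inverting a single linear elliptic operator.  Specifically, define the error
\[
    E_R := \mathbb D'_{R,h_R}\mathbb D''_R\,\dot\nu_{\text{app},R} - \mathbb D'_{R,h_R}\bigl(R^{1/2}\dot\varphi\bigr),
\]
where $h_R = h_{\text{app},R}\cdot e^{\gamma_R}$ is the true harmonic metric from \Cref{prop: approximate h accuracy}.  The task is to find $\rho_R$ solving
\begin{equation}\label{eqn:linforrho}
    L_R\,\rho_R := \mathbb D'_{R,h_R}\mathbb D''_R\,\rho_R = -E_R,
\end{equation}
in an appropriate weighted $b$-Sobolev space, with $\rho_R\to 0$ as $R\to 0$.

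The plan is to first show $E_R\to 0$ in $L^{0,2}_{\eps}(\Omega^{1,1}_C\otimes\End\mathcal E)$ (with $\eps$ as fixed in \Cref{prop: approximate h accuracy}), then to invert $L_R$ between appropriate weighted spaces.  For the error estimate, decompose $\mathbb{CP}^1$ as in the construction of $h_{\text{app},R}$:
(i) on each $B_\delta(p_i)$, the approximation $\dot\nu_{\text{app},R} = \dot\nu_{\text{loc},i,R}$ satisfies the \emph{local} complex Coulomb gauge equation for the local metric $h_{\text{loc},i,R}=h_{\text{app},R}$ exactly by \Cref{prop: local Coulomb gauge condition}, so the only error on this disk comes from replacing $h_{\text{app},R}$ by $h_R$; by \Cref{prop: approximate h accuracy}, the difference is encoded by $\gamma_R$, which is analytic in $R$ with $\gamma_0=0$, giving an error of size $O(R)$ in the relevant norm;
(ii) on the gluing annulus $B_{2\delta}(p_i)\setminus B_\delta(p_i)$, derivatives of the cutoff $\chi_i$ applied to $\dot\nu_{\text{loc},i,R}$ create a bounded contribution which vanishes as $R\to 0$ thanks to $\dot\nu_{\text{loc},i,R}\to 0$ pointwise (in the spirit of \Cref{lemma: annulus gluing} applied to the first variation);
(iii) on the complement $C\sm\bigcup B_{2\delta}(p_i)$, we have $\dot\nu_{\text{app},R}=0$ so the equation reduces to the source term $\mathbb D'_{R,h_R}(R^{1/2}\dot\varphi)$, which on this compact set tends to $0$ uniformly because $h_R\to h_0=\text{Id}$ and $[\varphi^{\dagger_{h_0}},\dot\varphi]$ is multiplied by an overall factor of $R$ after collecting powers.

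For the inversion step, after multiplying \eqref{eqn:linforrho} by $R$, the operator $R\, L_R = -\del^{h_R}\delbar + R^2[\varphi^{\dagger_{h_R}},[\,\cdot\,,\varphi]]$ degenerates at $R=0$ to the flat Laplacian $-\del\delbar$ on $\End_0\mathcal E$.  Just as in the proof of \Cref{prop: approximate h accuracy}, this limiting operator is Fredholm of index $0$ with trivial kernel on weighted $b$-Sobolev spaces (the kernel vanishes because $C=\CP^1$ has genus zero and the decay conditions at $D$ forbid bounded harmonic sections).  Hence $R\,L_R$ is a real-analytic family of Fredholm operators, invertible for all $R\in(0,R_{\max})$ after possibly shrinking $R_{\max}$, and $\rho_R = -(R\,L_R)^{-1}(R\,E_R)$ is analytic in $R$ with $\rho_0=0$.

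The main obstacle is making the error estimate in step~(i) uniform as $R\to 0$: both the indicial roots of $\mathbb D'_{R,h_R}\mathbb D''_R$ and the weights $\alpha_i(R)=\tfrac12-R\beta_i$ depend on $R$, so naive weighted spaces would lose regularity in the limit.  This is precisely the issue sidestepped in \Cref{prop: approximate h accuracy} by fixing $\eps\in(0,\tfrac12)$ independent of $R$ and showing (via the computation of the $T_{\bar J}, T_J, T_{J_0}$ decomposition) that the error actually lies in the subspace $L^{0,2}_{-\eps}$.  The analogous refinement here requires checking that the three categories of error above all land in $L^{0,2}_\eps$ with norm $o(1)$ as $R\to 0$; the key calculation is essentially the same as in the proof of \Cref{prop: approximate h accuracy}, now applied to the linearized Hitchin/Coulomb equation rather than the nonlinear one.
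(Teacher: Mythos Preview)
Your approach is essentially the same as the paper's: reduce to showing the error term is controlled in a fixed weighted $b$-Sobolev space and then invert the linearized operator, which at $R=0$ degenerates to the scalar Laplacian $\del\delbar$ and is Fredholm of index zero with trivial kernel. The paper phrases the last step via the analytic implicit function theorem rather than direct inversion, but since the Coulomb gauge equation is affine-linear in $\rho$, these are equivalent.

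Two minor points where your sketch is slightly incomplete compared to the paper. First, in your region~(i) you say the only error on $B_\delta(p_i)$ comes from replacing $h_{\text{app},R}$ by $h_R$; but $\dot\nu_{\text{loc},i,R}$ solves the \emph{local} Coulomb gauge equation for the single-residue Higgs field $x_iy_i\frac{\de z}{z-p_i}$, not for the full $\varphi$ and $\dot\varphi$, so there are additional cross terms from $\varphi_j,\dot\varphi_j$ with $j\neq i$. These are handled exactly by the $T_{\bar J},T_J,T_{J_0}$-style decomposition you already invoke, so this is not a real gap. Second, the paper spends its opening paragraphs verifying that $\dot\nu_{\text{app},R}$ has the correct leading behavior at each $p_i$---it decomposes $\dot\nu_{\text{app},R}=\dot\nu_{\text{flags}}+\dot\gamma_{\text{cor},R}$ with $\dot\gamma_{\text{cor},R}\in O(r^{4R\beta_i})$ and checks that $\mathrm{res}_{p_i}(\dot\varphi+[\dot\nu_{\text{app},R},\varphi])$ preserves the flag---which is needed for $\mathcal N_R^{\C}$ to map into the correct target space. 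You fold this into ``well-definedness'' without detail, which is acceptable for a sketch.
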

\begin{proof}
In \Cref{prop:nuflags}, we introduced $\dot{\nu}_{\text{flags}}$ and the decomposition
\[ \dot\nu_{\text{app},R} = \dot{\nu}_{\text{flags}} + \dot{\gamma}_{\text{cor}, R}. \]
Note that the expression for $\dot{\nu}_{\text{app}, R}$ which is written near $p_i$ in a special frame is related to the expression for $\dot{\nu}_{\text{flags}}$ by simply evaluating $\dot\nu_{\app,R}(p_i)$.
We'll borrow the notation from \Cref{def: local first variation} and drop the subscript $i$, instead writing $x=(x_1,0)^\intercal$ and $y=(0,y_2)$ in the appropriate frame.  Then using the formulas \eqref{eqn: nu dot component formulas} and \eqref{eqn: nu flag simple form}, the components of $\dot\gamma_{\text{cor},R}=\dot\nu_{\app,R}-\dot\nu_{\text{flags}}$ are
\begin{align*}
    (\dot\gamma_{\text{cor},R})_{11}
        &=\frac{\bar x_1\dot x_1}{|x|^2-|y|^2r^{4R\beta}}(r^{4R\beta}-1)+\frac{\bar x_1\dot x_1}{|x|^2}\\
        &=\frac{|x|^2\bar x_1\dot x_1(r^{4R\beta}-1)}{|x|^4-|x|^2|y|^2r^{4R\beta}}+\frac{(|x|^2-|y|^2r^{4R\beta})\bar x_1\dot x_1}{|x|^4-|x|^2|y|^2r^{4R\beta}}\\
        &=\frac{(|x|^2+|y|^2)\bar x_1\dot x_1r^{4R\beta}}{|x|^4-|x|^2|y|^2r^{4R\beta}}\\
        &\in O(r^{4R\beta})\\
    (\dot\gamma_{\text{cor},R})_{21}
        &=(\lambda_{\text{loc},R}^2-1)\frac{\dot x_2}{x_1}+\frac{\dot x_2}{x_1}\in O(r^{4R\beta})\\
            (\dot\gamma_{\text{cor},R})_{12}&=0
\end{align*}
It is a straightforward adaptation of \cite[Proposition 3.8]{CFW24} that the usual residue map at $p_i$ extends to 
\[\dot{\varphi} -[\dot{\nu}_{\text{app}, R}, \varphi] \]
and that $[\dot{\gamma}_{\mathrm{cor}}, \varphi]$ does not contribute to the residue map;
consequently, all contributions to the residue map come from $\dot{\nu}_{\text{flags}}$, so following the discussion in \Cref{prop:nuflags}, we conclude
    \[\mathrm{res}_{p_i} \left(\dot\varphi+[\dot\nu_{\text{app},R},\varphi]\right) \in\End_0(E_{p_i},F_i).\]

    \bigskip
  
    Consider the operator $\mathcal N_R^\C\from L_{2-\eps}^{2,2}(\End(\mathcal E))\to L_{-\eps}^{0,2}(\End(\mathcal E))$ given by\footnote{
        Along the lines of \Cref{prop: symplectic quotient tangent space}, this map is a kind of complexification of the map $\mathcal N_R$ from \Cref{prop: approximate h accuracy}, presented in complex gauge.
    }
    \begin{equation}
        \mathcal N_R^\C(\rho)=\del^{h_R}\delbar(\dot\nu_{\text{app},R}+\rho)+R^2[\varphi^{\dagger_{h_R}},\dot\varphi+[(\dot\nu_{\text{app},R}+\rho),\varphi]]
    \end{equation}
    Using the definition of $h_R=e^{\gamma_R^\dagger}h_{\text{app},R}e^{\gamma_R}$, the construction of $\dot\nu_{\text{app},R}$, and \Cref{prop: local Coulomb gauge condition}, it is straightforward to verify this map is well-defined; the argument is very similar to the proof of \Cref{prop: approximate h accuracy}.  The linearization at $\rho=0$ is
    \begin{equation}
        \de\mathcal N_R^\C\big|_0(\dot\rho)=\del^{h_R}\delbar\dot\rho+R^2[\varphi^{h_R},[\dot\rho,\varphi]].
    \end{equation}
    At $R=0$ we again recover the Laplacian $\de\mathcal N_0^\C\big|_0(\dot\rho)=\del\delbar\dot\rho$, which has Fredholm index 0 as a mapping between these weighted $b$-Sobolev spaces.  Hence, we obtain the desired analytic family $\rho_R$ by the analytic implicit function theorem \cite{KrantzPark}.
\end{proof}

\subsection{Proof of Main Theorem}\label{sec:mainthm}
We now prove the main theorem:
\begin{restatable}{theorem}{HKdegeneration}\label[theorem]{thm: HK degeneration}
    Suppose $\vec\beta\in(0,\infty)^n$ is generic and define $\vec\alpha(R)\in(0,\frac12)^n$ by $\alpha_i(R)=\frac12-R\beta_i$.  For $R>0$ small enough that $W_{[n]}(\vec\alpha(R))>(n-2)/2$, let $\mathcal T_R\from\mathcal X(\vec\beta)\into\mathcal M_R(\vec\alpha)$ be the family of embeddings described above.  Then the family of hyperkähler metrics $\mathcal{T}_R^*(g_R)$ converges pointwise to $2\pi\cdot g_\text{HP}$ as $R \to 0$.
\end{restatable}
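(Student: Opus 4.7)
The plan is to evaluate $\mathcal{T}_R^*(g_R)$ on a unitary tangent vector $(\dot\bx, \dot\by) \in T_{(\bx,\by)}\mathcal{X}(\vec\beta)$ by using the formula for the metric in \eqref{eq:hk expression}, feeding in the approximate harmonic metric $h_{\text{app},R}$ built in \Cref{prop: h app construction} and the approximate gauge transformation $\dot\nu_{\text{app},R}=\sum_i\chi_i \dot\nu_{\text{loc},i,R}$ built from the local first variations of \Cref{def: local first variation}. The near-puncture behavior of these approximations exactly matches the local model, which is controlled by \Cref{prop: local metric pairing}, while their behavior on compact subsets away from the punctures is trivial. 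The argument splits into three main steps.

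First, I would justify replacing the true data $(h_R, \dot\nu_R)$ by the approximate data $(h_{\text{app},R}, \dot\nu_{\text{app},R})$. By \Cref{prop: approximate h accuracy} and \Cref{lemma: approximate nu dot accuracy}, the corrections $\gamma_R\in L^{2,2}_{2-\eps}$ and $\rho_R\in L^{2,2}_{\eps-2}$ are analytic families that vanish at $R=0$. Since the integrand of \eqref{eq:hk expression} is a smooth functional of the pair $(h,\dot\nu)$ that depends continuously on these corrections in the appropriate norms (via the embedding $L^{2,2}_{2-\eps}\hookrightarrow L^\infty$ and the pointwise bound $|\Psi_{\app,R}|\lesssim r^{-1+4R\beta_i}$ used already in the proof of \Cref{prop: approximate h accuracy}), the difference between the metric evaluated at $(h_R,\dot\nu_R)$ and at $(h_{\text{app},R},\dot\nu_{\text{app},R})$ tends to zero as $R\to 0$. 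Thus it suffices to prove
\[\lim_{R\to 0}\ \mathcal{T}_R^*(g_R)\big|_{(h_{\text{app},R},\dot\nu_{\text{app},R})}(\dot\bx,\dot\by) = 2\pi |(\dot\bx,\dot\by)|^2_{g_\text{HP}}.\]

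Second, I would split the integral in \eqref{eq:hk expression} as
\[
\int_{\CP^1} = \sum_{i=1}^n \int_{B_\delta(p_i)} + \int_{K_\delta},\qquad K_\delta = \CP^1\setminus\bigsqcup_i B_\delta(p_i),
\]
and control each piece. On $K_\delta$, the bump functions give $\dot\nu_{\text{app},R}\equiv 0$, so the boundary terms all sit inside the disks and the integrand reduces to $R\langle\dot\varphi,\dot\varphi\rangle_{h_{\text{app},R}}$. Since $h_{\text{app},R}\to h_0 = \mathrm{Id}$ uniformly on $K_\delta$ and $\dot\varphi$ is fixed and smooth there, this bulk contribution is $O(R)$ and vanishes as $R\to 0$. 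On each disk $B_\delta(p_i)$, $h_{\text{app},R}$ agrees with the local model $h_{\text{loc},R}$ for the single branch $(x_i,y_i)$ and $\dot\nu_{\text{app},R}$ agrees with $\dot\nu_{\text{loc},i,R}$, so \Cref{prop: local metric pairing} gives that the disk contribution (together with the residue boundary term at $p_i$) converges to $2\pi(|\dot x_i|^2 + |\dot y_i|^2)$ in the appropriate normalization.

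Third, summing the contributions of all $n$ punctures and using \Cref{prop: metric on hyperpolygon space}, one obtains
\[
\lim_{R\to 0}\ \mathcal{T}_R^* g_R(\dot\bx,\dot\by) = 2\pi \sum_{i=1}^n\bigl(|\dot x_i|^2 + |\dot y_i|^2\bigr) = 2\pi\, g_\text{HP}(\dot\bx,\dot\by),
\]
as claimed. The main obstacle is the first step: showing that the corrections $\gamma_R$ and $\rho_R$ can be discarded in the limit. The subtlety is that the natural analytic setup forces these corrections to live in $R$-dependent weighted $b$-Sobolev spaces (indicial roots drift to $0$ as $\vec\alpha(R)\to(\tfrac12,\dots,\tfrac12)$), so uniform estimates in $R$ must be argued by working in the fixed space $L^{2,2}_{2-\eps}$ with $\eps\in(0,\tfrac12)$ independent of $R$. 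Bounding the pairing of $\rho_R$ (or a derivative thereof) against the local-model pieces requires combining Sobolev embedding with the $R$-uniform bound $|\Psi_{\text{app},R}|\lesssim r^{-1+4R\beta_i}$, and tracking how the $\dot\nu_{\text{flags}}$ part of $\dot\nu_{\text{app},R}$ produces the expected boundary contribution after applying Stokes' theorem as in \eqref{eq:hk expression}.
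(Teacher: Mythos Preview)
Your proposal is correct and follows essentially the same approach as the paper: reduce to the approximate data $(h_{\app,R},\dot\nu_{\app,R})$ via \Cref{prop: approximate h accuracy} and \Cref{lemma: approximate nu dot accuracy}, then split the integral and invoke \Cref{prop: local metric pairing} on each disk while the bulk contribution dies like $O(R)$. One small slip: $\dot\nu_{\app,R}$ vanishes on $K_{2\delta}$, not $K_\delta$ (the $\chi_i$ are supported on $B_{2\delta}(p_i)$), and on the disk $\dot\varphi$ also carries bounded cross terms from the other punctures---but both of these contribute $O(R)$ after pairing (using $h_{\app,R}\to\mathrm{Id}$ uniformly on compacta and \Cref{lemma: annulus gluing} on the annuli), so the argument goes through.
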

\begin{proof}     Let $\delta>0$.
    Let $(\bx,\by)$ be a unitary hyperpolygon and $(\dot\bx,\dot\by)$ a unitary deformation of $(\bx,\by)$.  The image $(\mathcal E,\varphi)=\mathcal T_R(\bx,\by)$ is a Higgs bundle with holomorphic structure $\delbar_{\mathcal E}=\delbar$ independent of $(\bx, \by)$, while the parabolic flag structure $\mathcal F$ depends on $(\bx, \by)$.  
        It suffices to prove that for every unitary deformations $(\dot \bx, \dot \by) \in T_{(\bx, \by)} \mathcal{X}(\vec \beta)$ and associated harmonic deformation 
        $\mathtt{H}_R \in T_{\mathcal{T}_R(\bx, \by)} \mathcal{M}_R(\vec(\alpha))$,
    \begin{equation}\label{eqn: main goal}
        \lim_{R\to0}\| \mathtt{H}_R\|_{g_R}^2=2\pi\sum_i\left(|\dot x_i|^2+|\dot y_i|^2\right).
    \end{equation}
    Recall that the deformation $(\dot\bx,\dot\by)$ corresponds to the Higgs bundle deformation  $(\dot{\eta}, \dot{\mathcal F},\dot\varphi)$. 
    It will be convenient to introduce the notation $\dot{\Phi} = \dot \Psi^{1,0}$, 
    so that 
    the associated deformation in the $h_R$-unitary formulation is $(\dot \nabla_R^{0,1},\dot\Phi_R)=(-\delbar\dot\nu_R,\dot\varphi+[\dot\nu_R,\varphi])$ as discussed in \Cref{sec:hitchinhk}.
Using \eqref{eq:hk expression} and the regularity of $\gamma_R$,
    \begin{align}
        \|(\dot \nabla^{0,1}_R,\dot\Phi_R)\|^2
                  &=\sum_{i=1}^n \left( \lim_{\delta'\to0}\oint_{\partial B_{\delta'}(p_i)}R^{-1}\langle\dot\nu_R,\delbar\dot\nu_R\rangle_{h_R}\right) +\int_{\P^1}R\langle\dot\varphi,\dot\Phi_R\rangle_{h_R}
                \notag\\
            &=\sum_{i=1}^n \left(  \lim_{\delta'\to0}\oint_{\partial B_{\delta'}(p_i)}R^{-1}\langle\dot\nu_R,\delbar\dot\nu_R\rangle_{h_{\app,R}} \right)+\int_{\P^1}R\langle\dot\varphi,\dot\Phi_{\app,R}\rangle_{h_{\app,R}}+O(R^2)
                \notag\\
            &=\sum_{i=1}^n \left( \lim_{\delta'\to0}\oint_{\partial B_{\delta'}(p_i)}R^{-1}\langle\dot\nu_{\text{app},R},\delbar\dot\nu_{\text{app},R}\rangle_{h_{\text{app},R}} \right)\\
                &\quad+\int_{\P^1}R\langle\dot\varphi,\dot\Phi_{\text{app},R}\rangle_{h_{\text{app},R}}+O(R^2)
                \notag\\
                &\quad+\sum_{i=1}^n \left( \lim_{\delta'\to0}\oint_{\partial B_{\delta'}(p_i)}\underbrace{R^{-1}\langle\rho_R,\delbar\dot\nu_{\text{app},R}\rangle_{h_{\text{app},R}}+R^{-1}\langle\dot\nu_{\text{app},R},\delbar\rho_R\rangle_{h_{\text{app},R}}}_{f_\text{cor}(\bar{z-p_i})^{-1}\de\bar z} \right).
           \end{align}
    The last integrand can be written as $f_\text{cor}(\bar{z-p_i})^{-1}\de\bar z$.  Since $\rho_R\in L_{{2-\eps}}^{2,2}(\End(\mathcal E))$, we have $\rho_R\in o(r^{1-\eps})$ and $\delbar\rho_R\in o(r^{-\eps})$.  Thus, $|f_\text{cor}|$ has leading order terms of the form $r^{1-\eps-4R\beta_i}$, so for sufficiently small $R$ we can bound it above by $Cr^{1/2}$ for some constant $C$;
    this constant is uniform as $R\to0$ by analyticity of $\rho_R$.  Now $Cr^{1/2}(\bar{z-p_i})^{-1}\de\bar z$ vanishes under integration along $\partial B_{\delta'}(p_i)$ in the limit $\delta'\to0$, so the last integral is zero.  Finally, the uniform boundedness of $\langle\dot\varphi,\dot\Phi_{\text{app},R}\rangle_{h_R}$ on $[0,R_{\max})\times K_\delta$ and \Cref{prop: local metric pairing} implies \eqref{eqn: main goal}.
\end{proof}

\subsection{\texorpdfstring{Comparison of $\U(1)$-Moment Maps}{Comparison of U(1)-Moment Maps}}\label[section]{sec:Morse}
The result in this section is a corollary of \Cref{thm: HK degeneration}, but we provide a direct proof here as an enlightening demonstration not requiring the technical details from \Cref{subsec: Local First Variations}.

Recall the Morse-Bott functions in \eqref{eqn: Morse-Bott functions} arising as moment maps with respect to the real symplectic form for the $\U(1)$-actions on the hyperk\"ahler spaces $\mathcal X(\vec\beta)=X\fourslash_{(0, \vec \beta)}$ and $\mathcal M_R(\vec\alpha(R))$:
    $$M_\text{HP}(\bx,\by)=\frac i2\sum_i|y_i|^2,
    \quad M_R(\mathcal E,\varphi,h_R)=\frac i2\int_{\C\P^1}R|\varphi|_{h_R}^2$$
    In \Cref{thm: Morse functions agree at fixed points}, we proved that $M_R(\mathcal{T}_R(\bx, \by))=2 \pi M_\text{HP}(\bx, \by)$ at corresponding $U(1)$-fixed points. Now, we prove that the family of Morse functions converges: 
\begin{theorem}
    \label[theorem]{thm: moment maps agree}
    In the setting of \Cref{thm: HK degeneration}, for any unitary hyperpolygon $(\bx,\by)$ and corresponding $(0,R_{\max})$-family of harmonic bundles $\mathcal{T}_R(\bx, \by)$, we have \begin{equation} \lim_{R\to0}M_R(\mathcal{T}_R(\bx,\by))=2\pi M_\text{HP}(\bx,\by).\end{equation}
\end{theorem}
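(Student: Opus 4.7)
The plan is to replace the true harmonic metric $h_R$ in $M_R(\mathcal T_R(\bx,\by)) = \tfrac{i}{2}\int_{\CP^1} R|\varphi|_{h_R}^2$ with the approximate harmonic metric $h_{\app,R}$, and then exploit the explicit formulas for $h_{\app,R}$ from \Cref{subsec: analytic expansion of the harmonic metric}. By \Cref{prop: approximate h accuracy} one has $h_R = e^{\gamma_R^\dagger} h_{\app,R} e^{\gamma_R}$ with $\gamma_R$ analytic in $R$ in $L^{2,2}_{2-\eps}$ and $\gamma_R \to 0$ as $R \to 0$; in particular $\gamma_R\to 0$ uniformly by Sobolev embedding, so the ratio $|\varphi|_{h_R}^2/|\varphi|_{h_{\app,R}}^2$ tends to $1$ with enough control to replace $h_R$ by $h_{\app,R}$ inside the integral. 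I would then split the domain as $\CP^1 = K_\delta \sqcup \bigsqcup_i B_\delta(p_i)$. On $K_\delta$, the Higgs field $\varphi$ is smooth and $h_{\app,R} \to h_0 = \mathrm{Id}$ uniformly, so $|\varphi|^2_{h_{\app,R}}$ is uniformly bounded and the $K_\delta$-integral is $O(R) \to 0$.

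The main contribution comes from each disk $B_\delta(p_i)$, where by construction $h_{\app,R} = h_{\mathrm{loc},i,R}$. Decompose $\varphi = \varphi_{\mathrm{sing},i} + \varphi_{\mathrm{reg},i}$ with $\varphi_{\mathrm{sing},i} = \tfrac{x_iy_i}{z-p_i}\de z$ and $\varphi_{\mathrm{reg},i}$ holomorphic on $B_\delta(p_i)$. The crucial point is that $h_{\mathrm{loc},i,R}$ is by construction an \emph{exact} $R$-harmonic metric for the local pair $(\mathcal E, \varphi_{\mathrm{sing},i})$ (\Cref{prop: delbar del log lambda}). Choosing a frame in which $x_i \in \langle e_1\rangle$, so that $h_{\mathrm{loc},i,R} = \sqrt{h_{\det}}\,\diag(\lambda_{\mathrm{loc},i,R}, \lambda_{\mathrm{loc},i,R}^{-1})$ and $\varphi_{\mathrm{sing},i}$ is strictly upper triangular, one computes
\[ R\,\tr\bigl(\varphi_{\mathrm{sing},i}\wedge \varphi_{\mathrm{sing},i}^{\dagger_{h_{\mathrm{loc},i,R}}}\bigr) \;=\; \pm R^{-1}\,\delbar\del\log\lambda_{\mathrm{loc},i,R},\]
with sign dictated by Hitchin's equation. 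Integrating and invoking \Cref{lemma: integral of dd log lambda} yields
\[ \lim_{R\to0}\tfrac{i}{2}\int_{B_\delta(p_i)} R|\varphi_{\mathrm{sing},i}|^2_{h_{\mathrm{loc},i,R}} = 2\pi\cdot\tfrac{i}{2}|y_i|^2.\]
Summing over $i$ produces the desired limit $2\pi M_{\mathrm{HP}}(\bx,\by)$.

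The remaining estimates show that $\varphi_{\mathrm{reg},i}$ and its cross term with $\varphi_{\mathrm{sing},i}$ on $B_\delta(p_i)$ contribute negligibly: since the eigenvalues of $h_{\mathrm{loc},i,R}$ behave like $r^{\pm 2R\beta_i}$, these integrands are bounded by constants times $r^{-4R\beta_i}$ (integrable for small $R$), and the prefactor $R$ makes their integrals $O(R) \to 0$. The hard part will be the first step---justifying the replacement of $h_R$ by $h_{\app,R}$ near the punctures, where $|\varphi|^2_h$ is itself singular. Here the weighted Sobolev regularity $\gamma_R\in L^{2,2}_{2-\eps}$ from \Cref{prop: approximate h accuracy} (which forces $\gamma_R$ to vanish to appropriate order at $D$), combined with its analytic $R$-dependence, are what allow one to control the perturbation of the singular integrand uniformly as $R\to 0$.
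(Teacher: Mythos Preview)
Your proposal is correct and follows essentially the same route as the paper: replace $h_R$ by $h_{\app,R}$ using the regularity of $\gamma_R$ from \Cref{prop: approximate h accuracy}, split the domain as $K_\delta \sqcup \bigsqcup_i B_\delta(p_i)$, and on each ball isolate the $(i,i)$ singular contribution (your $\varphi_{\mathrm{sing},i}$, the paper's $(j,k)=(i,i)$ summand), which is evaluated via \Cref{lemma: integral of dd log lambda}. One small imprecision: the cross terms between $\varphi_{\mathrm{sing},i}$ and $\varphi_{\mathrm{reg},i}$ are $O(r^{-1+4R\beta_i})$ rather than $O(r^{-4R\beta_i})$, but this is still integrable and still gives an $O(R)$ contribution, so your conclusion is unaffected.
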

\begin{proof}
    Let $(\bx,\by)$ be a unitary hyperpolygon, and let $\gamma_R$, $R_\text{max}$, and $\eps\in(0,\frac12)$ be as in \Cref{prop: approximate h accuracy}.  Our approximation $h_{\text{app},R}$ is uniformly bounded on $K_\delta=\CP^1 \sm \bigcup_{i} B_\delta(p_i)$ as $R\to0$, and therefore, so is $h_R=e^{\gamma_R}\cdot h_{\text{app},R}$ (recall $\lim_{R\to}\gamma_R=0$ and $\gamma_R$ is real analytic in $R$). 
    Thus, $R|\varphi|_{h_R}^2\to0$ on $K_\delta$ as $R\to0$.  
    
    On the ball $B_\delta(p_i)$ and in coordinates with $x_i \in \langle e_1\rangle$, we have
        \begin{align}\tr(\varphi^{\dagger_{h_R}}\varphi)&=\tr((e^{\gamma_R}\varphi e^{-\gamma_R})^{\dagger_{h_{\text{app},R}}}(e^{\gamma_R}\varphi e^{-\gamma_R}))\nonumber  \\ 
        &=\tr(\varphi^{\dagger_{h_\text{app,R}}}\varphi)+R\cdot O(r^{-1-\eps})\,\de\bar z\wedge\de z 
        \end{align}
    where the last equality follows from \Cref{prop: approximate h accuracy}. 
Now we further expand the term $\tr(\varphi^{\dagger_{h_\text{app,R}}}\varphi)$ on $B_\delta(p_i)$:
    \begin{align}
        \tr(\varphi^{\dagger_{h_{\text{app},R}}}\varphi)
            &=-i\sum_{j,k}\frac{\tr(h_{\text{app},R}^{-1}\varphi_j^\dagger h_{\text{app},R}\varphi_k)}{(\bar{z-p_j})(z-p_k)}\,\de\bar z\wedge\de z
                \notag\\
            &=-i\left(\frac{\lambda_{\text{loc},i,R}^2|\varphi_i|^2}{|z-p_i|^2}+\sum_{(j,k)\ne(i,i)}\frac{\tr(h_{\text{app},R}^{-1}\varphi_j^\dagger h_{\text{app},R}\varphi_k)}{(\bar{z-p_j})(z-p_k)}\right)\de\bar z\wedge\de z.
    \end{align}
We now compute the following integral:
    \begin{align}
    &\lim_{R \to 0} \int_{B_{\delta}(p_i)} R |\varphi|^2_{h_R}   \nonumber \\
    & \quad =\lim_{R \to 0} \int_{B_{\delta}(p_i)}\!\! -iR\left(\frac{\lambda_{\text{loc},i,R}^2|\varphi_i|^2}{|z-p_i|^2}+\!\!\sum_{(j,k)\ne(i,i)}\!\!\frac{\tr(h_{\text{app},R}^{-1}\varphi_j^\dagger h_{\text{app},R}\varphi_k)}{(\bar{z-p_j})(z-p_k)} + R\cdot O(r^{-1 - \eps})\right)\de\bar z\wedge\de z
        \label{eqn: norm of phi expansion}
    \end{align}
    The terms in the sum with $j\neq i $ and $k \neq i$ have no singularity at $p_i$. In the remaining cases where either $j=i$ or $k=i$, we have $\tr(h_{\app,R}^{-1}\varphi_j^\dagger h_{\app,R}\varphi_k)=\lambda_{\text{loc},i,R}^2\tr(\varphi_j^\dagger\varphi_k)\in O(r^{4R\beta_i})$. Thus, the $(j,k) \neq (i,i)$ summands in  \eqref{eqn: norm of phi expansion} have singularities at worst $O(r^{-1+4R\beta_i})$, hence $O(r^{-1-\eps})$.
      Since 
        $$\lim_{R\to0}\int_{B_\delta(p_i)}Rr^{-1-\eps}\de\bar z\wedge\de z
            =2\pi i\lim_{R\to0}\frac{R\delta^{1-\eps}}{1-\eps}
                =0,$$
    only the first in \eqref{eqn: norm of phi expansion} survives the limit $R \to 0$ after integration.
    By construction, 
    $\lambda_{\text{loc},i,R}$ satisfies $R^{-1}\delbar\del\log\lambda_{\text{loc},i,R}=R\lambda_{\text{loc},i,R}^2|\varphi_i|^2|z-p_i|^{-2}\,\de\bar z\wedge\de z$ (see \Cref{prop: delbar del log lambda}), so the above implies
    \begin{align}
        \lim_{R\to0}M_R(\mathcal E,\varphi,h_R)
            &=\lim_{R\to0}\frac i2\sum_i\int_{B_\delta(p_i)}R|\varphi|_{h_R}^2
                \notag\\
            &=\lim_{R\to0}\frac12\sum_i\int_{B_\delta(p_i)}R\frac{\lambda_{\text{loc},i,R}^2|\varphi_i|^2}{|z-p_i|^2}\,\de\bar z\wedge\de z
                \notag\\
            &=\lim_{R\to0}\frac12\sum_i\int_{B_\delta(p_i)}R^{-1}\delbar\del\log\lambda_{\text{loc},i,R}
                \notag\\
            &=\pi i\sum_i|y_i|^2
                \notag\\
            &=2\pi M_\text{HP}(\bx,\by)
    \end{align}
    where the second last equality uses \Cref{lemma: integral of dd log lambda}.

\end{proof}

\subsection{Further Directions}

\begin{enumerate}
\item When one defines $\mathcal{X}(\vec \beta)$, one typically sets the values of the complex moment map $\mu_{\C^\times, i} =0$. This ensures that $\mathcal{X}(\vec \beta)$ has a $\C^\times$-action. However, it is not necessary.  Similarly, one can generalize from strongly parabolic Higgs bundles (with nilpotent residues) to weakly parabolic Higgs bundles (with residues with possible non-zero eigenvalues $\pm m_i$). As we remarked in \Cref{rem:momentmaps}, one can view these non-zero eigenvalues of the Higgs field as choice of value of the complex map.  We conjecture that our result extends to this case. One would first have to prove that the map $\mathcal{T}$ preserves stability and holomorphic structures, since this has not been established. 
This is work in progress of AY.
\item We additionally conjecture that the same result holds for hyperpolygons with more general double star-shaped quivers \cite[Figure 2b]{RS} and strongly parabolic Higgs bundles on $\CP^1$ with matching general flag types.   
One would again first have to prove that the map $\mathcal{T}$ preserves stability and the holomorphic symplectic structure, since this has not been established. The proof that $\mathcal{T}$ preserves stability for appropriate weights and $\mathcal{T}$ preserves holomorphic structures is early work in progress of AY. 

\item One would like to extend the above result to weakly parabolic Higgs bundles, but there is some subtlety about the definition of these moduli spaces in the not full flag type. We have not yet thought deeply about this.

\item We wonder to what extent this type of result extends to comet-shaped quivers \cite[Figure 4]{RS} and parabolic Higgs bundles on higher genus Riemann surfaces. 
\end{enumerate}

\bibliography{Main}
\bibliographystyle{math}

\end{document}